\renewcommand{\phi}{\varphi}
\newcommand{\inv}{^{-1}}
\DeclareMathOperator{\tr}{tr}
\DeclareMathOperator{\sign}{sign}
\newcommand{\gr}{Grassmannian}
\newcommand{\gmle}{GE}
\newcommand{\gm}{Grassmann manifold}
\newcommand{\sa}{self-$\sig$-adjoint}
\newcommand{\R}{\mathbb{R}} 
\newcommand{\Gr}{\mathrm{G}} 
\newcommand{\pos}{\mathrm{Pos}}	
\newcommand{\E}{\mathbb{E}}
\newcommand{\sas}{{\Gr(m,r)}} 
\newcommand{\pas}{\pos^1_{sym}(m)} 
\newcommand{\pls}{{\Gr(m)}} 
\newcommand{\ls}{{\R^m}} 
\newcommand{\mat}{\R^{m\times m}} 
\newcommand{\projective}{\mathbb{P}} 
\newcommand{\ps}{{\projective^{m-1}}} 
\newcommand{\sig}{\Sigma} 
\newcommand{\p}{Pr(E_k,\sig)} 
\newcommand{\listU}{U_1,\ldots,U_n}
\newtheorem{theorem}{Theorem}[section]
\newtheorem{proposition}[theorem]{Proposition}
\newtheorem{corollary}[theorem]{Corollary}
\newtheorem{lemma}[theorem]{Lemma}
\theoremstyle{definition}
\newtheorem{definition}[theorem]{Definition}
\newtheorem{example}[theorem]{Example}
\newtheorem{remark}[theorem]{Remark}
\newtheorem*{rema*}{Remark}
\newtheorem*{remas*}{Remarks}
\newtheorem*{prop*}{Proposition}
\theoremstyle{problem}
\newcommand{\Ker}{\mathrm{Ker}}
\newcommand{\RR}{\mathbb{R}}
\newcommand{\cat}{$\mathrm{CAT}(0)$\xspace}
\newcommand{\diag}{\operatorname{diag}}
\newcommand{\id}{\operatorname{id}}
\newcommand{\Ad}{\operatorname{Ad}}
\newcommand{\SO}{\operatorname{SO}}
\newcommand{\GL}{\operatorname{GL}}
\newcommand{\ML}{\operatorname{ML}}
\newcommand{\Id}{\operatorname{Id}}
\newcommand{\SL}{\operatorname{SL}}
\newcommand{\G}{\operatorname{\mathbb{G}}}
\newcommand{\grad}{\operatorname{grad}}
\newcommand{\Veco}{\operatorname{Vec}}
\newcommand{\Hess}{\operatorname{Hess}}
\newcommand{\Sym}{\operatorname{Sym}}
\newcommand{\Pos}{\operatorname{Pos}}
\newcommand{\PosS}{\operatorname{PosS}}
\newcommand{\supp}{\operatorname{supp}}
\newcommand{\spn}{\operatorname{span}}
\newcommand{\TT}{\operatorname{\bf{T}} \mkern-4mu}
\definecolor{purple}{rgb}{0.5, 0.0, 0.5}
\newcommand{\efface}[1]{}
\def\og{\leavevmode\raise.3ex\hbox{$\scriptscriptstyle\langle\!\langle$~}}
\def\fg{\leavevmode\raise.3ex\hbox{~$\!\scriptscriptstyle\,\rangle\!\rangle$}}
\def\truc{\unskip\kern 3pt\penalty 500
\hbox{\vrule\vbox to 5pt{\hrule width 4pt\vfill\hrule}\vrule}\kern
3pt}
\def\R{{\mathbb R}}
\def\shp{{\mathcal P}}
\title{Geometrical and statistical properties of M-estimates of scatter on Grassmann manifolds  }
\author[1]{Corina Ciobotaru\thanks{corina.ciobotaru@unifr.ch}}
\author[1]{Christian Mazza\thanks{christian.mazza@unifr.ch}}
\affil[1]{Universit\'e de Fribourg, Section de Math\'ematiques, Chemin du Mus\'{e}e 23, 1700 Fribourg, Switzerland.}
\date{\today}
\begin{document}
\newcounter{qcounter}
\maketitle

\begin{abstract}  
We consider data from the Grassmann manifold $G(m,r)$ of all vector subspaces of dimension $r$ of $\R^m$, and  focus on the  Grassmannian statistical model.
Canonical Grassmannian distributions $\mathbb{G}_\sig$ on $G(m,r)$
 are indexed by parameters $\Sigma$ from the manifold 
$\mathcal{M}= \pas$ of positive definite symmetric matrices of determinant $1$. Robust  M-estimates of scatter (GE) for general probability measures $\shp$ on $G(m,r)$ are studied. Such estimators 
are defined to be the maximizers
of
the Grassmannian log-likelihood 
$-\ell_\shp(\sig)$ as  function of $\Sigma$. One of the novel features of this work is a strong use of the fact that $\mathcal{M}$ is a CAT(0) space with known visual boundary at infinity $\partial \mathcal{M}$, allowing us to study existence and unicity of GEs; along the way it is proved that 
$\ell_\shp(\gamma(t))$ is convex (and under further conditions even strictly convex) when evaluated on geodesics $\gamma(t)$ of $\mathcal{M}$. We also recall that the sample space $G(m,r)$ is a part of  $\partial \mathcal{M}$, show the distributions $\G_\sig$ are $\SL(m, \RR)$--quasi-invariant, and
that $\ell_\shp(\sig)$ is a weighted Busemann function. Let $\shp_n =(\delta_{U_1}+\cdots+\delta_{U_n})/n$ be the empirical probability measure for $n$-samples of random i.i.d. subspaces $U_i\in G(m,r)$ 
of common distribution $\shp$ with support $G(m,r)$. For $\Sigma_n$ and $\Sigma_\shp$ the \gmle s\ of $\shp_n$ and $\shp$, we show the almost sure convergence of $\Sigma_n$ toward $\Sigma$ as $n\to\infty$ using methods from geometry, and provide a central limit theorem for the rescaled process 
$C_n = \frac{m}{\tr(\Sigma_{\shp}^{-1} \Sigma_n)}g^{-1} \Sigma_n g^{-1}$,  where $\Sigma =g g$ with $g \in \SL(m, \RR)$ the unique symmetric positive-definite square root of $\Sigma$.

\end{abstract}

\section{Introduction and summary}
\label{Introduction_summary}


The deluge of current data in science, social sciences and technology is remarkable for the proliferation of new data type, and practitioners are increasingly faced with the geometries they induce. We focus  on data from  the \textbf{\gm\ $\sas$} of all vector subspaces of dimension $r$ of $\ls$ ($0<r<m$).
Such data arise for example in statistics in the setting of  likelihood-based sufficient dimension reduction \cite{Cook,Tan},
in signal processing  (see, e.g., \cite{Nokleby}, \cite{Zhang}) or  machine learning \cite{Lerman}.
Let $x_1,\ldots, x_{r}\in\ls$ be i.i.d. random vectors in $\ls$ with central normal distribution of positive definite  self-adjoint covariance matrix~$\sig$. The density of the normal law is $\exp(x^T\sig\inv x/2)$ ($x\in\ls$) up to a constant factor. We define the \textbf{\gr\ distribution of parameter $\sig$} as the law of the linear span $\langle x_1,\ldots, x_r\rangle$ of these vectors in $\ls$. It is a Borel probability measure $\G_\sig$ on $G(m,r)$. The parameter $\sig$ of a \gr\ distribution $\G_\sig$ is defined up to a positive factor only. This indeterminacy is removed by requiring the determinant of $\sig$ to be $1$. So, we parametrize the \gr\ distributions by the space $\pas$ of positive definite self-adjoint matrices $\sig\in\mat$ of determinant $1$.

Given a regular matrix $A\in\mat$, the random vectors $Ax_1,\ldots, Ax_r$ are i.i.d.\ with central normal law of covariance matrix $A\sig A^T$. Hence, the image measure of $\G_\sig$ under the transformation of $\sas$ given by
$AU=\{Ax\mid x\in U\}$ for $U\in\sas$ is
\begin{equation}\label{equivariance}
A\G_\sig=\G_{A\sig A^T}.
\end{equation}

Let us represent a point $U\in\sas$ as the linear span $U=\langle x_1,\ldots,x_r\rangle$ of
linearly independent vectors $x_1,\ldots, x_r$ of $U$ or, equivalently, as the range $U=\langle X\rangle$ of the matrix $X=(x_1,\ldots,x_r)$ of rank $r$. Then, a computation shows that the density, or Radon--Nikodym derivative, of the \gr\ distribution $\G_\sig$ ($\sig\in\pas$) with respect to the uniform distribution $\G_{\Id_m}$ on $\sas$ ($\Id_m$ = identity $m \times m$ matrix) is given by
\begin{equation}\label{density}
\frac{d\G_\sig}{d\G_{\Id_m}}(\langle
X\rangle)=
\left(\frac{\det(X^TX)}{\det(X^T\sig\inv X)}\right)^{ m/2}
\end{equation}
(see \cite{chi}). Interestingly, such probability measures are similar to recently defined determinantal probability measures on Grassmannians which are designed to model fermionic matter \cite{Kassel} or to probabilities associated with sufficient dimension reduction subspaces methods from machine learning \cite{Tan,Cook}.

When $r=1$, the \gr\ distribution $\G_\sig$ is known as the  \textbf{angular Gaussian distribution} of parameter $\sig\in\pas$ on the projective space $\ps=\Gr(m,1)$ (see \cite{AMR2005}). For any $0<r<m$, the \gm\ $\sas$ can be viewed as the space of projective subspaces of dimension $r-1$ of $\ps$ by identifying a vector $r$-subspace $U$ of $\ls$ with the projective subspace $\{y\in\ps\mid y\subseteq U\}$.
In this projective interpretation, the \gr\ distribution $\G_\sig$ on $\sas$ is the law of the projective span of i.i.d.\ random points $y_1,\ldots,y_r$ of $\ps$ with angular Gaussian distribution of parameter $\sig$.

Let $\shp$ be a Borel probability measure on $\sas$. A parameter $\sig\in\pas$ is called a \textbf{Grassmannian M-estimate of scatter} ---abbreviated \gmle\ in the sequel--- of $\shp$ if it maximizes the log-likelihood 
$\int_\sas\log(d\G_\sig/d\G_{\Id_m})\,d\shp$.
It is called a \gmle\ of a sample $\listU\in\sas$ when $\shp$ is the sample empirical measure. 

For convenience, we shall rather work with the following negative version of the log-likelihood
\begin{align}
\label{log-likelihood}\ell_\shp(\sig)&=-\frac1{ m}\int_\sas\log(d\G_\sig/d\G_{\Id_m})\,d\shp=\int_\sas\ell_U(\sig)\,d\shp(U),\\
\noalign{\noindent where the (negative) log-density $\ell_U$ is defined by}
\label{log-density}
\ell_U(\sig)&=-\frac1{m}\log\frac{d\G_\sig}{d\G_{\Id_m}}(U)
=\frac12\log\frac{\det(X^T\sig\inv X)}{\det(X^TX)}
\qquad(U=\langle X\rangle\in\sas).
\end{align}
With this notation, a \gmle\ of $\shp$ minimizes $\ell_\shp$. 

When $r=1$, \gmle\ is known as Tyler's M-estimator of scatter and is a special case of Maronna's affine invariant M-estimator
 \cite{Maronna,Ty}. It has the desirable property of being robust to outliers
 in the Huber sense \cite{Hub}, and is thus particularly suitable 
 for handling big data.
   Tyler \cite{Ty} proved that it is the most robust estimator of covariance for 
elliptical distributions. 
The
existence and unicity  of \gmle\ has been studied in 
\cite{Ty} and \cite{Kent}, see also \cite{Lutz} for 
more results on such M-estimators.
 The authors of \cite{AMR2005} studied such questions using the geometry of the parameter space $\mathcal{M}=
\pas$, which is a classical Riemannian manifold when endowed with a statistically meaningful Riemannian metric.
Within this framework, it turns out that the \textbf{M-functional} 
$\ell_\shp(\gamma(t))$ evaluated on geodesics $\gamma(t)$ of $\mathcal{M}$ is convex. 
The authors of \cite{AMR2005} then obtained precise results on existence and unicity of \gmle\ by studying the behaviour at infinity of $\ell_\shp(\gamma(t))$. Similar approaches were then developed within the signal processing literature
\cite{Wiesel,ZWG}, and in statistics \cite{DT} where a full treatment of the role of the Riemannian manifold structure of $\pas$ is given.
The random matrix regime of Maronna's and Tyler's robust estimators have been considered recently assuming similar population and sample sizes in \cite{Couillet14,Couillet15,Zhang16}. In this setting, it is shown, e.g., that the random spectral measure of properly scaled Tyler M-estimators converges weakly to the Marcenko--Pastur distribution.

For general $0<r<m$, the authors of \cite{AMR} studied the existence and uniqueness of \gmle\ using \emph{the sample space $G(m,r)$ as a subset of the boundary $\partial \mathcal{M}$ of the parameter space $\mathcal{M}$}, as observed by 
\cite{FR} when $r=1$. 

\medskip
The article is structured as follows.
Section \ref{sec::sym_space} recalls the basic notions from the Riemannian geometry of the parameter space $\mathcal{M}=\Pos_{sym}^1(m)$. Section \ref{s::quasi} shows that Grassmannian distributions correspond to quasi-invariant distributions on $G(m,r) \simeq  {\rm SL}(m,\R)/P_r \subset \partial \mathcal{M}$, where $P_r$ is a maximal parabolic subgroup of ${\rm SL}(m,\R)$. Using these notions, Section 
\ref{subsub::Grassmanian_Busemann} computes the so-called $\rho$ functions to determine quasi-invariant distributions on subsets of $\partial\mathcal{M}$.
This geometrical view point extends old results on invariant measures on Stiefel and Grasssmann manifolds, see, e.g. 
\cite{James,Herz,Muir,chi}, to quasi-invariant measures.
Furthermore, it is shown that $\ell_\shp$ is a weighted Busemann function. The statistical consequence of this property is that one can link geometrically data $U$ from $G(r,m)$, viewed as a subset of the boundary $\partial 
\mathcal{M}$ of the parameter space $\mathcal{M}$, see Figure \ref{Fig3}, and elements $\sig$ of $\mathcal{M}$:
For given data point $U=\langle X\rangle \in G(m,r)\subset \partial {\cal M}$
and fixed base point $\Sigma_0\in {\cal M}$,
the log-likelihood
 of the data point $U$ for the parameter $\sig$ is such that 
  \begin{equation}\label{BuseLikeli}
 c\ \ell_U(\sig)=-c\ \log(\frac{{\rm d}\G_\sig}{{\rm d}\G_{\id_m}}(U)) = \lim_{t\to\infty}(d(\sig_0,\gamma(t))-t),
 \end{equation}
where $\gamma(t)$ is a geodesic ray taking some $\sig$ to $U \in \partial {\cal M}$, and $c = 2\sqrt{\frac{m}{(m-1)r}}$.
Here $d(\Sigma_0,\gamma(t))$ is the geodesic distance between $\Sigma_0$ and $\gamma(t)$ in the Riemannian manifold  $\Pos_{sym}^1(m)$, see Figure \ref{Fig3}, with
$d(\Sigma_0,\gamma(t))=\vert\vert \log(\Sigma_0^{-1/2}\gamma(t)\Sigma_0^{-1/2}\vert\vert$ and
$\vert\vert C\vert\vert^2 = {\rm tr}(C^T C)$.

\begin{figure}
\centering
\includegraphics[width=10cm]{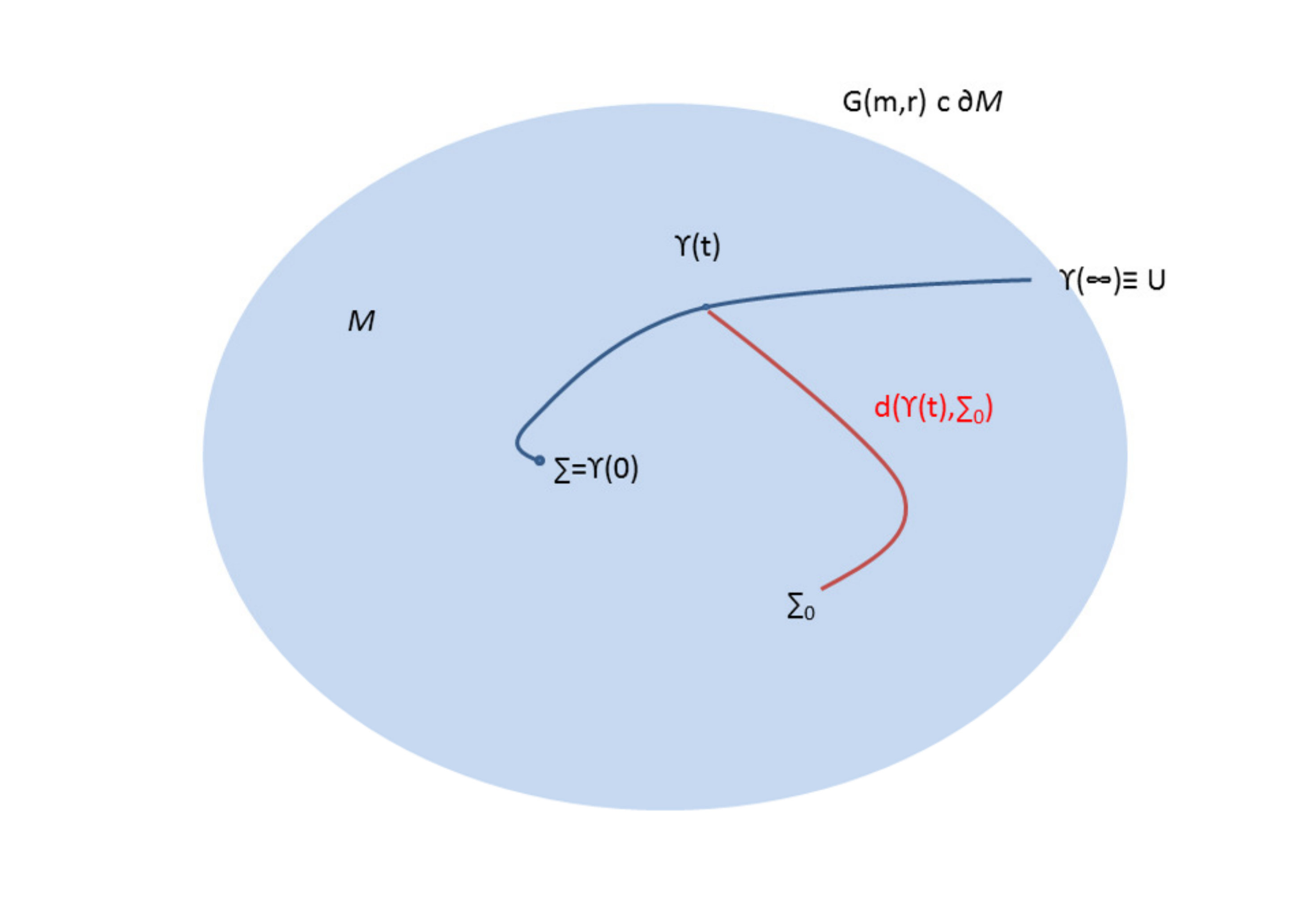}
\caption{$G(m,r)$ is viewed as a subset of the boundary $\partial \mathcal{M}$ of the parameter space $\mathcal{M}$.
The log-likelihood of the data point $U\in G(m,r)$ for the parameter $\sig$ is related to the geodesic distance in the manifold  $\Pos_{sym}^1(m)$ between a fixed base point $\sig_0$ and $\gamma(t)$, when the geodesic ray $\gamma(t)$ points to $U$ at infinity, see (\ref{BuseLikeli}).  As a consequence, statistical properties of GE are strongly related to geometrical properties of $\mathcal{M}$.}
\label{Fig3}
\end{figure}

 Section \ref{sec::Grass_max_like} studies the  gradient and covariant derivative of $\ell_{\shp}$, and Section \ref{s::GeodesicConvexity} proves that under suitable conditions $\ell_\shp$ is a strictly geodesically convex function.
Then,  general results of 
Kapovich--Leeb--Millson \cite{KLM} on Hadamard manifolds yield that
a Borel probability measure $\shp$ on the Grassmannian $\sas$ has a unique \gmle\  if and only if
\begin{equation}\label{Unicity1}
\int_\sas\dim(U\cap V)\,d\shp(U)<\frac rm\dim(V)
\end{equation}
for all nontrivial linear subspaces $V$ of $\ls$ ($0\ne V\ne\ls$). 
For a sample $\{U_i\}_{1 \leq i \leq n }$ of size $n$, let $\shp_n:= \frac{1}{n} (\delta_{U_1}+...+ \delta_{U_n})$ be its corresponding empirical probability measure. Then by \cite[Theorem 4]{AMR} for almost all samples of size $n > \frac{m^2}{r(m-r)}$ the corresponding map $\ell_{\shp_n}$ has a unique GE. When a probability measure $\shp$ or a sample does not have a unique \gmle, it may have several \gmle s or no \gmle s.  Section \ref{s.behaviourinfinity} considers geodesic coercivity using direct geometrical methods, and treats in detail the existence and unicity of \gmle s. Related to this, Corollary \ref{M-equation} shows that
a parameter $\sig\in\pas$ is a \gmle\ of a Borel probability measure~$\shp$ on $\sas$ if and only if it satisfies the
M-equation
\begin{equation}\label{Mequation}
\int_\sas \Pr(U,\sig)\,d\shp(U)=\frac{r}{m} \Id_m,
\end{equation}
where $\Pr(U,\sig)=X (X^{T} \Sigma^{-1} X)^{-1} X^{T}\sig^{-1}$ is the $\Sigma$-orthogonal projector on $U$ for the scalar product $(x\vert y)=x^T \Sigma^{-1}y$, $x$, $y\in\R^m$. 

Section \ref{sec::gen_lln} focus on the almost sure convergence of \gmle s.  More precisely, let $\{U_n\}_{n \geq 1}$ be a random sample of i.i.d.  random subspaces of $G(m,r)$, distributed according 
to a continuous Borel probability measure $\shp$ on $G(m,r)$ with support $G(m,r)$. Let $\shp_n:= \frac{1}{n} (\delta_{U_1}+...+ \delta_{U_n})$ be the corresponding empirical probability measures. Suppose the GEs $\{\Sigma_n \}_{n\geq 1}, \Sigma_\shp \subset \Pos_{sym}^1(m)$ for $\{\shp_n\}_{n \geq 1}$, respectively, $\shp$ exist. 
Theorem \ref{thm::LLN} shows that
for every $n$ large enough the $\Sigma_n$ is unique almost surely, $\Sigma_\shp \in \Pos_{sym}^1(m)$ is unique, and $\{\Sigma_n \}_{n \geq 1}$ converges almost surely to $\Sigma_\shp $.

Section \ref{sec::gen_CLT} considers the asymptotic normality of \gmle. With the above notation, let $\Sigma =g g$ with $g \in \SL(m, \RR)$ the unique symmetric positive-definite square root of $\Sigma$, and set
 $ C_n := \frac{m}{\tr(\Sigma_{\shp}^{-1} \Sigma_n)}g^{-1} \Sigma_n g^{-1}$. If the support of the continuous Borel probability measure $\shp$ is $G(m,r)$ then
Theorem \ref{CLT} shows that
$$
\sqrt{n}(\Veco(C_n - \Id_m)) \xrightarrow[distribution]{n \to \infty}
\mathcal{N}(0, \sigma_\infty^2),$$
where the limiting covariance matrix $\sigma_\infty^2$ is obtained using geometrical arguments.

\section{The geometry of the parameter space $\mathcal{M}=\Pos_{sym}^1(m)$ }
\label{sec::sym_space}
\subsection*{The symmetric space  of $\SL(m, \RR)$ }

Let $m \geq 2$ and consider the semi-simple real Lie group $\SL(m, \RR)$. It is a locally compact topological group that is also connected and with finite centre. Its associated symmetric space is $\mathcal{M}:\cong \SL(m, \RR)/\SO(m, \RR)$, where $\SO(m, \RR)$ is the special orthogonal group and is the maximal compact subgroup of $\SL(m, \RR)$. The symmetric space $\mathcal{M}$ is a Riemannian manifold and it is a \cat space (see \cite[Chapter II.10]{BH99}); in this section we want to precisely identify it. For more details and the proofs of this section, one can consult \cite[Chapter II.10]{BH99}.


Let $$\Sym(m):=\{M \text{ is an } m \times m \text{ matrix } \; \vert \; M=M^{T}\}.$$ Notice $\Sym(m)$ is a vector space isomorphic to $\RR^{m(m+1)/2}$ whose basis are the $m \times m$ matrices $E_{ij}$ having $1$ on the entries $(i,j)$ and $ (j,i)$ and $0$ otherwise. In particular, $E_{ii}$ is the $m \times m$ matrix having $1$ on the entry $(i,i)$ and zero otherwise. On $\Sym(m)$ we consider the \textbf{max-norm} denoted for what follows by $\vert \vert \cdot \vert \vert$, i.e.,  $\vert \vert A \vert \vert:= \max\limits_{i,j} \vert a_{ij} \vert $, for every $A= (a_{ij}) \in \Sym(m)$. 

Because $\Sym(m)$ is isomorphic to $\RR^{m(m+1)/2}$, thus a smooth manifold, the tangent space at every matrix $\Sigma \in \Sym(m)$ is of dimension $m(m+1)/2$. Then the max-norm topology agrees with the manifold topology on $\Sym(m)$.

Moreover,  $$\Pos_{sym}(m):=\{ M \in \Sym(m) \; \vert \; x M x^{T} >0, \forall x \neq 0 \in \RR^{m} \}$$ is an open subset of $\Sym(m)$, and so every matrix $M \in \Pos_{sym}(m)$ has the tangent space of dimension $m(m+1)/2$. 

It is a fact  $$\Pos_{sym}^1(m):= \{M \in \Pos_{sym}(m) \; \vert \; \det(M)=1\}$$ is a totally geodesic submanifold of  $\Pos_{sym}(m)$ and the tangent space at every point $ \Sigma \in \Pos_{sym}^1(m)$ is of dimension $m(m+1)/2-1= (m-1)(m+2)/2$. Moreover, one has  $\Pos_{sym}(m) \cong \Pos_{sym}^1(m) \times \RR$.

\begin{lemma}
\label{lem::action}
The group $\SL(m, \RR)$ acts on $\Pos_{sym}^1(m)$ by $g \cdot \Sigma:= g \Sigma g^{T}$, for every $g \in \SL(m, \RR)$ and every $\Sigma \in \Pos_{sym}^1(m)$. Moreover, $\SL(m, \RR)$ acts transitively on $\Pos_{sym}^1(m)$.
\end{lemma}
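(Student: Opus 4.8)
The plan is to verify two things: first that the formula $g\cdot\Sigma := g\Sigma g^T$ actually defines an action of $\SL(m,\RR)$ on $\Pos^1_{sym}(m)$, and second that this action is transitive. For the first part I would check three points in turn: (i) the map is well-defined, i.e. $g\Sigma g^T$ again lies in $\Pos^1_{sym}(m)$; (ii) the identity matrix acts trivially; (iii) the composition rule $g\cdot(h\cdot\Sigma) = (gh)\cdot\Sigma$ holds. Point (ii) is immediate since $\Id_m\,\Sigma\,\Id_m^T = \Sigma$, and (iii) is the routine computation $g(h\Sigma h^T)g^T = (gh)\Sigma(gh)^T$. For (i), if $\Sigma$ is symmetric then $(g\Sigma g^T)^T = g\Sigma^T g^T = g\Sigma g^T$, so symmetry is preserved; positive definiteness follows because for $x\ne 0$ we have $x^T(g\Sigma g^T)x = (g^Tx)^T\Sigma(g^Tx) > 0$, using that $g^T$ is invertible so $g^Tx\ne 0$; and the determinant condition is preserved since $\det(g\Sigma g^T) = \det(g)\det(\Sigma)\det(g^T) = 1\cdot 1\cdot 1 = 1$ because $g\in\SL(m,\RR)$. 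This is exactly the point where the restriction to determinant-one matrices is used — for $\GL(m,\RR)$ one would only land in $\Pos_{sym}(m)$, not in the slice $\Pos^1_{sym}(m)$.

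For transitivity, the key tool is the spectral theorem. Given an arbitrary $\Sigma\in\Pos^1_{sym}(m)$, I would diagonalize it as $\Sigma = O D O^T$ with $O\in\SO(m,\RR)$ (or at least orthogonal — one can arrange $\det O = 1$ by flipping a column sign if needed) and $D = \diag(\lambda_1,\dots,\lambda_m)$ with all $\lambda_i > 0$ and $\prod_i\lambda_i = \det\Sigma = 1$. Then set $g := O\,\diag(\sqrt{\lambda_1},\dots,\sqrt{\lambda_m})$; this $g$ satisfies $\det g = \det O\cdot\prod_i\sqrt{\lambda_i} = \sqrt{\prod_i\lambda_i} = 1$, so $g\in\SL(m,\RR)$, and $g\cdot\Id_m = g\Id_m g^T = g g^T = O\,\diag(\lambda_i)\,O^T = \Sigma$. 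Hence every element of $\Pos^1_{sym}(m)$ is in the orbit of $\Id_m$. Since being in the same orbit is an equivalence relation (the action being established in the first part), it follows that the action has a single orbit, i.e. it is transitive.

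I do not expect any real obstacle here; the statement is essentially a standard fact about the symmetric space $\SL(m,\RR)/\SO(m,\RR)$ and the proof is a sequence of short matrix computations. The only point requiring a moment's care is the bookkeeping on determinants — ensuring both that $g\Sigma g^T$ stays in the determinant-one slice and that the $g$ constructed in the transitivity argument genuinely has determinant $1$ rather than $\pm 1$, which is handled by taking positive square roots of the eigenvalues and, if necessary, adjusting the sign of one column of the orthogonal factor. Everything else is formal.
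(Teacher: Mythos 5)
Your proof is correct and complete; the paper itself states this lemma without proof, deferring to \cite[Chapter II.10]{BH99}, and your argument (closure under $\Sigma\mapsto g\Sigma g^T$ via symmetry, positive definiteness and the determinant computation, followed by transitivity from the spectral theorem and the square-root construction $g=O\,\diag(\sqrt{\lambda_1},\dots,\sqrt{\lambda_m})$) is exactly the standard one that reference supplies. The determinant bookkeeping you flag is handled correctly, including the sign adjustment needed to put the orthogonal factor in $\SO(m,\RR)$.
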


Now we want to define a Riemannian metric on $\Pos_{sym}^1(m)$ such that $\SL(m, \RR)$ acts on $\Pos_{sym}^1(m)$ by isometries. To do that it is enough to define a scalar product on the tangent space at the identity $\Id_m \in \Pos_{sym}^1(m)$. Then, by the transitivity of $\SL(m, \RR)$ on $\Pos_{sym}^1(m)$ we transport that scalar product on the tangent space at every point of $\Pos_{sym}^1(m)$.

\begin{lemma}
\label{lem::tangent}
The tangent space at the identity matrix $\Id_m \in \Pos_{sym}^1(m)$ is given by $$\TT_{\Id_m}\mkern-4mu\Pos_{sym}^1(m):=\{M \in \Sym(m)  \;  \vert \; \tr(M)=0\}$$ and has dimension $(m-1)(m+2)/2$.
\end{lemma}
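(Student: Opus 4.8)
The plan is to identify the tangent space $\TT_{\Id_m}\Pos_{sym}^1(m)$ by differentiating the defining equation of $\Pos_{sym}^1(m)$ at the point $\Id_m$. Since $\Pos_{sym}(m)$ is an open subset of the vector space $\Sym(m)$, its tangent space at any point is canonically $\Sym(m)$ itself; the submanifold $\Pos_{sym}^1(m)$ is cut out inside $\Pos_{sym}(m)$ by the single smooth constraint $\det(M) = 1$, so its tangent space at $\Id_m$ is the kernel of the differential of $\det$ at $\Id_m$, viewed as a linear functional on $\Sym(m)$.

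The key computation is therefore Jacobi's formula for the derivative of the determinant: for a curve $M(t) = \Id_m + tM$ with $M \in \Sym(m)$, one has $\left.\frac{d}{dt}\right|_{t=0}\det(\Id_m + tM) = \tr(M)$. Hence $d(\det)_{\Id_m}(M) = \tr(M)$, and a symmetric matrix $M$ is tangent to $\Pos_{sym}^1(m)$ at $\Id_m$ if and only if $\tr(M) = 0$. This gives the claimed description
\[
\TT_{\Id_m}\Pos_{sym}^1(m) = \{M \in \Sym(m) \mid \tr(M) = 0\}.
\]
For the dimension count, the trace is a nonzero linear functional on $\Sym(m)$ (it does not vanish identically, e.g.\ $\tr(E_{11}) = 1 \neq 0$), so its kernel has codimension exactly $1$ in $\Sym(m)$; since $\dim \Sym(m) = m(m+1)/2$, we obtain $\dim \TT_{\Id_m}\Pos_{sym}^1(m) = m(m+1)/2 - 1 = (m-1)(m+2)/2$, consistent with the dimension of $\Pos_{sym}^1(m)$ already recorded in the excerpt.

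There is essentially no serious obstacle here; the only point requiring a little care is to justify that $\det$ is a submersion near $\Id_m$ when restricted so that $\Pos_{sym}^1(m)$ is genuinely a manifold of the expected dimension — but this is already granted in the text (it is stated that $\Pos_{sym}^1(m)$ is a totally geodesic submanifold of the expected dimension), so one may simply invoke it. The proof thus reduces to the one-line application of Jacobi's formula together with the observation that $\tr$ is surjective onto $\RR$.
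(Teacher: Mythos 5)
Your proof is correct. The paper itself does not prove this lemma; it defers all proofs in that section to \cite[Chapter II.10]{BH99}, so there is no in-paper argument to compare against. Your route --- realizing $\Pos_{sym}^1(m)$ as the level set $\det=1$ inside the open subset $\Pos_{sym}(m)\subset\Sym(m)$, computing $d(\det)_{\Id_m}=\tr$ via Jacobi's formula, and noting that $\tr$ is a nonzero (hence surjective) linear functional so the kernel has codimension one --- is the standard argument and is complete; the dimension count $m(m+1)/2-1=(m-1)(m+2)/2$ checks out.
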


\begin{definition}
On $\TT_{\Id_m}\mkern-4mu\Pos_{sym}^1(m)$ we define the scalar product $$\langle , \rangle_{\Id_m}: \TT_{\Id_m}\mkern-4mu\Pos_{sym}^1(m) \times \TT_{\Id_m}\mkern-4mu\Pos_{sym}^1(m) \to \RR$$ by $\langle  A, B \rangle_{\Id_m}:=\tr(AB)$, for every $A,B \in \TT_{\Id_m}\mkern-4mu\Pos_{sym}^1(m)$.
\end{definition}

\begin{remark}
\label{rem::tangent}
Let $A(t)$ be a curve $A : [-\epsilon, \epsilon] \to \Pos_{sym}^1(m)$ with $A(0)= \Id_m$. Then for every $g \in \SL(m, \RR)$, $g A(t) g^{T}=:gAg^{T}(t)$ is a curve with $g A(0) g^{T}= gg^{T}$. So $$\frac{d gAg^{T}}{dt}\vert_{t=0}= g \frac{A}{dt}g^{T}\vert_{t=0}= gBg^{T} \in \TT_{gg^{T}} \mkern-4mu \Pos_{sym}^1(m)$$ where $B = \frac{A}{dt}= A'(0) \in \TT_{\Id_n}\mkern-4mu\Pos_{sym}^1(m)$. 

In particular, notice $\TT_{gg^{T}} \mkern-4mu \Pos_{sym}^1(m)= \{A \in \Sym(m) \; \vert \; \tr(g^{-1}A (g^{T})^{-1})=0\}$.
\end{remark}

For want follows we want to define a scalar product $\langle , \rangle_{\Sigma}$ on every tangent space $\TT_{\Sigma} \mkern-4mu \Pos_{sym}^1(m)$, with $\Sigma \in \Pos_{sym}^1(m)$, and with the property that every $g \in \SL(m, \RR)$ acts as an \textbf{isometry} on $\Pos_{sym}^1(m)$:
\begin{enumerate}
\item
$dg : \TT_{\Sigma} \mkern-4mu \Pos_{sym}^1(m) \to \TT_{g \cdot \Sigma} \mkern-4mu \Pos_{sym}^1(m)$ is an isomorphism of vector spaces,
\item
$\langle dg A,  dg B \rangle_{ g \cdot \Sigma}= \langle A, B \rangle_{\Sigma}$, for every $A, B \in \TT_{\Sigma} \mkern-4mu \Pos_{sym}^1(m)$.
 \end{enumerate}
 
Notice, for every $g \in \SL(m, \RR)$ the matrix $gg^{T} $ is an element of  $\Pos_{sym}^1(m)$. Moreover, every $\Sigma \in \Pos_{sym}^1(m)$ admits a square root $g \in  \SL(m, \RR)$, not necessarily unique, i.e., $\Sigma:= gg^{T}$. 

\begin{definition}
Let $g \in \SL(m, \RR)$ and take $\Sigma:= gg^{T}$. We define the isomorphism $$dg : \TT_{\Id_n}\mkern-4mu\Pos_{sym}^1(m) \to  \TT_{\Sigma} \mkern-4mu \Pos_{sym}^1(m)$$ by $A \mapsto dg(A):= gAg^{T}$, for every $A \in\TT_{\Id_n}\mkern-4mu\Pos_{sym}^1(m)$. Then we define $$\langle A ,C \rangle_{\Sigma}:=\tr(\Sigma^{-1} A \Sigma^{-1} C),$$
for every $A,C \in  \TT_{\Sigma} \mkern-4mu \Pos_{sym}^1(m)$.
\end{definition}

\begin{lemma}
\label{lem::action}
Let $g \in \SL(m, \RR)$ and take $\Sigma:= gg^{T}$. Then $dg :  (\TT_{\Id_m} \mkern-4mu \Pos_{sym}^1(m), \langle , \rangle_{\Id_m}) \to  (\TT_{\Sigma} \mkern-4mu \Pos_{sym}^1(m), \langle , \rangle_{\Sigma})$ preserves the scalar product. Moreover, $g$ is an isometry of $\Pos_{sym}^1(m)$ and any geodesic in $\Pos_{sym}^1(m)$ passing through $\Sigma$ is of the form $g \exp(tV)g^{T}$, where $V \in\TT_{\Id_m}\mkern-4mu\Pos_{sym}^1(m)$. 

\end{lemma}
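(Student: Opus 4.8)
The plan is to derive all three assertions from one cancellation identity for the trace, together with the classical description of the geodesics of the symmetric space $\SL(m,\RR)/\SO(m,\RR)$ through its base point. First I would record that $\phi_g\colon\Pos_{sym}^1(m)\to\Pos_{sym}^1(m)$, $\phi_g(\Sigma'):=g\Sigma'g^{T}$, is well defined (since $\det(g\Sigma'g^{T})=(\det g)^{2}\det\Sigma'=1$) and a diffeomorphism with inverse $\phi_{g^{-1}}$; being the restriction to $\Pos_{sym}^1(m)$ of the linear automorphism $M\mapsto gMg^{T}$ of $\Sym(m)$, its differential at every $\Sigma'$ is again $A\mapsto gAg^{T}$. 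At $\Sigma'=\Id_m$ this is exactly the isomorphism $dg$ of the Definition, $\phi_g(\Id_m)=gg^{T}=\Sigma$, and $d\phi_g|_{\Id_m}$ carries $\TT_{\Id_m}\mkern-4mu\Pos_{sym}^1(m)$ isomorphically onto $\TT_{\Sigma}\mkern-4mu\Pos_{sym}^1(m)$ (consistently with Remark \ref{rem::tangent}).

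Next I would check that $\phi_g$ preserves the metric, i.e. $\langle gAg^{T},gBg^{T}\rangle_{g\Sigma'g^{T}}=\langle A,B\rangle_{\Sigma'}$ for every $\Sigma'$ and all $A,B\in\TT_{\Sigma'}\mkern-4mu\Pos_{sym}^1(m)$. Writing $(g\Sigma'g^{T})^{-1}=(g^{T})^{-1}(\Sigma')^{-1}g^{-1}$ and cancelling $g^{-1}g=\Id_m=g^{T}(g^{T})^{-1}$ inside the trace,
\[
\tr\!\bigl((g\Sigma'g^{T})^{-1}gAg^{T}(g\Sigma'g^{T})^{-1}gBg^{T}\bigr)
=\tr\!\bigl((\Sigma')^{-1}A(\Sigma')^{-1}B\bigr)=\langle A,B\rangle_{\Sigma'}.
\]
Specialising to $\Sigma'=\Id_m$ gives the first claim of the lemma (that $dg$ preserves the scalar product), and the general case says $\phi_g$ is a Riemannian isometry of $\Pos_{sym}^1(m)$.

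Finally, for the geodesics I would invoke the standard description of $\SL(m,\RR)/\SO(m,\RR)$: the maximal geodesics through the base point $\Id_m$ are exactly the one-parameter subgroups $t\mapsto\exp(tV)$ with $V\in\TT_{\Id_m}\mkern-4mu\Pos_{sym}^1(m)$ (symmetric, traceless); see \cite[Ch.~II.10]{BH99}. (Alternatively, the inversion $\iota\colon\Sigma'\mapsto(\Sigma')^{-1}$ is an isometry by the same trace cancellation — its differential at $\Sigma'$ being $A\mapsto-(\Sigma')^{-1}A(\Sigma')^{-1}$ — it fixes $\Id_m$ with differential $-\id$ there, hence is the geodesic symmetry at $\Id_m$, so $\Pos_{sym}^1(m)$ is a Riemannian symmetric space and its geodesics through $\Id_m$ are the $\exp(tV)$.) Since an isometry maps geodesics to geodesics and, by the previous step, $\phi_g$ is an isometry with $\phi_g(\Id_m)=\Sigma$ and $d\phi_g|_{\Id_m}$ a linear isomorphism onto $\TT_{\Sigma}\mkern-4mu\Pos_{sym}^1(m)$, it takes the family of geodesics through $\Id_m$ bijectively onto the family of geodesics through $\Sigma$; hence the latter are precisely $t\mapsto\phi_g(\exp(tV))=g\exp(tV)g^{T}$, $V\in\TT_{\Id_m}\mkern-4mu\Pos_{sym}^1(m)$.

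The only step that is not routine is this last input: that the one-parameter subgroups $t\mapsto\exp(tV)$ are exactly the geodesics through $\Id_m$. Establishing it requires either the symmetric-space machinery (a geodesic symmetry forces the geodesics through the fixed point to be transvection orbits) or an explicit computation of the Levi-Civita connection of $\langle\,,\,\rangle$ followed by the verification that $\nabla_{\dot\gamma}\dot\gamma=0$ along $\gamma(t)=\exp(tV)$; the scalar-product and isometry statements are then immediate from the trace cancellation above.
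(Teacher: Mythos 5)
Your argument is correct and is essentially the standard one: the paper itself gives no proof of this lemma, deferring to \cite[Chapter~II.10]{BH99}, and your trace-cancellation computation for the metric invariance together with the identification of the geodesics through $\Id_m$ with the one-parameter subgroups $t\mapsto\exp(tV)$ (justified via the geodesic symmetry $\Sigma'\mapsto(\Sigma')^{-1}$) is exactly the argument that reference supplies. You correctly flag the only non-routine input and supply a valid justification for it, so nothing is missing.
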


From the above lemmas one can isometrically identify $\Pos_{sym}^1(m)$ with $ \SL(m, \RR)/\SO(m, \RR)$, and  as announced in the beginning of this section, we take $\mathcal{M}= \Pos_{sym}^1(m)$.

\begin{remark}
\label{rem::same_top}
It is a fact the topology on $\Pos_{sym}^1(m)$ given by the distance induced from the Riemannian metric on $\Pos_{sym}^1(m)$ agrees with the topology of the submanifold $\Pos_{sym}^1(m)$ of $\Sym(m)$, and thus with the max-norm topology on $(\Pos_{sym}, \vert \vert \cdot \vert \vert)$ induced from $\Sym(m)$.
\end{remark}

\section{Grassmannian distributions as quasi-invariant measures on $\partial \mathcal{M}$}
\label{s::quasi}

Let $\Sigma = g g^T \in \Pos_{sym}^1(m)$ for some $g\in \SL(m, \RR)$. As we have seen, the tangent space $\TT_{\Id_m}\mkern-4mu\Pos_{sym}^1(m)$ consists in zero trace symmetric matrices, and  $\TT_{\Sigma}\mkern-4mu\Pos_{sym}^1(m)$ is obtained by considering matrices  $g A g^T$, for some 
$A\in \TT_{\Id_m}\mkern-4mu\Pos_{sym}^1(m)$. The geodesic ray $\gamma(t)$ issuing from $\Sigma$ of speed $A$  is then 
$\gamma(t)=g\exp( t A)g^T$, see Figure \ref{Fig1}. Let us first explain how one can see   $G(m,r)$ as a subset of $\partial \mathcal{M}$ in a simple way. More precise mathematical constructions are provided later in this Section.
We focus on geodesic rays issuing from $\Id_m$, and require that the geodesic is parametrized by arc length, that is, we suppose that $\langle A,A\rangle_{\Id_m}={\rm tr}(A^2)=1$ with ${\rm tr}(A)=0$. A simple example is given by
$$A= \diag(\underbrace{\lambda_r , \cdots ,\lambda_r }_{r\text{-times}}, \underbrace{-\beta_r , \cdots , -\beta_r }_{m-r\text{-times}}),$$
where $\lambda_r= \sqrt{\frac{m-r}{mr}} $ and $\beta_r= \sqrt{\frac{r}{m(m-r)}}$. This example will occur in the proof of 
Lemma \ref{Busemann1}. Then it is easy to check that
$$
\gamma(t)\sim \exp(\lambda_r t)\diag(\underbrace{1 , \cdots ,1 }_{r\text{-times}}, \underbrace{0 , \cdots , 0 }_{m-r\text{-times}}),
$$
as $t\to\infty$. The limiting geodesic is then asymptotically confined as a $m\ {\rm x}\ r$  matrix $X_0=[e_1,\cdots,e_r]$
or with the linear subspace $U_0$ generated by the unit vectors $e_1,\cdots, e_r$ of $\RR^m$. Similar constructions hold in the general case  for arbitrary tangent vectors of  $\TT_{\Id_m}\mkern-4mu\Pos_{sym}^1(m)$. Such geodesic rays will then ultimately enter some asymptotic linear subspace of $G(m,r)$.

\begin{figure}
\centering
\includegraphics[width=10cm]{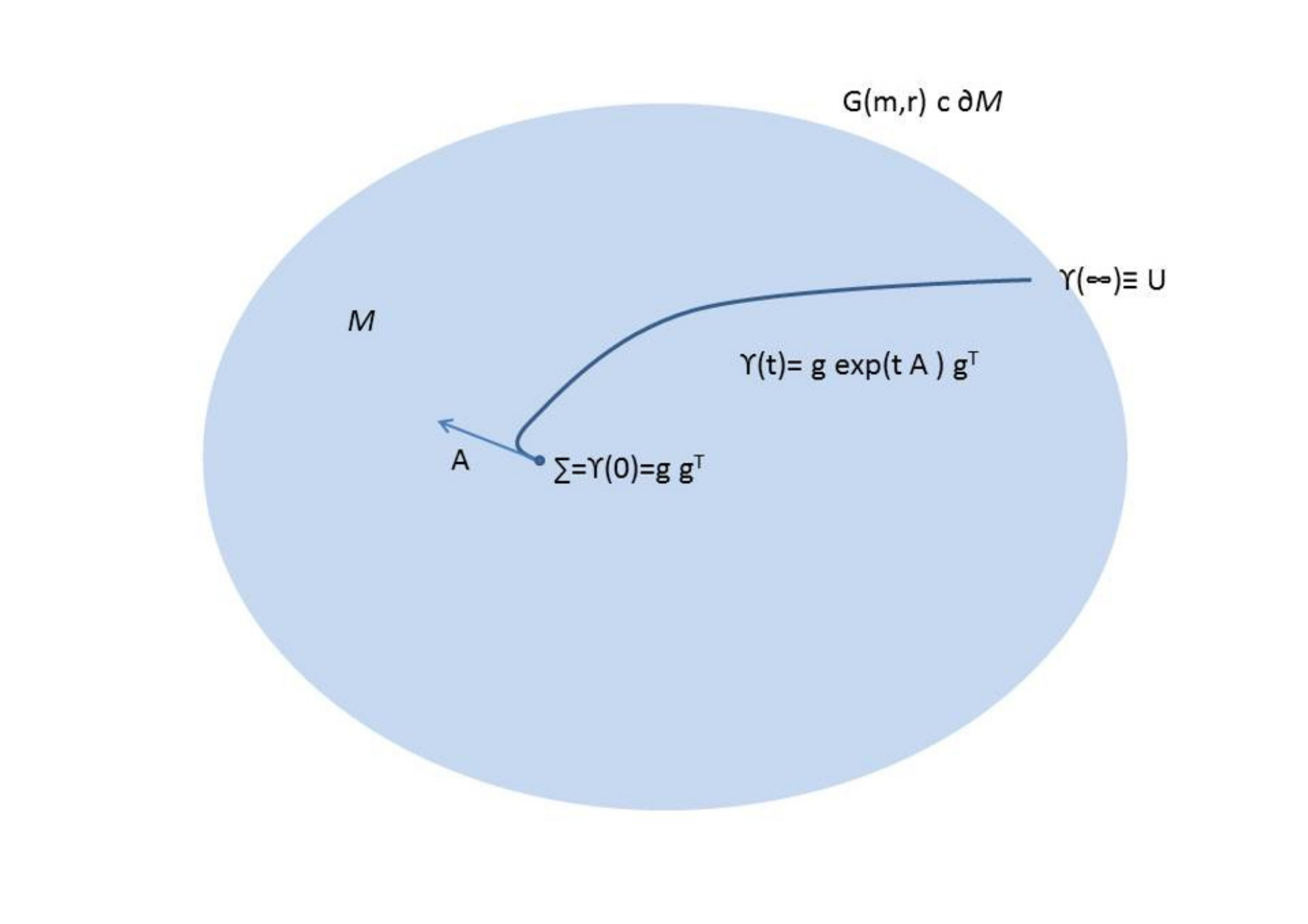}
\caption{Geodesics rays $\gamma(t)$ in $\Pos_{sym}^1(m)$ taking $\Sigma =\gamma(0)$ to a subset of the visual boundary which basically is an element of Grassmann manifold.  }
\label{Fig1}
\end{figure}

As the symmetric space $\Pos_{sym}^1(m) = \mathcal{M}$ of $\SL(m, \RR)$ is a \cat space, it has an associated  visual boundary at infinity  $\partial \mathcal{M}$.  $\partial \mathcal{M}$ is a spherical building of type $A_{m-1}$ in the sense of Tits, i.e., it has the structure of a simplicial complex that is expressed as the union of sub-complexes that are called apartments and satisfy three axioms (see \cite[Chapter II.10]{BH99}). Each apartment is tessellated  with (similar) maximal symplices that are all of the same type $A_{m-1}$ (thus same shape). Such a maximal simplex is called a chamber of $\partial \mathcal{M}$ or a spherical chamber at infinity of $\mathcal{M}$; it has dimension $m-2$ and has $m-1$ vertices. If we start coloring the vertices of a spherical chamber with $m-1$ colors, we can color the vertices of the spherical building $\partial \mathcal{M}$ in such a way that the vertices of each chamber of 
$\partial \mathcal{M}$ are differently colored, but using the same set of  $m-1$ colors. In fact, each color represents a different type of vertex and each chamber has only one vertex-representative for each color (type). 
The spherical building $\partial \mathcal{M}$ is compact with respect to  the cone topology induced from $\mathcal{M}$. If $m=2$ then the model chamber is just a point. 

\subsection{Parabolic subgroups}
\label{subsec::parab}
The group $G:=\SL(m, \RR)$ acts by isometries on $\mathcal{M}$ and continuously on $\partial \mathcal{M}$. Moreover, $G$ acts transitively on the set of all chambers of $\partial M$ by preserving the type of the vertices of the chambers.  Fix for what follows a chamber $c$ of $\partial \mathcal{M}$. The stabiliser in $G$ of a face of the chamber $c$  is called a \textbf{parabolic subgroup} of $G$, i.e., for a face $\sigma$ of $c$  the parabolic subgroup is $P_{\sigma}:= \{ g \in G \; \vert \; g(\sigma)=\sigma\}$. Note $P_{\sigma}$ is a closed subgroup of $G$. Because $G$ acts transitively on the set of all chambers of $\partial M$, the parabolic subgroups corresponding to other chambers are conjugated to those of the chamber $c$. 

For $\sigma=c$, the subgroup $P_c$ is called the Borel subgroup of $G$ and it is the minimal parabolic, i.e., contained in all the other parabolic subgroups $P_\sigma$.  When $\sigma$ is just a vertex of $c$ its corresponding parabolic is maximal, i.e., it is not contained in any larger parabolic subgroup. As the simplex $c$ has $m-1$ vertices, there are exactly $m-1$ maximal parabolic subgroups. It is well known that, up to conjugacy,  the maximal parabolic are of the form
$$P_{r} =\left\{ \begin{pmatrix} A_1 & B  \\
0 & A_2 \\
\end{pmatrix} \in \SL(m, \RR) \; \vert \; A_1 \in \GL(r, \RR), A_{2} \in \GL(m-r, \RR), B \in \RR^{r \times (m-r)}\right\},$$ where $1 \leq r \leq m-1$. Notice, the blocks $A_1, A_2$ are on the diagonal of $P_r$. 

A similar form is known for each parabolic $P_{\sigma}$, but  here we are only interested in maximal parabolics.

\subsection{The sample space $G(m,r)$ as a subset of $\partial \mathcal{M}$}
\label{subsec::grass}

The beginning of Section \ref{s::quasi} illustrates the fact that $G(m,r)$ can be seen as a subset of $\partial M$. More rigorous arguments use the preceding constructions:
It is easy to see the parabolic subgroup $P_r$ is the stabiliser in $G$ of the $r$-dimensional vector subspace of $\RR^m$ that is generated by the first $r$ vectors $\{e_1, \cdots, e_r\}$ of the canonical base  of $\RR^m$; $V$ is a point of $G(m,r)$. We can take the corresponding  $m \times r$ matrix to be $X:= (e_1, e_2, \cdots, e_r)$.

As $\SL(m, \RR)$ acts transitively on the set of all $r$-dimensional vector subspaces of $\RR^m$ that contain he origin $0 \in \RR^m$, one can conclude  the Grassmannian $G(m,r)$ equals as a set the quotient $\SL(m, \RR)/ P_r$, for every $1 \leq r \leq m-1$. Then using Section \ref{subsec::parab} the Grassmannian $G(m,r)$ equals the set of all vertices of $\partial M$ that are of the same type.  Using this identification, the action of an element $ g \in \SL(m, \RR)$ on a vertex of $\partial M$ can be interpreted as the matrix multiplication between $g$ and the corresponding matrix $X$  introduced above.

\subsection{Levi decompositions of maximal parabolics}
It is a well known fact that every maximal parabolic has the following direct product decomposition $P_r= U^{r} M_r$, called  \textbf{Levi decomposition} and where
$$ M_r :=\left\{ \begin{pmatrix} A_1 & 0  \\
0 & A_2 \\
\end{pmatrix} \in \SL(m, \RR) \; \vert \; A_1 \in \GL(r, \RR), A_{2} \in \GL(m-r, \RR)\right\}$$
and 
$$U^{r}:=\left\{\begin{pmatrix} \Id_1 & B  \\
0 & \Id_2 \\
\end{pmatrix} \; \vert \; \Id_r \in \GL(r, \RR), \Id_{2} \in \GL(m-r, \RR),  B \in \RR^{r \times (m-r)}\right\}.$$

Moreover, $U^r$ is normal in $P_r$ and is called the \textbf{unipotent radical} of  $P_r$. $M_r$ is called the \textbf{Levi factor} of $P_r$, it is a reductive group and is not connected.

Further we want to decompose the subgroup $M_r$. For $ A_1 \in \GL(r, \RR), A_{2} \in \GL(m-r, \RR)$, by considering $\frac{\sqrt[r]{\vert \det(A_1) \vert }}{\sqrt[r]{\vert \det(A_1) \vert}} A_1$ and $\frac{\sqrt[m-r]{\vert \det(A_2) \vert }}{\sqrt[m-r]{\vert \det(A_2) \vert } }A_2$ we can define a canonical group homomorphism $ \alpha : M_r \to  T^{r} \times G_r$ where 
$$T^{r}=\left\{\begin{pmatrix} \lambda_1 \Id_1 & 0  \\
0 & \lambda_2 \Id_2 \\
\end{pmatrix} \; \vert \;  \lambda_1, \lambda_2 \in \RR^{*}, \lambda_1^{r} \lambda_2^{m-r}=1 \right\}$$
and 
$$ G_r =\left\{ \begin{pmatrix} A_1 & 0  \\
0 & A_2 \\
\end{pmatrix} \in \SL(m, \RR) \; \vert \; \det(A_{1})= \pm 1= \det(A_{2})\right\}.$$
Notice $G_r$ and $T^r$ are subgroups of $M_r$, but one can see $M_r$ does not decompose as the direct product $T^{r} \times G_r$.  The subgroup $G_r$ is known to be semi-simple. Furthermore, the subgroup $T^r$ is obviously normal in $P_r$. We consider
$$P_r^{0}:= \left\{\begin{pmatrix} A_1 & B  \\
0 & A_2 \\
\end{pmatrix}  \in \SL(m, \RR) \; \vert \; \det(A_{1})= \pm 1= \det(A_{2}),  B \in \RR^{r \times (m-r)}\right\}$$
that is a subgroup of $P_r$. It is a known fact the $P_r$-orbit of $\Id_m$ in $\Pos_{sym}^1(m)$ equals $\Pos_{sym}^1(m)$ and the $P_r^{0}$-orbits in $\Pos_{sym}^1(m)$ are the horospheres corresponding to the point at infinity stabilized by $P_r$.

Similar decompositions of the corresponding Levi factor are known for each parabolic $P_{\sigma}$. 

\subsection{Measures on $\SL(m, \RR)/P_r$}

Let $G$ be a locally compact group, $H$ a closed subgroup of $G$. As we are working in the setting of locally compact groups, all Haar measures that are used in this paper are considered to be left-invariant. We denote by $dx$, respectively, $dh$ the Haar measure on $G$, respectively, $H$.

Endow $G/H$ with the quotient topology, meaning that the canonical projection $p :G \to G/H$ is continuous and open. In practice, one needs to put a measure on $G/H$ that is explained below.

\begin{definition}(See Bekka--de la Harpe--Valette~\cite[Appendix~B]{BHV})
\label{def::rho_function}
A \textbf{rho-function} of $(G,H)$ is a continuous function $\rho : G \to \RR^{*}_{+}$ satisfying the equality
\begin{equation}
\label{equ::rho_fct_cond}
\rho(xh)=\frac{\Delta_H(h)}{\Delta_G(h)} \rho(x) \text{ for all } x \in G, h \in H,
\end{equation}where $\Delta_{G}, \Delta_H$ are the modular functions on $G$, respectively on $H$. 
\end{definition}

We have the following relation between rho-functions of $(G,H)$ and $G$--quasi-invariant regular Borel measures on $G/H$. For the definition of a $G$--quasi-invariant regular Borel measure see~\cite[Appendix.~A.3]{BHV}. To briefly give the idea, suppose the topological space $G/H$ is endowed with a measure $\mu$. As $G$ acts on the space $G/H$ by the left multiplication, one can ask how the action of $G$ on $G/H$ can modify the measure $\mu$. More precisely, for each $g \in G$  what is the relation between $(G/H, \mu)$ and $G/H, g_{\star} \mu)$, where $ g_{\star} \mu$ is the pushforward measure on $G/H$ induced from the map $x \in G/H  \mapsto g \cdot x \in G/H$? In general, the passage from $g_{\star} \mu$ to $\mu$ is given by a function on $G/H$, called the Radon--Nikodym derivative between $g_{\star} \mu$ and $\mu$. If this Radon--Nikodym derivative is just one, then  $g_{\star} \mu= \mu$, and $\mu$ is called \textbf{$g$--invariant}.  If $\mu$ is $g$--invariant for every $g \in G$, then $\mu$ is called \textbf{$G$--invariant}, otherwise $\mu$ is called \textbf{$G$--quasi-invariant}.

\begin{theorem}(\cite[Thm.~B.1.4]{BHV})
\label{thm::measure_rho_function}
Let $G$ be a locally compact group and $H$ be a closed subgroup of $G$. Then there exists a rho-function of $(G,H)$. 

Moreover, with a rho-function $\rho$ of $(G,H)$ there is associated a $G$--quasi-invariant regular Borel measure $\mu$ on $G/H$ whose corresponding Radon--Nikodym derivative satisfies the relation $\frac{dg_{\star} \mu}{d\mu}(xH)=\frac{\rho(gx)}{\rho(x)}$, for every $g,x \in G$ and such that 
\begin{equation}
\label{eq::measure_mu}
\int\limits_{G} f(x)\rho(x) dx= \int\limits_{G/H} \int\limits_{H} f(xh)dh d\mu(xH)
\end{equation}
for every $f \in C_{c}(G)$, continuous complex-valued function on $G$ with compact support. 

Conversely, with a continuous $G$--quasi-invariant regular Borel measure $\mu$ on $G/H$ there is associated a rho-function of $(G,H)$, where continuous means the Radon--Nikodym derivative of $\mu$ is a continuous map.
\end{theorem}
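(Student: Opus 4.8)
The plan is to prove the two implications separately, building the measure from a rho-function first, then recovering a rho-function from a quasi-invariant measure. The starting point for existence of a rho-function (first sentence) is a standard device: choose a continuous function $\beta : G \to \RR^*_+$ that is ``adapted'' to $H$ in the sense that its restriction to each coset $xH$ has compact support and $\int_H \beta(xh)\,dh = 1$; such a $\beta$ exists by a partition-of-unity argument using local sections of $p : G \to G/H$ together with paracompactness of $G/H$. One then sets $\rho(x) := \int_H \beta(xh)\,\frac{\Delta_G(h)}{\Delta_H(h)}\,dh$ and checks directly, by the substitution $h \mapsto h_0 h$ and the cocycle identities $\Delta_G(h_0 h) = \Delta_G(h_0)\Delta_G(h)$, $\Delta_H(h_0 h) = \Delta_H(h_0)\Delta_H(h)$, that $\rho(x h_0) = \frac{\Delta_H(h_0)}{\Delta_G(h_0)}\rho(x)$, which is exactly \eqref{equ::rho_fct_cond}. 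Continuity and strict positivity of $\rho$ follow from those of $\beta$ and local compactness of the cosets.

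Next I would construct the measure $\mu$ from a given rho-function $\rho$. Define a linear functional on $C_c(G/H)$ as follows: for $F \in C_c(G/H)$, pick any $f \in C_c(G)$ with $\int_H f(xh)\,dh = F(xH)$ for all $x$ (this is the surjectivity of the ``averaging map'' $T_H f(xH) := \int_H f(xh)\,dh$ from $C_c(G)$ onto $C_c(G/H)$, again a standard section-and-partition-of-unity fact), and set $\Lambda(F) := \int_G f(x)\rho(x)\,dx$. The crux here is \emph{well-definedness}: if $T_H f = 0$ then $\int_G f\rho\,dx = 0$; this is verified by a Fubini computation exploiting precisely the transformation rule \eqref{equ::rho_fct_cond} for $\rho$ and left-invariance of $dx$, so that the $H$-integral of $f$ against the correcting modular factor collapses. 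Positivity of $\Lambda$ is clear (one can choose $f \ge 0$ when $F \ge 0$), so the Riesz representation theorem yields a regular Borel measure $\mu$ on $G/H$ satisfying \eqref{eq::measure_mu}. To get quasi-invariance, apply \eqref{eq::measure_mu} to the left-translate $x \mapsto f(g^{-1}x)$: using left-invariance of $dx$ one finds $\int_{G/H}\int_H f(g^{-1}xh)\,dh\,d\mu(xH) = \int_G f(x)\rho(gx)\,dx = \int_{G/H}\int_H f(xh)\,\frac{\rho(gx)}{\rho(x)}\,dh\,d\mu(xH)$ after inserting $\rho(gx) = \frac{\rho(gx)}{\rho(x)}\rho(x)$ and using that $x \mapsto \rho(gx)/\rho(x)$ is $H$-invariant on the right (a direct consequence of \eqref{equ::rho_fct_cond}), hence descends to a function on $G/H$. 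This identifies $\frac{dg_\star\mu}{d\mu}(xH) = \frac{\rho(gx)}{\rho(x)}$.

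For the converse, start from a continuous $G$-quasi-invariant regular Borel measure $\mu$ on $G/H$ with continuous Radon--Nikodym cocycle $\lambda(g,xH) := \frac{dg_\star\mu}{d\mu}(xH)$, which satisfies the chain rule $\lambda(g_1g_2, xH) = \lambda(g_1, g_2 xH)\,\lambda(g_2, xH)$. One wants a continuous $\rho : G \to \RR^*_+$ with $\rho(gx)/\rho(x) = \lambda(g^{-1}, \cdot)^{-1}$ or the appropriate normalization, and with the modular transformation law on the right. The idea is to set $\rho(x) := \lambda(x^{-1}, x H)^{-1}\,\psi(x)$ for a suitable continuous positive correction $\psi$ forcing the $H$-equivariance, or more cleanly to invoke the already-constructed pair $(\rho_0, \mu_0)$ from the forward direction and note that $d\mu/d\mu_0$ is a continuous positive function $\theta$ on $G/H$; then $\rho := (\theta \circ p)\cdot \rho_0$ is continuous, positive, still satisfies \eqref{equ::rho_fct_cond} since $\theta\circ p$ is right-$H$-invariant, and is the rho-function associated to $\mu$.

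The main obstacle is the well-definedness of $\Lambda$ in the forward direction — showing $\int_G f\rho\,dx$ depends only on $T_H f$. This is where the defining property \eqref{equ::rho_fct_cond} of the rho-function is used in an essential way, and it requires care with the interplay of the two modular functions and with the non-compactness of $H$ (one must justify the Fubini interchange, e.g. by first reducing to $f$ supported in a set mapping to a compact subset of $G/H$). Everything else — existence of adapted functions $\beta$, surjectivity of $T_H$ onto $C_c(G/H)$, the Riesz step, and the quasi-invariance computation — is routine once that point is secured.
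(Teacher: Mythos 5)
This theorem is quoted verbatim from Bekka--de la Harpe--Valette \cite[Thm.~B.1.4]{BHV}; the paper supplies no proof of its own, only the citation together with Remark \ref{rem::measure_Ri}, which outlines exactly the construction you use (surjectivity of the averaging map $T_H$, well-definedness of the positive linear functional $T_H f \mapsto \int_G f\rho\,dx$, and the Riesz--Markov--Kakutani representation theorem). Your sketch is the standard argument from \cite{BHV} and is sound: the Bruhat-function construction of $\rho$ with the substitution $h\mapsto h_0h$ giving \eqref{equ::rho_fct_cond}, the identification of the Radon--Nikodym derivative via the right-$H$-invariance of $x\mapsto\rho(gx)/\rho(x)$, and the recovery of a rho-function from a continuous quasi-invariant $\mu$ by multiplying a reference $\rho_0$ by the right-$H$-invariant lift of the continuous density $d\mu/d\mu_0$ all go through as you describe, so there is nothing to correct relative to the paper.
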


As by Theorem~\ref{thm::measure_rho_function} rho-functions of $(G,H)$ always exist, we have that $G$--quasi-invariant regular Borel measures always exist on $G/H$. We do not intend to clarify here the terminology (e.g., the rho-function associated with a measure) used in Theorem~\ref{thm::measure_rho_function}. Those are explained in~\cite{BHV} and the references therein. 

\begin{remark}
\label{rem::measure_Ri}
Given a rho-function $\rho$ of $(G,H)$, its corresponding $G$--quasi-invariant regular Borel measure $\mu$ on $G/H$ (given by relation \eqref{eq::measure_mu}) is obtained by applying the Riesz--Markov--Kakutani representation theorem to a specific positive linear functional on $C_{c}(G/H)$. More precisely, by \cite[Lem.~B.1.2]{BHV} the linear mapping $$T_{H}: C_{c}(G) \to C_{c}(G/H),  \; \; f \mapsto T_{H}(f) \; \; \text{ given by } (T_{H}(f))(xH):= \int\limits_{H} f(xh)dh$$
is surjective. Moreover, by \cite[Lem.~B.1.3 (iii)]{BHV} and because $T_{H}$ is surjective, the mapping $$T_{H}(f) \mapsto  \int\limits_{G} f(x) \rho(x)dx, \; \;  \text{ for } f \in C_{c}(G)$$
is a well-defined positive linear functional on $C_{c}(G/H)$. Then apply the Riesz--Markov--Kakutani representation theorem to obtain the regular Borel measure $\mu$ on $G/H$ that is also $G$--quasi-invariant.
\end{remark}

\medskip

For the rest of the article we apply Theorem \ref{thm::measure_rho_function} to the case $$G:=\SL(m, \RR) \text{ and } H:=P_r$$ a maximal parabolic subgroup of $\SL(m, \RR)$. Recall $G=\SL(m, \RR)$ is unimodular, thus $\Delta_G(g)=1$ for every $g \in G$. Also, as $\SL(m, \RR)= K P_r$, where $K=\SO(m,\RR)$, one example of rho-functions for $(G,P_r)$ is the $K$-invariant function $\rho: G \to \RR^{*}_{+}$ such that $\rho(k)=1$ for every $k \in K$ and $\rho(h)=\Delta_{P_r}(h)$ for every $h \in P_r$. This  function  $\rho$ gives the $G$--quasi-invariant regular Borel measure $\mu$ on $G/P_r$ that is $K$-invariant and whose corresponding Radon--Nikodym derivative satisfies the relation 
\begin{equation}
\label{equ::Pr_rho}
\frac{dg_{\star} \mu}{d\mu}(xP_r)=\frac{\rho(gx)}{\rho(x)} \text{ for every } g,x \in G.
\end{equation}

We claim the family of \gr\ distributions $\G_\sig$ on $\sas$ of parameter $\sig \in \Pos_{sym}^1(m)$  is in fact the family of $G$--quasi-invariant regular Borel measures $g_{\star}\mu$ on $G/P_r$ of parameter $g \in G$. This is the goal of the next two Sections  \ref{subsub:: modular function} and \ref{subsub::Grassmanian_Busemann}.

\subsubsection{The modular function $\Delta_{P_r}$ of $P_r$}
\label{subsub:: modular function}

In order to have an explicit formula for the $\rho$ function defined above, and so an explicit description of the Radon--Nikodym derivative, we need to compute the modular function $\Delta_{P_r}$ of $P_r$. 
 
By \cite[Proposition 8.27]{Knpp} the modular function of the real Lie group $P_r$ is given by $$ h \in P_r \mapsto \Delta_{P_r}(h)=\vert \det(\Ad(h))\vert, $$ where $\Ad :P_r \to \GL(\mathfrak{p}_r)$ is the adjoint representation of $P_r$ on its Lie algebra $\mathfrak{p}_r \subset \mathfrak{sl(m,\RR)} $. Recall $\Ad(h)X=hXh^{-1}$, for every $h \in P_r$ and $X \in \mathfrak{p}_r $ and 
$$\mathfrak{p}_r  =\{ \begin{pmatrix} A_1 & B  \\
0 & A_2 \\
\end{pmatrix} \; \vert \; A_1 \in \ML(r, \RR), A_{2} \in \ML(m-r, \RR), tr(A_1) + tr(A_2)=0, B \in \RR^{r \times (m-r)}\}.$$
 
\begin{lemma} 
For every $g \in G_r$ and every $u \in U^{r}$ we have $\Delta_{P_r}(g)= \Delta_{P_r}(u)=1$.
\end{lemma}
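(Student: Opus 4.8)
The plan is to use the formula $\Delta_{P_r}(h)=|\det(\Ad(h))|$ and evaluate $\Ad(h)$ on the Lie algebra $\mathfrak{p}_r$ by block computation. Recall that $\mathfrak{p}_r$ splits, as a vector space, into three pieces according to the block decomposition: the upper-left block $A_1\in\mathfrak{gl}(r,\RR)$, the lower-right block $A_2\in\mathfrak{gl}(m-r,\RR)$ (subject only to the single linear constraint $\tr(A_1)+\tr(A_2)=0$), and the off-diagonal block $B\in\RR^{r\times(m-r)}$. For $h=\begin{pmatrix}A_1'&B'\\0&A_2'\end{pmatrix}\in P_r$, the conjugation $X\mapsto hXh^{-1}$ is block-upper-triangular with respect to this decomposition, so $\det(\Ad(h))$ is the product of the determinants of the induced maps on the associated graded pieces: conjugation on the $A_1$-block by $A_1'$, conjugation on the $A_2$-block by $A_2'$, and the map $B\mapsto A_1'B(A_2')^{-1}$ on the off-diagonal block. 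The key point is that the first two are inner automorphisms of $\mathfrak{gl}$, hence have determinant $1$ (up to the affine constraint, which contributes nothing to the determinant since it only restricts to a codimension-one subspace on which the induced map is still a composition of conjugations), while the third has determinant $\det(A_1')^{m-r}\det(A_2')^{-r}$.

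\textbf{Step 1.} For $u=\begin{pmatrix}\Id_r&B'\\0&\Id_{m-r}\end{pmatrix}\in U^r$, we have $A_1'=\Id_r$ and $A_2'=\Id_{m-r}$, so conjugation on the diagonal blocks is trivial and the off-diagonal factor is $\det(\Id_r)^{m-r}\det(\Id_{m-r})^{-r}=1$. Hence $\Delta_{P_r}(u)=1$.

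\textbf{Step 2.} For $g=\begin{pmatrix}A_1&0\\0&A_2\end{pmatrix}\in G_r$, we have $\det(A_1)=\pm1=\det(A_2)$. The off-diagonal contribution is $|\det(A_1)|^{m-r}|\det(A_2)|^{-r}=1^{m-r}\cdot 1^{-r}=1$, and the two diagonal contributions are determinants of inner automorphisms of (an affine subspace of) $\mathfrak{gl}$, hence also $1$. Therefore $\Delta_{P_r}(g)=1$.

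\textbf{Main obstacle.} The only subtlety I foresee is handling the single linear trace constraint $\tr(A_1)+\tr(A_2)=0$ carefully: one should note that $\Ad(h)$ preserves this subspace and that restricting the product of conjugations to a codimension-one invariant subspace does not alter the relevant determinant — equivalently, one may first compute $\det(\Ad(h))$ on the full $\mathfrak{gl}(m)$-type space $\{\text{block upper triangular matrices}\}$ and then observe the constraint contributes a trivial factor. An alternative, perhaps cleaner, route is to avoid the graded computation entirely: use that $G_r$ and $U^r$ are each generated by commutators and unipotents inside $P_r$ (indeed $G_r$ is semisimple, hence $\Delta_{P_r}$ restricted to it is trivial since semisimple groups are unimodular and the restriction of a continuous homomorphism to $\RR^*_+$ must vanish on a group with no nontrivial such characters; and $U^r$ is unipotent, hence unimodular and likewise admits no nontrivial homomorphism to $\RR^*_+$). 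I would present the group-theoretic argument as the main proof and mention the explicit block computation as a remark, since the former is shorter and less error-prone.
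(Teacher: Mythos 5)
Your explicit block computation (Steps 1--2) is correct and complete: writing $\Ad(h)$ as a triangular map with respect to the splitting of $\mathfrak{p}_r$ into its diagonal and off-diagonal parts gives $\det(\Ad(h))=\det(A_1')^{\,m-r}\det(A_2')^{-r}$, which is $1$ on both $U^r$ and $G_r$, and the trace constraint is indeed harmless. This is a more computational route than the paper, whose entire proof is the one-line observation that $U^r$ consists of unipotent elements, that $G_r$ is generated by unipotent elements, and that unipotent elements have modular function $1$ in $P_r$ (because $\Ad(u)=e^{\ad(\log u)}$ is a unipotent operator, hence has determinant $1$). Your computation buys an explicit formula consistent with the paper's Lemma \ref{lem::3.5} for $T^r$; the paper's argument buys brevity and generality.

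However, the ``cleaner'' group-theoretic route you say you would actually present as the main proof contains a genuine error in the $U^r$ case. The group $U^r$ is isomorphic to the additive group $(\RR^{r\times(m-r)},+)$, and such a group admits \emph{many} nontrivial continuous homomorphisms to $\RR^*_+$ (for instance $B\mapsto e^{b_{11}}$); unipotence and unimodularity of $U^r$ do not force the restriction of $\Delta_{P_r}$ to be trivial. The character-vanishing argument is valid for $G_r$ (its identity component is semisimple, hence perfect, and the component group is finite), but for $U^r$ the correct group-theoretic reason is the one the paper uses: each $u\in U^r$ is a unipotent element, so $\Ad(u)$ is unipotent and $\det(\Ad(u))=1$ pointwise. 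If you present the group-theoretic version, replace the ``no nontrivial characters'' justification for $U^r$ by this pointwise argument; otherwise keep the block computation as the main proof, which stands as written.
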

\begin{proof}
 This is because $G_r$ is generated by unipotent elements and those have modular function $1$ in $P_r$. 
 \end{proof}

Therefore, in order to compute the modular function of $P_r$ it is enough to compute the modular function on the part $T^r$ of $P_r$.  
 
 \begin{lemma} \label{lem::3.5}
 Let $t \in T^{r}, t=\begin{pmatrix} \lambda_1 \Id_1 & 0 \\
0 & \lambda_2 \Id_2 \end{pmatrix}$ such that $\lambda_1, \lambda_2 \in \RR^{*}, \lambda_1^{r} \lambda_2^{m-r}=1$. Then $\Delta_{P_r}(t)= \vert \det(\Ad(t))\vert = \vert (\lambda_1 \lambda_2^{-1})^{r(m-r)} \vert =\vert \lambda_1^{mr} \vert$.
\end{lemma}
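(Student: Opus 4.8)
The plan is to compute directly the action of $\Ad(t)$ on the Lie algebra $\mathfrak{p}_r$ by exploiting the block-matrix structure. First I would fix the decomposition $\mathfrak{p}_r = \mathfrak{m}_r \oplus \mathfrak{n}_r$, where $\mathfrak{m}_r$ consists of the block-diagonal matrices $\begin{pmatrix} A_1 & 0 \\ 0 & A_2\end{pmatrix}$ (with $\tr(A_1)+\tr(A_2)=0$) and $\mathfrak{n}_r = \left\{\begin{pmatrix} 0 & B \\ 0 & 0\end{pmatrix} : B\in\RR^{r\times(m-r)}\right\}$ is the nilradical. Since $t$ is block-diagonal, $\Ad(t)$ preserves this decomposition, so $\det(\Ad(t)) = \det(\Ad(t)|_{\mathfrak{m}_r})\cdot \det(\Ad(t)|_{\mathfrak{n}_r})$.

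Next I would treat the two summands. On $\mathfrak{m}_r$: for $t = \diag(\lambda_1\Id_r,\lambda_2\Id_{m-r})$ conjugation acts trivially, since $t\begin{pmatrix}A_1 & 0\\0 & A_2\end{pmatrix}t^{-1} = \begin{pmatrix}A_1 & 0\\0 & A_2\end{pmatrix}$ (the scalars $\lambda_i$ cancel within each block). Hence $\Ad(t)|_{\mathfrak{m}_r} = \id$ and contributes a factor $1$. On $\mathfrak{n}_r$: we compute $t\begin{pmatrix}0 & B\\0 & 0\end{pmatrix}t^{-1} = \begin{pmatrix}0 & \lambda_1\lambda_2^{-1}B\\0 & 0\end{pmatrix}$, so $\Ad(t)|_{\mathfrak{n}_r}$ is multiplication by the scalar $\lambda_1\lambda_2^{-1}$ on a space of dimension $r(m-r)$, giving $\det = (\lambda_1\lambda_2^{-1})^{r(m-r)}$. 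Combining, $\Delta_{P_r}(t) = |\det(\Ad(t))| = |(\lambda_1\lambda_2^{-1})^{r(m-r)}|$.

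Finally I would simplify using the determinant-one constraint. From $\lambda_1^r\lambda_2^{m-r}=1$ we get $\lambda_2^{m-r} = \lambda_1^{-r}$, hence $\lambda_2 = \pm\lambda_1^{-r/(m-r)}$ and $\lambda_1\lambda_2^{-1} = \pm\lambda_1^{1 + r/(m-r)} = \pm\lambda_1^{m/(m-r)}$. Raising to the power $r(m-r)$ gives $(\lambda_1\lambda_2^{-1})^{r(m-r)} = \pm\lambda_1^{mr}$ (the sign is immaterial under the absolute value), so $\Delta_{P_r}(t) = |\lambda_1^{mr}|$, as claimed.

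The computation is entirely routine; the only point requiring a little care is making sure one uses the correct dimension $r(m-r)$ for the off-diagonal block space and correctly manipulates the exponents when eliminating $\lambda_2$ via the constraint $\lambda_1^r\lambda_2^{m-r}=1$. There is no real obstacle — the main thing is to invoke the cited formula $\Delta_{P_r}(h) = |\det(\Ad(h))|$ from \cite[Proposition 8.27]{Knpp} and then read off the eigenvalues of $\Ad(t)$ from the block structure.
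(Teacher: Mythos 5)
Your proposal is correct and is essentially the paper's own argument (the paper's proof is a one-line "compute $\Ad(t)$ and observe it is diagonal with determinant $(\lambda_1\lambda_2^{-1})^{r(m-r)}$"); you have simply written out the decomposition $\mathfrak{p}_r=\mathfrak{m}_r\oplus\mathfrak{n}_r$ and the eigenvalue $\lambda_1\lambda_2^{-1}$ on the $r(m-r)$-dimensional nilradical explicitly. The only cosmetic remark: rather than extracting the real root $\lambda_2=\pm\lambda_1^{-r/(m-r)}$ (which is delicate when $\lambda_1<0$), it is cleaner to write $|\lambda_2|^{-r(m-r)}=\bigl(|\lambda_2|^{m-r}\bigr)^{-r}=|\lambda_1|^{r^2}$ directly from the constraint, giving $|\lambda_1|^{r(m-r)+r^2}=|\lambda_1|^{mr}$.
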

\begin{proof}
 By applying such $t$ to $\mathfrak{p}_r$ and computing  $\Ad(t)$ one obtains $\Ad(t)$ is diagonal as a matrix of $\GL(\mathfrak{p}_r)$ and $\det(\Ad(t))= (\lambda_1 \lambda_2^{-1})^{r(m-r)}=\lambda_1^{mr}$.
 \end{proof}

\subsubsection{Rho-function, Grassmanian distributions and Busemann functions}
\label{subsub::Grassmanian_Busemann}

As mentioned before, $P_r$ is the stabiliser in $G$ of the $r$-dimensional vector subspace  $U_0$ of $\RR^m$ that is generated by the first $r$ vectors $\{e_1, \cdots, e_r\}$ of the canonical base of $\RR^m$; $U_0$ is a point of $G(m,r)$. We can take the corresponding  $m \times r$ matrix to be $X_0:= (e_1, e_2, \cdots, e_r)$.
This Section deals with Grassmanian and quasi-invariant distributions, and one of its main results states the following.

\begin{proposition}
\label{rem::Grass_distribution_rho}
Let $g \in G= \SL(m, \RR)$ and $  \Sigma:=g g^{T} \in \Pos_{sym}^1(m)$. Then the family of \gr\ distributions $\G_\sig$ on $\sas$ of parameter $\sig \in \Pos_{sym}^1(m)$  is the same as the family of $G$--quasi-invariant regular Borel measures $g_{\star}\mu$ on $G/P_r$ of parameter $g \in G$. More precisely, equality \eqref{density} from the Introduction can be written  
\begin{equation}
\label{equ::rho_grass_densities}
\frac{dg_{\star} \mu}{d\mu}(hP_r)=\frac{\rho(gh)}{\rho(h)}= \frac{d\G_\sig}{d\G_{\Id_m}}(\langle
X\rangle)= \left( \frac{\det(X^TX)}{\det(X^T\sig\inv X)} \right)^{ m/2}, 
\end{equation}
for every  $h \in \SL(m, \RR)$, where $X = h^{-1} X_0$.
\end{proposition}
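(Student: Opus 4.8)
The plan is to prove the Proposition in two stages. In the first, abstract stage I identify the two families of measures using only the equivariance \eqref{equivariance} of the Grassmannian distributions together with the fact that the compact homogeneous space $G(m,r)\cong\SL(m,\RR)/P_r$ carries a unique $K$-invariant probability measure, where $K=\SO(m,\RR)$. In the second, computational stage I make the rho-function $\rho$ explicit from the modular function $\Delta_{P_r}$ computed above, and check that $\rho(gh)/\rho(h)$ equals the density \eqref{density}; this produces the displayed chain \eqref{equ::rho_grass_densities}.

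For the first stage: by Section~\ref{subsec::grass} the quotient $\SL(m,\RR)/P_r$ is identified with the compact Grassmannian $G(m,r)$, on which $\mu$ thus lives. The rho-function used here (with $\rho|_K\equiv1$, $\rho|_{P_r}=\Delta_{P_r}$) is left-$K$-invariant, since $\SL(m,\RR)=KP_r$ and $\rho(kg)=\rho(g)$; hence $\frac{dk_\star\mu}{d\mu}(xP_r)=\rho(kx)/\rho(x)=1$ and $\mu$ is $K$-invariant. Being a $K$-invariant Radon measure on a compact space it is finite, and after normalizing it to a probability measure --- which changes no Radon--Nikodym derivative --- it is the unique $K$-invariant probability measure on $G(m,r)$, as $K$ acts transitively there. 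That measure is $\G_{\Id_m}$, since $\G_{\Id_m}$ is $\SO(m,\RR)$-invariant by \eqref{equivariance} applied with $A\in K$. Thus $\mu=\G_{\Id_m}$. Now, for $g\in\SL(m,\RR)$ and $\Sigma=gg^T$, \eqref{equivariance} gives $g_\star\mu=g_\star\G_{\Id_m}=\G_{g\,\Id_m\,g^T}=\G_\Sigma$; and since $g\mapsto gg^T$ maps $\SL(m,\RR)$ onto $\Pos_{sym}^1(m)$ (every $\Sigma$ has a square root in $\SL(m,\RR)$, as recalled before the statement), the family $\{g_\star\mu\}_g$ is exactly $\{\G_\Sigma\}_\Sigma$. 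This is the first assertion; it also yields $\frac{dg_\star\mu}{d\mu}=\frac{d\G_\Sigma}{d\G_{\Id_m}}$, so the two outer members of \eqref{equ::rho_grass_densities} agree and the last one is \eqref{density}, while the middle equality $\frac{dg_\star\mu}{d\mu}(hP_r)=\rho(gh)/\rho(h)$ is \eqref{equ::Pr_rho}.

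For the second stage I compute $\rho$ explicitly. Writing $g=kp$ with $k\in K$, $p\in P_r$, one has $\rho(g)=\Delta_{P_r}(p)$ because $\SL(m,\RR)$ is unimodular and $\rho|_K\equiv1$; combining the triviality of $\Delta_{P_r}$ on $U^r$ and $G_r$ with the value on the torus part $T^r$ from Lemma~\ref{lem::3.5} (and the factorization $P_r=U^rM_r$, $M_r\to T^r\times G_r$) gives $\Delta_{P_r}(p)=|\det A_1|^{m}$, where $A_1$ is the upper-left $r\times r$ block of $p$. Since $pX_0$ is the matrix formed by the first $r$ columns of $p$ (top block $A_1$, bottom block $0$) and $k^Tk=\Id_m$, one gets $(gX_0)^T(gX_0)=A_1^TA_1$, whence
\[
\rho(g)=\bigl(\det\bigl((gX_0)^T(gX_0)\bigr)\bigr)^{m/2}\qquad(g\in\SL(m,\RR)),
\]
and one checks directly that this satisfies \eqref{equ::rho_fct_cond} (right multiplication of $g$ by an element of $P_r$ multiplies the first $r$ columns of $g$ on the right by $A_1$). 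Feeding this into $\rho(gh)/\rho(h)$, writing $X$ for the $m\times r$ matrix that represents, under the dictionary of Section~\ref{subsec::grass}, the point of $G(m,r)$ attached to the coset $hP_r$ (determined only up to right multiplication by $\GL(r,\RR)$, a factor that cancels between numerator and denominator of \eqref{density}), and using $\Sigma^{-1}=g^{-T}g^{-1}$ to recognize one of the two determinants as $\det\bigl((g^{-1}X)^T(g^{-1}X)\bigr)=\det(X^T\Sigma^{-1}X)$, the ratio collapses to $\bigl(\det(X^TX)/\det(X^T\Sigma^{-1}X)\bigr)^{m/2}$, which is \eqref{density}.

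The part that requires care --- and the main obstacle --- is the bookkeeping in this second stage: reading off the correct exponent of $\det A_1$ from $\Delta_{P_r}$, and, above all, keeping the dictionary between $\SL(m,\RR)/P_r$ and $G(m,r)$ aligned with the left-translation and pushforward conventions, so that the occurrences of $g$, $g^{-1}$, $h$, $h^{-1}$ and $X_0$ all land in exactly the right places. The determinant algebra itself is routine --- multiplicativity of $\det$ together with $k^Tk=\Id_m$ --- so once the conventions are fixed, the identity \eqref{equ::rho_grass_densities} drops out.
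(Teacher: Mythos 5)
Your first stage is a genuinely different route from the paper's, and it is the stronger part of your argument. The paper proves the proposition purely computationally: it establishes the closed formula \eqref{equ::rho_fct_det} for $\rho$ in Lemma~\ref{lem::3.7} and then manipulates determinants to compute $\rho(gh)/\rho(h)$. It never invokes the uniqueness of the $K$-invariant probability measure on the compact homogeneous space $G(m,r)\cong \SL(m,\RR)/P_r$. Your argument --- $\rho$ is left-$K$-invariant, hence $\mu$ is $K$-invariant, hence after normalization $\mu=\G_{\Id_m}$, hence $g_\star\mu=\G_{gg^T}$ by \eqref{equivariance} --- identifies the two families of measures with essentially no computation, and buys the qualitative assertion of the proposition in a way that is insensitive to the determinant bookkeeping.

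The second stage, however, does not close, and it fails exactly at the point you flag as the main obstacle. Your formula $\rho(g)=\bigl(\det((gX_0)^T(gX_0))\bigr)^{m/2}$ is indeed a rho-function for the given normalization: for $h\in P_r$ one has $ghX_0=gX_0A_1$, so $\rho(gh)=|\det A_1|^{m}\rho(g)=\Delta_{P_r}(h)\rho(g)$, and $\rho|_K\equiv 1$. (Note that it is \emph{not} the same function as \eqref{equ::rho_fct_det}: the two agree on $K$ and on $P_r$ separately but not on products $kp$ --- for $m=2$, $r=1$ and $h=\left(\begin{smallmatrix}1&0\\1&1\end{smallmatrix}\right)$ yours gives $2$ while \eqref{equ::rho_fct_det} gives $1/2$ --- so this discrepancy has to be confronted, since the rho-function with $\rho|_K\equiv1$ is unique.) But the ratio you then obtain, $\rho(gh)/\rho(h)=\bigl(\det(X^TX)/\det(X^T\Sigma^{-1}X)\bigr)^{m/2}$, holds with $X=ghX_0$, not with $X=h^{-1}X_0$ as in the statement, and not with $X=hX_0$ as the dictionary of Section~\ref{subsec::grass} attaches to the coset $hP_r$. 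These matrices differ by \emph{left} multiplication by group elements, so they span different subspaces; a left factor does not cancel in \eqref{density} the way the right $\GL(r,\RR)$ ambiguity does, and already for $m=2$, $r=1$, $g=\diag(\lambda,\lambda^{-1})$ the density of $\G_{gg^T}$ takes different values at $\langle hX_0\rangle$ and $\langle ghX_0\rangle$ at generic points. As written, your stage 2 therefore evaluates the density at the wrong point of $G(m,r)$ and is inconsistent with the conclusion of your stage 1, which forces $\rho(gh)/\rho(h)$ to be the density at the point attached to $hP_r$ itself. To finish you must fix one convention for the bijection $G/P_r\to G(m,r)$ and for $g_\star$, and rederive which of $hX_0$, $ghX_0$, $h^{-1}X_0$ represents the argument of the Radon--Nikodym derivative; this is also precisely where the paper's own computation is weakest (it evaluates $\rho(gh)$ by inserting $X_0X_0^T$ as if it were $\Id_m$, which it is not).
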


\begin{proof}
This proposition is a direct consequence of equality \eqref{equ::Pr_rho} above,  and Lemma \ref{lem::3.7} and equality \eqref{equ::rho_fct_det} below. Indeed, by equality \eqref{equ::rho_fct_det} below and because $\Id_{r}=X_0^{T}X_0$ one has $\rho(h)= \left( \frac{\det(X_0^TX_0)}{\det(X_0^{T}(h^T)^{-1} h^{-1} X_0)} \right)^{ m/2}= \left( \frac{1}{\det(X^{T} X)} \right)^{ m/2}$. By the same quality \eqref{equ::rho_fct_det} below and as $\Id_{m}= X_0 X_0^T$ we also have
\begin{equation*}
\begin{split}
\rho(gh)&=\left( \frac{\det(X_0^TX_0)}{\det(X_0^{T}(h^Tg^{T})^{-1} (gh)^{-1} X_0)} \right)^{ m/2}= \left( \frac{1}{\det(X_0^{T}(g^{T})^{-1} (h^{T})^{-1} h^{-1} g^{-1} X_0)} \right)^{ m/2} \\
&= \left( \frac{1}{\det(X_0^{T}(g^{T})^{-1} X_0 X_0^{T}(h^{T})^{-1} X_0 X_0^{T} h^{-1} X_0 X_0^{T} g^{-1} X_0)} \right)^{ m/2}\\
&=  \left( \frac{1}{\det(X_0^{T}(g^{T})^{-1} X_0) \cdot  \det(X_0^{T}(h^{T})^{-1} X_0) \cdot  \det(X_0^{T} h^{-1} X_0) \cdot \det(X_0^{T} g^{-1} X_0)} \right)^{ m/2}\\
&= \left( \frac{1}{ \det(X_0^{T}(h^{T})^{-1} X_0) \cdot \det(X_0^{T}(g^{T})^{-1} X_0)  \cdot \det(X_0^{T} g^{-1} X_0) \cdot \det(X_0^{T} h^{-1} X_0)} \right)^{ m/2}\\
&= \left( \frac{1}{ \det(X_0^{T}(h^{T})^{-1}(g^{T})^{-1} g^{-1} h^{-1} X_0)} \right)^{ m/2}=  \left( \frac{1}{ \det(X \Sigma^{-1} X)} \right)^{ m/2}.
\end{split}
\end{equation*}
 \end{proof}
 

\begin{remark}
By using appropriate parabolic subgroups of $\SL(m, \RR)$ and computing their corresponding rho-functions one can give a similar geometrical/Lie theoretical interpretation as in Proposition \ref{rem::Grass_distribution_rho} for known density functions on Stiefel manifolds. 
\end{remark}

\begin{lemma}\label{lem::3.7}
 Let $1 \leq r \leq m-1$. Let  $X_0:= (e_1, e_2, \cdots, e_r)$ be the $m \times r$ matrix, where $\{e_1, \cdots, e_r\} \subset \RR^m$ are the first $r$ vectors of the canonical base of $\RR^m$.
Then for every $h \in G= K P_r $ we have 
\begin{equation}
\label{equ::rho_fct_det}
\rho(h)=  \left( \frac{\det(X_0^TX_0)}{\det(X_0^{T}(h^T)^{-1} h^{-1} X_0)} \right)^{ m/2}.
\end{equation}
\end{lemma}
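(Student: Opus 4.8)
The plan is to verify the claimed formula by checking the defining property \eqref{equ::rho_fct_cond} of a rho-function together with the normalization on $K$, since the rho-function with $\rho(k)=1$ for $k\in K$ and $\rho(h)=\Delta_{P_r}(h)$ on $P_r$ is the one fixed just before the statement, and a rho-function is determined by its values on a set of coset representatives. So I would define $\tilde\rho(h):=\bigl(\det(X_0^TX_0)/\det(X_0^T(h^T)^{-1}h^{-1}X_0)\bigr)^{m/2}$ for $h\in G$, observe $\det(X_0^TX_0)=\det(\Id_r)=1$ so that $\tilde\rho(h)=\det(X_0^T(h^T)^{-1}h^{-1}X_0)^{-m/2}$, and then show $\tilde\rho=\rho$.

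First I would record the elementary fact that for any $A\in\GL(m,\RR)$ one has $X_0^T A X_0 = $ the top-left $r\times r$ block of $A$; in particular $X_0^T(h^T)^{-1}h^{-1}X_0$ is the top-left $r\times r$ block of $(hh^T)^{-1}=\Sigma^{-1}$, which is positive definite, so $\tilde\rho$ is a well-defined continuous function $G\to\RR_+^*$. Next, for $k\in K=\SO(m,\RR)$ we have $(k^T)^{-1}k^{-1}=kk^T=\Id_m$, hence $\tilde\rho(k)=\det(\Id_r)^{-m/2}=1$, matching the normalization $\rho(k)=1$. It then remains to check the cocycle condition: for $h\in G$ and $p\in P_r$, $\tilde\rho(hp)=\dfrac{\Delta_{P_r}(p)}{\Delta_G(p)}\tilde\rho(h)=\Delta_{P_r}(p)\,\tilde\rho(h)$, using that $G=\SL(m,\RR)$ is unimodular. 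Writing $p=\begin{pmatrix}A_1 & B\\ 0 & A_2\end{pmatrix}$, one computes $(hp)^{-1}=p^{-1}h^{-1}$ and uses that $p^{-1}X_0$ spans the same $r$-subspace $U_0$ as $X_0$ with $X_0^Tp^{-1}X_0=A_1^{-1}$; concretely $p^{-1}X_0 = X_0 A_1^{-1}$ since the first $r$ columns of $p^{-1}$ form the block $\begin{pmatrix}A_1^{-1}\\ 0\end{pmatrix}$. Therefore $X_0^T(hp^{-1})^{T-1}\cdots$ — more cleanly, $(hp)^{-T}X_0 = h^{-T}p^{-T}X_0 = h^{-T}X_0 (A_1^{-T})$, so that $X_0^T(hp)^{-T}(hp)^{-1}X_0 = A_1^{-1}\bigl(X_0^Th^{-T}h^{-1}X_0\bigr)A_1^{-T}$, whence $\det$ of this equals $\det(A_1)^{-2}\det(X_0^Th^{-T}h^{-1}X_0)$ and thus $\tilde\rho(hp)=|\det(A_1)|^{m}\,\tilde\rho(h)$.

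To finish I must match $|\det(A_1)|^m$ with $\Delta_{P_r}(p)$. By the Levi/torus decomposition $P_r=U^rG_rT^r$ recalled above, both sides are multiplicative in $p$ and trivial on $U^r$ and $G_r$ (the lemma preceding Lemma \ref{lem::3.5} gives $\Delta_{P_r}=1$ there, and $|\det(A_1)|=1$ on $U^rG_r$ since there $\det(A_1)=\pm1$), so it suffices to check on $t=\diag(\lambda_1\Id_r,\lambda_2\Id_{m-r})\in T^r$ with $\lambda_1^r\lambda_2^{m-r}=1$. There $|\det(A_1)|^m=|\lambda_1|^{rm}$, which equals $\Delta_{P_r}(t)$ exactly by Lemma \ref{lem::3.5}. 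Hence $\tilde\rho$ satisfies \eqref{equ::rho_fct_cond} and agrees with $\rho$ on $K$; since $G=KP_r$, the two functions coincide on all of $G$, proving \eqref{equ::rho_fct_det}.

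The main obstacle I anticipate is purely bookkeeping: tracking how the block-triangular matrix $p\in P_r$ acts on the $r$-frame $X_0$ under inversion and transposition so that the determinant factor comes out as $|\det(A_1)|^m$ rather than, say, $|\det(A_2)|^m$ or a mixed expression — the identity $\lambda_1^{r}\lambda_2^{m-r}=1$ makes several superficially different answers numerically equal on $T^r$, so one must be careful to get the genuinely correct exponent before invoking Lemma \ref{lem::3.5}. Everything else is a routine check of the rho-function axioms.
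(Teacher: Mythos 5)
Your overall strategy --- normalize on $K$, verify the cocycle condition $\tilde\rho(hp)=\Delta_{P_r}(p)\,\tilde\rho(h)$ directly, and conclude from $G=KP_r$ --- is the right way to attack this, and it is more demanding than the paper's own proof, which checks the formula separately on $K$ and on $P_r$ and then merely asserts that the cocycle condition for general $x\in G$, $h\in P_r$ ``follows from the above computation''. But your key computational step is wrong. From $p^{-1}X_0=X_0A_1^{-1}$ (correct, since $p^{-1}$ is block \emph{upper} triangular) you infer $p^{-T}X_0=X_0A_1^{-T}$; this fails, because $p^{-T}=(p^{T})^{-1}$ is block \emph{lower} triangular and its first $r$ columns are $\begin{pmatrix}A_1^{-T}\\ -A_2^{-T}B^{T}A_1^{-T}\end{pmatrix}$, with nonzero bottom block unless $B=0$ (transposing $p^{-1}X_0=X_0A_1^{-1}$ only controls the first $r$ \emph{rows} of $p^{-T}$). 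Worse, the quantity you must control is
$$X_0^{T}(hp)^{-T}(hp)^{-1}X_0=\bigl(p^{-1}h^{-1}X_0\bigr)^{T}\bigl(p^{-1}h^{-1}X_0\bigr),$$
in which $p^{-1}$ acts on the frame $h^{-1}X_0$, whose column span is the generic subspace $h^{-1}U_0$ rather than $U_0$; your factorization tacitly replaces $(hp)^{-T}(hp)^{-1}$ by $(hp)^{-1}(hp)^{-T}$, and no identity of the form $p^{-1}(h^{-1}X_0)=(h^{-1}X_0)C$ is available.

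This is not a repairable bookkeeping slip: the cocycle identity you are trying to prove is false for the function in the statement. Take $m=2$, $r=1$, $k=\begin{pmatrix}0&-1\\1&0\end{pmatrix}\in K$ and $p=\diag(2,1/2)\in P_1$. Then $\tilde\rho(x)=\Vert x^{-1}e_1\Vert^{-2}$ gives $\tilde\rho(p)=4$ but $\tilde\rho(kp)=\Vert (0,-2)^{T}\Vert^{-2}=1/4$, whereas any rho-function with $\rho\vert_K\equiv 1$ must satisfy $\rho(kp)=\Delta_{P_1}(p)\rho(k)=\rho(p)$. So the ratio $\tilde\rho(xp)/\tilde\rho(x)$ genuinely depends on $x$, and $\tilde\rho$ is not a rho-function for any choice of modular-function convention; the stated formula agrees with $\rho$ on $K\cup P_r$ but not on products $kp$. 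The expression that does satisfy the cocycle condition is $\det\bigl(X_0^{T}h^{T}hX_0\bigr)^{\pm m/2}$ (sign according to the convention for $\Delta_{P_r}$): for it your verification goes through verbatim, since $(hp)X_0=(hX_0)A_1$ uses only the correct identity $pX_0=X_0A_1$ and yields $\det\bigl(X_0^{T}(hp)^{T}(hp)X_0\bigr)=\det(A_1)^{2}\det\bigl(X_0^{T}h^{T}hX_0\bigr)$. Your closing worry that the constraint $\lambda_1^{r}\lambda_2^{m-r}=1$ makes several superficially different answers agree on $T^{r}$ was exactly the right one: the statement as written, with $(h^{T})^{-1}h^{-1}$ in place of $h^{T}h$, cannot be proved, and the paper's own argument does not surmount this because it never actually verifies the cocycle relation for mixed products.
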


\begin{proof}
Notice $\Id_{m}= X_0 X_0^T$, $\Id_{r}=X_0^{T}X_0$,  $X_0^{T}(h^T)^{-1} h^{-1} X_0$ is positive semidefinite,  and $\left( \det(X_0^{T}(h^T)^{-1} h^{-1} X_0) \right)^{-m/2}$ is a continuous function on $G$. Thus $\det(X_0^TX_0)=1$.

If $h\in K= \SO(m, \RR)$ then $(h^T)^{-1} h^{-1}=\Id_m$ and as, by definition, $\rho(h)=1$ equality \eqref{equ::rho_fct_det}  follows. 

Now let  $h \in P_r$ and recall the fact that $\det(AB)= \det(A) \det(B)$ for square matrices. By writing $h= u t h_r \in P_r$ with $u \in U^r$, $t \in T^r$ and  $h_r \in G_r$ we have $$X_0^{T}h^{-1} X_0= X_0^{T}h_r^{-1} X_0 X_0^{T} t^{-1} X_0 X_0^{T} u^{-1} X_0 $$
and analogously for $X_0^{T}(h^{T})^{-1} X_0$. Then just by matrix multiplication it is easy to verify that $\det(X_0^{T}(h_r^T)^{-1} X_0X_0^{T}h_r^{-1} X_0)=1$ and $\det(X_0^{T}(u^{T})^{-1} X_0 X_0^{T} u^{-1} X_0)=1$. Therefore, $$\det(X_0^{T}(h^T)^{-1} h^{-1} X_0)= \det(X_0^{T}(t^T)^{-1} t^{-1} X_0).$$

Next we show that $\left( \det(X_0^{T}(t^T)^{-1} t^{-1} X_0) \right)^{-m/2}= \vert \det(\Ad(t))\vert $ for every $t \in T^r.$  Indeed, for $t \in T^{r}, t=\begin{pmatrix} \lambda_1 \Id_1 & 0 \\
0 & \lambda_2 \Id_2 \end{pmatrix}$ such that $\lambda_1, \lambda_2 \in \RR^{*}, \lambda_1^{r} \lambda_2^{m-r}=1$, it is just an easy computation of matrix multiplication to obtain $\det(X_0^{T}(t^T)^{-1} t^{-1} X_0) = \lambda_1^{-2r}$. And, by Lemma \ref{lem::3.5}  we have $\rho(h)= \left( \frac{\det(X_0^TX_0)}{\det(X_0^{T}(h^T)^{-1} h^{-1} X_0)} \right)^{ m/2}$, when  $h \in P_r$.

By the Definition \ref{def::rho_function} of the rho-function it remains to show
 $$\left( \det(X_0^{T}((xh)^T)^{-1} (xh)^{-1} X_0) \right) ^{-m/2}= \Delta_{P_r}(h) \left( \det(X_0^{T}(x^T)^{-1} x^{-1} X_0) \right)^{-m/2}$$ for every $x \in G$ and $h \in P_r$. But this follows from the above computation.
  
\end{proof}


By taking the $\log$ of the rho-function  $\rho$ given by equality \eqref{equ::rho_fct_det} we obtain a Busemann function; this gives a different interpretation of the rho-function $\rho$. For the definition and main properties of  Busemann functions see \cite[Chapter II.8]{BH99}.

In our setting, it is sufficient to be aware that, using the notations of the beginning of Section \ref{s::quasi},
for given data point $U=\langle X\rangle \in G(m,r)\subset \partial {\cal M}$
and fixed base point $\Sigma_0\in {\cal M}$,
the log-likelihood
 of the data point $U$ for the parameter $\sig$ is such that (see (\ref{BuseLikeli}) and  Figure \ref{Fig2})
  $$
 c\ \ell_U(\sig)=-c\ \log(\frac{{\rm d}\G_\sig}{{\rm d}\G_{\Id_m}}(U)) = \lim_{t\to\infty}(d(\sig_0,\gamma(t))-t),
$$
where $\gamma(t)$ is a geodesic ray taking some $\sig$ to $U \in \partial {\cal M}$, and $c = 2\sqrt{\frac{m}{(m-1)r}}$.
$d(\Sigma,\gamma(t))$ is the geodesic distance between $\Sigma_0$ and $\gamma(t)$ of $\Pos_{sym}^1(m)$, with
$d(\Sigma_0,\gamma(t))=\vert\vert \log(\Sigma_0^{-1/2}\gamma(t)\Sigma_0^{-1/2}\vert\vert$ and
$\vert\vert C\vert\vert^2 = {\rm tr}(C^T C)$. General results on Busemann functions for CAT(0) spaces show that such functions are generically convex.

\begin{figure}
\centering
\includegraphics[width=10cm]{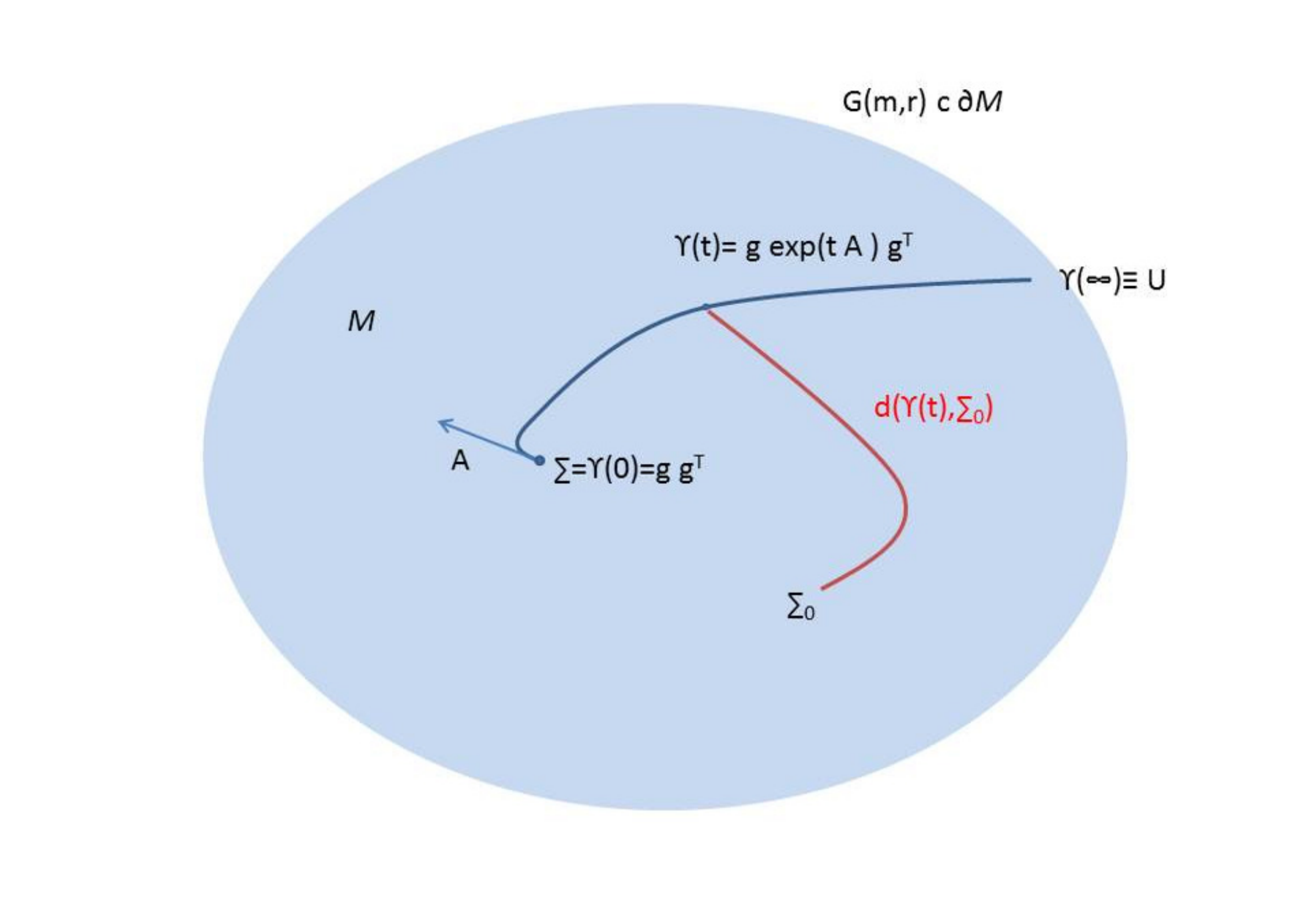}
\caption{The log-likelihood of the data point $U\in G(m,r)$ for the parameter $\sig$ is related to the geodesic distance in the manifold  $\Pos_{sym}^1(m)$ between a fixed base point $\sig_0$ and $\gamma(t)$, when the geodesic ray $\gamma(t)$ issuing from $\sig$ of speed $A$ points to $U$ at infinity, see (\ref{BuseLikeli}). }
\label{Fig2}
\end{figure}

\begin{lemma}\label{Busemann1}
Let $1 \leq r \leq m-1$. Let $U \in G(m, r)$ be a linear span of $r$ linearly independent vectors say $\{x_1, x_2, \cdots, x_r\} \subset \RR^m$, with associated $m \times r$ matrix $X$ given by $(x_1, x_2, \cdots, x_r)$, with $x_i$ the columns of $X$ that are written in the canonical base of $\RR^m$.
Then the  function $$b_{U} : \Pos_{sym}^1(m) \to \RR$$ 
$$ \Sigma \mapsto b_{U}(\Sigma):= \sqrt{\frac{m}{(m-r)r}} \cdot \log \frac{\det(X^{T} \Sigma^{-1} X)}{\det(X^{T}X)}$$
is a Busemann function on $\Pos_{sym}^1(m) $.
\end{lemma}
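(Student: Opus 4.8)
The plan is to identify $b_U$ with the Busemann function of an explicit geodesic ray pointing to $U\in\partial\mathcal{M}$, and then invoke the standard fact that every such limit is a Busemann function. First I would reduce to the case of the standard subspace $U_0=\langle X_0\rangle$ with $X_0=(e_1,\dots,e_r)$. Indeed, pick any $g\in\SL(m,\RR)$ with $g U_0=U$ (equivalently, with $gX_0=XR$ for some $R\in\GL(r,\RR)$, which is possible since $\SL(m,\RR)$ acts transitively on $G(m,r)$); since $g$ acts by isometries on $\Pos_{sym}^1(m)$ and Busemann functions are preserved (up to an additive constant and the induced action on the boundary) by isometries, it suffices to prove the statement for $U_0$. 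A short computation with $\det$ and the identity $\det(X^TX)>0$ shows that $b_{U_0}(\Sigma)$ and $b_U(g\cdot\Sigma)$ differ only by a constant depending on $g$ and $X$, because the ratio $\det(X^T\Sigma^{-1}X)/\det(X^TX)$ transforms by a multiplicative constant under $\Sigma\mapsto g\Sigma g^T$.

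Next I would write down the geodesic ray. Following the computation at the start of Section \ref{s::quasi}, take the unit tangent vector
\[
A=\diag\bigl(\underbrace{\lambda_r,\dots,\lambda_r}_{r},\underbrace{-\beta_r,\dots,-\beta_r}_{m-r}\bigr),\qquad \lambda_r=\sqrt{\tfrac{m-r}{mr}},\ \ \beta_r=\sqrt{\tfrac{r}{m(m-r)}},
\]
which satisfies $\tr(A)=0$ and $\tr(A^2)=1$, and let $\gamma(t)=\exp(tA)$, a unit-speed geodesic ray issuing from $\Id_m$. As $t\to\infty$ this ray converges in the cone topology to the boundary point corresponding to $U_0$ (this is exactly the asymptotic confinement $\gamma(t)\sim e^{\lambda_r t}\diag(1,\dots,1,0,\dots,0)$ noted earlier). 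Then the Busemann function associated to $\gamma$ and basepoint $\Sigma_0$ is $b_\gamma(\Sigma)=\lim_{t\to\infty}\bigl(d(\Sigma,\gamma(t))-t\bigr)$, and the core of the proof is the explicit evaluation $d(\Sigma,\gamma(t))=\|\log(\Sigma^{-1/2}\gamma(t)\Sigma^{-1/2})\|$ (with $\|C\|^2=\tr(C^TC)$) and the asymptotic expansion of this quantity. One computes that only the "large" block of $\gamma(t)$ — spanned by $e_1,\dots,e_r$ — contributes to the divergent part, and after subtracting $t$ the limit is
\[
b_\gamma(\Sigma)=\lambda_r\log\det\bigl(X_0^T\Sigma^{-1}X_0\bigr)+\text{const},
\]
up to the normalization constant; multiplying by $1/\lambda_r=\sqrt{m/((m-r)r)}$ (and absorbing the basepoint-dependent constant, since Busemann functions are defined up to an additive constant) yields exactly $b_{U_0}$. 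Equivalently, one can simply observe that $b_{U_0}$ equals $-c'\log\rho(h)$ with $h$ such that $\Sigma=hh^T$, via Lemma \ref{lem::3.7}, and that $\log\rho$ is a Busemann function as asserted in equation \eqref{BuseLikeli}; but I would prefer the direct geodesic computation for self-containedness.

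The main obstacle is the asymptotic estimate of $d(\Sigma,\gamma(t))-t$: one must diagonalize or otherwise control $\Sigma^{-1/2}e^{tA}\Sigma^{-1/2}$ as $t\to\infty$ and show that its logarithm, in the trace norm, behaves like $t + \lambda_r\log\det(X_0^T\Sigma^{-1}X_0) + o(1)$. The delicate point is that the eigenvalues of $\Sigma^{-1/2}e^{tA}\Sigma^{-1/2}$ split into $r$ eigenvalues growing like $e^{\lambda_r t}$ and $m-r$ eigenvalues decaying like $e^{-\beta_r t}$, and one needs the leading coefficients of the growing ones; a Schur-complement or block-matrix argument identifies the product of those leading coefficients with $\det(X_0^T\Sigma^{-1}X_0)$ (times a fixed constant), which is precisely the term appearing in $b_{U_0}$. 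Once this asymptotic is in hand, the $\SL(m,\RR)$-equivariance reduction handles general $U$, and convexity (if needed downstream) is automatic from the general theory of Busemann functions on \cat spaces.
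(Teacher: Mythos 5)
Your approach is viable but genuinely different from the paper's. The paper does not compute the limit $\lim_{t\to\infty}\bigl(d(\Sigma,\gamma_r(t))-t\bigr)$ at all: after the same reduction to $U_0$ (done via $k\in\SO(m,\RR)$ rather than a general $g\in\SL(m,\RR)$, which makes the additive constant vanish) and the same geodesic $\gamma_r(t)=\exp(tA)$, it invokes the characterization of Busemann functions from Dru\c{t}u (or Fl\"uge--Ruh): it suffices to check that $b_{U_0}$ is invariant under the horospherical subgroup $P_r^0$ and that $b_{U_0}(\gamma_r(t))=-t$. Both checks are elementary block-matrix computations with no asymptotics. Your route works directly from the definition of a Busemann function and is more self-contained, at the price of the two-block eigenvalue asymptotics that you correctly single out as the delicate point; the paper's route trades that analysis for reliance on an external criterion.

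However, your sketch of that delicate computation gets the constant wrong, and the error is not cosmetic. Write $\Sigma^{-1/2}e^{tA}\Sigma^{-1/2}=e^{\lambda_r t}Q+e^{-\beta_r t}R$ with $Q=\Sigma^{-1/2}X_0X_0^T\Sigma^{-1/2}$. The $r$ expanding eigenvalues contribute $\lambda_r\log\det(X_0^T\Sigma^{-1}X_0)$ to $\lim_{t\to\infty}\bigl(d(\Sigma,\gamma_r(t))-t\bigr)$, as you say; but the $m-r$ contracting eigenvalues also contribute a $\Sigma$-dependent term, not a constant. Since $\det\bigl(\Sigma^{-1/2}e^{tA}\Sigma^{-1/2}\bigr)=1$ and $r\lambda_r=(m-r)\beta_r$, the product of the contracting eigenvalues is asymptotically $e^{-(m-r)\beta_r t}/\det(X_0^T\Sigma^{-1}X_0)$, so their contribution to the limit is $+\beta_r\log\det(X_0^T\Sigma^{-1}X_0)$, and the full limit is $(\lambda_r+\beta_r)\log\det(X_0^T\Sigma^{-1}X_0)=\sqrt{m/(r(m-r))}\,\log\det(X_0^T\Sigma^{-1}X_0)$, i.e.\ exactly $b_{U_0}(\Sigma)$ with no rescaling. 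Your proposed final step of multiplying by $1/\lambda_r$ is both arithmetically off ($1/\lambda_r=\sqrt{mr/(m-r)}$, which equals $\sqrt{m/((m-r)r)}$ only when $r=1$) and inadmissible in principle: a nontrivial positive multiple of a Busemann function is not a Busemann function (it is no longer $1$-Lipschitz and fails $b(\gamma(t))=-t$ along the defining ray), so if a rescaling were genuinely needed the lemma would be false as stated. Once you carry the contracting block's contribution correctly, no rescaling is needed and your argument closes.
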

\begin{proof}
The lemma follows from Dru\c{t}u \cite[Section 2.3, Lemma 3.2.1 ]{Dr} or from Fl\"{u}ge-- Ruh \cite[Theorem 12]{FR}. For completeness we still give the idea of the proof.


We claim the value of $\frac{\det(X^{T} \Sigma^{-1} X)}{\det(X^{T}X)}$ does not depend on the chosen base $\{x_1, x_2, \cdots, x_r\}$ $\subset \RR^m$ of $U$, and thus it does not depend on the matrix $X$.  Indeed, this is because a change of base of $U$ will give a $r \times r$ matrix $B$ and the new base of $U$ will have the corresponding matrix equal to $XB^T$. Then by using $\det(CD)= \det(C) \det(D)$, where $C,D$ are square matrices, the claim follows. 

Secondly, we prove the lemma in the special case when $U$ is the $r$-dimensional vector subspace of $\RR^m$ that is generated by the first $r$ vectors $\{e_1, \cdots, e_r\}$ of the canonical base of $\RR^m$. We denote this subspace by $U_0$ and we take the corresponding  $m \times r$ matrix to be $X_0:= (e_1, e_2, \cdots, e_r)$. Then $\det(X_0^{T}X_0)=1$ and we claim $ b_{U_0}(\Sigma)= \sqrt{\frac{m}{(m-r)r}} \cdot \log \det(X_0^{T} \Sigma^{-1} X_0)$ is a Busemann function corresponding to the point at infinity $U_0 \in G(m,r) \subset \partial \Pos_{sym}^1(m)$.  

To prove that, we consider the diagonal matrix $A:= \diag(\underbrace{\lambda_r , \cdots ,\lambda_r }_{r\text{-times}}, \underbrace{-\beta_r , \cdots , -\beta_r }_{m-r\text{-times}})$, where $\lambda_r= \sqrt{\frac{m-r}{mr}} $ and $\beta_r= \sqrt{\frac{r}{m(m-r)}}$. One can see $A \in \TT_{\Id_m} \mkern-4mu \Pos_{sym}^1(m)$ and $ \langle A, A\rangle_{\Id_m}= \tr(AA)=1$. Then the map $\gamma_r: \RR \to \Pos_{sym}^1(m)$  given by $t \mapsto \gamma_r(t):= \exp(tA)$ is the bi-infinite geodesic line parameterized with respect to the arc length and such that $\gamma_r(0)= \Id_m \in \Pos_{sym}^1(m)$ and  $\gamma_r(\infty)= U_0 \in  G(m,r)$. Notice $P_r$ is the parabolic subgroup corresponding to $U_0 \in G(m,r)$. By Dru\c{t}u \cite[Section 2.3]{Dr} we need to prove:
\begin{enumerate}
\item[i)] 
$b_{U_0}: \Pos_{sym}^1(m) \to \RR$ is $P_r^{0}$-invariant, i.e., for every $g \in P_r^{0}$ and for every $\Sigma \in \Pos_{sym}^1(m)$ we have $b_{U_0}(\Sigma)= b_{U_0}(g \Sigma g^{T})$. Indeed, as $g^{-1} = \begin{pmatrix} A_1 & B  \\
0 & A_2 \\
\end{pmatrix}  \in \SL(m, \RR)$ with $\det(A_{1})= \pm 1= \det(A_{2})$, and $B \in \RR^{r \times (m-r)}$, then $g^{-1}X_0= \begin{pmatrix} A_1 \\
0  \\
\end{pmatrix}= X_0 A_1$.  So $X_0^{T}(g^{-1})^{T}= A_1^{T}X_0^{T}$ that gives a change of base for the subspace $U_0$.  As $\det(A_1^{T} A_1)=1$ the $P_r^{0}$-invariance follows.
\item[ii)]
$b_{U_0}(\gamma_r(t))=-t$, for every $t \in \RR$. Indeed, just by matrix multiplication we have $$\det (X_0^{T} \gamma_r(t)^{-1} X_0)= \det(X_0^{T} \exp(-tA) X_0)= e^{-t \cdot r \cdot \lambda_r}.$$ So $b_{U_0}(\gamma_r(t))=-t \cdot r \cdot \lambda_r \cdot \sqrt{\frac{m}{(m-r)r}} =-t $.
\end{enumerate}
Thus, for the spacial case of $U_0$ the lemma holds true.

It remains to prove the lemma in the general case of $U$.  Indeed, as $ \SL(m, \RR)=\SO(m, \RR) P_r$ acts transitively on $G(m,r)$,  there is $k \in K=\SO(m, \RR)$ such that $U= k U_0$. Then by an easy computation we have the equality $b_{U}(\Sigma)= b_{U_0}(k^{-1}\Sigma (k^{T})^{-1})-  b_{U_0}((k^{T}k)^{-1})= b_{U_0}(k^{-1}\Sigma (k^{T})^{-1})$, for every $\Sigma \in \Pos_{sym}^1(m)$.  Similarly as for the case $U_0$ we need to prove $b_{U}(kk^{T})=0$, $b_{U}(\Sigma)$ is $kP_r^{0}k^{-1}= kP_r^{0}k^{T}$-invariant and $b_{U}(k \gamma_r(t) k^{T})=-t$, for every $t \in \RR$. But this follows easily from $b_{U}(\Sigma)= b_{U_0}(k^{-1}\Sigma (k^{T})^{-1})$, for every $\Sigma \in \Pos_{sym}^1(m)$.
\end{proof}

Let $\shp$ be a Borel probability measure on $G(m,r)$. Then we remark  the map $$b_{\shp} : \Pos_{sym}^1(m) \to \RR$$  $$\Sigma \mapsto b_{\shp}(\Sigma): = \int_{G(m,r)} b_{U}(\Sigma) d\shp(U)$$  is a weighted Busemann function in the sense of Kapovich--Leeb--Millson \cite{KLM}.
The necessary and sufficient condition (\ref{Unicity1})  was given in \cite{AMR} without explicit proof, as a Corollary of  general results of \cite{KLM}. We will provide more explicit results in Section 
\ref{s.behaviourinfinity} using direct geometrical methods.
The interested reader can consult \cite{AMR} where examples are provided.

\section{Gradient and covariant derivative of $\ell_{\shp}$}
\label{sec::Grass_max_like}

%
%

For what follows we want to find the \gmle\  that minimises the map $\ell_{\shp} $ (see (\ref{log-likelihood})).  To do that we need to compute the gradient of $\ell_{\shp}(\Sigma)$, solve the equation $\grad \ell_{\shp}(\Sigma) =0$ and find the critical points $\Sigma \in \Pos_{sym}^1(m)$. Then it is enough to compute the gradient of $\ell_{U}(\Sigma)$ as:
$$  \grad \ell_{\shp}(\Sigma) = \int\limits_{G(m,r)} \grad \ell_{U}(\Sigma) d\shp(U).$$


\subsection{The gradient of $\ell_{U}(\Sigma)$}
Let $\Sigma \in \Pos_{sym}^1(m)$. Fix for that follows $g \in \SL(m, \RR)$ such that $\Sigma= gg^{T}$, i.e., such square root $g$ of $\Sigma$ always exists. Recall from Section \ref{sec::sym_space} (Lemma \ref{lem::action}) that any geodesic in $\Pos_{sym}^1(m)$ passing through $\Sigma$ is of the form $g \exp(tV)g^{T}$, where $V \in\TT_{\Id_m}\mkern-4mu\Pos_{sym}^1(m)$. 

\begin{lemma}
\label{lem::diff_det}
For a curve $A: \RR \to  \SL(m, \RR)$ we have $$\frac{d}{dt} \det(A(t))= \det(A(t)) \cdot \tr(A(t)^{-1} \frac{d}{dt} A(t)).$$
\end{lemma}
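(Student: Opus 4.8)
The plan is to establish the classical Jacobi formula for the derivative of the determinant and then specialise to the case at hand. First I would record the first-order expansion of $\det$ near the identity: for any $B \in \R^{m \times m}$,
$$\det(\Id_m + s B) = 1 + s\,\tr(B) + o(s) \qquad (s \to 0).$$
This follows by expanding $\det(\Id_m + sB)$ as a sum over permutations and collecting powers of $s$ (only the identity permutation contributes to the constant and the linear term, the latter being $s\sum_i b_{ii} = s\,\tr(B)$); equivalently, the coefficients of the characteristic polynomial of $B$ are the elementary symmetric functions of its eigenvalues, whose first one is $\tr(B)$.

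Next, fix $t_0 \in \R$. Since $A(t_0) \in \SL(m,\R)$ is invertible, for small $s$ we may write
$$A(t_0 + s) = A(t_0) + s\,A'(t_0) + o(s) = A(t_0)\bigl(\Id_m + s\,A(t_0)^{-1} A'(t_0) + o(s)\bigr).$$
Applying the multiplicativity of $\det$ together with the expansion above, with $B = A(t_0)^{-1} A'(t_0)$, gives
$$\det A(t_0 + s) = \det A(t_0)\,\bigl(1 + s\,\tr(A(t_0)^{-1} A'(t_0)) + o(s)\bigr).$$
Subtracting $\det A(t_0)$, dividing by $s$, and letting $s \to 0$ yields $\frac{d}{dt}\det A(t)\big|_{t = t_0} = \det A(t_0)\,\tr(A(t_0)^{-1} A'(t_0))$, which is the asserted identity. (Of course, since $A$ takes values in $\SL(m,\R)$ one has $\det A(t) \equiv 1$, so in fact both sides vanish identically; the identity as stated holds verbatim for any smooth curve in $\GL(m,\R)$, and it is in this generality that it will be used later.)

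There is no genuine obstacle here. The only point needing a word of care is the passage to the limit: this is immediate because $\det$ is a polynomial map $\R^{m\times m} \to \R$ and $A$ is differentiable, so the $o(s)$ terms are genuinely negligible. Alternatively, one can simply invoke Jacobi's formula $\frac{d}{dt}\det A(t) = \tr\bigl(\operatorname{adj}(A(t))\,A'(t)\bigr)$ together with the identity $\operatorname{adj}(A(t)) = \det(A(t))\,A(t)^{-1}$, valid whenever $A(t)$ is invertible, to obtain the result at once.
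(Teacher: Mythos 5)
Your proof is correct and complete: the expansion $\det(\Id_m+sB)=1+s\,\tr(B)+o(s)$ combined with $A(t_0+s)=A(t_0)(\Id_m+sA(t_0)^{-1}A'(t_0)+o(s))$ is the standard derivation of Jacobi's formula, and your remark that both sides vanish identically for a curve in $\SL(m,\RR)$ (the identity really being used for curves in $\GL(m,\RR)$) is accurate. The paper offers no proof at all here — it states only that ``the result is well known'' — so your argument simply supplies the standard justification the authors are implicitly invoking; there is nothing to compare beyond that.
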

\begin{proof}
The result is well known.
\end{proof}

\begin{lemma}
\label{lem::diff_log_like}
Let $g \exp(tV)g^{T}$ in $\Pos_{sym}^1(m)$ be a geodesic curve passing through $\Sigma= gg^{T}$, where $V \in\TT_{\Id_m}\mkern-4mu\Pos_{sym}^1(m)$. 
Then $$d(\ell_{U})_{\Sigma}(gVg^{T})=\frac{d \ell_{U}(g \exp(tV)g^{T})}{dt} \vert_{t=0}= -\frac{1}{2} \cdot \tr(X (X^{T} \Sigma^{-1} X)^{-1} X^{T} (g^{T})^{-1} Vg^{-1} ).$$
\end{lemma}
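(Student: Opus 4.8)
The plan is to reduce the whole computation to the derivative-of-log-determinant formula of Lemma~\ref{lem::diff_det}. First I would set $\Sigma(t) := g\exp(tV)g^{T}$, the geodesic through $\Sigma = \Sigma(0)$ with initial velocity $gVg^{T}$, and record that $\Sigma(t)^{-1} = (g^{T})^{-1}\exp(-tV)g^{-1}$. Substituting into the definition \eqref{log-density} of $\ell_U$ gives
$$\ell_U(\Sigma(t)) = \tfrac12\log\det\!\big(X^{T}(g^{T})^{-1}\exp(-tV)g^{-1}X\big) - \tfrac12\log\det(X^{T}X),$$
so the second summand does not depend on $t$ and drops out upon differentiating.

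Next I would introduce the $r\times r$ curve $Y(t) := X^{T}(g^{T})^{-1}\exp(-tV)g^{-1}X$, which at $t=0$ equals $X^{T}\Sigma^{-1}X$. Since $X$ has rank $r$ and $\Sigma^{-1}$ is positive definite, $Y(0)$ is positive definite; hence $Y(t)$ stays invertible for $t$ near $0$ and $t\mapsto\log\det Y(t)$ is smooth there, so Lemma~\ref{lem::diff_det} applies. Differentiating the one-parameter group $t\mapsto\exp(-tV)$ (and using that $V$ commutes with $\exp(-tV)$) yields $\dot Y(t) = -X^{T}(g^{T})^{-1}V\exp(-tV)g^{-1}X$, whence $\dot Y(0) = -X^{T}(g^{T})^{-1}Vg^{-1}X$. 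Applying Lemma~\ref{lem::diff_det} to $A(t)=Y(t)$ gives
$$\frac{d\,\ell_U(\Sigma(t))}{dt}\Big|_{t=0} = \tfrac12\,\tr\!\big(Y(0)^{-1}\dot Y(0)\big) = -\tfrac12\,\tr\!\big((X^{T}\Sigma^{-1}X)^{-1}X^{T}(g^{T})^{-1}Vg^{-1}X\big).$$

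Finally I would invoke the cyclic invariance of the trace to move the rightmost factor $X$ to the front, obtaining $-\tfrac12\,\tr\!\big(X(X^{T}\Sigma^{-1}X)^{-1}X^{T}(g^{T})^{-1}Vg^{-1}\big)$, which is precisely the asserted value of $d(\ell_U)_\Sigma(gVg^{T})$. There is no genuine obstacle here: the computation is routine, and the only points deserving a word of care are the invertibility of $Y(t)$ near $t=0$ (legitimizing the differentiation of $\log\det$) and the correct bookkeeping of transposes in passing from $\Sigma(t)$ to $\Sigma(t)^{-1}$. As a consistency check one may verify that the result depends on $g$ only through $\Sigma=gg^{T}$: replacing $g$ by $gk$ with $k\in\SO(m,\RR)$ forces $V\mapsto k^{T}Vk$ in order to keep the velocity $gVg^{T}$ fixed, and then $(g^{T})^{-1}Vg^{-1}$ is unchanged, as it must be for $d\ell_U$ to be a well-defined differential.
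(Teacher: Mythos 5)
Your proposal is correct and follows essentially the same route as the paper's own proof in the Appendix: write $\ell_U(\Sigma(t))$ as $\tfrac12\log\det\big(X^{T}(g^{T})^{-1}\exp(-tV)g^{-1}X\big)$ up to a constant, differentiate the determinant via Lemma~\ref{lem::diff_det}, and finish with cyclic invariance of the trace. Your added remarks on the invertibility of $Y(t)$ near $t=0$ and the independence of the result from the choice of square root $g$ are sound but not needed beyond what the paper records.
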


\begin{proof}
The proof is provided in Section \ref{sec::Appendix}.
\end{proof}

In order to find the formula for the gradient of $\ell_{U}$ we have to use the definition of the gradient that says that
 $$\langle \grad \ell_{U}(\Sigma) , W \rangle_{\Sigma}= d(\ell_{U})_{\Sigma}(W)$$ 
 for every $\Sigma=  gg^{T} \in \Pos_{sym}^1(m)$  and every $W \in \TT_{\Sigma} \mkern-4mu \Pos_{sym}^1(m)= g (\TT_{\Id_m} \Pos_{sym}^1(m))g^{T}$.
 
 \begin{lemma}
 \label{lem::grad_formula}
Let $\grad \ell_{U}: \Pos_{sym}^1(m) \to  \TT  \Pos_{sym}^1(m)$ be the gradient of $\ell_{U}$. Then for every $\Sigma \in  \Pos_{sym}^1(m)$ we have $$\grad \ell_{U}(\Sigma)= \frac{r}{2m} \Sigma-\frac{1}{2} X (X^{T} \Sigma^{-1} X)^{-1} X^{T} \in \TT_{\Sigma} \mkern-4mu \Pos_{sym}^1(m).$$
 \end{lemma}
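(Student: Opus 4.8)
The plan is to compute the gradient directly from the definition $\langle \grad \ell_U(\Sigma), W\rangle_\Sigma = d(\ell_U)_\Sigma(W)$ using the differential formula from Lemma \ref{lem::diff_log_like}. First I would fix $g\in\SL(m,\RR)$ with $\Sigma = gg^T$ and parametrize a general tangent vector as $W = gVg^T$ with $V\in\TT_{\Id_m}\mkern-4mu\Pos_{sym}^1(m)$. Then by Lemma \ref{lem::diff_log_like},
\[
d(\ell_U)_\Sigma(gVg^T) = -\tfrac12\tr\bigl(X(X^T\Sigma^{-1}X)^{-1}X^T(g^T)^{-1}Vg^{-1}\bigr).
\]
I want to rewrite the right-hand side as $\langle C, gVg^T\rangle_\Sigma = \tr(\Sigma^{-1}C\Sigma^{-1}gVg^T)$ for the appropriate $C\in\TT_\Sigma\mkern-4mu\Pos_{sym}^1(m)$. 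Using $\Sigma^{-1} = (g^T)^{-1}g^{-1}$, one computes $\tr(\Sigma^{-1}C\Sigma^{-1}gVg^T) = \tr\bigl(g^{-1}C(g^T)^{-1}V\bigr)$ (by cyclicity of the trace). So I need $g^{-1}C(g^T)^{-1}$ to match (the symmetrized version of) $-\tfrac12 g^{-1}X(X^T\Sigma^{-1}X)^{-1}X^T(g^T)^{-1}$; the candidate is then $C = -\tfrac12 X(X^T\Sigma^{-1}X)^{-1}X^T$, which is indeed symmetric.

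The only subtlety is that $C$ as written need not lie in $\TT_\Sigma\mkern-4mu\Pos_{sym}^1(m)$, i.e., need not satisfy the trace-zero condition $\tr(g^{-1}C(g^T)^{-1}) = 0$ from Remark \ref{rem::tangent}. Indeed $\tr\bigl(g^{-1}X(X^T\Sigma^{-1}X)^{-1}X^T(g^T)^{-1}\bigr) = \tr\bigl((X^T\Sigma^{-1}X)^{-1}X^T(g^T)^{-1}g^{-1}X\bigr) = \tr\bigl((X^T\Sigma^{-1}X)^{-1}(X^T\Sigma^{-1}X)\bigr) = \tr(\Id_r) = r$, which is nonzero. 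Hence I must project $C$ onto the tangent space by subtracting the appropriate multiple of $\Sigma$: the orthogonal projection of an ambient symmetric matrix $C$ onto $\TT_\Sigma\mkern-4mu\Pos_{sym}^1(m)$ (with respect to $\langle\cdot,\cdot\rangle_\Sigma$) is $C - \tfrac{\tr(\Sigma^{-1}C)}{m}\Sigma$, since $\langle\Sigma,\Sigma\rangle_\Sigma = \tr(\Sigma^{-1}\Sigma\Sigma^{-1}\Sigma) = \tr(\Id_m) = m$ and $\langle C,\Sigma\rangle_\Sigma = \tr(\Sigma^{-1}C)$. Applying this with $C = -\tfrac12 X(X^T\Sigma^{-1}X)^{-1}X^T$, for which $\tr(\Sigma^{-1}C) = -\tfrac{r}{2}$ by the computation above, gives exactly
\[
\grad \ell_U(\Sigma) = -\tfrac12 X(X^T\Sigma^{-1}X)^{-1}X^T + \tfrac{r}{2m}\Sigma,
\]
as claimed.

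The main obstacle — really the only place requiring care rather than bookkeeping — is making sure the projection onto the trace-zero tangent space is handled correctly: the "naive" gradient coming from the differential is the ambient matrix $-\tfrac12 X(X^T\Sigma^{-1}X)^{-1}X^T$, and one must both verify that subtracting $\tfrac{\tr(\Sigma^{-1}C)}{m}\Sigma$ is genuinely the metric-orthogonal projection and check that adding this correction term does not change the value of $d(\ell_U)_\Sigma(W)$ (it does not, because $W\in\TT_\Sigma\mkern-4mu\Pos_{sym}^1(m)$ is orthogonal to $\Sigma$, i.e., $\langle \Sigma, W\rangle_\Sigma = \tr(\Sigma^{-1}W) = 0$). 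Everything else is routine manipulation with $\Sigma^{-1} = (g^T)^{-1}g^{-1}$, cyclicity of the trace, and the identity $X^T\Sigma^{-1}X\cdot(X^T\Sigma^{-1}X)^{-1} = \Id_r$.
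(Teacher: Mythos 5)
Your proposal is correct and follows essentially the same route as the paper: compute $d(\ell_U)_\Sigma(gVg^T)$ via Lemma \ref{lem::diff_log_like}, observe that the naive candidate $-\tfrac12 X(X^T\Sigma^{-1}X)^{-1}X^T$ fails the trace-zero condition (its $\Sigma^{-1}$-trace being $-r/2$), and correct by the multiple $\tfrac{r}{2m}\Sigma$ of $\Sigma$, which pairs to zero against every tangent vector. Your framing of the correction as the metric-orthogonal projection onto $\TT_\Sigma\mkern-4mu\Pos_{sym}^1(m)$ is a slightly cleaner way of packaging the same verification the paper carries out explicitly.
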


\begin{proof}
The proof is provided in Section \ref{sec::Appendix}.
\end{proof}

\subsection{The covariant derivative of $\grad \ell_{U}(\Sigma)$}

Given a vector field $Z$ in $\TT \Pos_{sym}^1(m)$, let us now compute the Levi--Civita connection $\nabla_{Z}$ associated with $\Pos_{sym}^1(m)$ (also called covariant derivative), and then apply it to the the gradient of $\ell_{U}$  $$\grad \ell_{U} : \Pos_{sym}^1(m) \to \TT\Pos_{sym}^1(m)$$ 
at a point $\Sigma \in \Pos_{sym}^1(m)$.. 

Recall also the definition of the covariant derivative is saying that
\begin{equation}
\label{equ::hess_formula}
 \langle \nabla_{Z} \grad \ell_{U}(\Sigma) , W \rangle_{\Sigma}=  \Hess  \ell_{U}(\Sigma)(Z,W)
\end{equation}
for every $Z,W \in \TT_{\Sigma} \mkern-4mu \Pos_{sym}^1(m)$. Then we will have
 $$ \nabla_{Z} \grad \ell_{\shp}(\Sigma) = \int\limits_{G(m,r)} \nabla_{Z} \grad \ell_{U}(\Sigma) d\shp(U)$$
 and
 $$ \langle \nabla_{Z} \grad \ell_{\shp}(\Sigma) , W \rangle_{\Sigma}= \int\limits_{G(m,r)} \langle \nabla_{Z} \grad \ell_{U}(\Sigma) , W \rangle_{\Sigma} d\shp(U).$$

\begin{lemma}
\label{lem::covariant_deriv}
Let $Z, Y$ be vector fields in $\TT \Pos_{sym}^1(m)$. Then for every $\Sigma \in \Pos_{sym}^1(m)$ 
$$(\nabla_{Z} Y)_\Sigma= (Z(Y))_{\Sigma} - \frac{1}{2} Z\Sigma^{-1}Y- \frac{1}{2} Y\Sigma^{-1}Z \in  \TT_{\Sigma} \mkern-4mu \Pos_{sym}^1(m).$$
In particular, for $Y = \grad \ell_{U}$ we have
$$\nabla_{Z} \grad \ell_{U}(\Sigma) = \frac{1}{4}Z\pi_{U}(\Sigma) \Sigma+  \frac{1}{4} \Sigma \pi_{U}(\Sigma) Z -\frac{1}{2}  \Sigma \pi_{U}(\Sigma) Z \pi_{U}(\Sigma)\Sigma$$
where $\pi_{U}(\Sigma):= \Sigma^{-1} X (X^{T} \Sigma^{-1}  X)^{-1} X^{T} \Sigma^{-1}$. 
\end{lemma}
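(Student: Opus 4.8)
The plan is to establish the two assertions of the lemma in turn. For the connection formula $(\nabla_Z Y)_\Sigma = (Z(Y))_\Sigma - \tfrac12 Z\Sigma^{-1}Y - \tfrac12 Y\Sigma^{-1}Z$ (where $Z(Y)$ denotes the directional derivative, along $Z$, of $Y$ regarded as a $\Sym(m)$-valued field on $\Pos_{sym}^1(m)$), I would invoke the uniqueness of the Levi--Civita connection: the shape of the correction term can be discovered by demanding $\nabla_{\dot\gamma}\dot\gamma=0$ along the explicit geodesics $\gamma(t)=g\exp(tV)g^{T}$ of Lemma \ref{lem::action}, after which one only has to verify that the resulting prescription (a) is a connection taking values in the tangent space, (b) is torsion-free, and (c) is metric for $\langle\cdot,\cdot\rangle_\Sigma$. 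That it is a connection ($C^\infty$-linear in $Z$, Leibniz in $Y$) is visible from its shape, and (b) is immediate since the correction term is symmetric in $Z$ and $Y$, so $\nabla_Z Y-\nabla_Y Z=Z(Y)-Y(Z)=[Z,Y]$. For the tangency in (a): by Remark \ref{rem::tangent} the tangent space at $\Sigma$ is $\{A\in\Sym(m):\tr(\Sigma^{-1}A)=0\}$; differentiating $\tr(\Sigma^{-1}Y)\equiv 0$ along $Z$, using $Z(\Sigma^{-1})=-\Sigma^{-1}Z\Sigma^{-1}$ (the derivative of matrix inversion), gives $\tr(\Sigma^{-1}Z(Y))=\tr(\Sigma^{-1}Z\Sigma^{-1}Y)$, whence $\tr(\Sigma^{-1}(\nabla_Z Y)_\Sigma)=0$ by cyclicity of the trace. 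Property (c) is the one genuine computation: one expands $Z\langle Y,W\rangle_\Sigma=Z(\tr(\Sigma^{-1}Y\Sigma^{-1}W))$ by the product rule (again via $Z(\Sigma^{-1})=-\Sigma^{-1}Z\Sigma^{-1}$) and expands $\langle\nabla_Z Y,W\rangle_\Sigma+\langle Y,\nabla_Z W\rangle_\Sigma$ the same way, then sees that the two sides agree once the cubic trace monomials are matched by cyclic permutation (for instance $\tr(\Sigma^{-1}Y\Sigma^{-1}W\Sigma^{-1}Z)=\tr(\Sigma^{-1}Z\Sigma^{-1}Y\Sigma^{-1}W)$). This identifies $\nabla$.

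Second, I would specialize the formula to $Y=\grad\ell_U$. Writing $Q(\Sigma):=X(X^{T}\Sigma^{-1}X)^{-1}X^{T}$, Lemma \ref{lem::grad_formula} reads $\grad\ell_U(\Sigma)=\tfrac{r}{2m}\Sigma-\tfrac12 Q(\Sigma)$, and directly from the definition of $\pi_U$ one has the bookkeeping identity $Q(\Sigma)=\Sigma\,\pi_U(\Sigma)\,\Sigma$. The only derivative needed is $Z(Q)$: setting $M(\Sigma)=X^{T}\Sigma^{-1}X$ one gets $Z(M)=-X^{T}\Sigma^{-1}Z\Sigma^{-1}X$, hence $Z(M^{-1})=M^{-1}X^{T}\Sigma^{-1}Z\Sigma^{-1}XM^{-1}$, so $Z(Q)=Q\Sigma^{-1}Z\Sigma^{-1}Q=\Sigma\,\pi_U(\Sigma)\,Z\,\pi_U(\Sigma)\,\Sigma$; also $Z(\Sigma)=Z$ because $\Sigma\mapsto\Sigma$ is the identity field. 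Substituting into the connection formula, the three terms become $(Z(\grad\ell_U))_\Sigma=\tfrac{r}{2m}Z-\tfrac12\Sigma\pi_U Z\pi_U\Sigma$, then $-\tfrac12 Z\Sigma^{-1}\grad\ell_U=-\tfrac{r}{4m}Z+\tfrac14 Z\pi_U\Sigma$, and $-\tfrac12\grad\ell_U\Sigma^{-1}Z=-\tfrac{r}{4m}Z+\tfrac14\Sigma\pi_U Z$ (using $Z\Sigma^{-1}Q=Z\pi_U\Sigma$ and $Q\Sigma^{-1}Z=\Sigma\pi_U Z$). Summing, the three multiples of $Z$ cancel because $\tfrac{r}{2m}-\tfrac{r}{4m}-\tfrac{r}{4m}=0$, which leaves exactly $\tfrac14 Z\pi_U(\Sigma)\Sigma+\tfrac14\Sigma\pi_U(\Sigma)Z-\tfrac12\Sigma\pi_U(\Sigma)Z\pi_U(\Sigma)\Sigma$, the asserted formula.

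I expect the only delicate step to be the metric-compatibility verification (c) in the first part: it is bookkeeping with a handful of quartic and cubic trace monomials, collapsed using cyclicity of the trace, but one must match them carefully. Everything else — the tangency check, and the whole second part — is a mechanical substitution, the one nonroutine ingredient there being the identity $Q=\Sigma\pi_U\Sigma$ that rewrites the matrix expressions in terms of $\pi_U$. As a shortcut for the first part one may instead quote the standard formula for the affine-invariant Levi--Civita connection of $\Pos_{sym}(m)$ and note that $\Pos_{sym}^1(m)$ is totally geodesic in it (Section \ref{sec::sym_space}), so that its connection is the restriction of the ambient one and the tangency computation above shows no further orthogonal projection is needed; the self-contained verification is, however, just as short.
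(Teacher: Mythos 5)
Your argument is correct, and the second half (the specialization to $Y=\grad\ell_U$) is exactly the computation the paper carries out in its appendix: differentiate $Q(\Sigma)=X(X^{T}\Sigma^{-1}X)^{-1}X^{T}$ via the derivative-of-the-inverse rule to get $Z(Q)=\Sigma\pi_U(\Sigma)Z\pi_U(\Sigma)\Sigma$, substitute, and watch the multiples of $Z$ cancel. For the first half you take a mildly different route from the paper: the paper \emph{derives} the formula by plugging the metric $\langle Y,W\rangle_\Sigma=\tr(\Sigma^{-1}Y\Sigma^{-1}W)$ and the brackets into Koszul's formula and collecting the cubic trace monomials, whereas you \emph{propose} the candidate and verify that it is a torsion-free metric connection taking tangential values, then invoke uniqueness of the Levi--Civita connection. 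These are two presentations of the same fundamental theorem and the computational core — matching the cubic trace terms $\tr(\Sigma^{-1}Z\Sigma^{-1}Y\Sigma^{-1}W)$ etc.\ by cyclicity — is identical; the derivation has the advantage of not requiring the answer in advance, while your verification is slightly leaner (no need to expand all six Koszul terms) and has the merit of making explicit the tangency check $\tr(\Sigma^{-1}(\nabla_Z Y)_\Sigma)=0$, which the paper asserts but does not spell out. Your alternative shortcut (restricting the standard affine-invariant connection of $\Pos_{sym}(m)$ to the totally geodesic submanifold $\Pos_{sym}^1(m)$) is also sound and is consistent with the facts recalled in Section~\ref{sec::sym_space}.
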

\begin{proof}
The proof is provided in Section \ref{sec::Appendix}.
\end{proof}

\section{Geodesic convexity of M-functionals}
\label{s::GeodesicConvexity}
From Section \ref{subsub::Grassmanian_Busemann} we hence know that our M-functionals $\ell_\shp$ are weighted  Busemann functions in the sense of \cite{KLM}, and they are geodesically convex functions by the same \cite{KLM}. The reader is also invited to look at \cite{DT} for a full treatment of geodesics coercivity for a large class of models involving M-estimates of scatter. This section deals further with the strict geodesic convexity of the M-functionals $\ell_\shp$ and gives sufficient conditions to imply that (see Proposition \ref{prop::strict_conv_P} and Lemmas \ref{lem::strict_conv_1}, \ref{lem::strict_conv_x}), this being a property also used in the proof of Theorem \ref{thm::LLN}.  The key ingredients used in this section are the (self-)$\Sigma$-adjoint linear maps and the $\Sigma$-orthogonal projections  ${\rm Pr}(U, \Sigma)$, where $\Sigma \in \pas$. Along the way we also give a proof of the geodesic convexity of $\ell_\shp$ (see Proposition \ref{prop::strict_conv_P}).

A function $f: \pas \to \RR$ is called \textbf{geodesically convex} (resp., \textbf{strictly geodesically convex}) if its restriction
$f(\gamma(t))$ ($t\in\R$) to any geodesic $\gamma$ in $\pas$ is convex (resp., strictly convex) in the  
usual sense. This amounts to saying that the Hessian $\nabla^2f$ is positive
semidefinite (resp., positive-definite), i.e.,
$\nabla^2_vf(\sig)=\langle \nabla_v\grad f(\sig),v\rangle\ge0$ (resp., $>0$) for all
$\sig\in\pas$ and $v \neq 0_m \in \TT_{\Sigma} \mkern-4mu \Pos_{sym}^1(m)$, since
\begin{equation}\label{secondder}
\frac {d^2}{dt^2}f(\gamma(t))
=(\nabla^2_vf)(\gamma(t))=\langle \nabla_v\grad f(\gamma(t)),v\rangle,
\end{equation}
where $v$ is the velocity of the geodesic $\gamma$. 

\medskip
For this section we fix  an element $U$ of $G(m,r)$.  Recall $U$ is  the linear span of $r$ linearly independent vectors say $\{x_1, x_2, \cdots, x_r\}$. With $U = \langle x_1, x_2, \cdots, x_r  \rangle$ we associate the $m \times r$ matrix $X$ given by $(x_1, x_2, \cdots, x_r)$, with $x_i$ the columns of $X$ that are written in the canonical base of $\RR^m$.

First we give a geometric interpretation of the matrix $\pi_{U}(\Sigma):= \Sigma^{-1} X (X^{T} \Sigma^{-1}  X)^{-1} X^{T} \Sigma^{-1}$ introduced in Lemma \ref{lem::covariant_deriv}.

Following the ideas in \cite[Section 4]{AMR}, given $\Sigma \in \Pos_{sym}^1(m)$ we introduce on $\RR^m$ the following \textbf{$\Sigma$-scalar product} 
\begin{equation}
\label{equ::Sigma_scalar_product}
(x\vert y )_{\Sigma}:= x^{T} \Sigma^{-1}y,  \ \ \ \ \text{for  every } x,y \in \RR^m.
\end{equation}

As $U$ is a linear subspace of $\RR^m$, we denote by ${\rm Pr}(U, \Sigma): \RR^m \to U$ the \textbf{$\Sigma$-orthogonal projection} onto $U$ which is a linear map defined by ${\rm Pr}(U, \Sigma)(u)=u$ for every $u \in U$ and ${\rm Pr}(U, \Sigma)(v)=0$ for every $v \in \RR^m$ that is $\Sigma$-orthogonal to $U$, i.e., $v \in U^{\perp_{\Sigma}}: =\{ w \in \RR^{m} \; \vert \; (u\vert w )_{\Sigma}=0 \;  \forall u \in U\}$.

\begin{remark}
\label{rem::Sigma_scalar_prod}
For $\Sigma = \Id_m$ the scalar product $(x\vert y )_{\Id_m}$ is just the canonical Euclidean scalar product $\langle , \rangle$ on $\RR^m$.
As for $\Sigma \in  \Pos_{sym}^1(m)$ there is a unique symmetric positive-definite square roots $g=g^T \in \SL(m, \RR)$ with $\Sigma= gg^{T}$, then $(x\vert y )_{\Sigma}= x^{T} \Sigma^{-1}y= \langle g^{-1}x ,g^{-1}y \rangle $  for  every  $x,y \in \RR^n$. In particular we obtain $U^{\perp_{\Sigma}}$ is a linear subspace of dimension $m-r$. 
\end{remark}

\begin{lemma}
\label{lem::orth_proj}
Given $\Sigma$ and $U$ as above, we have ${\rm Pr}(U, \Sigma)= \Sigma \pi_{U}(\Sigma)$.
\end{lemma}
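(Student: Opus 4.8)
The plan is to verify directly that the matrix $\Sigma \pi_U(\Sigma) = X(X^T\Sigma^{-1}X)^{-1}X^T\Sigma^{-1}$ satisfies the two defining properties of the $\Sigma$-orthogonal projection ${\rm Pr}(U,\Sigma)$: it restricts to the identity on $U$, and it annihilates $U^{\perp_\Sigma}$. Since $U$ and $U^{\perp_\Sigma}$ together span $\RR^m$ by Remark \ref{rem::Sigma_scalar_prod}, checking these two properties pins down the linear map uniquely, so this will suffice.

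First I would handle the restriction to $U$. Any $u \in U$ can be written as $u = Xc$ for some $c \in \RR^r$, because the columns of $X$ form a basis of $U$. Substituting, $\Sigma\pi_U(\Sigma)u = X(X^T\Sigma^{-1}X)^{-1}X^T\Sigma^{-1}Xc = X(X^T\Sigma^{-1}X)^{-1}(X^T\Sigma^{-1}X)c = Xc = u$, where the inverse makes sense because $X^T\Sigma^{-1}X$ is a positive-definite $r\times r$ matrix (as $X$ has rank $r$ and $\Sigma^{-1}$ is positive-definite). This gives ${\rm Pr}(U,\Sigma)(u)=u$ on $U$.

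Next I would treat $U^{\perp_\Sigma}$. By definition $v \in U^{\perp_\Sigma}$ means $(x_i \mid v)_\Sigma = x_i^T\Sigma^{-1}v = 0$ for every column $x_i$ of $X$, that is, $X^T\Sigma^{-1}v = 0$. But then $\Sigma\pi_U(\Sigma)v = X(X^T\Sigma^{-1}X)^{-1}(X^T\Sigma^{-1}v) = X(X^T\Sigma^{-1}X)^{-1}\cdot 0 = 0$, so $\Sigma\pi_U(\Sigma)$ vanishes on $U^{\perp_\Sigma}$, matching the second defining property of ${\rm Pr}(U,\Sigma)$.

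Finally, since every $w \in \RR^m$ decomposes uniquely as $w = u + v$ with $u \in U$ and $v \in U^{\perp_\Sigma}$ (the $\Sigma$-orthogonal direct sum decomposition, valid because $(\cdot\mid\cdot)_\Sigma$ is a genuine inner product), linearity forces $\Sigma\pi_U(\Sigma)$ and ${\rm Pr}(U,\Sigma)$ to agree on all of $\RR^m$. This is essentially a routine computation; the only point requiring a moment's care is the invertibility of $X^T\Sigma^{-1}X$ and the fact that the $\Sigma$-orthogonal complement genuinely complements $U$, both of which are already recorded in Remark \ref{rem::Sigma_scalar_prod}, so there is no real obstacle here.
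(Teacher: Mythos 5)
Your proof is correct. The paper's own argument is the same computation run in the opposite direction: instead of verifying that the candidate matrix $X(X^{T}\Sigma^{-1}X)^{-1}X^{T}\Sigma^{-1}$ fixes $U$ and kills $U^{\perp_{\Sigma}}$, the paper starts from the definition of ${\rm Pr}(U,\Sigma)$, writes ${\rm Pr}(U,\Sigma)(w)=X\eta$ for a unique $\eta\in\RR^{r}$, imposes the $\Sigma$-orthogonality of $w-X\eta$ to $U$ on the basis vectors $x_{1},\dots,x_{r}$, and solves $X^{T}\Sigma^{-1}w=X^{T}\Sigma^{-1}X\eta$ for $\eta$, thereby \emph{deriving} the formula rather than checking it. Both routes rest on exactly the same two facts you isolate (invertibility of $X^{T}\Sigma^{-1}X$ and the decomposition $\RR^{m}=U\oplus U^{\perp_{\Sigma}}$ from Remark \ref{rem::Sigma_scalar_prod}); your version makes the uniqueness step explicit, while the paper's version shows where the formula comes from. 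There is no gap in either.
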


\begin{proof}
Let $w \in \RR^m$ and using the linearly independent vectors $x_1, x_2, \cdots, x_r$ spanning $U$ there exists a unique $\eta \in \RR^r$ such that ${\rm Pr}(U, \Sigma)(w)= X \eta \in U$. Then $w - {\rm Pr}(U, \Sigma)(w)= w - X \eta$ is $\Sigma$-orthogonal to $U$ and so for every $u \in U$ we have 
$$(u\vert w - X \eta)_{\Sigma}=0 \Leftrightarrow u^{T} \Sigma^{-1} w= u^{T} \Sigma^{-1} X \eta.$$
In particular, by taking $u \in \{x_1, ..., x_r\}$ we obtain $X^{T}\Sigma^{-1} w= X^{T}\Sigma^{-1} X \eta$ implying ${\rm Pr}(U, \Sigma)= X (X^{T} \Sigma^{-1}  X)^{-1} X^{T} \Sigma^{-1}$, and the conclusion follows.
\end{proof}

\begin{definition}
\label{def::Sigma_adj}
Let $\Sigma \in \Pos_{sym}^1(m)$. For a linear map $A : \RR^m \to \RR^m$ its \textbf{$\Sigma$-adjoint} with respect to the $\Sigma$-scalar product $(\cdot \vert  \cdot)_{\Sigma}$ is the linear map $\Sigma A^{T} \Sigma^{-1}$. $A$ is called \textbf{self-$\Sigma$-adjoint} if $(A(x)\vert y )_{\Sigma}= (x\vert A(y) )_{\Sigma}$, for every $x, y \in \RR^m$, or equivalently, if $A= \Sigma A^{T} \Sigma^{-1}$. 
\end{definition}

\begin{remark}
\label{rem::first_rem_adj}
Let $\Sigma \in  \Pos_{sym}^1(m)$. Then $\Sigma^{-1}, {\rm Pr}(U, \Sigma)$ and $W \Sigma^{-1}$ are self-$\Sigma$-adjoint, for every $W \in \TT_{\Sigma} \mkern-4mu \Pos_{sym}^1(m)=\{A \in \Sym(m) \; \vert \; \tr(g^{-1}A (g^{T})^{-1})=0\}$. Indeed, for example $W \Sigma^{-1}= \Sigma \Sigma^{-1} W \Sigma^{-1}= \Sigma (W \Sigma^{-1})^{T} \Sigma^{-1}$.
\end{remark}

\begin{remark}
\label{rem::second_rem_adj}
For a linear map $A : \RR^m \to \RR^m$ the $\Id_m$-adjoint is just the usual transpose of $A$. Moreover, for $\Sigma \in  \Pos_{sym}^1(m)$,  if $A$ is self-adjoint, i.e., $A=A^{T}$, then $A \Sigma^{-1}$ is self-$\Sigma$-adjoint. And vice versa, if $A$ is self-$\Sigma$-adjoint then $A \Sigma$ is self-adjoint.
\end{remark}


\begin{lemma}
\label{lem::sigma_adj_trace}
Let $\Sigma= g g^{T} \in \Pos_{sym}^1(m)$. Let $\begin{pmatrix} v_{11} & v_{12} \\
v_{21} & v_{22} \\
\end{pmatrix} : \RR^{m} \to \RR^m$ be a self-$\Sigma$-adjoint map where $v_{11} : U \to U$, $v_{22} : U^{\perp_{\Sigma}} \to U^{\perp_{\Sigma}}$, $v_{12} : U^{\perp_{\Sigma}} \to U$, and $v_{21} :U \to U^{\perp_{\Sigma}}$.

Then $\begin{pmatrix} 0 & v_{12} \\
v_{21} & 0 \\
\end{pmatrix}$, 
$\begin{pmatrix} v_{11} & 0\\
0 & 0 \\
\end{pmatrix}$ and $\begin{pmatrix} 0 & 0 \\
0 & v_{22} \\
\end{pmatrix} : \RR^{m} \to \RR^m$
are self-$\Sigma$-adjoint. In particular, we have $\tr(v_{21} v_{12}) \geq 0$, resp., $\tr(v_{11} v_{11}) \geq 0$, with equality if and only if both $v_{12}, v_{21}$ vanish, resp., $v_{11}$ vanishes.
\end{lemma}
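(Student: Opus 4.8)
The plan is to transport the entire setup to an ordinary Euclidean one, where self-$\Sigma$-adjoint maps become symmetric matrices and the $\Sigma$-orthogonal splitting $\RR^m = U \oplus U^{\perp_\Sigma}$ becomes the standard block splitting $\RR^r \oplus \RR^{m-r}$. Concretely, I would fix a basis $f_1,\dots,f_m$ of $\RR^m$ that is orthonormal for the $\Sigma$-scalar product $(\cdot\,\vert\,\cdot)_\Sigma$ and adapted to the splitting, i.e.\ $f_1,\dots,f_r$ span $U$ and $f_{r+1},\dots,f_m$ span $U^{\perp_\Sigma}$. Such a basis exists: by Remark \ref{rem::Sigma_scalar_prod} one has $\RR^m = U \oplus U^{\perp_\Sigma}$ with $\dim U^{\perp_\Sigma} = m - r$, and $(\cdot\,\vert\,\cdot)_\Sigma$ restricts to a positive-definite form on each summand, so Gram--Schmidt inside each summand produces the $f_i$. (Equivalently one may conjugate everything by $g^{-1}$, using $(x\,\vert\,y)_\Sigma = \langle g^{-1}x, g^{-1}y\rangle$ and $g^{-1}(U^{\perp_\Sigma}) = (g^{-1}U)^{\perp}$.) In the basis $(f_i)$ the form $(\cdot\,\vert\,\cdot)_\Sigma$ is the standard dot product, so a linear map of $\RR^m$ is self-$\Sigma$-adjoint if and only if its matrix is symmetric; and the matrix of the given map $V = \begin{pmatrix} v_{11} & v_{12}\\ v_{21} & v_{22}\end{pmatrix}$ in this basis is exactly that block matrix, where $U$ carries the basis $f_1,\dots,f_r$ and $U^{\perp_\Sigma}$ carries $f_{r+1},\dots,f_m$.

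Since $V$ is self-$\Sigma$-adjoint, this matrix is symmetric, which forces $v_{11}^{T} = v_{11}$, $v_{22}^{T} = v_{22}$ and $v_{21} = v_{12}^{T}$. Hence the three matrices $\begin{pmatrix} 0 & v_{12}\\ v_{21} & 0\end{pmatrix}$, $\begin{pmatrix} v_{11} & 0\\ 0 & 0\end{pmatrix}$ and $\begin{pmatrix} 0 & 0\\ 0 & v_{22}\end{pmatrix}$ are all symmetric, i.e.\ self-$\Sigma$-adjoint, which is the first assertion. Alternatively, a coordinate-free argument: put $P := {\rm Pr}(U, \Sigma)$ and $Q := \Id_m - P$, which are self-$\Sigma$-adjoint (Remark \ref{rem::first_rem_adj}) idempotents with $P + Q = \Id_m$; the three block maps are then $PVP$, $QVQ$ and $PVQ + QVP$, and their self-$\Sigma$-adjointness follows from that of $V$, $P$, $Q$ together with the fact that $A \mapsto \Sigma A^{T}\Sigma^{-1}$ reverses the order of products.

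For the trace inequalities I would compute once more in the Euclidean picture, the trace being invariant under the change of basis. Using $v_{21} = v_{12}^{T}$, the quantity $\tr(v_{21} v_{12}) = \tr(v_{12}^{T} v_{12})$ equals the sum of the squares of the entries of $v_{12}$, hence is $\ge 0$ and vanishes exactly when $v_{12} = 0$, in which case $v_{21} = v_{12}^{T} = 0$ as well. Similarly $v_{11}$ is a real symmetric matrix, so it has real eigenvalues $\mu_1,\dots,\mu_r$ and $\tr(v_{11} v_{11}) = \sum_{i} \mu_i^2 \ge 0$, with equality if and only if all $\mu_i = 0$, i.e.\ if and only if $v_{11} = 0$. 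The only step needing care is the bookkeeping that identifies the abstract blocks $v_{ij}$ of the statement with the corresponding blocks of the matrix of $V$ in the adapted basis --- so that ``symmetric matrix'' really does translate back into the asserted self-$\Sigma$-adjointness and so that the two notions of trace coincide --- but this is immediate once one observes that the adapted basis respects the splitting $\RR^m = U \oplus U^{\perp_\Sigma}$, and I do not anticipate any genuine obstacle beyond it.
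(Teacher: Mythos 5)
Your proof is correct, but it takes a genuinely different route from the paper's. You first choose a $\Sigma$-orthonormal basis adapted to the splitting $\RR^m = U \oplus U^{\perp_\Sigma}$ (equivalently, conjugate by $g^{-1}$), so that self-$\Sigma$-adjointness becomes ordinary symmetry of the block matrix; the relations $v_{11}^T = v_{11}$, $v_{22}^T = v_{22}$, $v_{21} = v_{12}^T$ then fall out at once, and the trace inequalities reduce to $\tr(v_{12}^T v_{12}) = \sum_{i,j}(v_{12})_{ij}^2 \ge 0$ and $\tr(v_{11}^2) = \sum_i \mu_i^2 \ge 0$, with the equality cases immediate. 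The paper instead stays coordinate-free throughout: it tests the identity $(A(x)\vert y)_\Sigma = (x\vert A(y))_\Sigma$ on vectors with $x_1 = y_1 = 0$, $x_2 = y_2 = 0$, $x_1 = 0$, $y_1 = 0$ in turn, extracts the self-$\Sigma$-adjointness of each block map by adding and cancelling the resulting identities, and then gets the sign of the trace from $\tr(CC) = \tr(C\,\Sigma C^T \Sigma^{-1}) = \tr\bigl(g^{-1}Cg\,(g^{-1}Cg)^T\bigr) \ge 0$ --- essentially your computation, but performed without ever orthonormalizing. Your approach buys brevity and transparency (the whole lemma becomes a statement about symmetric matrices in block form), at the mild cost of the bookkeeping you flag at the end, namely checking that traces of the abstract compositions $v_{21}v_{12}$ and $v_{11}v_{11}$ are basis-independent endomorphism traces and so may be computed in the adapted basis; this is fine since $v_{21}v_{12}$ and $v_{11}^2$ are endomorphisms of $U^{\perp_\Sigma}$ and $U$ respectively. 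Your secondary coordinate-free argument via $P = {\rm Pr}(U,\Sigma)$, $Q = \Id_m - P$ and the anti-multiplicativity of $A \mapsto \Sigma A^T \Sigma^{-1}$ is also valid and is closer in spirit to the paper's, while being shorter than the paper's case-by-case testing.
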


\begin{proof}
Take  $C \in \{\begin{pmatrix} 0 & v_{12} \\
v_{21} & 0 \\
\end{pmatrix}, \begin{pmatrix} v_{11} & 0\\
0 & 0 \\
\end{pmatrix},  \begin{pmatrix} 0 & 0 \\
0 & v_{22} \\
\end{pmatrix}\}$. To prove $C$ is the $\Sigma$-adjoint of $C$, it is enough to verify $(C(x)\vert y )_{\Sigma}= (x\vert C(y) )_{\Sigma}$, for every $x, y \in \RR^m$. Indeed, as $A:=\begin{pmatrix} v_{11} & v_{12} \\
v_{21} & v_{22} \\
\end{pmatrix} $ is self-$\Sigma$-adjoint, for every $x=(x_1,x_2)^{T}, y=(y_1,y_2)^{T} \in \RR^m = U \bigoplus U^{\perp_{\Sigma}}$ we have
\begin{equation}
\label{equ::sigma_adj}
\begin{split}
(A(x)\vert y)_{\Sigma} &=((v_{11}x_1+v_{12}x_2,v_{21}x_1+v_{22}x_2)^{T}\vert(y_1,y_2)^{T})_{\Sigma}\\
&=((v_{11}x_1+v_{12}x_2,0)^{T}\vert(y_1,0)^{T})_{\Sigma}+ ((0,v_{21}x_1+v_{22}x_2)^{T}\vert(0,y_2)^{T})_{\Sigma}\\
&=(x\vert A(y))_{\Sigma}\\
&=((x_1,x_2)^{T}\vert (v_{11}y_1+v_{12}y_2,v_{21}y_1+v_{22}y_2)^{T})_{\Sigma}\\
&=((x_1,0)^{T}\vert(v_{11}y_1+v_{12}y_2,0)^{T})_{\Sigma}+ ((0,x_2)^{T}\vert(0,v_{21}y_1+v_{22}y_2)^{T})_{\Sigma}.\\
\end{split}
\end{equation}

If we take  in equality (\ref{equ::sigma_adj}) $x_1=y_1=0$, respectively, $x_2=y_2=0$, one obtains, respectively, 
\begin{equation}
\label{equ::sigma_adj_5}
((0,v_{22}x_2)^{T}\vert(0,y_2)^{T})_{\Sigma}= ((0,x_2)^{T}\vert(0,v_{22}y_2)^{T})_{\Sigma}
\end{equation}
\begin{equation}
\label{equ::sigma_adj_6}
((v_{11}x_1,0)^{T}\vert(y_1,0)^{T})_{\Sigma}= ((x_1,0)^{T}\vert(v_{11}y_1,0)^{T})_{\Sigma}.
\end{equation}
Equalities (\ref{equ::sigma_adj_5}), (\ref{equ::sigma_adj_6}) prove $\begin{pmatrix} v_{11} & 0\\
0 & 0 \\
\end{pmatrix},  \begin{pmatrix} 0 & 0 \\
0 & v_{22} \\
\end{pmatrix}$ are self-$\Sigma$-adjoint.

Taking $x_1=0$, respectively, $y_1=0$ in equality (\ref{equ::sigma_adj})  one obtains
\begin{equation}
\label{equ::sigma_adj_1}
\begin{split}
&((v_{12}x_2,0)^{T}\vert(y_1,0)^{T})_{\Sigma}+ ((0,v_{22}x_2)^{T}\vert(0,y_2)^{T})_{\Sigma}= ((0,x_2)^{T}\vert(0,v_{21}y_1+v_{22}y_2)^{T})_{\Sigma}=\\
&= ((0,x_2)^{T}\vert(0,v_{21}y_1)^{T})_{\Sigma} + ((0,x_2)^{T}\vert(0,v_{22}y_2)^{T})_{\Sigma}\\
\end{split}
\end{equation}
respectively
\begin{equation}
\label{equ::sigma_adj_2}
\begin{split}
((0,v_{21}x_1+v_{22}x_2)^{T}\vert(0,y_2)^{T})_{\Sigma}&= ((0,v_{21}x_1)^{T}\vert(0,y_2)^{T})_{\Sigma} + ((0,v_{22}x_2)^{T}\vert(0,y_2)^{T})_{\Sigma}\\
&=((x_1,0)^{T}\vert((v_{12}y_2,0)^{T})_{\Sigma}+ ((0,x_2)^{T}\vert(0,v_{22}y_2)^{T})_{\Sigma}.\\
\end{split}
\end{equation}

Because we have proved $\begin{pmatrix} 0 & 0 \\
0 & v_{22} \\
\end{pmatrix}$ is self-$\Sigma$-adjoint, from equalities (\ref{equ::sigma_adj_1}), (\ref{equ::sigma_adj_2}) we have
\begin{equation}
\label{equ::sigma_adj_3}
((v_{12}x_2,0)^{T}\vert(y_1,0)^{T})_{\Sigma}=((0,x_2)^{T}\vert(0,v_{21}y_1)^{T})_{\Sigma}
\end{equation}
\begin{equation}
\label{equ::sigma_adj_4}
((0,v_{21}x_1)^{T}\vert(0,y_2)^{T})_{\Sigma}=((x_1,0)^{T}\vert((v_{12}y_2,0)^{T})_{\Sigma}.
\end{equation}
Adding up equalities (\ref{equ::sigma_adj_3}), (\ref{equ::sigma_adj_4})  we obtain exactly the desired equality $(C(x)\vert y )_{\Sigma}= (x\vert C(y) )_{\Sigma}$, for every $x=(x_1,x_2)^{T}, y=(y_1,y_2)^{T} \in \RR^m = U \bigoplus U^{\perp_{\Sigma}}$.

Thus $C= \Sigma C^{T} \Sigma^{-1}= \Sigma \begin{pmatrix} 0 & v_{21}^{T} \\
v_{12}^{T} & 0 \\
\end{pmatrix} \Sigma^{-1}$.
Now $\tr(C C)= 2 \tr(v_{21}v_{12})= \tr(C \Sigma C^{T} \Sigma^{-1})= \tr(g^{-1}C g g^{T} C^{T} (g^{T})^{-1})= \tr(g^{-1}C g(g^{-1}C g)^{T})\geq 0$. One sees we have equality if and only if $g^{-1}C g=0 \Leftrightarrow C=0$.

As well, if you take $C=\begin{pmatrix} v_{11} & 0\\
0 & 0 \\
\end{pmatrix}$ in the above calculation, the conclusion of the lemma follows.
\end{proof}

\begin{lemma}
\label{lem::strict_conv}
Let  $U$ be an element of $G(m,r)$, $\Sigma \in \Pos_{sym}^1(m)$ and $Z \in \TT_{\Sigma} \mkern-4mu \Pos_{sym}^1(m) \setminus \{0_m\}$. Then for the function $\ell_U$ given in (\ref{log-density}) we have $$ \langle \nabla_{Z} \grad \ell_{U}(\Sigma) , Z \rangle_{\Sigma} \geq 0.$$ The equality holds if and only if 
$$Z \Sigma^{-1}=\begin{pmatrix} v_{11} & 0 \\
0 & v_{22} \\
\end{pmatrix} : U \bigoplus U^{\perp_{\Sigma}} \to U \bigoplus U^{\perp_{\Sigma}}.$$
\end{lemma}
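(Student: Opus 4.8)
\noindent The plan is to substitute the explicit formula for $\nabla_{Z}\grad\ell_{U}(\Sigma)$ from Lemma~\ref{lem::covariant_deriv} into the identity $\langle\nabla_{Z}\grad\ell_{U}(\Sigma),Z\rangle_{\Sigma}=\tr\bigl(\Sigma^{-1}\,\nabla_{Z}\grad\ell_{U}(\Sigma)\,\Sigma^{-1}Z\bigr)$ and to simplify it. The algebraic facts I would exploit are that $\Pr(U,\Sigma)=\Sigma\pi_{U}(\Sigma)$ by Lemma~\ref{lem::orth_proj}, that $\Pr(U,\Sigma)$ is idempotent, and that it is self-$\Sigma$-adjoint, equivalently $\Pr(U,\Sigma)^{T}\Sigma^{-1}=\Sigma^{-1}\Pr(U,\Sigma)$ (Remark~\ref{rem::first_rem_adj}). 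Rewriting the three summands of $\nabla_{Z}\grad\ell_{U}(\Sigma)$ in terms of $\Pr(U,\Sigma)$ rather than $\pi_{U}(\Sigma)$ and using cyclicity of the trace repeatedly, the whole expression collapses to a combination of only two traces.

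\noindent To make this collapse transparent I would conjugate everything by the unique symmetric positive-definite square root $g=g^{T}\in\SL(m,\RR)$ of $\Sigma$ (Remark~\ref{rem::Sigma_scalar_prod}). Put $\widetilde{Z}:=g^{-1}Zg^{-1}\in\Sym(m)$, which is nonzero since $Z\neq 0_{m}$, put $Y:=g^{-1}X$, and put $Q:=g^{-1}\Pr(U,\Sigma)g=Y(Y^{T}Y)^{-1}Y^{T}$, which is precisely the Euclidean-orthogonal projection of $\RR^{m}$ onto the $r$-plane $g^{-1}U$, hence a symmetric idempotent. Since conjugation by $g$ sends $\Sigma^{-1}$ to $g^{-2}$ and $Z\Sigma^{-1}$ to $g\widetilde{Z}g^{-1}$, one further use of cyclicity yields
\begin{equation*}
\langle\nabla_{Z}\grad\ell_{U}(\Sigma),Z\rangle_{\Sigma}=\frac{1}{2}\Bigl(\tr\bigl(Q\widetilde{Z}^{2}\bigr)-\tr\bigl(Q\widetilde{Z}Q\widetilde{Z}\bigr)\Bigr).
\end{equation*}
As a cross-check, the same identity also drops out of \eqref{secondder} by differentiating the map $t\mapsto\ell_{U}\bigl(g\exp(t\widetilde{Z})g^{T}\bigr)=\frac{1}{2}\log\det\bigl(Y^{T}\exp(-t\widetilde{Z})Y\bigr)+\mathrm{const}$ twice at $t=0$ with the help of Lemma~\ref{lem::diff_det}.

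\noindent Finally I would decompose $\RR^{m}=g^{-1}U\oplus(g^{-1}U)^{\perp}$. In this splitting $Q=\diag(\Id_{r},0)$ and $\widetilde{Z}=\left(\begin{smallmatrix}a&b\\ b^{T}&c\end{smallmatrix}\right)$ with $a,c$ symmetric and $b$ the $r\times(m-r)$ off-diagonal block; a one-line block computation gives $\tr(Q\widetilde{Z}^{2})=\tr(a^{2})+\tr(bb^{T})$ and $\tr(Q\widetilde{Z}Q\widetilde{Z})=\tr(a^{2})$, so
\begin{equation*}
\langle\nabla_{Z}\grad\ell_{U}(\Sigma),Z\rangle_{\Sigma}=\frac{1}{2}\tr\bigl(bb^{T}\bigr)\ge 0 ,
\end{equation*}
with equality exactly when $b=0$, i.e.\ when $\widetilde{Z}$ is block-diagonal for the splitting $g^{-1}U\oplus(g^{-1}U)^{\perp}$. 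Because $(x\mid y)_{\Sigma}=\langle g^{-1}x,g^{-1}y\rangle$ (Remark~\ref{rem::Sigma_scalar_prod}), multiplication by $g$ carries $g^{-1}U\oplus(g^{-1}U)^{\perp}$ to $U\oplus U^{\perp_{\Sigma}}$ and carries $\widetilde{Z}$ to $Z\Sigma^{-1}=g\widetilde{Z}g^{-1}$, so $b=0$ translates exactly into $Z\Sigma^{-1}=\left(\begin{smallmatrix}v_{11}&0\\ 0&v_{22}\end{smallmatrix}\right)$ on $U\oplus U^{\perp_{\Sigma}}$, as claimed. Equivalently, since $Z\Sigma^{-1}$ is self-$\Sigma$-adjoint (Remark~\ref{rem::first_rem_adj}) one may avoid this last conjugation by instead identifying $\langle\nabla_{Z}\grad\ell_{U}(\Sigma),Z\rangle_{\Sigma}$ with $\frac{1}{2}\tr(v_{21}v_{12})$ in the blockwise notation of Lemma~\ref{lem::sigma_adj_trace}, whose conclusion then supplies both the inequality and the equality case at once.

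\noindent The only genuinely laborious step is the first simplification: keeping careful track of the $\Sigma^{\pm1}$, $g$ and $X$ factors, and correctly using that $\Pr(U,\Sigma)$ is a $\Sigma$-adjoint projector to eliminate all but two trace terms. Everything past the identity $\langle\nabla_{Z}\grad\ell_{U}(\Sigma),Z\rangle_{\Sigma}=\frac{1}{2}(\tr(Q\widetilde{Z}^{2})-\tr(Q\widetilde{Z}Q\widetilde{Z}))$ is routine linear algebra.
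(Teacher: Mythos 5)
Your proof is correct and follows essentially the same route as the paper: both reduce $\langle\nabla_{Z}\grad\ell_{U}(\Sigma),Z\rangle_{\Sigma}$ via Lemma~\ref{lem::covariant_deriv} and cyclicity of the trace to $\tfrac12\tr\bigl((\Id_m-\Pr(U,\Sigma))\,Z\Sigma^{-1}\,\Pr(U,\Sigma)\,Z\Sigma^{-1}\bigr)$ and then read off sign and equality case from the block decomposition of $Z\Sigma^{-1}$ relative to $U\oplus U^{\perp_\Sigma}$. The only cosmetic difference is that you make positivity manifest by conjugating by the symmetric square root $g$ and computing $\tfrac12\tr(bb^{T})$ in Euclidean coordinates, whereas the paper invokes Lemma~\ref{lem::sigma_adj_trace} (whose own proof performs the same conjugation); you correctly note that alternative yourself.
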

\begin{proof}
By Remark \ref{rem::first_rem_adj} $Z \Sigma^{-1}$ is self-$\Sigma$-adjoint and moreover we can write $$Z \Sigma^{-1}=\begin{pmatrix} v_{11} & v_{12} \\
v_{21} & v_{22} \\
\end{pmatrix} : \RR^{m} \to \RR^m$$ where $v_{11} : U \to U$, $v_{22} : U^{\perp_{\Sigma}} \to U^{\perp_{\Sigma}}$, $v_{12} : U^{\perp_{\Sigma}} \to U$ and $v_{21} :U \to U^{\perp_{\Sigma}}$. Then by Lemma \ref{lem::sigma_adj_trace} $\tr(v_{21} v_{12}) \geq 0$, resp., $\tr(v_{11} v_{11}) \geq 0$, with equality if and only if both $v_{12}, v_{21}$ vanish, resp., $v_{11}$ vanishes.

As ${\rm Pr}(U, \Sigma)= \Sigma \pi_{U}(\Sigma)$ is the $\Sigma$-orthogonal projection onto $U$ we can write $${\rm Pr}(U, \Sigma) =\begin{pmatrix} 1 & 0 \\
0 & 0 \\
\end{pmatrix} : U \bigoplus U^{\perp_{\Sigma}} \to U \bigoplus U^{\perp_{\Sigma}}.$$

By Lemma \ref{lem::covariant_deriv} and an easy computation
\begin{equation}
\label{equ::grad_trace_proj}
\begin{split}
\langle \nabla_{Z} \grad \ell_{U}(\Sigma) , Z \rangle_{\Sigma}&=\frac{1}{2}\tr((\Sigma^{-1}- \pi_{U}(\Sigma)) Z\pi_{U}(\Sigma)Z)\\
&= \frac{1}{2} \tr((\Id_m- {\rm Pr}(U, \Sigma)) Z \Sigma^{-1} {\rm Pr}(U, \Sigma) Z \Sigma^{-1})\\
&=  \frac{1}{2}\tr(\begin{pmatrix}  0 & 0 \\
0 & 1 \\
\end{pmatrix} \begin{pmatrix} v_{11} & v_{12} \\
v_{21} & v_{22} \\
\end{pmatrix} 
\begin{pmatrix} 1 & 0 \\
0 & 0 \\
\end{pmatrix}
\begin{pmatrix} v_{11} & v_{12} \\
v_{21} & v_{22} \\
\end{pmatrix})\\
&= \frac{1}{2}\tr(v_{21}v_{12}) \geq 0.\\
\end{split}
\end{equation}
\end{proof}



\begin{proposition}
\label{prop::strict_conv_P}
Let $\shp$ be a probability measure on $G(m,r)$, $\Sigma \in \Pos_{sym}^1(m)$ and $Z \in \TT_{\Sigma} \mkern-4mu \Pos_{sym}^1(m) \setminus \{0_m\}$. Then the log-likelihood $\ell_{\shp}$ of $\shp$ given by (\ref{log-likelihood}) verifies 
$$\langle \nabla_{Z} \grad \ell_{\shp}(\Sigma) , Z \rangle_{\Sigma} \geq 0.$$
The equality holds if and only if 
$$Z \Sigma^{-1}=\begin{pmatrix} v_{11} & 0 \\
0 & v_{22} \\
\end{pmatrix} : U \bigoplus U^{\perp_{\Sigma}} \to U \bigoplus U^{\perp_{\Sigma}}$$
for $\shp$-almost all $U \in G(m,r)$. In particular, we have $Z \Sigma^{-1}(U^{\perp_{\Sigma}}) \subseteq U^{\perp_{\Sigma}}$ and $Z \Sigma^{-1}(U) \subseteq U$, for $\shp$-almost all $U \in G(m,r)$.
\end{proposition}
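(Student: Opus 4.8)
The proposition follows from Lemma \ref{lem::strict_conv} by integrating against $\shp$, and the plan is as follows. First I would record that, for the fixed $\Sigma \in \Pos_{sym}^1(m)$ and the fixed direction $Z$, the map $U \mapsto \nabla_{Z}\grad \ell_{U}(\Sigma)$ is bounded on $G(m,r)$: by Lemma \ref{lem::covariant_deriv} it is a fixed polynomial expression in $Z$, $\Sigma$, $\Sigma^{-1}$ and $\pi_{U}(\Sigma)$, and $\Sigma\pi_{U}(\Sigma) = {\rm Pr}(U,\Sigma)$ is a projector, hence of operator norm bounded independently of $U$. In particular $U \mapsto \langle \nabla_{Z}\grad \ell_{U}(\Sigma), Z\rangle_{\Sigma}$ is a bounded measurable function on $G(m,r)$, the vector-valued integral $\int_{G(m,r)}\nabla_{Z}\grad \ell_{U}(\Sigma)\,d\shp(U)$ is well defined, and by the identities recalled in Section \ref{sec::Grass_max_like} one has
\[
\langle \nabla_{Z}\grad \ell_{\shp}(\Sigma), Z\rangle_{\Sigma}
= \int_{G(m,r)} \langle \nabla_{Z}\grad \ell_{U}(\Sigma), Z\rangle_{\Sigma}\,d\shp(U).
\]

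By Lemma \ref{lem::strict_conv} the integrand is $\geq 0$ for every $U$, so the left-hand side is $\geq 0$; this is the first assertion. For the equality case I would invoke the elementary fact that a nonnegative $\shp$-integrable function has vanishing integral against $\shp$ if and only if it vanishes $\shp$-almost everywhere. Hence $\langle \nabla_{Z}\grad \ell_{\shp}(\Sigma), Z\rangle_{\Sigma} = 0$ holds if and only if $\langle \nabla_{Z}\grad \ell_{U}(\Sigma), Z\rangle_{\Sigma} = 0$ for $\shp$-almost all $U \in G(m,r)$, and by the equality characterization in Lemma \ref{lem::strict_conv} this is exactly the condition that $Z\Sigma^{-1}$ be block-diagonal with respect to the splitting $\RR^{m} = U\oplus U^{\perp_{\Sigma}}$ for $\shp$-almost all $U$, i.e. $v_{12}=0$ and $v_{21}=0$ in the notation of Lemmas \ref{lem::sigma_adj_trace}--\ref{lem::strict_conv}.

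Finally, the "in particular" clause is just the translation of block-diagonality into invariance: $v_{21}=0$ says that $Z\Sigma^{-1}$ maps $U$ into $U$, and $v_{12}=0$ says that it maps $U^{\perp_{\Sigma}}$ into $U^{\perp_{\Sigma}}$, for $\shp$-almost all $U$. No genuine difficulty is expected in this argument; the only point that merits an explicit sentence — and which I have flagged above — is the uniform boundedness and measurability of $U \mapsto \langle \nabla_{Z}\grad \ell_{U}(\Sigma), Z\rangle_{\Sigma}$, which legitimizes both the use of the integral identity from Section \ref{sec::Grass_max_like} with $W = Z$ and the "vanishing integral implies vanishing almost everywhere" step.
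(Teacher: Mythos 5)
Your proposal is correct and follows essentially the same route as the paper: the paper's proof is exactly the one-line reduction to Lemma \ref{lem::strict_conv} via the integral identity $\langle \nabla_{Z}\grad \ell_{\shp}(\Sigma), Z\rangle_{\Sigma} = \int_{G(m,r)} \langle \nabla_{Z}\grad \ell_{U}(\Sigma), Z\rangle_{\Sigma}\,d\shp(U)$, with the equality case handled by the same "nonnegative integrand with vanishing integral vanishes $\shp$-a.e." argument. Your additional remarks on boundedness and measurability of the integrand (via ${\rm Pr}(U,\Sigma)$ being a projector) are a welcome justification of a step the paper leaves implicit, but they do not change the substance of the argument.
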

\begin{proof}
 By using the equation 
$$\langle \nabla_{Z} \grad \ell_{\shp}(\Sigma) , Z \rangle_{\Sigma}= \int\limits_{G(m,r)} \langle \nabla_{Z} \grad \ell_{U}(\Sigma) , Z \rangle_{\Sigma} d\shp(U)$$ it is a consequence of Lemma \ref{lem::strict_conv}.
\end{proof}
 As by Proposition \ref{prop::strict_conv_P} the log-likelihood function $\ell_\shp$ is convex, its minima are exactly the zeroes of its gradient:

\begin{corollary}[M-equation]\label{M-equation}
A parameter $\sig\in\pas$ is a \gmle\ of a probability measure~$\shp$ on $\sas$ if and only if it satisfies the
\textbf{M-equation}
\begin{equation}
\int_\sas {\rm Pr}(U,\sig)\,d\shp(U)=\frac{r}{m} \Id_m,
\end{equation}
where ${\rm Pr}(U,\sig)=X (X^{T} \Sigma^{-1} X)^{-1} X^{T}\sig^{-1}$.
\end{corollary}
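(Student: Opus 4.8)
The plan is to combine the explicit gradient formula of Lemma~\ref{lem::grad_formula} with the geodesic convexity of $\ell_\shp$ proved in Proposition~\ref{prop::strict_conv_P}. A \gmle\ of $\shp$ is by definition a minimizer of $\ell_\shp$ on $\pas$, and for a geodesically convex function on the \cat\ space $\pas$ the minimizers are exactly the critical points; so it suffices to rewrite the equation $\grad\ell_\shp(\sig)=0$ as the M-equation.

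First I would integrate the pointwise gradient. Using $\grad\ell_\shp(\Sigma)=\int_\sas\grad\ell_U(\Sigma)\,d\shp(U)$ together with Lemma~\ref{lem::grad_formula},
\[
\grad\ell_\shp(\Sigma)=\frac{r}{2m}\,\Sigma-\frac12\int_\sas X(X^{T}\Sigma^{-1}X)^{-1}X^{T}\,d\shp(U).
\]
Since $X(X^{T}\Sigma^{-1}X)^{-1}X^{T}=\big(X(X^{T}\Sigma^{-1}X)^{-1}X^{T}\Sigma^{-1}\big)\Sigma={\rm Pr}(U,\Sigma)\,\Sigma$, this becomes
\[
\grad\ell_\shp(\Sigma)=\frac12\Big(\frac{r}{m}\,\Id_m-\int_\sas{\rm Pr}(U,\Sigma)\,d\shp(U)\Big)\Sigma .
\]
As $\Sigma$ is invertible, $\grad\ell_\shp(\Sigma)=0$ if and only if $\int_\sas{\rm Pr}(U,\Sigma)\,d\shp(U)=\frac{r}{m}\Id_m$, which is precisely the M-equation. (In passing one checks, as expected, that the left-hand side has trace $r$ and that the identity is consistent with $\grad\ell_U(\Sigma)\in\TT_{\Sigma}\pas$ for every $U$.)

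It remains to identify critical points with \gmle s. If $\sig\in\pas$ is a \gmle\ it minimizes $\ell_\shp$, hence $\grad\ell_\shp(\sig)=0$ and the M-equation follows from the computation above. Conversely, if $\sig\in\pas$ satisfies the M-equation then $\grad\ell_\shp(\sig)=0$; for an arbitrary $\sig'\in\pas$ let $\gamma$ be the geodesic of $\pas$ with $\gamma(0)=\sig$ and $\gamma(1)=\sig'$ (it exists, e.g.\ in the form $g\exp(tV)g^{T}$ of Lemma~\ref{lem::action}). By Proposition~\ref{prop::strict_conv_P} the map $t\mapsto\ell_\shp(\gamma(t))$ is convex, and its derivative at $0$ equals $\langle\grad\ell_\shp(\sig),\dot\gamma(0)\rangle_{\sig}=0$, so $t=0$ is a global minimum of $t\mapsto\ell_\shp(\gamma(t))$; hence $\ell_\shp(\sig)\le\ell_\shp(\sig')$. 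As $\sig'$ was arbitrary, $\sig$ is a \gmle.

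I do not expect a genuine obstacle: essentially all the content already sits in Lemma~\ref{lem::grad_formula} and Proposition~\ref{prop::strict_conv_P}, and the corollary is bookkeeping. The only points deserving a word of justification are the interchange of $\grad$ with the integral over $\sas$ (valid because $U\mapsto\grad\ell_U(\Sigma)$ is continuous, hence bounded, on the compact space $\sas$, and was already used in Section~\ref{sec::Grass_max_like}) and the standard passage from ``critical point of a geodesically convex function on a Hadamard manifold'' to ``global minimizer'', which uses only that any two points of $\pas$ are joined by a geodesic.
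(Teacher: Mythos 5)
Your proposal is correct and follows essentially the same route as the paper, which derives the corollary in one line from the convexity established in Proposition~\ref{prop::strict_conv_P} (minima of a geodesically convex function are exactly the zeroes of its gradient) together with the gradient formula of Lemma~\ref{lem::grad_formula}; you have merely written out the identification $X(X^{T}\Sigma^{-1}X)^{-1}X^{T}={\rm Pr}(U,\Sigma)\,\Sigma$ and the critical-point-implies-global-minimum step explicitly. No gaps.
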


Let $U_1, \cdots, U_n \subset G(m,r)$ and let $\shp_n := \frac{1}{n} (\delta_{U_1}+...+ \delta_{U_n})$ be the empirical  probability measure on $G(m,r)$ corresponding to the samples $\{U_1, ..., U_n\}$. In general, given $n>0$ large enough  one would like to have a sufficient condition on $U_1, \cdots, U_n  \subset G(m,r)$ so that  $\ell_{\shp_n}$ to be a strictly geodesically convex function. This is not the case due to the following example.
\begin{example}
\label{ex::not_suff}
Take $m=3$, $r=2$ and $\Sigma = \Id_3 \in \Pos_{sym}^1(3)$. Consider $$Z = \begin{pmatrix} 1/2 & 0 & 0 \\
0 & 1/2 & 0 \\
0 & 0 & -1 \\
\end{pmatrix}  \in \TT_{\Id_3} \mkern-4mu \Pos_{sym}^1(m) = \{A \in \Sym(m) \; \vert \; \tr(A)=0\}. $$ 
As $Z \Sigma^{-1}= Z$ is self-$\Sigma$-adjoint, thus just self-adjoint, and its eigenvalues are $1/2$ and $-1$, the corresponding eigenspaces $E_{1/2}$ and $E_{-1}$ are pairwise $\Sigma$-orthogonal and $\RR^3= E_{1/2}\oplus E_{-1}$.

For every $v \in E_{1/2}$ define $U(v):= \RR v \oplus E_{-1} \in G(3,2)$. Then notice $Z(U(v)) \subseteq U(v)$ and $Z(U(v)^{\perp_{\Sigma}}) \subseteq U(v)^{\perp_{\Sigma}}$ for every $v \in E_{1/2}$. Therefore by Proposition \ref{prop::strict_conv_P}, for every $n>0$ and for every sequence of vectors $\{v_j\}_{1 \leq j\leq n} \subset E_{1/2}$ the corresponding $\ell_{\shp_n} $ associated with $\{U(v_j)\}_{1 \leq j\leq n} \subset G(3,2)$ is not strictly geodesically convex at $\Sigma= \Id_3$ and $Z$ as above. Still, by Example \ref{ex::cond_1} below $\shp_n$ has a unique GE (almost surely) for $n$ large enough.
\end{example}

\begin{lemma}
\label{lem::strict_conv_1}
Let $\shp$ be a continuous Borel probability measure on $G(m,r)$ with support $G(m,r)$. Then $\ell_{\shp}$ is a strictly geodesically convex function. 
\end{lemma}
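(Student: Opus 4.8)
The plan is to show that for any $\Sigma \in \Pos_{sym}^1(m)$ and any nonzero $Z \in \TT_{\Sigma}\mkern-4mu\Pos_{sym}^1(m)$, the second derivative $\langle \nabla_{Z}\grad\ell_{\shp}(\Sigma), Z\rangle_{\Sigma}$ is strictly positive, which by \eqref{secondder} is exactly strict geodesic convexity. By Proposition \ref{prop::strict_conv_P} we already know this quantity is $\ge 0$, with equality precisely when $Z\Sigma^{-1}$ is block-diagonal with respect to the splitting $\RR^m = U \oplus U^{\perp_{\Sigma}}$ for $\shp$-almost every $U$; equivalently, $Z\Sigma^{-1}(U)\subseteq U$ for $\shp$-almost every $U$. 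So it suffices to argue that this block-diagonal condition \emph{cannot} hold for $\shp$-almost every $U$ when $\shp$ has support equal to all of $G(m,r)$ and is continuous (non-atomic), unless $Z = 0_m$.

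First I would reduce to a linear-algebra statement about the single self-adjoint operator $L := g^{-1}Z (g^{T})^{-1}$, where $\Sigma = gg^T$ with $g = g^T \in \SL(m,\RR)$ the symmetric positive-definite square root (Remark \ref{rem::Sigma_scalar_prod}); conjugating by $g$ turns $\Sigma$-orthogonality into ordinary Euclidean orthogonality and turns $Z\Sigma^{-1}$ into the genuinely symmetric matrix $L$, with the condition ``$Z\Sigma^{-1}(U)\subseteq U$'' becoming ``$L(W)\subseteq W$'' for the $r$-plane $W = g^{-1}U$. Since $g^{-1}$ is a homeomorphism of $G(m,r)$ preserving the property of having full support, the claim becomes: if $L$ is a nonzero real symmetric $m\times m$ matrix with $\tr L = 0$, then the set $\mathcal{Z}_L := \{W \in G(m,r) : L(W)\subseteq W\}$ of $L$-invariant $r$-planes is not $\shp'$-conull for any continuous full-support measure $\shp'$ on $G(m,r)$. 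I would prove the sharper topological statement that $\mathcal{Z}_L$ has empty interior (indeed is a proper closed subvariety), so its complement is a nonempty open set on which a full-support $\shp'$ puts positive mass, contradicting conullity of $\mathcal{Z}_L$.

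For the sharper statement: if $L$ has at least two distinct eigenvalues (which it must, being nonzero with zero trace), pick eigenvectors $w_1, w_2$ with $Lw_1 = \mu_1 w_1$, $Lw_2 = \mu_2 w_2$, $\mu_1 \neq \mu_2$, and consider the vector $w_1 + tw_2$ for a parameter $t$; an $r$-plane $W$ containing $w_1 + tw_2$ but not containing $w_1$ or $w_2$ individually cannot be $L$-invariant for generic $t$, because $L(w_1 + tw_2) = \mu_1 w_1 + t\mu_2 w_2$ is not a scalar multiple of $w_1 + tw_2$. More efficiently, one can exhibit directly an open ball of $r$-planes that are not $L$-invariant: take any $r$-plane $W_0$ not invariant under $L$ (such exists, else $L$ would be a scalar), observe that non-invariance is an open condition in $G(m,r)$ (it is the condition that the composite $W \hookrightarrow \RR^m \xrightarrow{L} \RR^m \twoheadrightarrow \RR^m/W$ is nonzero, a continuous function of $W$), hence a neighborhood $\mathcal{O}$ of $W_0$ consists entirely of non-$L$-invariant planes. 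Then $\shp(g \cdot \mathcal{O}) > 0$ by the full-support hypothesis, while by Proposition \ref{prop::strict_conv_P} equality in the convexity inequality would force $Z\Sigma^{-1}(U)\subseteq U$ for $\shp$-a.e.\ $U$, in particular for some $U = gW$ with $W \in \mathcal{O}$ — a contradiction. Therefore the inequality is strict, and $\ell_{\shp}$ is strictly geodesically convex.

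The main obstacle is the verification that a nonzero zero-trace symmetric $L$ genuinely fails to have \emph{all} $r$-planes invariant, together with the openness of non-invariance in $G(m,r)$; both are routine but must be stated carefully, since the degenerate case $r = m-1$ and the possibility of $L$ having a single eigenspace of dimension $m-1$ (so that ``most'' but not all $r$-planes are invariant) need the zero-trace condition to rule out $L$ being scalar. The use of the full-support \emph{and} continuity hypotheses on $\shp$ is exactly what converts the measure-theoretic conclusion of Proposition \ref{prop::strict_conv_P} into the desired contradiction: full support gives positive mass to the open set of bad planes, which is all that is needed (continuity/non-atomicity is not strictly required for this argument but is part of the standing hypothesis and does no harm).
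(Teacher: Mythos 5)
Your proposal is correct and takes essentially the same route as the paper: both invoke Proposition \ref{prop::strict_conv_P} to reduce strict convexity to the impossibility of $Z\Sigma^{-1}$ leaving $\shp$-almost every $U$ invariant, exhibit one non-invariant $r$-plane from the spectral decomposition of the self-$\Sigma$-adjoint operator $Z\Sigma^{-1}$ (which has at least two eigenvalues since it is nonzero with zero trace), and conclude by openness of non-invariance together with the full-support hypothesis. Your added remarks (the reduction to a genuinely symmetric $L$ by conjugating with $g$, and the observation that non-atomicity is not actually needed for this step) are accurate but do not change the substance of the argument.
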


\begin{proof}
Suppose by contradiction that $\ell_{\shp}$ is not a strictly geodesically convex function. By Proposition \ref{prop::strict_conv_P} we know $\ell_{\shp}$ is geodesically convex and there exist $\Sigma \in \Pos_{sym}^1(m)$ and $Z \in \TT_{\Sigma} \mkern-4mu \Pos_{sym}^1(m)  \setminus \{0_m\}$ such that $\langle \nabla_{Z} \grad \ell_{\shp}(\Sigma) , Z \rangle_{\Sigma} = 0$. In particular, by the same Proposition \ref{prop::strict_conv_P} we must have $Z \Sigma^{-1}=\begin{pmatrix} v_{11} & 0 \\
0 & v_{22} \\
\end{pmatrix} : U \bigoplus U^{\perp_{\Sigma}} \to U \bigoplus U^{\perp_{\Sigma}}$ and so $Z \Sigma^{-1}(U) \subseteq U $, $Z \Sigma^{-1}(U^{\perp_{\Sigma}}) \subseteq U^{\perp_{\Sigma}}$ for almost every $U$ in the support of $\shp$. 

As $Z \Sigma^{-1}$ is self-$\Sigma$-adjoint, its eigenvalues are real numbers $\lambda_1>\lambda_2>\ldots>\lambda_{s}$  
($s \geq 1$), the corresponding eigenspaces $E_1,\ldots,E_{s}$ are  
pairwise $\sig$-orthogonal, and $\R^m=E_1\oplus\cdots\oplus E_{s}$. As $\tr(Z \Sigma^{-1})= 0$ and $Z \neq 0_m$ we must have $s\geq 2 $.  

We claim there are (many) linearly independent vectors $v_1, v_2,...,v_r \in \R^m$ such that for every $i \in \{1,...,r-1\}$ the vector $v_i$ in some $E_j$, and $v_r$ is such that $\Pr(E_i,\Sigma)(v_r) \neq 0$ for every $i  \in \{1,...,s\}$. Then $ Z \Sigma^{-1}(v_r) \notin \RR v_r$. Then $U :=span(v_1,...,v_r) \in G(m,r)$ has the property that  $Z \Sigma^{-1}(U)$ is not contained in $U$.

Now because $\shp$ is a continuous Borel probability measure on $G(m,r)$ with support $G(m,r)$ and $Z \Sigma^{-1}$ induces a continuous map on $G(m,r)$, there is a small open neighborhood $\mathcal{U} \subset G(m,r)$ of $U$,  thus  $\mathcal{U}$  of positive $\shp$-measure, such that for every $V \in \mathcal{U}$ we have $Z \Sigma^{-1}(V)$ is not contained in $V$ as well. This is a contradiction and the lemma follows.
\end{proof}

Examples of continuous Borel probability measures on $G(m,r)$ such that Lemma \ref{lem::strict_conv_1} holds true are  the \gr\ distributions $\G_\sig$, for $\Sigma \in \Pos_{sym}^1(m)$. As the probability measure $\G_\sig$ is $K$-invariant, where $K= g \SO(m, \RR)g^{-1}$ is the maximal compact subgroup of $\SL(m, \RR)$ stabilizing the point $\Sigma = gg^{T} \in \Pos_{sym}^1(m)$, one can see $\ell_{\G_\sig}$ has $\Sigma$ as a unique GE.

\begin{lemma}
\label{lem::strict_conv_x}
Let $\shp$ be a continuous Borel probability measure on $G(m,r)$ with support $G(m,r)$. Let $U_i$, $i=1,\cdots,n$, be i.i.d. random elements of $G(m,r)$ of law $\shp$, and let 
$\shp_n:= \frac{1}{n} (\delta_{U_1}+...+ \delta_{U_n})$ be the related (random) empirical measure.
Then, if $nr>m$,  $\ell_{\shp_n}$ is a.s. strictly geodesically  convex. 
\end{lemma}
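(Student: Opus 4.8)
The plan is to translate strict geodesic convexity of $\ell_{\shp_n}$ into a transversality property of the random subspaces $U_1,\dots,U_n$, and then to show that this property holds off a proper subvariety of $G(m,r)^n$. \emph{Step 1 (reduction to the absence of a common splitting).} By Proposition~\ref{prop::strict_conv_P} the map $\ell_{\shp_n}$ is geodesically convex, and it fails to be strictly geodesically convex precisely when there are $\Sigma\in\Pos_{sym}^1(m)$ and $Z\in\TT_{\Sigma} \mkern-4mu \Pos_{sym}^1(m)\setminus\{0_m\}$ with $\langle\nabla_Z\grad\ell_{\shp_n}(\Sigma),Z\rangle_\Sigma=0$; by that Proposition this means $A:=Z\Sigma^{-1}$ is block--diagonal with respect to $U_i\oplus U_i^{\perp_\Sigma}$ for every $i$. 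Since $A$ is self-$\Sigma$-adjoint with $\tr(A)=0$ and $A\neq 0$, it has at least two distinct real eigenvalues with pairwise $\Sigma$-orthogonal eigenspaces; taking $W$ to be one eigenspace and $W'=W^{\perp_\Sigma}$ yields a decomposition $\R^m=W\oplus W'$ into proper nonzero subspaces with $U_i=(U_i\cap W)\oplus(U_i\cap W')$ for all $i$, because each $U_i$ is $A$-invariant. Conversely, given such a decomposition, choose $\Sigma$ with $W'=W^{\perp_\Sigma}$ (take the inner product that is the orthogonal direct sum of inner products on $W$ and $W'$, rescaled to determinant one) and set $A:=\lambda\,{\rm Pr}(W,\Sigma)+\mu\,{\rm Pr}(W',\Sigma)$ with $\lambda\dim W+\mu\dim W'=0$ and $\lambda\neq\mu$; then $Z:=A\Sigma\in\TT_{\Sigma} \mkern-4mu \Pos_{sym}^1(m)\setminus\{0_m\}$, $Z\Sigma^{-1}=A$ preserves each $U_i$ and each $U_i^{\perp_\Sigma}$, and Lemma~\ref{lem::strict_conv} forces $\langle\nabla_Z\grad\ell_{U_i}(\Sigma),Z\rangle_\Sigma=0$ for all $i$, hence $\langle\nabla_Z\grad\ell_{\shp_n}(\Sigma),Z\rangle_\Sigma=0$. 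Thus $\ell_{\shp_n}$ is strictly geodesically convex iff $(U_1,\dots,U_n)$ admits no \emph{compatible splitting}, i.e.\ no proper nonzero decomposition $\R^m=W\oplus W'$ with $U_i=(U_i\cap W)\oplus(U_i\cap W')$ for all $i$.

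\emph{Step 2 (the exceptional set is a proper subvariety).} Let $B\subseteq G(m,r)^n$ be the set of tuples admitting a compatible splitting; since the law of $(U_1,\dots,U_n)$ is $\shp^{\otimes n}$ with $\shp$ continuous of full support, it suffices to show $B$ lies in a closed subset of dimension $<nr(m-r)=\dim G(m,r)^n$. Because $nr>m$, every tuple outside a proper subvariety satisfies $\sum_iU_i=\R^m$; on that open dense set a one-line dimension argument using $W\cap W'=0$ shows that any compatible $(W,W')$ must satisfy $W=\sum_i(U_i\cap W)$ and $W'=\sum_i(U_i\cap W')$, so $(W,W')$ is recovered from the intersection data. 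Stratifying by $p=\dim W\in\{1,\dots,m-1\}$ and the type $\vec a=(a_i)_i$, $a_i:=\dim(U_i\cap W)$, one sees that $B$ is, up to a proper subvariety, a finite union of the images under $(W,W',(U_i))\mapsto(U_i)$ of the incidence varieties
\[
I_{p,\vec a}=\bigl\{(W,W',(U_i)_i):\R^m=W\oplus W',\ \dim W=p,\ \dim(U_i\cap W)=a_i,\ U_i=(U_i\cap W)\oplus(U_i\cap W')\ \forall i\bigr\}.
\]

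\emph{Step 3 (the dimension estimate).} Parametrizing $I_{p,\vec a}$ by the subspaces $\ell_i:=U_i\cap W$ and $m_i:=U_i\cap W'$ (so $U_i=\ell_i\oplus m_i$ and $(W,W')=(\sum_i\ell_i,\sum_i m_i)$), $I_{p,\vec a}$ embeds into the locally closed subset of $\prod_iG(m,a_i)\times\prod_iG(m,r-a_i)$ cut out by $\dim\sum_i\ell_i=p$, $\dim\sum_i m_i=m-p$ and $(\sum_i\ell_i)\oplus(\sum_i m_i)=\R^m$. The conditions $\dim\sum_i\ell_i\le p$ and $\dim\sum_i m_i\le m-p$ are proper closed determinantal conditions of codimension $(m-p)\bigl(\sum_i a_i-p\bigr)$, resp.\ $p\bigl(\sum_i(r-a_i)-(m-p)\bigr)$, which cannot both vanish since $\sum_i a_i+\sum_i(r-a_i)=nr>m=p+(m-p)$. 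The plan is to combine these codimension gains with the identity $\sum_i\bigl[a_i(m-a_i)+(r-a_i)(m-r+a_i)\bigr]=nr(m-r)+2\sum_i a_i(r-a_i)$ and, at the strata where these gains alone are insufficient (the "balanced" ones with $p\approx m/2$, $a_i\approx r/2$), with the extra input that every configuration in such a stratum carries a positive-dimensional family of compatible splittings, allowing one to subtract the generic fibre dimension of $I_{p,\vec a}\to G(m,r)^n$ as well. Assembling these bounds gives $\dim I_{p,\vec a}<nr(m-r)$ for every admissible stratum, hence $\shp^{\otimes n}(B)=0$ and $\ell_{\shp_n}$ is a.s.\ strictly geodesically convex.

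The main obstacle is exactly the dimension bookkeeping of Step~3 at the balanced strata: the naive incidence count is not by itself sharp enough to beat the bound $nr>m$, so one must genuinely exploit that for such configurations the spanning systems $\{U_i\cap W\}$ (or $\{U_i\cap W'\}$) are forced to be rank-deficient and that the witnessing pairs $(W,W')$ occur in a family, so that the relevant image has dimension strictly below $\dim G(m,r)^n$. When $2r\le m$ this is transparent, since then $U_i\cap U_j=0$ generically forces $p=\sum_i a_i$ and $m-p=\sum_i(r-a_i)$, whence $nr=m$, a contradiction; the general case requires the more careful count above.
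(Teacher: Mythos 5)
Your Step 1 is correct and is essentially the paper's own reduction via Proposition \ref{prop::strict_conv_P} (you add the useful converse direction, so that non-strictness is exactly equivalent to the existence of a compatible splitting $\R^m=W\oplus W'$). From Step 2 on you diverge from the paper, which does not do algebraic geometry at all: it conditions on the Gaussian generators $x_{ij}$ of the $U_i$, decomposes them along the flag $F_1\subset\cdots\subset F_{s-1}$, and argues that either some $r$-tuple of projections onto $F_{s-1}$ is linearly dependent (probability zero) or all $nr$ vectors lie in $F_{s-1}$ with $\dim F_{s-1}<m$ (probability zero when $nr>m$).

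The genuine gap is Step 3: the estimate $\dim I_{p,\vec a}<nr(m-r)$ is announced as a ``plan'' and never carried out, and it cannot be carried out from the hypothesis $nr>m$ alone. Your own Step 1 equivalence shows why. Take $m=3$, $r=2$, $n=2$ (so $nr=4>3=m$): any two distinct planes $U_1,U_2\subset\R^3$ meet in a line $L$, and the splitting $W=L$, $W'$ any complement, satisfies $U_i=(U_i\cap W)\oplus(U_i\cap W')$ for both $i$. Hence \emph{every} pair admits a compatible splitting, the bad set $B$ is all of $G(3,2)^2$, and no stratified dimension count can place it in a proper subvariety; the stratum $p=1$, $\vec a=(1,1)$ is exactly one of the ``balanced'' strata where you concede the naive count fails. (The same configuration defeats the paper's argument as well: its linear-dependence step treats the flag $F_{s-1}$ as fixed, whereas here $F_1=U_1\cap U_2$ is built from the sample, and the dependence of the projections then occurs with probability one. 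So the obstruction you ran into is a feature of the statement at small $n$, not of your method.) Two smaller points: the inference ``$\shp$ continuous with full support $\Rightarrow$ proper subvarieties of $G(m,r)^n$ are $\shp^{\otimes n}$-null'' is unjustified — non-atomicity does not give absolute continuity with respect to the invariant measure (the paper makes an analogous unjustified reduction to the uniform law); and your closing remark that the case $2r\le m$ is ``transparent'' because $U_i\cap U_j=0$ generically forces $nr=m$ is too quick, since a compatible splitting does not require the $U_i$ to pairwise intersect $W$ nontrivially.
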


\begin{proof}
Consider the sample empirical log-likelihood map
$$\ell_{\shp_n}=\frac{1}{n}\sum_{i=1}^n \ell_{U_i}.$$
In what follows we suppose without loss of generality that $\Sigma =\Id_m$. 
According to Proposition \ref{prop::strict_conv_P}, the fact that the random map 
$\ell_{\shp_n}$ is not strictly convex at $\Sigma =\Id_m$ implies the existence
 of some $Z \in \TT_{\Id_m} \mkern-4mu \Pos_{sym}^1(m) \setminus \{0_m\}$ such that
$Z U_i \subseteq U_i$, $i=1,\cdots,n$.  Moreover, as $Z$ is a tangent vector at $\Id_m$, for every scalar $\lambda \neq 0 \in \RR$ we also have $\lambda Z U_i \subseteq \lambda U_i= U_i$. Therefore it is enough to consider $Z$ in the unit tangent space $\TT_{\Id_m} \mkern-4mu \Pos_{sym}^1(m)$ that is a compact space.  

As $Z$ is self-$\Id_m$-adjoint, thus just self-adjoint, its eigenvalues are real numbers $\lambda_1>\lambda_2>\ldots>\lambda_{s}$  
($s \geq 1$), the corresponding eigenspaces $E_1,\ldots,E_{s}$ are  
pairwise $\Id_m$-orthogonal, thus just orthogonal, and $\R^m=E_1\oplus\cdots\oplus E_{s}$. As $\tr(Z )= 0$ and $Z \neq 0_m$ we must have $s\geq 2 $.  
Using this decomposition, we see that 
$\{Z U_i \subseteq U_i\}$ and $Z$ being self-adjoint implies $U_i=U_i^{1}\oplus\cdots\oplus U_i^{s}$ with $U_i^{k} \subseteq E_k$ or $U_{i}^{k}=\{0_m\}$, for $i=1,\cdots, n$ and $k =1,\cdots,s$. Suppose furthermore that
${\rm dim}(E_1)\ge {\rm dim}(E_2)\ge \cdots \ge {\rm dim}(E_s)>0$.
As $U_i$ has a continuous Borel law on $G(m,r)$, it is sufficient that the probability that $\shp_n$ is not strictly convex vanishes when the $U_i$ are uniform on $G(m,r)$, of law 
$\G_{\Id_m}$. In this particular case each $U_i=\langle x_{i1},\cdots,x_{ir}\rangle$ is generated by i.i.d. normal multivariate random vectors $x_{i1},\cdots, x_{ir}\in\R^m$  of covariance matrix
$\sig =\Id_m$. 

In summary, the event of interest is included in the following
\begin{eqnarray*}
A&=&\{\exists \R^m=E_1\oplus\cdots\oplus E_{s},\ s\ge 2, \\
  & & U_i=U_i^{1}\oplus\cdots\oplus U_i^{s},\ U_i^{k} \subseteq E_k\ i=1,\cdots, n,\ k=1,\cdots,s\}.
  \end{eqnarray*}
 
Let $W\in {\rm O}(m)$ be an orthogonal matrix transforming the orthogonal subspaces $E_k$ into canonical ones. We can then rewrite $A$ as follows, with the various $E_k$ being canonical orthogonal subspaces
\begin{eqnarray*}
\label{eq::A_ortho}
A&=&\{\exists \R^m=E_1\oplus\cdots\oplus E_{s},\ s\ge 2, \hbox{ and }W\in {\rm O}(m),\\
  & & V_i =W^{-1}U_i=V_i^{1}\oplus\cdots\oplus V_i^{s},\ V_i^{k} \subseteq E_k,\ k=1,\cdots,s,\\
  & & V_i =\langle W^{-1}x_{i1},\cdots, W^{-1} x_{ir}\rangle,\ i=1,\cdots, n,\}.
    \end{eqnarray*}  
The random vectors $y_{ij}=W^{-1}x_{ij}$, $i=1,\cdots,n$, $j=1,\cdots,r$ are again i.i.d. standard normal in $\R^m$. Let $F_k=E_1\oplus\cdots\oplus E_{k}$, $k=1,\cdots,s$. We orthogonally decompose each $y_{ij}$ according to this flag by setting $y_{ij}=(y_{ij}^1,\cdots,y_{ij}^s)$ with $y_{ij}^k \in V_i^k$, $k=1,\cdots,s$. Then the orthogonal projection of $y_{ij}$ onto $F_{s-1}$ is just $(y_{ij}^1,\cdots,y_{ij}^{s-1})$. 
\begin{itemize}
\item{} Suppose that there is some $1\le i\le n$ such that $V_i^s \ne 0$. The $r$ orthogonal projections
$(y_{ij}^1,\cdots,y_{ij}^{s-1})$ are contained in the linear subspace 
$V_i^{1}\oplus\cdots\oplus V_i^{s-1}\subset F_{s-1}$ of dimension ${\rm dim}(V_i^{1}\oplus\cdots\oplus V_i^{s-1})=r-{\rm dim}(V_i^s)< r$, and are thus linearly dependent.
 The probability that the $r$ multivariate standard normal random projections 
$(y_{ij}^1,\cdots,y_{ij}^{s-1})$ are linearly dependent is zero. 
\item{} If all the linear subspaces $V_i$ are such that $V_i^s =0$, the necessarily all of the $n r$ random vectors $y_{ij}$ belong to the linear subspaces $F_{s-1}$ with ${\rm dim}(F_{s-1})< m$, and the probability of such an event is zero when $nr > m$.
\end{itemize}
 \end{proof}

Recall, strictly geodesically convex functions do not necessarily admit a global/ local minima, but if they admit one this must be unique.

\begin{lemma}
\label{lem::strictly_convex_fct}
Let $f : \Pos_{sym}^1(m) \to \RR$ be a differential function whose Hessian $\Hess(f)$ at a point $\Gamma \in \Pos_{sym}^1(m)$ is invertible. Then $\Gamma $ is a critical point for $\langle \grad(f)(\Sigma), \grad(f)(\Sigma) \rangle_{\Sigma}$, where $\Sigma \in \Pos_{sym}^1(m)$, if and only if $\grad(f)(\Gamma)=0$.
\end{lemma}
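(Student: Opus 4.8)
The plan is to introduce the auxiliary function
$$h(\Sigma) := \langle \grad f(\Sigma), \grad f(\Sigma) \rangle_{\Sigma}, \qquad \Sigma \in \Pos_{sym}^1(m),$$
and to compute its gradient. One direction is immediate and uses nothing about the Hessian: the Riemannian metric being positive-definite, $h \ge 0$ on all of $\Pos_{sym}^1(m)$; so if $\grad f(\Gamma)=0$ then $h(\Gamma)=0$ is a global minimum of $h$, and therefore $\grad h(\Gamma)=0$, i.e.\ $\Gamma$ is a critical point of $\Sigma \mapsto \langle \grad f(\Sigma), \grad f(\Sigma) \rangle_{\Sigma}$.

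For the converse I would compute $\grad h$ pointwise. Write $\nabla$ for the Levi--Civita connection of $\Pos_{sym}^1(m)$ and, for $\Sigma \in \Pos_{sym}^1(m)$, let $H_{\Sigma}\colon \TT_{\Sigma}\mkern-4mu\Pos_{sym}^1(m)\to\TT_{\Sigma}\mkern-4mu\Pos_{sym}^1(m)$, $H_{\Sigma}(Z):=\nabla_{Z}\grad f(\Sigma)$, be the self-adjoint Hessian operator, which represents the symmetric bilinear form $\Hess f(\Sigma)$ through (\ref{equ::hess_formula}). For any tangent vector $Z$, compatibility of $\nabla$ with the metric yields
$$dh_{\Sigma}(Z) = Z\langle \grad f, \grad f\rangle = 2\,\langle \nabla_{Z}\grad f(\Sigma), \grad f(\Sigma)\rangle_{\Sigma} = 2\,\Hess f(\Sigma)\bigl(Z,\grad f(\Sigma)\bigr);$$
since the Hessian bilinear form is symmetric (the connection being torsion-free), this equals $2\,\langle H_{\Sigma}(\grad f(\Sigma)), Z\rangle_{\Sigma}$, and as $Z$ is arbitrary we conclude
$$\grad h(\Sigma) = 2\,H_{\Sigma}\bigl(\grad f(\Sigma)\bigr).$$
Now if $\Gamma$ is a critical point of $h$, then $0 = \grad h(\Gamma) = 2\,H_{\Gamma}(\grad f(\Gamma))$; since by hypothesis $\Hess f(\Gamma)$ is invertible, $H_{\Gamma}$ is a linear isomorphism of $\TT_{\Gamma}\mkern-4mu\Pos_{sym}^1(m)$, so its kernel is trivial and hence $\grad f(\Gamma)=0$.

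The only delicate point is the gradient computation: one must invoke the symmetry of the Hessian bilinear form to move the factor $\grad f$ out of the argument slot paired against the test vector $Z$, which is exactly what turns the vanishing of $\grad h$ at $\Gamma$ into the statement that $H_{\Gamma}$ annihilates $\grad f(\Gamma)$. After that the nondegeneracy hypothesis closes the argument at once; there are no analytic subtleties, since the manifold is finite-dimensional and $f$ is smooth (at least $C^2$) near $\Gamma$.
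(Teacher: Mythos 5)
Your proof is correct and follows essentially the same route as the paper's: both compute the derivative of $h(\Sigma)=\langle \grad f(\Sigma),\grad f(\Sigma)\rangle_{\Sigma}$ via metric compatibility to get $2\,\Hess f(\Gamma)\bigl(Z,\grad f(\Gamma)\bigr)$ and then invoke invertibility of the Hessian to force $\grad f(\Gamma)=0$. The only cosmetic difference is in the easy direction, where you argue via $h\ge 0$ attaining a global minimum while the paper just reads off the vanishing from the same derivative formula; both are immediate.
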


Recall, strictly geodesically convex functions $f$ have their Hessian $\Hess(f)$ invertible at every point. Still, the converse is not true in general.

\begin{proof}
Let $g : \Pos_{sym}^1(m) \to \RR$ be given by $g(\Sigma): = \langle \grad(f)(\Sigma), \grad(f)(\Sigma) \rangle_{\Sigma}$. By definition $\Gamma$ is a critical point of $g$ if and only if $\grad(g)(\Gamma)=0$. 

Suppose $\Gamma$ is a critical point for $g$, and we want to prove $\grad(f)(\Gamma)=0$. As $\grad(g)(\Gamma)=0$,  for every $X \in \TT_{\Gamma}\Pos_{sym}^1(m)$ we have $$0=\nabla_{X}(g)(\Gamma)= 2 \langle \nabla_{X} \grad(f)(\Gamma), \grad(f)(\Gamma) \rangle_{\Gamma}= 2\Hess(f)_{\Gamma}(X, \grad(f)(\Gamma))$$
which implies that $\grad(f)(\Gamma) \in \Ker (\Hess(f)_{\Gamma})$. As the Hessian $\Hess(f)_{\Gamma}$ at $\Gamma$ is invertible then $\grad(f)(\Gamma) \in \Ker (\Hess(f)_{\Gamma})=0$.

Now suppose $\grad(f)(\Gamma)=0$; we want to prove $\Gamma$ is a critical point for $g$. Indeed, as for every $X \in \TT_{\Gamma}\Pos_{sym}^1(m)$ we have $$ \langle \grad(g)(\Gamma), X\rangle_{\Gamma}= \nabla_{X}(g)(\Gamma)= 2 \langle \nabla_{X} \grad(f)(\Gamma), \grad(f)(\Gamma) \rangle_{\Gamma}=0$$
one obtains $\grad(g)(\Gamma)=0$ and the conclusion follows.

\end{proof}
\section{Behaviour at infinity of M-functionals}
\label{s.behaviourinfinity}

The following Theorem is a refinement of results of \cite{KLM} giving (\ref{Unicity1}) as necessary and sufficient condition for the existence of \gmle. It covers situations where (\ref{Unicity1}) is not satisfied, and its proof is based on our previous constructions.
\begin{theorem}\label{mainadd}
Let $\shp$ be a  Borel probability measure on the \gr\ $\sas$. Denote by $\pls=\bigcup_{s=1}^{m-1}\Gr(m,s)$ the set of all the proper vector subspaces $V$ of $\ls$ ($0\ne V\ne\ls$) and set
\begin{equation}
I_\shp(V)=\frac rm\dim(V)-\int_\sas\dim(U\cap V)\,d\shp(U)
\end{equation} 
for $V\in\pls$.
\begin{enumerate}
\item If $I_\shp(V)>0$ for all $V\in\pls$, then $\shp$ has a unique \gmle.
\item If $I_\shp(V)<0$ for some $V\in\pls$, then $\shp$ has no \gmle.
\item (Limit case) Suppose that $I_\shp(V)\ge0$ for all $V\in\pls$ and $I_\shp(V)=0$ for some $V\in\pls$.
\begin{enumerate}
\item If each $V\in\pls$ with $I_\shp(V)=0$ has a complement $V'\in\pls$ ($V\oplus V'=\ls$) such that $I_\shp(V')=0$ too (or, equivalently, such that $U=(U\cap V)\oplus(U\cap V')$ for $\shp$-almost all $U\in\sas$), then $\shp$ has infinitely many \gmle s. More precisely, the \gmle s of $\shp$ form a submanifold of dimension $d-1$ of $\pas$ if $\ls=V_1\oplus\cdots\oplus V_d$ is a maximal decomposition of $\ls$ into subspaces $V_i\in\pls$ such that $I_\shp(V_i)=0$ for $i=1,\ldots,d$.
\item Otherwise, $\shp$ has no \gmle.
\end{enumerate}
 \end{enumerate}

\end{theorem}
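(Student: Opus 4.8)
The plan is to reduce everything to the behaviour of $\ell_\shp$ along geodesic rays, exploiting that $\ell_\shp = \sqrt{\tfrac{(m-r)r}{m}}\cdot b_\shp$ is a weighted Busemann function (Lemma~\ref{Busemann1} and the remark following it) and hence convex along geodesics (Proposition~\ref{prop::strict_conv_P}). A convex function on the Hadamard manifold $\pas$ attains a minimum if and only if it is not ``geodesically coercive'' in the degenerate direction, so the whole theorem amounts to computing, for each unit tangent direction $A\in\TT_{\Id_m}\pas$ with eigenvalue decomposition $\R^m = F_1\oplus\cdots\oplus F_k$ (eigenvalues $a_1>\cdots>a_k$), the asymptotic slope
$$
\mu_\shp(A):=\lim_{t\to\infty}\frac{d}{dt}\,\ell_\shp(\exp(tA)).
$$
First I would fix a base point $\Id_m$ (legitimate by $\SL(m,\R)$-homogeneity and the quasi-invariance~\eqref{equivariance}) and, using~\eqref{log-density}, compute this slope explicitly. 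For $U=\langle X\rangle$ one finds that $\det(X^T\exp(-tA)X)$ is, up to lower-order terms, governed by the largest eigenspaces $F_j$ that $U$ meets: writing $d_j(U)=\dim\big((U\cap(F_1\oplus\cdots\oplus F_j))\big)-\dim\big((U\cap(F_1\oplus\cdots\oplus F_{j-1}))\big)$, one gets $\tfrac{d}{dt}\ell_U(\exp(tA))\to -\tfrac12\sum_j a_j d_j(U)$. Integrating against $\shp$ gives
$$
\mu_\shp(A)=-\frac12\sum_{j=1}^{k}a_j\!\int_\sas d_j(U)\,d\shp(U).
$$

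Next I would rewrite this slope in terms of the flag subspaces $V_i=F_1\oplus\cdots\oplus F_i$. Using Abel summation and $\tr(A)=0$, the slope becomes a positive combination of the quantities $I_\shp(V_i)$: concretely, with $a_i-a_{i+1}>0$ and $\sum (\dim F_j)a_j=0$, one obtains
$$
\mu_\shp(A)=\frac12\sum_{i=1}^{k-1}(a_i-a_{i+1})\,I_\shp(V_i).
$$
This is the computational heart of the argument. From it, the three cases are read off by the standard Hadamard-manifold dichotomy for convex functions (``$\ell_\shp$ attains its infimum iff $\mu_\shp(A)\ge 0$ for every unit direction $A$, and the minimum set is totally geodesic''), together with the strict-convexity statements already proved:
(1) If $I_\shp(V)>0$ for all proper $V$, then $\mu_\shp(A)>0$ for every $A\ne 0$, so $\ell_\shp$ is geodesically coercive and attains a minimum; uniqueness follows because equality $\mu_\shp(A)=0$ cannot occur, or alternatively from strict convexity under the support hypothesis — but in general uniqueness here must be argued directly: if two minimizers existed, the geodesic joining them would be a direction with $\mu_\shp=0$ in both senses, forcing some $I_\shp(V_i)=0$, a contradiction.
(2) If $I_\shp(V)<0$ for some proper $V$, choose $A$ diagonal with $F_1=V$ and $a_1>a_2=\cdots$; then $\mu_\shp(A)=\tfrac12(a_1-a_2)I_\shp(V)<0$, so $\ell_\shp$ is strictly decreasing along that ray and has no minimum.
(3) In the limit case, $\mu_\shp(A)\ge 0$ always, so infima along rays are either $0$ or $+\infty$; the minimum set is nonempty iff there is \emph{no} direction $A$ along which $\ell_\shp$ is \emph{eventually constant but non-attaining}. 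A direction has $\mu_\shp(A)=0$ precisely when $I_\shp(V_i)=0$ for every flag member $V_i$ appearing with a nonzero coefficient; along such a ray $\ell_\shp$ is bounded, and whether its infimum is attained is governed by whether the ``degenerate part'' of $\pas$ in that direction splits off as a product — this is exactly the complemented-subspace condition in (3a). I would make this precise by decomposing, via a maximal $\ls=V_1\oplus\cdots\oplus V_d$ with $I_\shp(V_i)=0$, the parameter space into block-diagonal matrices (a totally geodesic submanifold isometric to a product of lower-dimensional $\Pos^1_{sym}$'s times a $(d-1)$-dimensional flat from the scalings), on which $\ell_\shp$ restricts to a sum of genuinely coercive pieces plus a constant, giving a minimum set which is precisely this $(d-1)$-dimensional flat; and if no such complemented decomposition exists, produce an explicit geodesic ray along which $\ell_\shp$ decreases to its infimum without attaining it, using a flag whose successive quotients are not ``balanced'' by $\shp$.

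The main obstacle is case~(3): the asymptotic-slope computation only detects whether $\ell_\shp$ is coercive, not whether a non-coercive convex function attains its bound, so one needs the finer structural argument identifying the minimum set with a flat inside a totally geodesic product submanifold and, in the negative subcase~(3b), explicitly exhibiting a minimizing but non-attained ray. Carrying out the Abel-summation identity cleanly, and checking that the product decomposition in~(3a) is genuinely totally geodesic and isometric (so that convexity and the dimension count transfer), are the places where care is required; the rest is the standard CAT(0) theory of convex functions together with the slope formula above.
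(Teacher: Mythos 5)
Your plan follows essentially the same route as the paper: the computational heart is exactly the paper's Proposition~\ref{thasymptotic} (the dominant exponential term of $\det(X^T\gamma(t)^{-1}X)$ along a geodesic, regrouped by Abel summation over the flag $V_1\subset\cdots\subset V_s$ of partial sums of eigenspaces to give $\lim_{t\to\infty}\frac{d}{dt}\ell_\shp(\gamma(t))=\sum_k\alpha_k I_\shp(V_k)$ with $\alpha_k>0$), and cases (1), (2) and (3b) are then read off from convexity exactly as in the paper. Case (3a) is not proved in the paper (it is deferred to Section~8 of \cite{AMR2005}), so your block-diagonal product-submanifold sketch cannot be checked against it, but it is consistent in spirit with what is done there for $r=1$.

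Two points need repair. First, your uniqueness argument in case (1) -- ``if two minimizers existed, the geodesic joining them would be a direction with $\mu_\shp=0$ in both senses'' -- is false for a general convex function: constancy of a convex $f(t)$ on a segment $[t_0,t_1]$ only forces $f'\le 0$ before and $f'\ge 0$ after, and says nothing about the asymptotic slopes. What closes the gap (and what the paper uses) is that $t\mapsto\ell_\shp(\gamma(t))$ is real-analytic (for each $U$ it is $\tfrac12\log$ of a finite sum of real exponentials), so constancy on a segment propagates to all of $\R$, contradicting $\ell_\shp(\gamma(t))\to+\infty$. You should state this explicitly; without it the step fails. Second, the ``standard Hadamard-manifold dichotomy'' you invoke (``attains its infimum iff $\mu_\shp(A)\ge 0$ for every unit direction'') is wrong as an equivalence -- nonnegativity of all asymptotic slopes is necessary but not sufficient, which is precisely what distinguishes (3a) from (3b). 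You do recognize this later when treating case (3), but the statement as written should not be relied upon; in case (1) what you actually need is the (strictly stronger) fact that $\mu_\shp(A)>0$ for every direction forces divergence along every ray, whence bounded (hence compact) sublevel sets and existence of a minimizer.
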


\begin{example}
\label{ex::cond_1}
Condition 1. of Theorem \ref{mainadd} corresponds  to inequality (\ref{Unicity1}) from the Introduction.
For a sample $\{U_i\}_{1 \leq i \leq n }$ of size $n$, let $\shp_n:= \frac{1}{n} (\delta_{U_1}+...+ \delta_{U_n})$ be its corresponding empirical probability measure. Using methods from algebraic geometry, the authors of \cite[Theorem 4]{AMR} proved that for almost all samples of size $n > \frac{m^2}{r(m-r)}$ 
inequality (\ref{Unicity1}) is satisfied.
\end{example}

\begin{example} The limit case 3. of Theorem \ref{mainadd} occurs, for example, when $n=m $ and $\shp := \frac{1}{n} (\delta_{U_1}+...+ \delta_{U_n})$ is the empirical Borel probability measure on $G(m,1)$ corresponding to samples $\{U_1, ..., U_n\}$ that are in general position in the projective space $\Gr(m,1)=\ps$. Then, $I_\shp(V)=0$ if and only if the linear subspace $V\in\pls$ contains exactly $\dim(V)$ points of the sample, so that condition (a) is satisfied.

Moreover, for a sample $\{U_1, ..., U_n\}$ in general position in the projective space $\Gr(m,1)=\ps$ and  $\shp := \frac{1}{n} (\delta_{U_1}+...+ \delta_{U_n})$, Theorem~\ref{mainadd} gives:
{\it \begin{enumerate}
\item A sample of size $n>m$ in general position in $\ps$ has a unique GE. 
\item A sample of size $n<m$ in general position $\ps$ has no GE. 
\item When $n=m$, and the sample $\{U_1, ..., U_n\}$ of size  $n=m$ is in general position in $\ps$, then $\ls=U_1\oplus\cdots\oplus U_m$ and so its corresponding GEs form a submanifold of dimension $n-1$ in the parameter space $\pas$.
\end{enumerate}}
\end{example}

To prove Theorem \ref{mainadd}, besides convexity, we also need the following asymptotic property of the log-likelihood.
As we have seen in Section \ref{sec::sym_space} (Lemma \ref{lem::action}), any geodesic $\gamma(t)$ issuing at $\Sigma=g g^T\in\Pos_{sym}^1(m)$ takes the generic form
$\gamma(t)=g e^{tv}g^T$, with $v\in \TT_{\Id_m} \Pos_{sym}^1(m)$. 

For $ \Sigma=g g^T\in \Pos_{sym}^1(m) $  let $S_\Sigma:=  g \TT_{\Id_m} \Pos_{sym}^1(m) g^{-1}$. Then $S_\Sigma$  is a linear vector space and by Defintion \ref{def::Sigma_adj}, for every $w= g v g^{-1} \in S_\Sigma$, where $v \in \TT_{\Id_m} \Pos_{sym}^1(m)$, we have $\tr(w)=0$ and
$w$ is self-$\Sigma$-adjoint,  that is
$\Sigma w^T \Sigma^{-1}=w$. One has furthermore that
$$e^{tw}\Sigma =\left(\sum_{k=0}^\infty \frac{t^k}{k!}g v^k g^{-1} \right)g g^T =g e^{tv}g^T =\gamma(t), \; \; \; \forall t \in \RR.$$ One can thus consider geodesics of the generic form $\gamma(t)=e^{tw}\Sigma$, where $w$ belongs to the linear space $S_\Sigma$ of self-$\Sigma$-adjoint matrices of zero trace. 
As the linear operator $w$ is \sa, its  
eigenvalues are real numbers $\lambda_1>\lambda_2>\ldots>\lambda_{s+1}$  
($s \geq 0$), the corresponding eigenspaces $E_1,\ldots,E_{s+1}$ are  
pairwise $\sig$-orthogonal, and $\R^m=E_1\oplus\cdots\oplus E_{s+1}$.  

\begin{proposition}\label{thasymptotic}
 Let $\shp$ be a  Borel probability measure on the \gr\ $\sas$.  Let $w \in S_\Sigma$ and $\gamma(t)=e^{wt}\Sigma$ be the geodesic of velocity $w$ issuing from
$\sig\in\pas$.  Then $w$ decomposes in a unique
way as
\begin{equation}\label{decompose}
w=\sum_{k=1}^s\alpha_k\bigl({\rm Pr}(V_k,\sig)-\frac 1 m\dim(V_k) 
\Id_m\bigr),
\end{equation}
 where 
$V_1,\ldots,V_s$ are vector subspaces of $\ls$ such that
$V_k=E_1\oplus\cdots\oplus E_k$, with
\[
0\subset V_1\subset V_2\subset\cdots\subset V_s\subset V_{s+1}=\ls 
\]
and where $\alpha_k = \lambda_k -\lambda_{k+1}>0$.
Moreover, 
\begin{equation}\label{asymptotic}
\lim_{t\to\infty}\frac{d}{dt}\ell_\shp(\gamma(t))
=\sum_{k=1}^s\alpha_k I_\shp(V_k).
\end{equation}
\end{proposition}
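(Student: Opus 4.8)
The plan is to first establish the spectral decomposition \eqref{decompose} of $w$ as an elementary linear algebra computation, and then compute the asymptotic slope of $\ell_\shp$ along $\gamma$ by a dominated-convergence argument applied to the pointwise derivative formula from Lemma \ref{lem::diff_log_like}. For the first part, I would use that $w$ is self-$\Sigma$-adjoint with simple spectrum $\lambda_1 > \cdots > \lambda_{s+1}$ and $\Sigma$-orthogonal eigenspaces $E_1,\ldots,E_{s+1}$. Setting $V_k = E_1 \oplus \cdots \oplus E_k$, one has the Abel-summation identity $w = \sum_{k=1}^{s+1}\lambda_k\,{\rm Pr}(E_k,\Sigma) = \sum_{k=1}^{s}(\lambda_k-\lambda_{k+1})\,{\rm Pr}(V_k,\Sigma) + \lambda_{s+1}\Id_m$, since ${\rm Pr}(V_k,\Sigma) = \sum_{j\le k}{\rm Pr}(E_j,\Sigma)$; the trace condition $\tr(w)=0$ together with $\tr({\rm Pr}(V_k,\Sigma))=\dim(V_k)$ then forces $\lambda_{s+1} = -\frac1m\sum_k \alpha_k\dim(V_k)$, which redistributes into the claimed form with the normalized projectors ${\rm Pr}(V_k,\Sigma)-\frac1m\dim(V_k)\Id_m$. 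Uniqueness follows because the flag, the eigenspaces, and the gaps $\alpha_k=\lambda_k-\lambda_{k+1}$ are all determined by $w$.

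For the asymptotic formula, I would differentiate under the integral sign: $\frac{d}{dt}\ell_\shp(\gamma(t)) = \int_\sas \frac{d}{dt}\ell_U(\gamma(t))\,d\shp(U)$, where by Lemma \ref{lem::diff_log_like} (written $\Sigma$-invariantly) the integrand equals $\tfrac12\langle\nabla\ell_U(\gamma(t)),w\rangle$ up to the sign bookkeeping already in the paper, i.e.\ a quantity of the form $\tfrac12\big(\tfrac rm\la w,\Id_m\ra_\cdot - \tr(w\,{\rm Pr}(U,\gamma(t)))\big)$ after using Lemma \ref{lem::grad_formula}. The key is then to compute $\lim_{t\to\infty}\tr\big(w\,{\rm Pr}(U,\gamma(t))\big)$ for a fixed $U=\langle X\rangle$. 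Conjugating by $g$ we may take $\Sigma=\Id_m$ and $w=\diag$ acting by $e^{tw}$; as $t\to\infty$ the columns of $e^{tw}X$ are stretched by the factors $e^{t\lambda_k}$ along $E_k$, so $e^{tw}U$ converges in $G(m,r)$ to a limiting subspace whose intersection pattern with the flag $(V_k)$ records exactly the numbers $\dim(U\cap V_k)$. A careful bookkeeping of which directions dominate shows
$$
\lim_{t\to\infty}\tr\big(w\,{\rm Pr}(U,\gamma(t))\big)=\sum_{k=1}^{s}\alpha_k\Big(\dim(U\cap V_k)-\tfrac1m\dim(V_k)\,r\Big)\cdot\frac{m}{?}
$$
— more precisely, after matching normalizations, $\lim_t \frac{d}{dt}\ell_U(\gamma(t)) = \sum_k \alpha_k\big(\frac rm\dim(V_k)-\dim(U\cap V_k)\big) = \sum_k\alpha_k I_U(V_k)$, and integrating against $\shp$ (justified by the domination below) yields \eqref{asymptotic}.

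The main obstacle is the rigorous evaluation of $\lim_{t\to\infty}\tr(w\,{\rm Pr}(U,\gamma(t)))$ and its uniform domination in $U$. For the limit, I would reduce to $\Sigma=\Id_m$, pick a basis of $U$ adapted to the flag filtration $U\cap V_1\subseteq U\cap V_2\subseteq\cdots$, and perform a Gaussian-elimination / Gram–Schmidt argument on the matrix $e^{tw}X$ to show that the $\Sigma$-orthogonal projector ${\rm Pr}(e^{tw}U,\Id_m)$ converges to the orthogonal projector onto $\bigoplus_k \big((U\cap V_k)\cap E_k'\big)$ for suitable complements, giving $\tr(w\cdot\text{limit}) = \sum_k \lambda_k\dim\big((U\cap V_k)/(U\cap V_{k-1})\big)$; Abel summation of this against the $\lambda_k$'s reproduces $\sum_k\alpha_k\dim(U\cap V_k)$ plus the $\lambda_{s+1}r$ term. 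For domination, note that the map $t\mapsto \frac{d}{dt}\ell_U(\gamma(t))$ is monotone in $t$ (geodesic convexity of $\ell_U$, Lemma \ref{lem::strict_conv}), hence bounded between its value at $t=0$ and its limit, and $\|{\rm Pr}(U,\gamma(t))\|$ and hence the whole integrand is bounded by a constant depending only on $w$ (projectors have operator norm controlled by the spectral data, and $\ell_U$ is finite for every $U\in\sas$); this uniform bound legitimizes both differentiation under the integral and the passage to the limit by dominated convergence. The remaining step — splitting $\dim(U\cap V_k) - \frac rm\dim(V_k)$ into $-I_U(V_k)$ and recognizing $\int I_U(V_k)\,d\shp(U) = I_\shp(V_k)$ — is immediate from the definition of $I_\shp$.
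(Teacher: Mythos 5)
Your proposal is correct, and while part (1) is the same Abel-summation argument as the paper's, your treatment of the asymptotic slope takes a genuinely different computational route. The paper works directly with $a(t)=X^T\gamma(t)^{-1}X$, expands $\det(a(t))$ by the Leibniz formula over permutations, and identifies the dominant exponential $e^{-\beta t}$ (with $\beta=\sum_k\lambda_k|I_k|$) via the flag-adapted basis, checking that its coefficient is $1$ and hence nonzero. You instead start from the directional-derivative identity $\frac{d}{dt}\ell_U(\gamma(t))=-\tfrac12\tr\bigl(w\,{\rm Pr}(U,\gamma(t))\bigr)$ (the $\tfrac rm$ term drops since $\tr(w)=0$), reduce by equivariance to $\tr\bigl(w\,{\rm Pr}(e^{-tw/2}U,\sig)\bigr)$, and compute the limit from the convergence of $e^{-tw/2}U$ in $\sas$ to the graded subspace $\bigoplus_k {\rm Pr}(E_k,\sig)(U\cap V_k)$; both routes rest on the same adapted-basis construction. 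What your route buys is a genuinely explicit domination: since ${\rm Pr}(U,\gamma(t))$ is a $\gamma(t)$-orthogonal projector of rank $r$ and $w$ commutes with $e^{tw/2}$, the integrand is bounded by a constant depending only on $w$, uniformly in $U$ and $t$, and the derivative is monotone in $t$ by convexity --- the paper merely invokes dominated convergence without exhibiting the dominating function. What the paper's route buys is self-containedness: it never needs the (standard but not proved in the paper) fact that $e^{-tw/2}U$ converges in the Grassmannian, only elementary estimates on sums of exponentials. One small caveat you inherit from the paper: the intermediate computation gives $\lim_t\frac{d}{dt}\ell_U(\gamma(t))=-\beta/2=\tfrac12\sum_k\alpha_k I_U(V_k)$, so strictly speaking \eqref{asymptotic} holds up to an overall factor $\tfrac12$ (harmless for the sign analysis in Theorem \ref{mainadd}, but worth flagging rather than hiding behind ``matching normalizations'').
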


\begin{proof}

According to the spectral theorem,
\[
w=\lambda_1 {\rm Pr}(E_1,\sig)+\cdots+\lambda_{s+1} {\rm Pr}(E_{s+1},\sig),
\]
and
\[
e^{-tw}=e^{-\lambda_1t}{\rm Pr}(E_1,\sig)+\cdots+e^{-\lambda_{s+1}t}{\rm Pr}(E_{s+ 
1},\sig).
\]

${\rm Pr}(E_1,\sig)={\rm Pr}(V_1,\sig)$,
${\rm Pr}(E_k,\sig)={\rm Pr}(V_k,\sig)-{\rm Pr}(V_{k-1},\sig)$ for  
$k=2,\ldots,s-1$ and ${\rm Pr}(E_{s+1},\sig)=\Id_m-{\rm Pr}(V_s,\sig)$, hence
\[
w=(\lambda_1-\lambda_2){\rm Pr}(V_1,\sig)+\cdots
+(\lambda_s-\lambda_{s+1}){\rm Pr}(V_s,\sig)+\lambda_{s+1}\Id_m.
\]
This is formula~\eqref{decompose} with $\alpha_k=\lambda_k-\lambda_{k+1}$ since $\tr(w)=0$ and $\tr {\rm Pr}(V_k,\sig)=\dim(V_k)$. 

Let $X=(x_1,\ldots,x_r)$ be a basis of $U\in\sas$. In view of the equation~\eqref{log-density},
\[
\frac d{dt}\ell_U(\gamma(t))=\frac12\,\frac d{dt}\log\det(a(t)),
\]
where
\[
a(t)=X^T\gamma(t)\inv X=X^T\sig\inv e^{-tw}X
=\sum_{k=1}^{s+1}e^{-\lambda_kt}X^T\sig\inv {\rm Pr}(E_k,\sig) X.
\]
The entries of the $r \times r$ matrix $a(t)$ are
\[
a_{ij}(T)=\sum_{k=1}^{s+1}e^{-\lambda_kt}(\p x_i|\p x_j)_\sig,
\] where $x_i, x_j$ are the vectors of $X=(x_1,\ldots,x_r)$.
Next,
\[
\det(a(t))=\sum_{\sigma_\in S_r}\sign(\sigma)a_{1\sigma(1)}(t)\cdots a_{r\sigma(r)}(t)
\]
where $S_r$ denotes the permutation group of $\{1,\ldots,r\}$, so that $\det(a(t))$ is a sum of exponential functions of $t$. It remains to find the dominant term of this sum, say $e^{-\beta t}$ up to a nonzero factor. Then
\[
\lim_{t\to\infty}\frac d{dt}\ell_U(\gamma(t))=\frac12\lim_{t\to\infty}\frac d{dt}\log\det(a(t))=-\beta/2.
\]
To obtain that, we construct a  new basis $X=(y_1,\ldots,y_r)$ of $U$ that is adapted to the vector subspaces
\[
0\subseteq U\cap V_1\subseteq U\cap V_2\subseteq\cdots
\subseteq U\cap V_s\subseteq U=U\cap V_{s+1}.
\]
Indeed, start with a $\sig$-orthonormal basis  $y_i$, $i\in I_1$, of $U\cap V_1=U\cap E_1$. Then choose
$|I_2|=\dim(U\cap V_2)-\dim(U\cap V_1)$
vectors $y_i\in U\cap V_2$, $i\in I_2$, whose
projections ${\rm Pr}(E_2,\sig)y_i$ form a $\sig$-orthonormal 
system in $E_2$. Iterating this procedure, we get at stage
$k$ a family of $|I_k|=\dim(U\cap V_k)-\dim(U\cap V_{k-1})$
vectors $y_i\in U\cap V_k$, $i\in I_k$, with $\sig$-orthonormal projections
$\p y_i$.

The family $I_1,\ldots,I_{s+1}$ forms a partition of $\{1,\ldots,r\}$ into possibly empty subsets.
Given $i\in\{1,\ldots,r\}$, we denote by $m_i$ the unique index such that $i\in I_{m_i}$. Then $\p y_i=0$ if $k>m_i$ since $y_i\in V_{m_i}$. Therefore, the dominant term of $a_{ij}(t)$ is at most $\exp(-t\lambda_{\min\{m_i,m_j\}})$, and the dominant term of the product
\[
a_{1\sigma(1)}(t)\cdots a_{r\sigma(r)}(t)
\qquad(\sigma\in S_r)
\]
in the expansion of $\det(a(t))$ is at most
\[
\prod_{i=1}^r\exp(-\lambda_{\min\{m_i,m_{\sigma(i)}\}}).
\]
The largest exponent is obtained when $m_{\sigma(i)}=m_i$ for all $i=1,\ldots,r$, i.e., when the permutation $\sigma$ leaves each of the blocks $I_1,\ldots,I_{s+1}$ invariant. For such a permutation, the dominant term is at most $e^{-\beta t}$, where
\[
\beta:=\sum_{i=1}^r\lambda_{m_i}=\sum_{k=1}^{s+1}\lambda_k|I_k|.
\]
The dominant term corresponding to $e^{-\beta t}$ in $\det(a(t))$ is the sum
\[
\sum_\sigma\sign(\sigma)a_{1\sigma(1)}(t)\cdots a_{r\sigma(r)}(t)
\]
over those permutations $\sigma\in S_r$ leaving the blocks $I_1,\ldots,I_{s+1}$ invariant. Since,  by construction $(\p y_i | \p y_j)_\sig=\delta_{ij}$, for $i,j\in I_k$, the coefficient of $e^{-\beta t}$ is $1$ . We conclude  the dominant term of $\det(a(t))$ is actually $e^{-\beta t}$, hence
\begin{multline*}
-\lim_{t\to\infty}\frac d{dt}\ell_U(\gamma(t))=\beta=\sum_{k=1}^{s+1}\lambda_k|I_k|
=\sum_{k=1}^{s+1}\lambda_k \bigl(\dim(U\cap V_k)-\dim(U\cap V_{k-1})\bigr)\\
=\sum_{k=1}^s \alpha_k \dim(U\cap V_k)+\lambda_{s+1}r
=\sum_{k=1}^s\alpha_k\bigl(\dim(U\cap V_k)-\frac r m\dim(V_k)\bigr).
\end{multline*}
Formula~\eqref{asymptotic} follows by integrating with respect to $\shp$ and using the Lebesgue dominated convergence theorem to interchange integration and limit.
\end{proof}

\proof[Proof of Theorem~\ref{mainadd}]

\noindent(1)
Suppose that $I_\shp(V)>0$ for all $V\in\pls$.  Then, according to Proposition~\ref{thasymptotic}, $\lim\limits_{t\to\infty}(d/dt)\ell_\shp(\gamma(t))>0$ hence $\lim\limits_{t\to\infty}\ell_\shp(\gamma(t))=\infty$
for any nonconstant geodesic $\gamma$ in $\pas$. We conclude that the log-likelihood $\ell_\shp$, as a convex function tending to $+\infty$ along any geodesic, admits a minimum $\sig_0\in\pas$. Suppose it had another minimum $\sig_1$. Let $\gamma$ be the geodesic joining $\sig_0=\gamma(t_0)$ to $\sig_1=\gamma(t_1)$. Since the convex function $\ell_\shp(\gamma(t))$ takes its minimum at both~$t_0$ and~$t_1$, it should be constant for $t_0\le t\le t_1$, hence for all $t\in\R$ by analyticity of $\ell_\shp(\gamma(t))$ , in contradiction with $\lim\limits_{t\to\infty}\ell_\shp(\gamma(t))=\infty$.

\smallskip\noindent(2)
Suppose that $I_\shp(V)<0$ for some $V\in\pls$. By Lemma \ref{lem::grad_formula} and the definition of $\pi_{V}(\Sigma)$ from Lemma \ref{lem::covariant_deriv},  let $\gamma(t)= e^{tv} $ be the geodesic of velocity 
$v:= \pi_{V}(\Id_m) -\frac{\dim(V)}{m}\Id_m \in \TT_{\Id_m} \Pos_{sym}^1(m)$ issuing from $\Id_m \in \pas$. By Proposition~\ref{thasymptotic}, $\lim\limits_{t\to\infty}(d/dt)\ell_\shp(\gamma(t))=I_\shp(V)<0$ hence $\lim\limits_{t\to\infty}\ell_\shp(\gamma(t))=-\infty$, so that the log-likelihood function $\ell_\shp$ has no minimum.

\smallskip\noindent(3)
Suppose that $I_\shp(V)\ge0$ for all $V\in\pls$ and $I_\shp(V)=0$ for some $V\in\pls$. Let us prove
(b). By hypothesis, there exists $V\in\pls$ with ${I_\shp(V)=0}$ such that $I_\shp(V')>0$ for every complementary subspace $V'$ of $V$. Suppose that $\sig\in\pas$ was a minimum of $\ell_\shp$.  Let $\gamma(t)= g e^{tv}g^T $ be the geodesic of velocity  $v:= g^{T}\pi_{V}(\sig)g -\frac{\dim(V)}{m}\Id_m \in \TT_{\Id_m} \Pos_{sym}^1(m)$ issuing from $\sig= gg^{T} \in \pas$. 
 The velocity of the reverse geodesic $t\mapsto\gamma(-t)$  is $-v=g^{T}\pi_{V'}(\sig) g-\frac{\dim(V')}{m} \Id_m$, where 
$V'=\{x'\in\ls\mid(x'|x)_\sig=0, \; \forall x \in V\}$ is the $\sig$-orthogonal complement of $V$. 
According to Proposition~\ref{thasymptotic}, $\lim\limits_{t\to\infty}(d/dt)\ell_\shp(\gamma(t))=I_\shp(V)=0$, and  
$\lim\limits_{t\to\infty}(d/dt)\ell_\shp(\gamma(-t))=I_\shp(V')>0$. The convex function $\ell_\shp(\gamma(t))$ is thus decreasing, in contradiction with the minimality of $\ell_\shp(\sig)$.
We omit the lengthy proof of (a) since it is similar to the proof for the case $r=1$ given in Section 8 of \cite{AMR2005}.

\endproof

\section{Almost sure convergence}
\label{sec::gen_lln}

The following theorem generalises Tyler's convergence result of \gmle\ (\cite[Theorem 3.1]{Ty})  to the case of Grassmannian $G(m,r)$, for every $1 \leq r \leq m-1 $.

\begin{theorem}
\label{thm::LLN}
Let $\{U_n\}_{n\geq 1}$ be  i.i.d. 
random elements of $G(m,r)$, distributed
according to a continuous Borel probability measure $\shp$ on $G(m,r)$ with support $G(m,r)$. Let $\shp_n= \frac{1}{n} (\delta_{U_1}+...+ \delta_{U_n})$ be the  empirical probability measure corresponding to the random sample $\{U_1, ..., U_n\}$.  Suppose  GEs $\{\Sigma_n \}_{n \geq 1}, \Sigma_\shp \subset \Pos_{sym}^1(m)$ for $\{\shp_n\}_{n \geq 1}$, respectively, $\shp$ exist. Then for every $n$ large enough the $\Sigma_n$ is unique almost surely, $\Sigma_\shp \in \Pos_{sym}^1(m)$ is unique, and $\{\Sigma_n \}_{n \geq 1}$ converges almost surely to $\Sigma_\shp$.
\end{theorem}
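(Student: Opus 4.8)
The plan is to combine the convexity and coercivity of the negative log-likelihoods with a uniform strong law of large numbers, everything taking place inside the Hadamard manifold $\mathcal{M}=\Pos_{sym}^1(m)$. First, uniqueness: since $\shp$ is continuous with support $G(m,r)$, Lemma~\ref{lem::strict_conv_1} tells us $\ell_\shp$ is strictly geodesically convex, so it has at most one minimizer; as a \gmle\ is assumed to exist, $\Sigma_\shp$ is unique. Then $\shp$ cannot be in case (2) or case (3) of Theorem~\ref{mainadd} (neither of which permits a unique \gmle), so it is in case (1): $I_\shp(V)>0$ for all $V\in\pls$. For the samples, when $nr>m$ Lemma~\ref{lem::strict_conv_x} shows $\ell_{\shp_n}$ is a.s.\ strictly geodesically convex, whence $\Sigma_n$ is a.s.\ unique when it exists; and for $n>m^2/(r(m-r))$, Example~\ref{ex::cond_1} with Theorem~\ref{mainadd}(1) gives a.s.\ existence and uniqueness of $\Sigma_n$.

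Next, coercivity. By $I_\shp(V)>0$ for all $V\in\pls$ and Proposition~\ref{thasymptotic}, every nonconstant geodesic $\gamma$ has $\lim_{t\to\infty}\frac{d}{dt}\ell_\shp(\gamma(t))=\sum_k\alpha_k I_\shp(V_k)>0$, so $\ell_\shp(\gamma(t))\to+\infty$. A continuous convex function on the proper space $\mathcal{M}$ tending to $+\infty$ along every geodesic ray issuing from a point has compact sublevel sets $K_c:=\{\Sigma:\ell_\shp(\Sigma)\le c\}$: were some $K_c$ unbounded, one would pick $\Sigma_j\in K_c$ with $d(\Sigma_\shp,\Sigma_j)\to\infty$, extract a limiting direction $v$ of the geodesic segments $[\Sigma_\shp,\Sigma_j]$, and obtain by convexity $\ell_\shp\le c$ all along the ray $\gamma_v$, contradicting coercivity. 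Set $m_0:=\ell_\shp(\Sigma_\shp)=\min_{\mathcal M}\ell_\shp$ and fix the compact set $K:=K_{m_0+2}\ni\Sigma_\shp$.

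I would then prove the uniform law $\varepsilon_n:=\sup_{\Sigma\in K}|\ell_{\shp_n}(\Sigma)-\ell_\shp(\Sigma)|\to0$ a.s. The function $(\Sigma,U)\mapsto\ell_U(\Sigma)$ of~\eqref{log-density} is well defined, independently of the matrix $X$ representing $U$ (Lemma~\ref{Busemann1}), and jointly continuous; on the compact $K\times G(m,r)$ it is bounded and has a modulus of continuity $\omega$ uniform in $U$. For fixed $\Sigma$ the $\ell_{U_i}(\Sigma)$ are i.i.d.\ and bounded, so $\ell_{\shp_n}(\Sigma)\to\ell_\shp(\Sigma)$ a.s.\ by the strong law; covering $K$ by finitely many $\eta$-balls and using $|\ell_{\shp_n}(\Sigma)-\ell_{\shp_n}(\Sigma')|\le\omega(d(\Sigma,\Sigma'))$ together with continuity of $\ell_\shp$ upgrades this to uniform convergence on $K$, and letting $\eta\to0$ yields $\varepsilon_n\to0$ a.s.

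Finally I would show that $\Sigma_n\in K$ eventually, a.s., and conclude. If $\Sigma_n\notin K$, i.e.\ $\ell_\shp(\Sigma_n)>m_0+2$, then along the geodesic $\gamma$ from $\Sigma_\shp$ to $\Sigma_n$ the continuous map $\ell_\shp\circ\gamma$ takes the value $m_0+2$ at some point $\Sigma_n'$ strictly between $\Sigma_\shp$ and $\Sigma_n$, so $\Sigma_n'\in K$; convexity of $\ell_{\shp_n}$ along $\gamma$ and $\ell_{\shp_n}(\Sigma_n)\le\ell_{\shp_n}(\Sigma_\shp)$ give $\ell_{\shp_n}(\Sigma_n')\le\ell_{\shp_n}(\Sigma_\shp)$, while $\ell_{\shp_n}(\Sigma_n')\ge m_0+2-\varepsilon_n$ and $\ell_{\shp_n}(\Sigma_\shp)\le m_0+\varepsilon_n$, forcing $\varepsilon_n\ge1$. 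Hence once $\varepsilon_n<1$ (a.s.\ eventually), $\Sigma_n\in K$. Any subsequential limit $\Sigma_*$ of $\{\Sigma_n\}$ in the compact $K$ satisfies, for every $\Sigma\in K$, $\ell_\shp(\Sigma_*)=\lim_k\ell_{\shp_{n_k}}(\Sigma_{n_k})\le\lim_k\ell_{\shp_{n_k}}(\Sigma)=\ell_\shp(\Sigma)$ (by $\varepsilon_n\to0$ and continuity), so $\Sigma_*$ minimizes $\ell_\shp$ on $\mathcal M$ and hence $\Sigma_*=\Sigma_\shp$ by uniqueness; thus $\Sigma_n\to\Sigma_\shp$ a.s. The main obstacle is this coercivity/tightness step — excluding escape of $\ell_\shp$, and of the minimizers $\Sigma_n$, towards the boundary at infinity $\partial\mathcal{M}$ — which is exactly where the CAT(0) geometry of $\mathcal{M}$ and the strict inequalities $I_\shp(V)>0$ (through Proposition~\ref{thasymptotic}) enter; everything else is routine convex analysis and the uniform strong law.
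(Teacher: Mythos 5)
Your proof is correct, but it follows a genuinely different route from the paper's. The paper adopts Tyler's original strategy: it studies the squared gradient norm $h_n(\Gamma)=\langle \grad \ell_{\shp_n}(\Gamma),\grad \ell_{\shp_n}(\Gamma)\rangle_\Gamma$, which reduces to $\tr(M_n(\Gamma)^2)$ up to constants, proves a uniform strong law for $M_n\to M$ on a compact neighbourhood $C$ of $\Id_m$ (Lemma~\ref{lem::equicont}), uses the strict inequality $\tr(M(\Id_m)^2)<\tr(M(\Gamma)^2)$ for $\Gamma\in C\setminus\{\Id_m\}$ (Lemma~\ref{lem::ineg_trace_square}) to force a local minimum of $h_n$ inside $C$, and then identifies that local minimum with $\Sigma_n$ either via the matrix Kantorovich inequality or via strict convexity (Lemmas~\ref{lem::strict_conv_x} and~\ref{lem::strictly_convex_fct}). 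You instead run a Wald-type consistency argument directly on the log-likelihood values: coercivity of $\ell_\shp$ along every geodesic (obtained from $I_\shp(V)>0$ for all $V$, which you correctly extract from the existence of a unique \gmle\ via the trichotomy of Theorem~\ref{mainadd} and Proposition~\ref{thasymptotic}) gives compact sublevel sets; a uniform strong law for $\ell_{\shp_n}\to\ell_\shp$ on such a compact set, combined with geodesic convexity of $\ell_{\shp_n}$, traps the minimizers $\Sigma_n$ in a fixed sublevel set once $\varepsilon_n<1$; and the standard argmin-convergence step then identifies every subsequential limit with $\Sigma_\shp$. Your approach buys a more self-contained and arguably more transparent argument that only uses the behaviour-at-infinity results of Section~\ref{s.behaviourinfinity} plus elementary convex analysis, and it never needs the explicit gradient formulas, the M-equation, or the Kantorovich inequality; the paper's approach buys a quantitative handle on the estimating equation itself (the maps $M_n$), which it then reuses directly in the proof of the central limit theorem. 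Both the uniform-law step and the trapping step in your write-up are sound: the joint continuity of $(\Sigma,U)\mapsto\ell_U(\Sigma)$ on $K\times G(m,r)$ gives the required uniform modulus of continuity, and the intermediate-value/convexity argument forcing $\varepsilon_n\ge1$ whenever $\Sigma_n\notin K_{m_0+2}$ is correct.
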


\medskip
The main idea of the proof, inspired from Tyler \cite[Theorem 3.1]{Ty}, is to study the square of the norm of the gradients of the log-likelihood functions $\{\ell_{\shp_n}\}_{n \geq 1}, \ell_{\shp} : \Pos_{sym}^1(m) \to \RR$ associated, respectively, with the probability measures $\{\shp_n\}_{n \geq 1}$ and $\shp$. Notice those gradients are vectors in the tangent bundle $\TT\Pos_{sym}^1(m)$ and their norm are given by the scalar product which is the trace on matrices.

Let $\{h_n\}_{n \geq 1}, h : \Pos_{sym}^1(m) \to \RR$ be  those functions 
$$h_n(\Gamma):=\langle \grad(\ell_{\shp_n})(\Gamma), \grad(\ell_{\shp_n})(\Gamma) \rangle_{\Gamma}$$
$$h(\Gamma):=\langle \grad(\ell_{\shp})(\Gamma), \grad(\ell_{\shp})(\Gamma) \rangle_{\Gamma}.$$

By Lemma \ref{lem::strict_conv_1}, our hypothesis on $\shp$, and without loss of generality we can suppose the GE of $\shp$ is $\Sigma_\shp= \Id_m$. Moreover, from Example \ref{ex::cond_1} for every $n$ large enough the log-likelihood function $\ell_{\shp_n}$ has a unique GE, denoted  $\Sigma_n$, almost surely. 
Then taking a compact neighbourhood $C$ of $\Id_m$ we need to prove for every  $n$ large enough $\Sigma_n$ is in $C$ almost surely.  The two main ingredients are Lemma \ref{lem::equicont}, and Lemma \ref{lem::grad_h_n} together with Remark \ref{rem::main} (or for a second proof using strict convexity of $\ell_{\shp_n}$ from Lemmas \ref{lem::strict_conv_x} and \ref{lem::strictly_convex_fct}). Using Lemma \ref{lem::equicont}, for $n$ large enough  we show the compact neighbourhood $C$ contains a local minimum of $h_n$, almost surely.  Then for a first proof as in \cite[Theorem 3.1]{Ty} this local minimum is a solution to $\grad(h_n)(\Gamma)=0$, which by Remark \ref{rem::main}  must be the GE  of $\ell_{\shp_n}$ and that is exactly  $\Sigma_n$. As a second proof one can use the strict convexity of $\ell_{\shp_n}$ from Lemma \ref{lem::strict_conv_x} and then Lemma \ref{lem::strictly_convex_fct} to conclude the local minimum of $h_n$ is the GE  of $\ell_{\shp_n}$ almost surely.


\medskip
Let us now make explicit the main idea of the proof.
By definition $\{\Sigma_n\}_{n \geq 1}$ and $\Sigma_\shp \in \Pos_{sym}^1(m)$ satisfy, respectively, the following M-equations (see equality \eqref{Mequation} or Corollary \ref{M-equation})
\begin{equation}
\label{equ::gradient_i}
\frac{r}{2m} \Sigma_n= \frac{1}{n} \sum_{j =1}^{n}\frac{1}{2} X_j (X_j^{T} \Sigma_n^{-1} X_j)^{-1} X_j^{T}
\end{equation}

\begin{equation}
\label{equ::gradient_p}
\frac{r}{2m} \Sigma_\shp= \int_{G(m,r)}\frac{1}{2} X (X^{T} \Sigma_\shp^{-1} X)^{-1} X^{T} d\shp(U),
\end{equation}
where $X_n$ and $X$ are matrices that correspond to $U_n$, $U$, as presented in the beginning of Section \ref{Introduction_summary}.
\begin{definition}
\label{def::gradient_functional}
Let $\PosS_{sym}(m):=\{ M \in \Sym(m) \; \vert \; x M x^{T} \geq 0, \forall x \neq 0 \in \RR^{m} \}$ the set of all symmetric and positive semidefinite $m \times m$ matrices. For every $n \geq 1$ we define 
$$M_n : \Pos_{sym}^1(m) \to  \PosS_{sym}(m)$$
$$ \Gamma =g g^{T}  \mapsto M_n(\Gamma) := \frac{1}{n} \sum_{j =1}^{n}g^{-1}X_j (X_j^{T} \Gamma^{-1} X_j)^{-1} X_j^{T} (g^{T})^{-1}.$$

In the same way we define 
$$M : \Pos_{sym}^1(m) \to  \PosS_{sym}(m)$$
$$ \Gamma =g g^{T}  \mapsto M(\Gamma) := \int_{G(m,r)}g^{-1}X (X^{T} \Gamma^{-1} X)^{-1} X^{T} (g^{T})^{-1} d\shp(U).$$
\end{definition}

\begin{lemma}
\label{lem::well_def_M}
The maps $M, \{M_n\}_{n \geq 1}$ are well defined. In particular, for every $\Gamma \in \Pos_{sym}^1(m)$ we have $M(\Gamma), \{M_n(\Gamma)\}_{n \geq 1}$ are symmetric and positive semidefinite $m\times m$ real matrices and $\tr(M(\Gamma))=\tr(M_n(\Gamma))= r$.
\end{lemma}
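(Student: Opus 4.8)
The plan is to reduce each summand of $M_n(\Gamma)$ — and the integrand of $M(\Gamma)$ — to an ordinary Euclidean orthogonal projection of rank $r$, and then to use that the set $\PosS_{sym}(m)$ of symmetric positive semidefinite matrices is a closed convex cone, so that the three claimed properties (symmetry, positive semidefiniteness, trace $r$) are inherited by convex combinations and by integrals against a probability measure.

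First I would fix $\Gamma = g g^{T} \in \Pos_{sym}^1(m)$ and a point $U = \langle X\rangle \in G(m,r)$ represented by an $m\times r$ matrix $X$ of rank $r$, and set $Y := g^{-1}X$. Since $g\in\SL(m,\RR)$ is invertible, $Y$ still has rank $r$, so $Y^{T}Y$ is an invertible $r\times r$ matrix; moreover $X^{T}\Gamma^{-1}X = X^{T}(g^{T})^{-1}g^{-1}X = Y^{T}Y$, so
\[
g^{-1}X\,(X^{T}\Gamma^{-1}X)^{-1}X^{T}(g^{T})^{-1} \;=\; Y(Y^{T}Y)^{-1}Y^{T} \;=:\; P_U .
\]
A direct computation gives $P_U = P_U^{T}$, $P_U^{2}=P_U$, and $x^{T}P_U x = (Y^{T}x)^{T}(Y^{T}Y)^{-1}(Y^{T}x)\ge 0$ for all $x\in\RR^m$, so $P_U\in\PosS_{sym}(m)$; and $\tr(P_U)=\tr((Y^{T}Y)^{-1}Y^{T}Y)=\tr(\Id_r)=r$. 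I would also note that replacing $X$ by $XB$ for an invertible $B\in\GL(r,\RR)$ leaves $P_U$ unchanged (the factors $B$ and $B^{-1}$ cancel), so $P_U$ depends only on $U$ and not on the chosen basis; hence $M_n$ and $M$ are genuine functions of $\Gamma$ once a square root $g$ of $\Gamma$ is fixed.

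Then for $M_n(\Gamma)=\frac1n\sum_{j=1}^{n}P_{U_j}$: a finite average of elements of the convex cone $\PosS_{sym}(m)$ is again symmetric and positive semidefinite, and $\tr(M_n(\Gamma))=\frac1n\sum_{j=1}^{n}\tr(P_{U_j})=r$. For $M(\Gamma)=\int_{G(m,r)}P_U\,d\shp(U)$ I would first observe that $U\mapsto P_U$ is continuous on the compact manifold $G(m,r)$, hence bounded and Borel, so the entrywise integral defines a matrix $M(\Gamma)$; symmetry and the identity $\tr(M(\Gamma))=\int_{G(m,r)}r\,d\shp(U)=r$ follow from linearity of the integral and of the trace, while positive semidefiniteness follows from $x^{T}M(\Gamma)x=\int_{G(m,r)}x^{T}P_U x\,d\shp(U)\ge 0$ for every $x\in\RR^m$.

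There is essentially no serious obstacle here. The only point requiring a little care is the algebraic identity exhibiting $g^{-1}X(X^{T}\Gamma^{-1}X)^{-1}X^{T}(g^{T})^{-1}$ as the Euclidean orthogonal projection onto $\mathrm{Im}(g^{-1}X)$, together with the verification that this matrix is independent of the choice of basis $X$ of $U$; everything else is immediate from the cone structure of $\PosS_{sym}(m)$ and linearity of integration and trace.
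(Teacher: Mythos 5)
Your proof is correct and follows essentially the same route as the paper, which simply asserts that symmetry and positive semidefiniteness "follow easily" and obtains the trace from $\tr(AB)=\tr(BA)$. Your write-up is a more careful elaboration of that argument — in particular the reduction to the Euclidean projector $Y(Y^{T}Y)^{-1}Y^{T}$ and the basis-independence check are exactly the details the paper leaves implicit.
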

\begin{proof}
The symmetric and positive semidefinite properties of $M(\Gamma), \{M_n(\Gamma)\}_{n\geq 1}$ follow easily.
Using the fact that $\tr(AB)=\tr(BA)$ one sees $\tr(M(\Gamma))=\tr(M_n(\Gamma))= r$.
\end{proof}

The following is a well-known result.

\begin{lemma}
\label{lem::trace}
Let $A, B$ be two positive semidefinite $m \times m$ real matrices. Then $$0 \leq \tr(A B) \leq \tr(A) \cdot \tr(B).$$
\end{lemma}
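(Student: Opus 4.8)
The statement is classical, so the plan is just to record a short self-contained argument. I would treat the two inequalities separately, using the spectral decomposition of $A$.

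For the lower bound, since $A$ is symmetric positive semidefinite it has a symmetric square root $A^{1/2}$, and by the cyclic invariance of the trace
$$\tr(AB)=\tr(A^{1/2}A^{1/2}B)=\tr(A^{1/2}BA^{1/2}).$$
The matrix $A^{1/2}BA^{1/2}$ is of the form $M^{T}BM$ with $M=A^{1/2}$ symmetric, hence it is positive semidefinite; therefore all of its eigenvalues are nonnegative and $\tr(A^{1/2}BA^{1/2})\geq 0$, which gives $\tr(AB)\geq 0$.

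For the upper bound, write a spectral decomposition $A=PDP^{T}$ with $P\in\SO(m,\RR)$ and $D=\diag(\lambda_1,\dots,\lambda_m)$, $\lambda_i\geq 0$. Then, again by cyclicity,
$$\tr(AB)=\tr(D\,P^{T}BP)=\sum_{i=1}^{m}\lambda_i\,(P^{T}BP)_{ii}.$$
Each diagonal entry satisfies $(P^{T}BP)_{ii}=(Pe_i)^{T}B(Pe_i)\geq 0$ because $B$ is positive semidefinite, and $\sum_{i=1}^m (P^{T}BP)_{ii}=\tr(P^{T}BP)=\tr(B)$. Writing $\mu_i:=(P^{T}BP)_{ii}\geq 0$, it remains to observe that
$$\sum_{i=1}^m\lambda_i\mu_i\ \leq\ \Bigl(\sum_{i=1}^m\lambda_i\Bigr)\Bigl(\sum_{j=1}^m\mu_j\Bigr)=\tr(A)\tr(B),$$
since expanding the product on the right yields all the terms $\lambda_i\mu_i$ together with the additional cross terms $\lambda_i\mu_j$ ($i\neq j$), which are all nonnegative. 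Combining the two bounds gives $0\leq\tr(AB)\leq\tr(A)\tr(B)$.

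There is no real obstacle here; the only points deserving a word are the positive semidefiniteness of $M^{T}BM$ (congruence preserves the PSD property), the nonnegativity of the diagonal entries of a PSD matrix, and the fact that dropping nonnegative cross terms only decreases a sum. If a more operator-theoretic phrasing is preferred, one may instead use $A\preceq\lambda_{\max}(A)\,\Id_m$ to get $B^{1/2}AB^{1/2}\preceq\lambda_{\max}(A)B$, hence $\tr(AB)\leq\lambda_{\max}(A)\tr(B)\leq\tr(A)\tr(B)$ by monotonicity of the trace on PSD matrices and $\lambda_{\max}(A)\leq\tr(A)$; this is interchangeable with the computation above.
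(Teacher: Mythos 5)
Your proof is correct. Note that the paper itself does not prove this lemma at all: its ``proof'' is the single line ``See [Lieb--Thirring], page 269,'' so there is no argument in the paper to compare against. Your self-contained two-part argument is sound: the lower bound via $\tr(AB)=\tr(A^{1/2}BA^{1/2})$ and the positive semidefiniteness of a congruence $M^{T}BM$ is standard and airtight, and the upper bound via the spectral decomposition $A=PDP^{T}$, the nonnegativity of the diagonal entries $\mu_i=(P^{T}BP)_{ii}$, the identity $\sum_i\mu_i=\tr(B)$, and the elementary inequality $\sum_i\lambda_i\mu_i\le\bigl(\sum_i\lambda_i\bigr)\bigl(\sum_j\mu_j\bigr)$ for nonnegative numbers is exactly right. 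The alternative phrasing via $B^{1/2}AB^{1/2}\preceq\lambda_{\max}(A)\,B$ and $\lambda_{\max}(A)\le\tr(A)$ is equally valid. Either version is a perfectly acceptable replacement for the paper's bare citation; the only cosmetic remark is that one need not insist on $P\in\SO(m,\RR)$ rather than a general orthogonal $P$, though this is harmless since a column sign flip fixes the determinant without affecting the diagonalization.
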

\begin{proof}
See \cite{LT}, page 269.
\end{proof}

\begin{lemma}
\label{lem::general_fact}
Let $A$ be a symmetric $m \times m$ matrix such that $\tr(A)=r$. Then $\frac{r^2}{m} \leq \tr(A^2)$, with equality if and only if $A=\frac{r}{m}\Id_m$.
\end{lemma}
\begin{proof}
Notice $(A-\frac{r}{m}\Id_m)$ is a symmetric matrix and $\tr((A- \frac{r}{m}\Id_m)^2) \geq 0$. Then as $(A- \frac{r}{m}\Id_m)^2=A^2 +\frac{r^2}{m^2}\Id_m- 2 \frac{r}{m} A$ we have $\tr(A^2 + \frac{r^2}{m^2}\Id_m- 2\frac{r}{m} A)=\tr(A^2)+\frac{r^2}{m}-2\frac{r^2}{m}\geq 0$ and the conclusion follows.

Suppose $\tr(A^2)=\frac{r^2}{m}$. Then it is easy to see $A= \frac{r}{m} \Id_m$ verifies that. Now by writing explicitly the trace of $(A- \frac{r}{m}\Id_m)^2$ and imposing the condition $\tr((A- \frac{r}{m} \Id_m)^2)=0$ we obtain $A= \frac{r}{m}\Id_m$.
\end{proof}

By Lemmas \ref{lem::well_def_M}, \ref{lem::trace} and \ref{lem::general_fact} we obtain:
\begin{lemma}
\label{lem::ineg_trace_square}
Let $\Gamma \in \Pos_{sym}^1(m)$. Then for every $n \geq 1$ we have
$$\frac{r^2}{m} \leq \tr(M(\Gamma)^2) \leq r^2$$
$$\frac{r^2}{m} \leq \tr(M_n(\Gamma)^2) \leq r^2.$$
Moreover, $\frac{r^2}{m} = \tr(M(\Gamma)^2)$, $\frac{r^2}{m}= \tr(M_n(\Gamma)^2)$, respectively, if and only if $M(\Gamma)= \frac{r}{m}\Id_m$, $M_n(\Gamma)= \frac{r}{m}\Id_m$, respectively.
\end{lemma}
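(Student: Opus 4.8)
The plan is to read off the statement as a direct corollary of the three preceding lemmas, with essentially no extra work. By Lemma \ref{lem::well_def_M}, for any fixed $\Gamma\in\Pos_{sym}^1(m)$ the matrices $M(\Gamma)$ and $M_n(\Gamma)$ are symmetric, positive semidefinite, and have trace exactly $r$. These two properties (symmetric with trace $r$; positive semidefinite) are precisely the hypotheses needed to invoke Lemmas \ref{lem::general_fact} and \ref{lem::trace} respectively.

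First I would establish the lower bounds together with their equality cases. Since $M(\Gamma)$ is symmetric with $\tr(M(\Gamma))=r$, Lemma \ref{lem::general_fact} applies with $A=M(\Gamma)$ and yields $\frac{r^2}{m}\le\tr(M(\Gamma)^2)$, with equality if and only if $M(\Gamma)=\frac{r}{m}\Id_m$; the identical argument with $A=M_n(\Gamma)$ gives $\frac{r^2}{m}\le\tr(M_n(\Gamma)^2)$ with equality if and only if $M_n(\Gamma)=\frac{r}{m}\Id_m$. This disposes of both the lower inequality and the ``moreover'' clause.

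Next I would establish the upper bounds. Since $M(\Gamma)$ is positive semidefinite, Lemma \ref{lem::trace} applies with $A=B=M(\Gamma)$ and gives $\tr(M(\Gamma)^2)\le\tr(M(\Gamma))\cdot\tr(M(\Gamma))=r^2$, using $\tr(M(\Gamma))=r$ from Lemma \ref{lem::well_def_M}; the same with $A=B=M_n(\Gamma)$ gives $\tr(M_n(\Gamma)^2)\le r^2$. Combining the two halves completes the proof.

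There is no genuine obstacle here: the lemma is purely a bookkeeping assembly of Lemmas \ref{lem::well_def_M}, \ref{lem::trace} and \ref{lem::general_fact}, and the only thing to be careful about is to cite the correct hypothesis (positive semidefiniteness for the upper bound, symmetry plus the trace normalization for the sharp lower bound) for each of the two families $M$ and $M_n$ uniformly in $n$.
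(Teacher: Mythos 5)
Your proposal is correct and matches the paper exactly: the paper gives no separate proof, stating the lemma as an immediate consequence of Lemmas \ref{lem::well_def_M}, \ref{lem::trace} and \ref{lem::general_fact}, which is precisely the assembly you carry out. Nothing to add.
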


\begin{remark}
Let $\{U_n\}_{n \geq 1}$ be i.i.d. random elements of $G(m,r)$ distributed according to a continuous Borel probability measure $\shp$. 
Recall, the log-likelihood of $\{\shp_n\}_{n \geq 1}$ and $\shp$, respectively, are the continuous and differentiable maps given by: 
\begin{equation}
\label{equ::log_like_equ}
\begin{split}
& \{\ell_{\shp_n}\}_{n \geq 1}, \ell_{\shp} : \Pos_{sym}^1(m) \to \RR \\
&\Gamma \mapsto \ell_{\shp_n}(\Gamma) = \frac{1}{n}\sum_{j=1}^{n} \frac{1}{2} \log \frac{\det(X_j^{T} \Gamma^{-1} X_j)}{\det(X_j^{T}X_j)},\\
&\Gamma \mapsto \ell_{\shp}(\Gamma)= \int_{G(m,r)} \frac{1}{2} \log \frac{\det(X^{T} \Gamma^{-1} X)}{\det(X^{T}X)} d\shp(U).\\
\end{split}
\end{equation}

The gradients of $\{\ell_{\shp_n}\}_{n \geq 1}, \ell_{\shp} : \Pos_{sym}^1(m) \to \RR$, respectively, are the maps given by:
\begin{equation}
\label{equ::log_like_grad}
\begin{split}
& \{\grad(\ell_{\shp_n})\}_{n \geq 1}, \grad(\ell_{\shp}) : \Pos_{sym}^1(m) \to \TT \Pos_{sym}^1(m) \\
&\Gamma \mapsto \grad(\ell_{\shp_n})(\Gamma) = \frac{r}{2m} \Gamma- \frac{1}{2n}\sum_{j=1}^{n} X_j (X_j^{T} \Gamma^{-1} X_j)^{-1} X_j^{T}\\
&\Gamma \mapsto \grad(\ell_{\shp})(\Gamma) =  \frac{r}{2m} \Gamma- \frac{1}{2} \int_{G(m,r)} X (X^{T} \Gamma^{-1} X)^{-1} X^{T} d\shp(U).\\
\end{split}
\end{equation}

As $\Pos_{sym}^1(m)$ is a Riemannian manifold, we can isometrically transport the gradient maps $\{ \grad(\ell_{\shp_n})\}_{n \geq 1}, \grad(\ell_{\shp})$ to the tangent space $\TT_{\Id_m} \Pos_{sym}^1(m)$ at the identity matrix $\Id_m \in \Pos_{sym}^1(m)$:
\begin{equation}
\label{equ::log_like_grad_id_1}
\begin{split}
\Gamma = gg^{T} \mapsto g^{-1}\grad(\ell_{\shp_n})(\Gamma)(g^{T})^{-1} &= \frac{r}{2m} \Id_m- \frac{1}{2n}\sum_{j=1}^{n} g^{-1}X_j (X_j^{T} \Gamma^{-1} X_j)^{-1} X_j^{T}(g^{T})^{-1}\\
&= \frac{r}{2m} \Id_m- \frac{1}{2} M_n(\Gamma)\\
\end{split}
\end{equation}

\begin{equation}
\label{equ::log_like_grad_id_2}
\begin{split}
\Gamma =gg^{T} \mapsto g^{-1} \grad(\ell_{\shp})(\Gamma) (g^{T})^{-1} &=  \frac{r}{2m} \Id_m- \frac{1}{2} \int_{G(m,r)} g^{-1} X (X^{T} \Gamma^{-1} X)^{-1} X^{T} (g^{T})^{-1} d\shp(U)\\
&= \frac{r}{2m} \Id_m-  \frac{1}{2} M(\Gamma).\\
\end{split}
\end{equation}
\end{remark}

\begin{definition}
\label{def::trace_grad}
Let $\{U_n\}_{n \geq 1} $ be i.i.d. random elements of $G(m,r)$, distributed according to a  probability measure $\shp$ on $G(m,r)$. 
As defined above, we have: $$\{h_n\}_{n \geq 1}, h : \Pos_{sym}^1(m) \to \RR$$ 
\begin{equation}
\label{def::trace_grad_like_i}
\begin{split}
\Gamma = gg^{T} \mapsto h_n(\Gamma)&= \langle  g^{-1}\grad(\ell_{\shp_n})(\Gamma)(g^{T})^{-1}, g^{-1}\grad(\ell_{\shp_n})(\Gamma)(g^{T})^{-1} \rangle_{\Id_m} \\
& =\langle \grad(\ell_{\shp_n})(\Gamma), \grad(\ell_{\shp_n})(\Gamma) \rangle_{\Gamma}\\
&= \tr(\Gamma^{-1}\grad(\ell_{\shp_n})(\Gamma)\Gamma^{-1}\grad(\ell_{\shp_n})(\Gamma))\\
&= \frac{r^2}{4m}-\frac{r^2}{2m} +  \frac{1}{4} \tr(M_n(\Gamma)^2)\\
\end{split}
\end{equation}
 
\begin{equation}
\label{def::trace_grad_like}
\begin{split}
\Gamma = gg^{T} \mapsto h(\Gamma)&= \langle  g^{-1}\grad(\ell_{\shp})(\Gamma)(g^{T})^{-1}, g^{-1}\grad(\ell_{\shp})(\Gamma)(g^{T})^{-1} \rangle_{\Id_m} \\
& =\langle \grad(\ell_{\shp})(\Gamma), \grad(\ell_{\shp})(\Gamma) \rangle_{\Gamma}\\
&= \tr(\Gamma^{-1}\grad(\ell_{\shp})(\Gamma)\Gamma^{-1}\grad(\ell_{\shp})(\Gamma))\\
&= \frac{r^2}{4m}-\frac{r^2}{2m} + \frac{1}{4}\tr(M(\Gamma)^2).\\
\end{split}
\end{equation}
\end{definition}

\begin{lemma}
\label{lem::grad_h_n}
Let $\{U_i\}_{1 \leq i \leq n } \subset G(m,r)$ be a sample of size $n>0$. Let $\shp_n:= \frac{1}{n} (\delta_{U_1}+...+ \delta_{U_n})$ be the corresponding empirical probability measure and $\ell_{\shp_n}$ the associated log-likelihood function. Then the gradient of the function $h_n: \Pos_{sym}^1(m) \to \RR$ defined by $\Gamma \mapsto h_n(\Gamma):= \langle \grad(\ell_{\shp_n})(\Gamma), \grad(\ell_{\shp_n})(\Gamma) \rangle_{\Gamma}$ is given by
\begin{equation*}
\begin{split}
\grad(h_n)(\Gamma)&= \frac{1}{2n^2}\Gamma \left(\sum\limits_{j=1}^{n} \pi_{U_{j}}(\Gamma) \Gamma \left(\sum\limits_{i=1}^n \pi_{U_{i}}(\Gamma)\right) \Gamma  \pi_{U_{j}}(\Gamma)  \right) \Gamma \\
& \; \; - \frac{1}{2n^2} \Gamma \left(\sum\limits_{i=1}^{n} \pi_{U_{i}}(\Gamma)\right) \Gamma \left(\sum\limits_{i=1}^{n} \pi_{U_{i}}(\Gamma)\right) \Gamma,
\end{split}
\end{equation*}
where $\pi_{U}(\Gamma):= \Gamma^{-1} X (X^{T} \Gamma^{-1}  X)^{-1} X^{T} \Gamma^{-1}$.

In particular, $\grad(h_n)(\Gamma)=0$ if and only if
\begin{equation}
\label{equ::grad_h_n1}
\sum\limits_{j=1}^{n} \pi_{U_{j}}(\Gamma) \Gamma \left(\sum\limits_{i=1}^n \pi_{U_{i}}(\Gamma)\right) \Gamma  \pi_{U_{j}}(\Gamma)  =  \left(\sum\limits_{i=1}^{n} \pi_{U_{i}}(\Gamma)\right) \Gamma \left(\sum\limits_{i=1}^{n} \pi_{U_{i}}(\Gamma)\right).
\end{equation}
\end{lemma}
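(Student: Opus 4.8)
The plan is to compute $\grad(h_n)$ directly using the covariant-derivative formula from Lemma~\ref{lem::covariant_deriv} together with the defining relation $\langle \grad(h_n)(\Gamma), W\rangle_\Gamma = \nabla_W(h_n)(\Gamma)$ for all $W\in \TT_\Gamma\Pos_{sym}^1(m)$. From Definition~\ref{def::trace_grad} we already have the clean closed form
$$
h_n(\Gamma) = \frac{1}{4}\tr\bigl(M_n(\Gamma)^2\bigr) - \frac{r^2}{4m},
$$
and, since $M_n(\Gamma)=g^{-1}\bigl(\sum_j X_j(X_j^T\Gamma^{-1}X_j)^{-1}X_j^T\bigr)(g^T)^{-1} = \Gamma\,\bigl(\tfrac1n\sum_j \pi_{U_j}(\Gamma)\bigr)\,\Gamma$ after transporting back, it is cleaner to rewrite
$$
h_n(\Gamma)=\frac14\,\tr\!\Bigl(\Bigl(\tfrac1n\textstyle\sum_j\pi_{U_j}(\Gamma)\Bigr)\Gamma\Bigl(\tfrac1n\textstyle\sum_i\pi_{U_i}(\Gamma)\Bigr)\Gamma\Bigr)-\frac{r^2}{4m}.
$$

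First I would differentiate $h_n(\gamma(t))$ along a geodesic $\gamma(t)=g\exp(tV)g^T$ through $\Gamma=gg^T$ with $V\in\TT_{\Id_m}\Pos_{sym}^1(m)$, i.e. compute $\tfrac{d}{dt}h_n(\gamma(t))\big|_{t=0}$; the tangent vector at $\Gamma$ is $W=gVg^T$. The two ingredients needed are (i) $\tfrac{d}{dt}\Gamma(t)^{-1}\big|_{t=0}=-\Gamma^{-1}W\Gamma^{-1}$, and (ii) the derivative of $\pi_{U}(\Gamma(t))$, which can be read off from the expression of $\nabla_Z\grad\ell_U$ in Lemma~\ref{lem::covariant_deriv} since $\grad\ell_U(\Gamma)=\tfrac{r}{2m}\Gamma-\tfrac12\Gamma\pi_U(\Gamma)\Gamma$ and the covariant derivative formula there already encodes $\tfrac{d}{dt}\bigl(\Gamma\pi_U(\Gamma)\Gamma\bigr)$ up to the Christoffel correction. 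Expanding $\tfrac{d}{dt}\tr\bigl((\sum_j\pi_{U_j})\Gamma(\sum_i\pi_{U_i})\Gamma\bigr)$ by the product rule gives a sum of four trace terms; using cyclicity of the trace and the self-$\Gamma$-adjointness of each $\pi_{U}(\Gamma)\Gamma$ (Remark~\ref{rem::first_rem_adj}), these collapse to something of the form $\tr(\,\cdot\,W)$, and matching against $\langle \grad(h_n)(\Gamma),W\rangle_\Gamma=\tr(\Gamma^{-1}\grad(h_n)(\Gamma)\Gamma^{-1}W)$ identifies $\grad(h_n)(\Gamma)$ as the stated expression. The final clean-up is conjugating by $g$/$g^{-1}$ to turn everything back into matrices on $\RR^m$, which produces the sandwiched form $\tfrac1{2n^2}\Gamma(\sum_j\pi_{U_j}\Gamma(\sum_i\pi_{U_i})\Gamma\pi_{U_j})\Gamma - \tfrac1{2n^2}\Gamma(\sum_i\pi_{U_i})\Gamma(\sum_i\pi_{U_i})\Gamma$.

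The second assertion, the characterization of $\grad(h_n)(\Gamma)=0$, then follows because both terms are already of the form $\Gamma(\cdot)\Gamma$ with $\Gamma$ invertible, so $\grad(h_n)(\Gamma)=0$ iff the bracketed matrices agree, which is exactly equation~\eqref{equ::grad_h_n1}. (One should note the two bracketed expressions are automatically symmetric, so no extra symmetrization is needed; $\sum_i\pi_{U_i}(\Gamma)$ is symmetric and $\pi_{U_j}(\Gamma)\Gamma(\sum_i\pi_{U_i})\Gamma\pi_{U_j}(\Gamma)$ is visibly symmetric since $\pi_{U_j}(\Gamma)$ and $\Gamma$ are.)

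The main obstacle I expect is bookkeeping rather than conceptual: getting the Christoffel/covariant correction terms right when differentiating $\pi_{U_j}(\Gamma(t))$, and then verifying that the many trace terms genuinely collapse to the advertised two-term answer without stray factors. A safe way to control this is to differentiate the ambient-coordinate expression $h_n(\Gamma)=\tfrac14\tr(M_n(\Gamma)^2)-\tfrac{r^2}{4m}$ using the already-established formula $\nabla_Z\grad\ell_U$ (which handles the Riemannian correction once and for all), rather than re-deriving the connection; concretely, write $\grad(h_n)=\tfrac12\,\nabla\!\big(\text{linear form in }M_n\big)$ and reuse Lemma~\ref{lem::covariant_deriv} termwise. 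Apart from that, the computation is routine.
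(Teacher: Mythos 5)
Your plan is essentially the paper's own proof: the paper computes $\tfrac{d}{dt}h_n(g\exp(tV)g^T)\big|_{t=0}$ directly from the trace expression $h_n(\Gamma)=\tr\bigl((\tfrac{r}{2m}\Id_m-\tfrac{1}{2n}\sum_j\pi_{U_j}(\Gamma)\Gamma)^2\bigr)$ using $\tfrac{d}{dt}E(t)^{-1}=-E^{-1}E'E^{-1}$, observes that the $\tfrac{r}{m}\Id_m$ cross-terms cancel, and reads off $\grad(h_n)(\Gamma)$ from the pairing $\tr(\Gamma^{-1}\,\cdot\,\Gamma^{-1}W)$, exactly as you propose. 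The only caveats are cosmetic: it is $\Gamma\pi_{U}(\Gamma)={\rm Pr}(U,\Gamma)$ (not $\pi_U(\Gamma)\Gamma$) that Remark~\ref{rem::first_rem_adj} makes self-$\Gamma$-adjoint, and the collapse to two terms comes from cyclicity of the trace plus that cancellation rather than from any adjointness, but neither affects the validity of your argument.
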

\begin{proof}
The proof is provided in Section \ref{sec::Appendix}.
\end{proof} 

\begin{lemma}
\label{lem::inverse_pi_u}
Let $\{U_i\}_{1 \leq i \leq n } \subset G(m,r)$ be a sample of size $n>0$ such that $\RR^m$ is spanned by $\{U_1,...,U_n\}$.
Then the $m\times m$ real matrix $\sum\limits_{i=1}^{n} \pi_{U_{i}}(\Gamma)$ is invertible for every $\Gamma \in \Pos_{sym}^1(m)$.
\end{lemma}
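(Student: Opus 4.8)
The plan is to prove the stronger statement that $\sum_{i=1}^n\pi_{U_i}(\Gamma)$ is symmetric and positive-definite, from which invertibility is immediate. First I would record that each summand is symmetric positive semidefinite: writing $X_i$ for the $m\times r$ matrix associated with $U_i$ and setting $C_i:=\Gamma^{-1}X_i$, one has $X_i^{T}\Gamma^{-1}X_i=C_i^{T}\Gamma C_i$, so
$$\pi_{U_i}(\Gamma)=\Gamma^{-1}X_i(X_i^{T}\Gamma^{-1}X_i)^{-1}X_i^{T}\Gamma^{-1}=C_i(C_i^{T}\Gamma C_i)^{-1}C_i^{T}.$$
Since $\Gamma\in\Pos_{sym}^1(m)$ and $X_i$ has rank $r$, the $r\times r$ matrix $C_i^{T}\Gamma C_i$ is positive-definite, hence so is its inverse; therefore for every $v\in\RR^m$
$$v^{T}\pi_{U_i}(\Gamma)v=(C_i^{T}v)^{T}(C_i^{T}\Gamma C_i)^{-1}(C_i^{T}v)\ge 0,$$
with equality if and only if $C_i^{T}v=0$, equivalently $X_i^{T}\Gamma^{-1}v=0$, equivalently $v\in U_i^{\perp_{\Gamma}}$ in the sense of the $\Gamma$-scalar product \eqref{equ::Sigma_scalar_product} (cf. Remark \ref{rem::Sigma_scalar_prod} and Lemma \ref{lem::orth_proj}).

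Next I would use that a finite sum of symmetric positive semidefinite matrices is again symmetric positive semidefinite, with kernel equal to the intersection of the kernels of the summands. Concretely, $v^{T}\bigl(\sum_{i=1}^n\pi_{U_i}(\Gamma)\bigr)v=0$ forces $v^{T}\pi_{U_i}(\Gamma)v=0$ for every $i$, hence $X_i^{T}\Gamma^{-1}v=0$ for all $i$; this says that $\Gamma^{-1}v$ is Euclidean-orthogonal to every column of every $X_i$, hence to their joint span, which is $\sum_{i=1}^nU_i=\RR^m$ by hypothesis. Consequently $\Gamma^{-1}v=0$, so $v=0$. Equivalently, the condition that $v$ annihilates every $\pi_{U_i}(\Gamma)$ translates into $v\in\bigcap_{i=1}^n U_i^{\perp_{\Gamma}}=(\sum_{i=1}^n U_i)^{\perp_{\Gamma}}=(\RR^m)^{\perp_{\Gamma}}=\{0\}$, the last equality because the $\Gamma$-scalar product is non-degenerate. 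Therefore $\sum_{i=1}^n\pi_{U_i}(\Gamma)$ has trivial kernel, so it is positive-definite and in particular invertible, for every $\Gamma\in\Pos_{sym}^1(m)$.

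There is no genuine obstacle here; the only thing to keep straight is the interplay between the ordinary Euclidean scalar product and the $\Gamma$-scalar product, namely that $X_i^{T}\Gamma^{-1}v=0$ is exactly the statement $v\in U_i^{\perp_{\Gamma}}$. Once this translation is made, the reduction of invertibility to the spanning hypothesis $\sum_i U_i=\RR^m$ is automatic.
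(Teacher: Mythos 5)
Your proof is correct and follows essentially the same route as the paper: both reduce invertibility to showing the kernel of the sum is trivial, using that each summand is a positive semidefinite operator whose quadratic form vanishes at $v$ exactly when $v\in U_i^{\perp_{\Gamma}}$, and then invoking the spanning hypothesis. Your explicit quadratic-form computation $v^{T}\pi_{U_i}(\Gamma)v=(C_i^{T}v)^{T}(C_i^{T}\Gamma C_i)^{-1}(C_i^{T}v)$ is in fact a cleaner rendering of the paper's geometric step that each nonzero $g^{-1}{\rm Pr}(U_i,\Gamma)g(v)$ lies ``on the same side of the hyperplane $v^{\perp}$ as $v$.''
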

\begin{proof}
Let $\Gamma=gg^{T} \in \Pos_{sym}^1(m)$, and notice by the definition $\sum\limits_{i=1}^{n} \pi_{U_{i}}(\Gamma)$ is a symmetric matrix.  Moreover, by Lemma \ref{lem::orth_proj} we also have $\sum\limits_{i=1}^{n} \pi_{U_{i}}(\Gamma)= \Gamma^{-1}  \sum\limits_{i=1}^{n} {\rm Pr}(U_i, \Gamma)$.

It is enough to prove $\sum\limits_{i=1}^{n} g^{-1}{\rm Pr}(U_i, \Gamma)g$ is invertible, this is equivalent to proving the kernel of $\sum\limits_{i=1}^{n} g^{-1}{\rm Pr}(U_i, \Gamma)g$ contains only the zero vector in $\RR^m$. Indeed, by its definition ${\rm Pr}(U, \Gamma)$ is the $\Gamma$-orthogonal projection onto $U$, and so $g^{-1}{\rm Pr}(U, \Gamma)g$ is the $\Id_m$-orthogonal projection onto $g^{-1}U \in G(m,r)$, and $g^{-1}U^{\perp_{\Sigma}} = (g^{-1}U)^{\perp_{\Id_m}}=(g^{-1}U)^{\perp}$. Let $v \in \RR^m \setminus \{0_m\}$ and let $v^{\perp} \subset \RR^m$ be the $\Id_m$-orthogonal hyperplane on $v$. Because $\RR^m$ is spanned by $\{g^{-1}U_1,...,g^{-1}U_n\}$ there is at least one $g^{-1}U_j$ that is not a subset of $v^{\perp}$. In particular, for that $j$ we have $g^{-1}{\rm Pr}(U_j, \Gamma)g(v)\neq 0_m$. Moreover, if not zero, the vector $g^{-1}{\rm Pr}(U_i, \Gamma)g(v)$ is on the same side of the hyperplane $v^{\perp}$ as $v$, and so $\sum\limits_{i=1}^{n} g^{-1}{\rm Pr}(U_i, \Gamma)g(v) \neq 0_m$. 
\end{proof}

\begin{remark}
\label{rem::main}
Let $\{U_i\}_{1 \leq i \leq n } \subset G(m,r)$ be a sample of size $n>0$ such that $\RR^m$ is spanned by $\{U_1,...,U_n\}$. Then by Lemma \ref{lem::inverse_pi_u} equation (\ref{equ::grad_h_n1}) becomes 
\begin{equation}
\label{equ::grad_h_n2}
 \sum\limits_{j=1}^{n} \pi_{U_{j}}(\Gamma) \Gamma \left(\sum\limits_{i=1}^n \pi_{U_{i}}(\Gamma)\right) \Gamma  \pi_{U_{j}}(\Gamma) \left(\sum\limits_{i=1}^{n} \pi_{U_{i}}(\Gamma)\right)^{-1} =  \left(\sum\limits_{i=1}^{n} \pi_{U_{i}}(\Gamma)\right) \Gamma
\end{equation}
and taking the trace in (\ref{equ::grad_h_n2}) we obtain
\begin{equation}
\label{equ::grad_h_n3}
\begin{split}
 \tr \left(\sum\limits_{j=1}^{n} \pi_{U_{j}}(\Gamma) \Gamma \left(\sum\limits_{i=1}^n \pi_{U_{i}}(\Gamma)\right) \Gamma  \pi_{U_{j}}(\Gamma) \left(\sum\limits_{i=1}^{n} \pi_{U_{i}}(\Gamma)\right)^{-1} \right)&= \tr\left(\sum\limits_{i=1}^{n} \pi_{U_{i}}(\Gamma) \Gamma \right)\\
 &=n\tr(\Id_{r\times r})\\
 &=n\cdot r.\\
 \end{split}
\end{equation}
Therefore
\begin{equation}
\label{equ::grad_h_n4}
 \tr \left(\frac{1}{n}\sum\limits_{j=1}^{n} \pi_{U_{j}}(\Gamma) \Gamma \left(\sum\limits_{i=1}^n \pi_{U_{i}}(\Gamma)\right) \Gamma  \pi_{U_{j}}(\Gamma) \left(\sum\limits_{i=1}^{n} \pi_{U_{i}}(\Gamma)\right)^{-1} \right)=r.
\end{equation}
By the Definition \ref{def::gradient_functional} of $M_n(\Gamma)$ the above equality (\ref{equ::grad_h_n4}) is equivalent to
\begin{equation}
\label{equ::grad_h_n5}
 \tr \left(\frac{1}{n}\sum\limits_{j=1}^{n} g^{-1} X_j (X_{j}^{T} \Gamma^{-1}  X_j)^{-1} X_{j}^{T} (g^{T})^{-1}  M_{n}(\Gamma) \;g^{-1} X_j (X_{j}^{T} \Gamma^{-1}  X_j)^{-1} X_{j}^{T} (g^{T})^{-1} M_{n}(\Gamma)^{-1}\right)=r.
\end{equation}

Notice, for every $j \in \{1,...,n\}$ the $r \times r$ matrix $X_{j}^{T} \Gamma^{-1}  X_j$ is symmetric and also positive definite, as $\Gamma^{-1}= (g^{T})^{-1}g^{-1}$. Therefore, one can write $X_{j}^{T} \Gamma^{-1}  X_j= h_j^{T}h_j$, with  $h_j$ an $r\times r$ real matrix.

Denoting $g^{-1} X_j h_j^{-1}=: \theta_j$, equation (\ref{equ::grad_h_n5}) is further equivalent to
\begin{equation}
\label{equ::grad_h_n6}
 \tr \left(\frac{1}{n}\sum\limits_{j=1}^{n} \theta_j^{T} M_{n}(\Gamma)\theta_j \theta_j^{T} M_{n}(\Gamma)^{-1} \theta_j \right)=r.
\end{equation}
\end{remark}

\begin{lemma}
\label{lem::equicont}
Let $\{U_n\}_{n \geq 1} $ be  i.i.d. random elements of $G(m,r)$, distributed
according to a continuous Borel probability measure $\shp$ on $G(m,r)$.  
Let $C \subset  (\Pos_{sym}^1(m), \vert \vert \cdot \vert \vert)$ be a compact set. Then 
$$\sup_{\Gamma \in C} \vert h_n(\Gamma)- h(\Gamma) \vert \xrightarrow[n \to \infty]{} 0 \ \ \ \text{almost surely}. $$
\end{lemma}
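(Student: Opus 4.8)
The plan is to deduce Lemma~\ref{lem::equicont} from a uniform strong law of large numbers for the matrix-valued map $M_n$ of Definition~\ref{def::gradient_functional}. First I would reduce the statement to $M_n$. By Definition~\ref{def::trace_grad} one has $h_n(\Gamma)-h(\Gamma)=\frac14\bigl(\tr(M_n(\Gamma)^2)-\tr(M(\Gamma)^2)\bigr)$, and since $M_n(\Gamma),M(\Gamma)$ are symmetric, positive semidefinite and of trace $r$ (Lemma~\ref{lem::well_def_M}), Lemma~\ref{lem::ineg_trace_square} bounds $\tr(M_n(\Gamma)^2),\tr(M(\Gamma)^2)\le r^2$, hence $\vert\vert M_n(\Gamma)\vert\vert_F,\vert\vert M(\Gamma)\vert\vert_F\le r$ for the Frobenius norm $\vert\vert A\vert\vert_F^2=\tr(A^{T}A)$. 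Writing $M_n^2-M^2=M_n(M_n-M)+(M_n-M)M$ and using $\vert\tr(AB)\vert\le\vert\vert A\vert\vert_F\,\vert\vert B\vert\vert_F$ then gives $\sup_{\Gamma\in C}\vert h_n(\Gamma)-h(\Gamma)\vert\le\frac{r}{2}\sup_{\Gamma\in C}\vert\vert M_n(\Gamma)-M(\Gamma)\vert\vert_F$, so it is enough to show $\sup_{\Gamma\in C}\vert\vert M_n(\Gamma)-M(\Gamma)\vert\vert_F\to0$ almost surely.

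Next I would set things up for the uniform SLLN. Fixing for each $\Gamma$ its unique symmetric positive-definite square root $\Gamma^{1/2}$, I write $M_n(\Gamma)=\frac1n\sum_{j=1}^n\psi(U_j,\Gamma)$ and $M(\Gamma)=\int_{G(m,r)}\psi(U,\Gamma)\,d\shp(U)$ with $\psi(U,\Gamma):=\Gamma^{-1/2}X(X^{T}\Gamma^{-1}X)^{-1}X^{T}\Gamma^{-1/2}=\Gamma^{-1/2}\,{\rm Pr}(U,\Gamma)\,\Gamma^{1/2}$; by Lemma~\ref{lem::orth_proj} this does not depend on the basis $X$ chosen for $U$. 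The key observation is that $(U,\Gamma)\mapsto\psi(U,\Gamma)$ is continuous on the \emph{compact} set $G(m,r)\times C$: continuity in $\Gamma$ because $\Gamma\mapsto\Gamma^{\pm1/2}$ is continuous on $C$, and continuity in $U$ locally, by choosing the frame matrix $X$ to depend continuously on $U$, the matrix $X^{T}\Gamma^{-1}X$ being invertible since $\operatorname{rank}X=r$ and $\Gamma^{-1}$ is positive definite. Hence $\psi$ is bounded and \emph{uniformly} continuous on $G(m,r)\times C$: for every $\varepsilon>0$ there is $\delta>0$ with $\vert\vert\psi(U,\Gamma)-\psi(U,\Gamma')\vert\vert_F<\varepsilon$ for all $U\in G(m,r)$ whenever $\Gamma,\Gamma'\in C$ and $\vert\vert\Gamma-\Gamma'\vert\vert<\delta$; averaging over the $U_j$, respectively integrating against $\shp$, this transfers to equicontinuity of $\{M_n\}_{n\ge1}$ and of $M$ on $C$, uniformly in $n$ and independently of the realisation.

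Then I would run the standard $\varepsilon$-net argument. Given $\varepsilon>0$, take the corresponding $\delta$, cover $C$ by finitely many balls $B(\Gamma_1,\delta),\dots,B(\Gamma_N,\delta)$ by compactness, and apply the ordinary strong law of large numbers entrywise at each $\Gamma_i$ (the $\psi(U_j,\Gamma_i)$ being i.i.d.\ and bounded with mean $M(\Gamma_i)$) to get $M_n(\Gamma_i)\to M(\Gamma_i)$ a.s.; intersecting the $N$ probability-one events, a.s.\ $\max_i\vert\vert M_n(\Gamma_i)-M(\Gamma_i)\vert\vert_F\to0$. On that event, for any $\Gamma\in C$ pick $i$ with $\vert\vert\Gamma-\Gamma_i\vert\vert<\delta$ and bound $\vert\vert M_n(\Gamma)-M(\Gamma)\vert\vert_F$ by the triangle inequality through $M_n(\Gamma_i)$ and $M(\Gamma_i)$, the two equicontinuity terms being $<\varepsilon$ and the middle term $<\varepsilon$ for $n$ large; thus $\limsup_n\sup_{\Gamma\in C}\vert\vert M_n(\Gamma)-M(\Gamma)\vert\vert_F\le3\varepsilon$ a.s. Finally I apply this with $\varepsilon=1/k$ for $k\in\N$ and intersect the countably many probability-one events to get $\sup_{\Gamma\in C}\vert\vert M_n(\Gamma)-M(\Gamma)\vert\vert_F\to0$ a.s., hence the lemma by the first paragraph. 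The only genuinely delicate point is making the ``almost surely'' uniform over the uncountable set $C$, which is exactly what the finite net together with the countable intersection over $\varepsilon=1/k$ handles; everything else (joint continuity of ${\rm Pr}(U,\Gamma)$, the trace inequalities, the scalar SLLN) is routine.
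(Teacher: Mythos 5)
Your proposal is correct and follows essentially the same route as the paper: the paper likewise reduces the claim to $\sup_{\Gamma\in C}\Vert M_n(\Gamma)-M(\Gamma)\Vert\to 0$ a.s., obtains uniform continuity of the map $(U,\Gamma)\mapsto g^{-1}X(X^{T}\Gamma^{-1}X)^{-1}X^{T}(g^{T})^{-1}$ on the compact set $G(m,r)\times C$, and concludes with a finite $\delta_\epsilon$-net of $C$, the ordinary strong law of large numbers at the net points, and the triangle inequality. Your explicit bound $\vert h_n-h\vert\le\frac{r}{2}\Vert M_n-M\Vert_F$ and the countable intersection over $\varepsilon=1/k$ merely spell out steps the paper leaves implicit.
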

\begin{proof}
This is the same proof as in Tyler \cite[Statement (3.2)]{Ty}.

Recall an element $U$ of $G(m,r)$ is the linear span of $r$ linearly independent vectors say $\{x_1, x_2, \cdots, x_r\}$. With $U = \langle x_1, x_2, \cdots, x_r  \rangle$ we associate the $m \times r$ matrix $X$ given by $(x_1, x_2, \cdots, x_r)$, with $x_i$ the columns of $X$ that are written in the canonical base of $\RR^m$. Then we define the map 

$$G : G(m,r) \times  \Pos_{sym}^1(m) \to \PosS_{sym}(m)$$
$$ (U, \Gamma = g g^{T}) \mapsto G(U,\Gamma):=g^{-1}X (X^{T} \Gamma^{-1} X)^{-1} X^{T} (g^{T})^{-1}.$$

As $G(m,r) \times C$ is compact and the map $G$ is continuous, being given by multiplication of matrices (one can prove that using the topologies on $G(m,r) $ and $\Pos_{sym}^1(m)$), the map $G$ is uniformly continuous on $G(m,r) \times C$. In particular, for every $\epsilon >0$ there exists $\delta_{\epsilon}>0$, that does not depend on $U \in G(m,r)$, $\Gamma_0 \in C$, nor $\Gamma \in C$, such that if $\vert \vert \Gamma_0 - \Gamma \vert \vert < \delta_\epsilon$ then $\vert \vert G(U,\Gamma_0) -G(U,\Gamma) \vert \vert  < \epsilon$, for every $ U \in G(m,r)$.

Since $C$ is compact, for every $\delta_\epsilon >0$ there exist a finite partition of $C$, say $C_{\epsilon, k}$ such that $\vert \vert \Gamma_1 - \Gamma_2 \vert \vert  < \delta_\epsilon$ for every $\Gamma_1, \Gamma_2 \in C_{\epsilon, k}$. Choose one element from each sets $C_{\epsilon, k}$, say $\Gamma_{\epsilon, k}$, and label this new formed set by $C_\epsilon$.

Since $\shp(X=0)=0$, it follows from the uniform continuity of $G$ and the Law of Large Numbers that: 
\begin{equation*}
\begin{split}
\sup_{\Gamma \in C} \vert \vert M_n(\Gamma)- M(\Gamma) \vert \vert &\leq \max_{k} \sup_{\Gamma \in C}(\vert \vert M_n(\Gamma) -M_n(\Gamma_{\epsilon, k})\vert \vert + \vert \vert M(\Gamma) - M(\Gamma_{\epsilon, k})\vert \vert ) +\\
&\; \; + \max_{k} \vert \vert  M_n(\Gamma_{\epsilon, k}) - M(\Gamma_{\epsilon, k})\vert \vert  \\
& \leq 2 \epsilon + \max_{k} \vert \vert M_n(\Gamma_{\epsilon, k})- M(\Gamma_{\epsilon, k})\vert \vert  \xrightarrow{n \to \infty} 2 \epsilon\\
\end{split}
\end{equation*}
almost surely. Since $\epsilon$ can be made arbitrarily small, it follows that $$\sup_{\Gamma \in C} \vert \vert M_n(\Gamma)-M(\Gamma) \vert \vert  \xrightarrow{n \to \infty} 0$$ almost surely. Then $\sup_{\Gamma \in C} \vert \tr(M_n(\Gamma)^2)- \tr(M(\Gamma)^2) \vert \xrightarrow{i \to \infty} 0$ almost surely. So the conclusion follows.
\end{proof}

\begin{proof}[Proof of Theorem \ref{thm::LLN}]
The proof goes as in Tyler \cite[Theorem 3.1]{Ty}.

By Lemma \ref{lem::strict_conv_1} the map $\ell_{\shp}$ is strictly geodesically convex. As by hypothesis a GE for  $\ell_{\shp}$ exists it must be unique. Moreover, without loss of generality we can suppose the GE of $\shp$ is $\Sigma_\shp= \Id_m$. Then $M(\Sigma_\shp)= M(\Id_m)=  \frac{r}{m} \Id_m$.

From Example \ref{ex::cond_1} for every $n$ large enough the (random) log-likelihood function $\ell_{\shp_n}$ has a unique GE, almost surely.  As by hypothesis a GE for  $\ell_{\shp_n}$ exists, this is $\Sigma_n $.

It remains to prove $\{\Sigma_n\}_{n \geq 1}$ converges almost surely to $\Sigma_\shp= \Id_m$. Indeed, let $C$ be a compact neighbourhood of $\Id_m$, with respect to the max-norm topology on $\Pos_{sym}^1(m)$ induced from $\Sym(m)$. Moreover, we choose $C$ such that $\Id_m$ is in the interior of $C$.  By the hypotheses on $\ell_{\shp}$ and $\Sigma_\shp$, from Lemma \ref{lem::ineg_trace_square} we have $\frac{r^2}{m} = \tr(M(\Id_m)^2) < \tr(M(\Gamma)^2)$ for every $\Gamma \in C \setminus \{\Id_m\}$. As the map $h$ is continuous, Lemma \ref{lem::equicont} implies with probability one that for every large enough $n$, $h_n(\Id_m)$ is less than $h_n(\Gamma)$, for every any $\Gamma$ on the boundary of $C$. Hence, with probability one, the map $h_n$ contains a local minimum in $C$ for every $n$ large enough.  As a first variant of proof and following the ideas from \cite[Theorem 3.1]{Ty}, this local minimum $\Gamma_n$ must be a critical point of $h_n$ and so a solution to $\grad(h_n)(\Gamma)=0$. Indeed, for $n>m$ and because $\{U_n\}_{n \geq 1} $ are of i.i.d. random elements of $G(m,r)$, distributed
according to a continuous Borel probability measure $\shp$ on $G(m,r)$, with probability one the space $\RR^m$ is spanned by $\{U_n\}_{n \geq 1}$. By Lemma \ref{lem::inverse_pi_u} and Remark \ref{rem::main} the local minimum $\Gamma_n$ verifies equality (\ref{equ::grad_h_n6}) almost surely. 

Notice, $M_n(\Gamma)$ is symmetric and positive definite, and by the matrix version of Kantorovich's inequality \cite[Formula (6)]{MarOl}, keeping the notation from (\ref{equ::grad_h_n6}), one has 
$$\tr\left(\theta_j^{T} M_{n}(\Gamma)\theta_j \theta_j^{T} M_{n}(\Gamma)^{-1} \theta_j \right) \geq r$$
with equality if and only if $\theta_j^{T}M_{n}(\Gamma)= \theta_j^{T}$.

Therefore, equality (\ref{equ::grad_h_n6}) holds if and only if  $\theta_j^{T}M_{n}(\Gamma_n)= \theta_j^{T}$, for every $j \in\{1,...,n\}$. And this is true only if $M_{n}(\Gamma_n)$ is a multiple of $\Id_m$. As $\tr(M_n(\Gamma_n))=r$ we must have $M_n(\Gamma_n))=\frac{r}{m}\Id_m$. Thus the local minimum $\Gamma_n$ is the GE of $\ell_{\shp_n}$, which is exactly  $\Sigma_n$.  This implies with probability one that $\Sigma_n \in C$. 

As a second variant of proof, again the local minimum $\Gamma_n \in C$ of $h_n$ must be a critical point of $h_n$ and so a solution to $\grad(h_n)(\Gamma)=0$. Indeed, one can use the strict convexity of $\ell_{\shp_n}$ from Lemma \ref{lem::strict_conv_x} and then Lemma \ref{lem::strictly_convex_fct} to conclude the local minimum of $h_n$ is the GE  $\Sigma_n$ of $\ell_{\shp_n}$ almost surely.  As $C$ can be chosen arbitrarily small, the conclusion of the theorem follows.
\end{proof}

\section{Asymptotic normality }
\label{sec::gen_CLT}

Let $\{U_n\}_{n \geq 1}$  be a  sample of i.i.d. random elements, distributed according to a  Borel probability measure $\shp$ on $G(m,r)$. Let $\{\shp_n:= \frac{1}{n} (\delta_{U_1}+...+ \delta_{U_n})\}_{n \geq 1}$ be the (random) empirical probability measures corresponding to the random samples $\{U_1, ..., U_n\}$. Suppose  GEs $\{\Sigma_n \}_{n \geq 1}, \Sigma_\shp \subset \Pos_{sym}^1(m)$ for $\{\shp_n\}_{n \geq 1}$, respectively, $\shp$ exist. 

Write $\{\Sigma_n= g_n g_n\}_{n \geq 1}$ and $\Sigma_\shp = g g$, with $\{ g_n \}_{n \geq 1},  g  \in \SL(n, \RR) $ the unique symmetric positive-definite square roots of $\{\Sigma_n\}_{n \geq 1}$ and $\Sigma_\shp $, respectively.  Recall  $\{\Sigma_n\}_{n \geq 1}, \Sigma_\shp $ satisfy equations (\ref{equ::gradient_i}), (\ref{equ::gradient_p}), respectively:
\begin{equation*}
\frac{r}{m} \Id_m= \frac{1}{n} \sum_{j =1}^{n} g_n^{-1} X_j (X_j^{T} \Sigma_n^{-1} X_j)^{-1} X_j^{T}g_n^{-1}= M_n(\Sigma_n),
\end{equation*}

\begin{equation*}
\frac{r}{m} \Id_m= \int_{G(m,r)}g^{-1} X (X^{T} \Sigma_\shp^{-1} X)^{-1} X^{T} g^{-1} d\shp(U)= M(\Sigma_\shp),
\end{equation*}
where $X_j$ and $X$ are matrices that correspond to $U_j$, $U$, as presented in the beginning of Section \ref{Introduction_summary}. 


\begin{theorem}\label{CLT}
Let $\{U_n\}_{n \geq 1}$  be a  sample of i.i.d. random elements, distributed according to a continuous Borel probability measure $\shp$ on $G(m,r)$ with support $G(m,r)$.
Let $ C_n := \frac{m}{\tr(\Sigma_{\shp}^{-1} \Sigma_n)}g^{-1} \Sigma_n g^{-1}$.
Then
\begin{equation}\label{CLT2}
\sqrt{n}(\Veco(C_n - \Id_m)) \xrightarrow[distribution]{n \to \infty}
\mathcal{N}(0, \sigma_\infty^2) ,
\end{equation}
where the limiting covariance matrix $\sigma_\infty^2$ is given by $ \left[ Q L_0 Q \right]^+ \sigma^2 \Big(\left[ Q L_0 Q \right]^+\Big)^T$, where $Q$ is the orthogonal projection onto ${\rm Im}(\sigma^2)\in\R^{m^2}$, $\left[ A \right]^+$ denotes the Moore--Penrose inverse of a matrix $A$, $L_0 :=\frac{r}{m} \Id_{m^{2}} - \Sigma_0  \left( \Id_m  \otimes \Id_m \right)$, $\Sigma_0:=\E_\shp(\Theta (\Theta^T\Theta)^{-1}\Theta^T\otimes \Theta (\Theta^T\Theta)^{-1}\Theta^T)$, and $\Theta :=g^{-1}X$.
\end{theorem}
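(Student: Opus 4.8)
The plan is to prove asymptotic normality of $C_n$ by linearizing the M-equation $M_n(\Sigma_n) = \frac{r}{m}\Id_m$ around $\Sigma_\shp = gg$, exploiting the almost-sure convergence $\Sigma_n \to \Sigma_\shp$ from Theorem~\ref{thm::LLN} together with a classical delta-method/implicit-function argument adapted to the degeneracy caused by the trace constraint. Without loss of generality we take $\Sigma_\shp = \Id_m$ (so $g = \Id_m$, $\Theta = X$), since the general case follows by the $\SL(m,\RR)$-equivariance $g\cdot\Sigma = g\Sigma g^T$ and the corresponding change of variables in $M_n$ and $M$.

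\textbf{Step 1: a CLT for the unconstrained average.} Set $F(U,\Gamma) := X(X^T\Gamma^{-1}X)^{-1}X^T$ (with $\Gamma = \Id_m$ this is $\Theta(\Theta^T\Theta)^{-1}\Theta^T$), so that $M_n(\Id_m) = \frac1n\sum_j F(U_j,\Id_m)$ and $\E_\shp F(U,\Id_m) = \frac{r}{m}\Id_m$ by the M-equation for $\shp$. The i.i.d.\ multivariate CLT gives $\sqrt{n}\,\Veco\!\big(M_n(\Id_m) - \tfrac{r}{m}\Id_m\big) \to \mathcal N(0,\sigma^2)$ with $\sigma^2 = \mathrm{Cov}_\shp\big(\Veco F(U,\Id_m)\big)$, which is exactly the variance entering $\Sigma_0$ and hence $\sigma_\infty^2$; here the matrix $X(X^TX)^{-1}X^T = {\rm Pr}(U,\Id_m)$ is bounded (it is an orthogonal projector of rank $r$), so the second moment is automatically finite.

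\textbf{Step 2: linearize $M_n$ at $\Sigma_n$.} Write $\Gamma_n := \Sigma_n$, which lies in a shrinking neighborhood of $\Id_m$ a.s. A first-order Taylor expansion of $\Gamma \mapsto M_n(\Gamma)$ at $\Id_m$ (using Lemma~\ref{lem::diff_det}-type computations, equivalently the covariant-derivative formula of Lemma~\ref{lem::covariant_deriv}) yields
\begin{equation*}
\tfrac{r}{m}\Id_m = M_n(\Sigma_n) = M_n(\Id_m) + DM_n(\Id_m)[\Sigma_n - \Id_m] + o(\|\Sigma_n - \Id_m\|),
\end{equation*}
where the differential $DM_n(\Id_m)$ converges a.s.\ to the deterministic operator $DM(\Id_m)$ by the uniform law-of-large-numbers argument of Lemma~\ref{lem::equicont} (applied to $\Gamma$-derivatives of the kernel $G$). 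A direct differentiation of $X(X^T\Gamma^{-1}X)^{-1}X^T$ in $\Gamma$ identifies $DM(\Id_m)$, acting on $\Veco$'s, with $L_0 = \frac{r}{m}\Id_{m^2} - \Sigma_0(\Id_m\otimes\Id_m)$ (modulo the usual Kronecker bookkeeping and symmetrization of the tangent directions). Since $M_n$ takes values in trace-$r$ matrices, both sides live in the hyperplane $\{\tr = 0\}$ after subtracting $\frac{r}{m}\Id_m$; this is precisely the source of the rank deficiency of $L_0$ and forces the appearance of the Moore--Penrose inverse and the projector $Q$ onto $\mathrm{Im}(\sigma^2)$.

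\textbf{Step 3: invert and conclude.} Rearranging Step~2,
\begin{equation*}
\sqrt n\,\Veco(\Sigma_n - \Id_m) = -\big[DM_n(\Id_m)\big]^{+}\sqrt n\,\Veco\!\big(M_n(\Id_m) - \tfrac{r}{m}\Id_m\big) + o_P(1),
\end{equation*}
valid on the range of $\sigma^2$ after composing with $Q$; combined with Step~1 and Slutsky, $\sqrt n\,\Veco(\Sigma_n - \Id_m) \to \mathcal N\big(0, [QL_0Q]^{+}\sigma^2([QL_0Q]^{+})^T\big)$. Finally one checks that $C_n = \frac{m}{\tr(\Sigma_\shp^{-1}\Sigma_n)}\Sigma_n$ (when $g=\Id_m$) satisfies $\sqrt n\,\Veco(C_n - \Id_m) = \sqrt n\,\Veco(\Sigma_n - \Id_m) + o_P(1)$: indeed $\frac{m}{\tr\Sigma_n} = 1 + O(\|\Sigma_n - \Id_m\|^2)$ because $\tr\Sigma_n - m$ vanishes to first order along the determinant-one constraint (the tangent space at $\Id_m$ is trace-free), so the rescaling only affects the limit through second-order terms. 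This is the mechanism by which $C_n$ lands cleanly in the tangent space and the covariance takes the stated form.

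\textbf{The main obstacle} I anticipate is handling the degeneracy rigorously: the differential $L_0$ is not invertible (its kernel contains $\Veco\Id_m$ plus whatever directions are $\shp$-a.s.\ preserved by every ${\rm Pr}(U,\Id_m)$), $\sigma^2$ is itself singular and supported on a subspace of $\R^{m^2}$, and one must verify the compatibility $\mathrm{Im}(\sigma^2) \subseteq \mathrm{Im}(QL_0Q)$ so that the Moore--Penrose inverse actually recovers the fluctuations — this is where the strict geodesic convexity of $\ell_\shp$ (Lemma~\ref{lem::strict_conv_1}), equivalently the invertibility of the Hessian on the full tangent space, is essential, since $L_0$ restricted to that tangent space is, up to sign and the metric identification, the Hessian of $\ell_\shp$ at $\Sigma_\shp$. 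Controlling the remainder terms uniformly (so that the $o_P(1)$ claims are legitimate) again rides on the equicontinuity estimate of Lemma~\ref{lem::equicont} applied to one more derivative of the kernel.
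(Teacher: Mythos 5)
Your proposal is correct in substance and arrives at the same limiting covariance, but it takes a genuinely different route through the central step. Where you linearize the estimating equation by a first-order Taylor expansion of $\Gamma \mapsto M_n(\Gamma)$ at $\Sigma_\shp$ and invoke a delta-method argument, the paper instead derives an \emph{exact} algebraic identity: starting from $A_{jn}^{-1}=B_j^{-1}-A_{jn}^{-1}X_j^T(V_n^{-1}-\Sigma_\shp^{-1})X_jB_j^{-1}$ it obtains, with no remainder term, $\Veco(M_n(\Sigma_\shp)-\tfrac{r}{m}\Id_m)=[\tfrac{r}{m}\Id_{m^2}-\Sigma_{1,n}(C_n^{-1/2}\otimes C_n^{-1/2})]\Veco(C_n-\Id_m)$, and then only needs the almost-sure convergence $\Sigma_{1,n}(C_n^{-1/2}\otimes C_n^{-1/2})\to\Sigma_0$ (via Theorem \ref{thm::LLN} and the strong law) before inverting. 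What the paper's identity buys is that it works with the trace-normalized $C_n$ from the outset, so $\Veco(C_n-\Id_m)$ lies exactly in $W_\shp=\TT_{\Id_m}\Pos_{sym}^1(m)$ and no Taylor remainder ever has to be controlled; what your route buys is conceptual transparency, and in particular your identification of $L_0$ (restricted to the tangent space) with the Hessian of $\ell_\shp$ at $\Sigma_\shp$ is correct — one checks $\langle\nabla_Z\grad\ell_\shp(\Id_m),Z\rangle=\tfrac12\Veco(Z)^TL_0\Veco(Z)$ using $\E_\shp{\rm Pr}(U,\Id_m)=\tfrac{r}{m}\Id_m$ — and it yields the injectivity of $QL_0Q$ on $W_\shp$ directly from strict geodesic convexity (Lemma \ref{lem::strict_conv_1}), which is arguably cleaner than the paper's proof by contradiction via the support of the limit law (Lemma \ref{l.2}). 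The one place where your write-up would need to be tightened to be fully rigorous is the remainder: writing $o(\|\Sigma_n-\Id_m\|)$ is not enough until you have established $\sqrt{n}$-consistency of $\Sigma_n$, so you must first note that the second derivative of $\Gamma\mapsto X(X^T\Gamma^{-1}X)^{-1}X^T$ is bounded uniformly over the compact $G(m,r)\times C$ (making the remainder $O(\|\Sigma_n-\Id_m\|^2)$ uniformly), then bootstrap $\|\Sigma_n-\Id_m\|=O_P(n^{-1/2})$ from the injectivity of $L_0$ on $W_\shp$ before Slutsky can be applied; your final reduction from $\Sigma_n$ to $C_n$ via $\tr\Sigma_n-m=O(\|\Sigma_n-\Id_m\|^2)$ along the determinant-one constraint is correct.
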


The proof of Theorem \ref{CLT} makes strong use of 
$$M_n(\Sigma_\shp)=\frac{1}{n}\sum_{j=1}^n \Theta_j (\Theta_j^T\Theta_j)^{-1}\Theta_j^T,$$
where we set $\Theta_j :=g^{-1}X_j$, for each $n\ge 1$ and $j\in\{1,\cdots,n\}$, see Definition \ref{def::gradient_functional}.
Notice, for each $j \in \{1,\cdots, n\}$, $ g\Theta_j (\Theta_j^T\Theta_j)^{-1}\Theta_j^T g^{-1}$ equals the projector  ${\rm Pr}(U_j,\Id_m)$ onto the linear subspace $U_j$ generated  by the columns of $\Theta_j$ (see Lemma \ref{lem::orth_proj}). Then $M_n(\Sigma_\shp)$
is a sum of i.i.d. random matrices of mean $\frac{r}{m} \Id_m$.
The multivariate central limit theorem yields that
$\sqrt{n}(\Veco(M_n(\Sigma_\shp)-  \frac{r}{m} \Id_m))$
is asymptotically multivariate normal centred of 
covariance matrix
\begin{equation}
\label{eq::covariance}
\begin{split}
\sigma^2 &:= \E_\shp(\Veco(\Theta (\Theta^T\Theta)^{-1}\Theta^T - \frac{r}{m} \Id_m)(\Veco(\Theta (\Theta^T\Theta)^{-1}\Theta^T - \frac{r}{m} \Id_m)^{T})\\
&= \E_\shp(\Veco(\Theta (\Theta^T\Theta)^{-1}\Theta^T)
\Veco(\Theta (\Theta^T\Theta)^{-1}\Theta^T)^T -\frac{r^2}{m^2} \Veco(\Id_m) \Veco(\Id_m)^T,\\
\end{split}
\end{equation}
where $\E_\shp(\cdot)$ denotes the expectation on $G(m,r)$ with respect to the probability measure $\shp$.
Recall the following formula regarding Kronecker product and $\Veco$ operator for arbitrary  $n \times m$ matrix $A$, $m \times k$ matrix $X$ and $k\times l$ matrix $B$: 
\begin{equation}
\label{Kronecker_prod}
\Veco(AXB)= (B^{T} \otimes A) \Veco(X),
\end{equation}
see, e.g., \cite{Muir}.
A direct application of (\ref{Kronecker_prod}) to the above expression yields
\begin{equation}
\label{eq::covariance_kron}
\sigma^2 = \E_\shp(\Theta\otimes\Theta \Veco((\Theta^T\Theta)^{-1})
\Veco((\Theta^T\Theta)^{-1})^T \Theta^T\otimes\Theta^T)
 -\frac{r^2}{m^2}\Veco(\Id_m) \Veco(\Id_m)^T.
 \end{equation}
 We next give two lemmas which are needed to prove Theorem \ref{CLT}:

\begin{lemma}
\label{lem::ker_sigma}
Let  $\shp$ be a Borel probability measure on $G(m,r)$ and let 
\begin{equation*}
W_{\shp}:= \spn\{g^{-1} X (X^{T} \Sigma_\shp^{-1} X)^{-1} X^{T} g^{-1}-  \frac{r}{m} \Id_m \; \vert \; U \in \supp(\shp) \subset G(m,r) \}
\end{equation*}
where $X$ is a matrix corresponding to  $U \in G(m,r) $ as presented in the beginning of Section \ref{Introduction_summary}. 
Then $W_{\shp}$ is a vector subspace of $(\TT_{\Id_m} \mkern-4mu \Pos_{sym}^1(m), \langle , \rangle_{\Id_m}) \subset (\RR^{m^2}, \langle , \rangle_{\Id_m})$, $\sigma^2$ is positive semidefinite, 
$$\Ker(\sigma^2)= W_{\shp}^{\perp}:= \{ v \in \RR^{m^2} \; \vert \;  \langle v,u, \rangle_{\Id_m}=0, \; \forall u \in W_{\shp}\}
\hbox{ and }{\rm Im}(\sigma^2)=W_\shp.
$$
Moreover, if $\supp(\shp)=G(m,r)$, then $W_\shp = \TT_{\Id_m} \mkern-4mu \Pos_{sym}^1(m)$.
\end{lemma}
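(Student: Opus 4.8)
The plan is to phrase everything in terms of the $\Sym(m)$-valued function $U\mapsto Y(U):=g^{-1}X(X^{T}\Sigma_\shp^{-1}X)^{-1}X^{T}g^{-1}-\frac rm\Id_m$ on $G(m,r)$, viewed through $\Veco$ inside $(\RR^{m^2},\langle\cdot,\cdot\rangle_{\Id_m})$, so that $\langle A,B\rangle_{\Id_m}=\tr(AB)$ for symmetric $A,B$ is the restriction of the Euclidean product. First I would record the soft facts. Writing $\Theta:=g^{-1}X$ and using that $g$ is the symmetric square root of $\Sigma_\shp$, one has $X^{T}\Sigma_\shp^{-1}X=\Theta^{T}\Theta$ and $g^{-1}X(X^{T}\Sigma_\shp^{-1}X)^{-1}X^{T}g^{-1}=\Theta(\Theta^{T}\Theta)^{-1}\Theta^{T}$, which by Lemma \ref{lem::orth_proj} is the Euclidean orthogonal projector onto $g^{-1}U$; hence $Y(U)$ is symmetric, depends only on $U$ and not on the chosen basis matrix $X$, and depends continuously on $U$ (as in the proof of Lemma \ref{lem::equicont}). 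Its trace vanishes because $\tr(\Theta(\Theta^{T}\Theta)^{-1}\Theta^{T})=\tr((\Theta^{T}\Theta)^{-1}\Theta^{T}\Theta)=r=\tr(\frac rm\Id_m)$, so every generator of $W_\shp$ lies in the linear subspace $\TT_{\Id_m} \mkern-4mu \Pos_{sym}^1(m)=\{A\in\Sym(m):\tr A=0\}$ of $\RR^{m^2}$; being a span of such matrices, $W_\shp$ is a vector subspace contained in $\TT_{\Id_m} \mkern-4mu \Pos_{sym}^1(m)$. Finally, by definition $\sigma^2=\E_\shp(\Veco(Y)\Veco(Y)^{T})$, which is symmetric and positive semidefinite, with $v^{T}\sigma^2v=\E_\shp(\langle v,\Veco(Y)\rangle_{\Id_m}^2)\ge0$ for every $v\in\RR^{m^2}$.

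Next I would identify $\Ker(\sigma^2)$. If $v\in W_\shp^{\perp}$, then $\langle v,\Veco(Y(U))\rangle_{\Id_m}=0$ for every $U\in\supp(\shp)$, hence $v^{T}\sigma^2v=0$; since $\sigma^2\succeq0$, Cauchy--Schwarz for the semidefinite form $(x,y)\mapsto x^{T}\sigma^2y$ forces $\sigma^2v=0$, so $W_\shp^{\perp}\subseteq\Ker(\sigma^2)$. Conversely, if $\sigma^2v=0$ then $\E_\shp(\langle v,\Veco(Y(U))\rangle_{\Id_m}^2)=0$, so the continuous function $U\mapsto\langle v,\Veco(Y(U))\rangle_{\Id_m}$ vanishes $\shp$-almost everywhere; its zero set being closed and of full $\shp$-measure, it contains $\supp(\shp)$, whence $v$ is orthogonal to every generator of $W_\shp$, i.e. $v\in W_\shp^{\perp}$. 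Thus $\Ker(\sigma^2)=W_\shp^{\perp}$, and since $\sigma^2$ is symmetric, $\im(\sigma^2)=\Ker(\sigma^2)^{\perp}=(W_\shp^{\perp})^{\perp}=W_\shp$.

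It remains to treat the case $\supp(\shp)=G(m,r)$, where I must show $W_\shp=\TT_{\Id_m} \mkern-4mu \Pos_{sym}^1(m)$; equivalently, that every traceless symmetric $v$ with $\langle v,\Veco(Y(U))\rangle_{\Id_m}=0$ for all $U\in G(m,r)$ vanishes. Writing $P_W:=\Theta(\Theta^{T}\Theta)^{-1}\Theta^{T}$ for the Euclidean projector onto $W=g^{-1}U$, which runs over all of $G(m,r)$ as $U$ does, and using $\tr v=0$, the hypothesis becomes $\tr(vP_W)=0$ for every $r$-dimensional subspace $W\subseteq\RR^m$. Given any orthonormal pair $e,e'$ in $\RR^m$, since $r-1\le m-2$ one may choose orthonormal $f_2,\dots,f_r$ orthogonal to both; then $P_{W_1}-P_{W_2}=ee^{T}-e'e'^{T}$ for $W_1=\spn\{e,f_2,\dots,f_r\}$ and $W_2=\spn\{e',f_2,\dots,f_r\}$, so $e^{T}ve-e'^{T}ve'=\tr(vP_{W_1})-\tr(vP_{W_2})=0$. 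Hence $v$ has constant diagonal in every orthonormal basis, which in an eigenbasis forces all eigenvalues of $v$ to coincide, i.e. $v=\lambda\Id_m$; then $\tr v=0$ gives $v=0$. This last spanning step is the only substantive point, and it is precisely where the inequality $r\le m-1$ enters; the identifications of $\Ker(\sigma^2)$ and $\im(\sigma^2)$ are routine bookkeeping once the continuity of $Y$ is in hand.
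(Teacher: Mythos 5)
Your proof is correct. The identification of $W_\shp$ as a subspace of $\TT_{\Id_m}\mkern-4mu\Pos_{sym}^1(m)$, the positive semidefiniteness of $\sigma^2$, and the two inclusions $\Ker(\sigma^2)=W_\shp^{\perp}$ and ${\rm Im}(\sigma^2)=W_\shp$ follow essentially the same bookkeeping as the paper, via $\Veco(A)^T\Veco(B)=\tr(AB)$ and $v^T\sigma^2v=\E_\shp(\tr(Y(U)B)^2)$; in fact your use of the continuity of $U\mapsto Y(U)$ to upgrade ``vanishes $\shp$-a.e.'' to ``vanishes on $\supp(\shp)$'' is slightly more careful than the paper, which passes over this point silently even though $W_\shp$ is defined as a span over $U\in\supp(\shp)$. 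Where you genuinely diverge is the final claim that $\supp(\shp)=G(m,r)$ forces $W_\shp=\TT_{\Id_m}\mkern-4mu\Pos_{sym}^1(m)$: the paper proves the inclusion $\TT_{\Id_m}\mkern-4mu\Pos_{sym}^1(m)\subseteq W_\shp$ constructively, writing an arbitrary traceless symmetric $v=\sum_i\lambda_i{\rm Pr}(u_i,\Id_m)$ explicitly as $\sum_I^*\alpha_I(\frac rm\Id_m-{\rm Pr}(U_I,\Id_m))$ over the $r$-element coordinate subspaces $U_I$ of an eigenbasis, and solving a small combinatorial linear system for the $\alpha_I$. You instead prove the dual statement, that no nonzero traceless symmetric $v$ satisfies $\tr(vP_W)=0$ for all $W\in G(m,r)$, via the identity $P_{W_1}-P_{W_2}=ee^T-e'e'^T$ for two $r$-planes sharing an $(r-1)$-dimensional intersection orthogonal to $e,e'$; this is shorter, makes the role of $1\le r\le m-1$ transparent, and avoids the coefficient computation, at the cost of being non-constructive (it does not exhibit $v$ as an explicit combination of generators, which the paper's version does). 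Both arguments are sound; your reduction correctly uses that $U\mapsto g^{-1}U$ is a bijection of $G(m,r)$ and that the trace form is positive definite on symmetric matrices, so orthogonal complements behave as claimed.
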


\begin{proof}
That $W_{\shp}$ is a vector subspace of $(\TT_{\Id_m} \mkern-4mu \Pos_{sym}^1(m), \langle , \rangle_{\Id_m}) \subset (\RR^{m^2}, \langle , \rangle_{\Id_m})$ is a immediate consequence of  the fact that, by Lemma \ref{lem::grad_formula}, the matrix $g^{-1} X (X^{T} \Sigma_\shp^{-1} X)^{-1} X^{T} g^{-1}-  \frac{r}{m} \Id_m  \in \TT_{\Id_m} \mkern-4mu\Pos_{sym}^1(m)$. Notice, the scalar product $ \langle A,B \rangle_{\Id_m}= \tr(AB)$, for every  $A,B \in \TT_{\Id_m} \mkern-4mu \Pos_{sym}^1(m)$, also gives a scalar product on $\RR^{m^2}$. By equation \eqref{eq::covariance} it is clear that $\sigma^2$ is positive semidefinite.

Let us prove $W_{\shp}^{\perp} \subseteq \Ker(\sigma^2)$. Indeed, let $B \in W_{\shp}^{\perp}$, then $ \langle A,B \rangle_{\Id_m}= \tr(AB)=0$, for every $A \in W_{\shp}$. Thus by the property that $\Veco(A)^{T} \Veco(B)= \tr(AB)$ of $\Veco$ operator  we have
\begin{equation*}
\begin{split}
\sigma^2 \Veco(B)&= \E_\shp(\Veco(\Theta (\Theta^T\Theta)^{-1}\Theta^T - \frac{r}{m} \Id_m)(\Veco(\Theta (\Theta^T\Theta)^{-1}\Theta^T - \frac{r}{m} \Id_m)^{T})\Veco(B))\\
&= \E_\shp(\Veco(\Theta (\Theta^T\Theta)^{-1}\Theta^T - \frac{r}{m} \Id_m)\tr((\Theta (\Theta^T\Theta)^{-1}\Theta^T - \frac{r}{m} \Id_m)B))\\
&= 0.\\
\end{split}
\end{equation*}

Let us now prove $\Ker(\sigma^2) \subseteq W_{\shp}^{\perp}$. Indeed, let $B \in \RR^{m^2}$ such that $\sigma^2 \Veco(B)=0$. Then by the same property $\Veco(A)^{T} \Veco(B)= \tr(AB)$ of $\Veco$ operator, applied twice, we have
\begin{equation*}
0=\Veco(B)^{T}\sigma^2 \Veco(B)= \E_\shp(\tr((\Theta (\Theta^T\Theta)^{-1}\Theta^T - \frac{r}{m} \Id_m)B)^2). 
\end{equation*}
Thus, for $\shp$-almost every $U \in G(m,r)$ we have obtained $$\tr((g^{-1} X (X^{T} \Sigma_\shp^{-1} X)^{-1} X^{T} g^{-1}-  \frac{r}{m} \Id_m)B)=0$$
and so $B \in W_{\shp}^{\perp}$. 

Next, $\sigma^2$ being symmetric, ${\rm Ker}(\sigma^2)$ is orthogonal to the column space of $\sigma^2$, which is ${\rm Im}(\sigma^2)$. The assertion then follows since ${\rm Ker}(\sigma^2)= W_{\shp}^{\perp}$.

Finally, assume that $\supp(\shp)=G(m,r)$, and let $v$ be a zero trace symmetric matrix, thus $v \in  \TT_{\Id_m} \mkern-4mu \Pos_{sym}^1(m)$.
The spectral Theorem gives that 
$v=\sum_{i=1}^m \lambda_i {\rm Pr}(u_i,\Id_m)$,
where the $\lambda_i$ are the real eigenvalues of $v$ with $\sum_{i=1}^m \lambda_i =0$, and where the $u_i$ are the orthonormal eigenvectors of $v$. Let $I$ denotes a generic subset of $\{1,...,m\}= : [m]$ of size $r$. We show that one can find coefficients
$\alpha_I$ associated to each subset $I$ such that
\begin{equation}
\label{equ::want_to_prove}
v =\sum_{I\subset [m]}^* \alpha_I (\frac{r}{m}\Id_m -{\rm Pr}(U_I,\Id_m)),
\end{equation}
where $U_I$ denotes the linear subspace generated by the vectors $u_i$, $i\in I$, and where $*$ indicates that we consider $r$-elements subsets of $[m]$. Indeed, equality \eqref{equ::want_to_prove} equivalent to
$$v=\sum_{i=1}^m {\rm Pr}(u_i,\Id_m)\Big(\frac{r}{m}\sum_{I\subset [m]}^* \alpha_I -\sum_{i\in I\subset [m]}^* \alpha_I\Big),$$
where we have used the identities $\Id_m =\sum_{i=1}^m {\rm Pr}(u_i,\Id_m)$ and
${\rm Pr}(U_I,\Id_m)=\sum_{i\in I\subset [m]}{\rm Pr}(u_i,\Id_m)$.
We will prove the statement if one can find coefficients $\alpha_I$ such that
$$\lambda_i = \frac{r}{m}\sum_{I\subset [m]}^* \alpha_I -\sum_{i\in I\subset [m]}^* \alpha_I,\; \;  i \in \{1,\cdots, m\}.$$
Let $k={r-1\choose m-1}$ and $l={r-2 \choose m-2}$. A direct computation shows that 
$\alpha_I = -\sum_{i\in I}\lambda_i /(k-l)$ satisfies the above equation. 
\end{proof}

\begin{lemma}\label{l.2} 
Let $\shp$ be a Borel probability measure on $G(m,r)$. Let $Q$ be the orthogonal projection onto ${\rm Im}(\sigma^2)\subset \RR^{m^2}$, and  let $\Veco(Z)$ be normal of law 
$P_Z :=\mathcal{N}(0, \sigma^2)$.
Then $\Veco(Z)\in W_{\shp}$, 
$Q \Veco(Z)=\Veco(Z)$, and $\supp(P_Z)=W_\shp$.
\end{lemma}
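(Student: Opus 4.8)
The plan is to deduce everything from Lemma \ref{lem::ker_sigma}, which already identifies $\Ker(\sigma^2)=W_\shp^\perp$ and $\im(\sigma^2)=W_\shp$. First I would recall the general fact about Gaussian measures: if $\Veco(Z)\sim\mathcal{N}(0,\sigma^2)$ with covariance $\sigma^2$ positive semidefinite (as guaranteed by Lemma \ref{lem::ker_sigma}), then $\Veco(Z)$ lives almost surely in the range of $\sigma^2$. The cleanest way to see this is to pick any $B\in\RR^{m^2}$ with $\sigma^2\Veco(B)=0$, i.e.\ $\Veco(B)\in\Ker(\sigma^2)$; then $\E[\langle \Veco(B),\Veco(Z)\rangle^2]=\Veco(B)^T\sigma^2\Veco(B)=0$, so $\langle\Veco(B),\Veco(Z)\rangle=0$ almost surely. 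Since $\Ker(\sigma^2)$ is finite-dimensional, taking a basis $B_1,\ldots,B_d$ of it we conclude $\Veco(Z)\perp\Ker(\sigma^2)$ almost surely, i.e.\ $\Veco(Z)\in\Ker(\sigma^2)^\perp=(W_\shp^\perp)^\perp=W_\shp$ almost surely (using that $W_\shp$ is a linear subspace, hence closed and equal to its double orthogonal complement).

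Second, for the claim $Q\Veco(Z)=\Veco(Z)$: by Lemma \ref{lem::ker_sigma} we have $\im(\sigma^2)=W_\shp$, and $Q$ is by definition the orthogonal projection onto $\im(\sigma^2)=W_\shp$. Since $\Veco(Z)\in W_\shp$ almost surely by the previous paragraph, and $Q$ restricts to the identity on $W_\shp$, we get $Q\Veco(Z)=\Veco(Z)$ almost surely.

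Third, for $\supp(P_Z)=W_\shp$: the support of a centered Gaussian measure on a finite-dimensional space is exactly the linear span of its covariance, i.e.\ $\im(\sigma^2)$. One direction ($\supp(P_Z)\subseteq W_\shp$) is immediate from $P_Z(W_\shp)=1$ together with $W_\shp$ closed. For the reverse inclusion, note that the push-forward of $P_Z$ under the orthogonal projection onto $W_\shp$ is a nondegenerate Gaussian on $W_\shp$ (its covariance, the compression of $\sigma^2$ to $W_\shp$, is positive-definite since $\Ker(\sigma^2)=W_\shp^\perp$), hence has full support $W_\shp$; as $P_Z$ is already concentrated on $W_\shp$, this forces $\supp(P_Z)=W_\shp$. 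Finally one invokes Lemma \ref{lem::ker_sigma} again to rewrite $\im(\sigma^2)=W_\shp$ in terms of $W_\shp$ as stated.

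I do not expect a genuine obstacle here: the statement is essentially a repackaging of Lemma \ref{lem::ker_sigma} plus standard facts about Gaussian measures in $\RR^{m^2}$. The only mild care needed is the nondegeneracy argument for the support statement (the compressed covariance being strictly positive-definite on $W_\shp$), which follows because $\sigma^2$ is symmetric with kernel exactly $W_\shp^\perp$, so its restriction to the invariant complement $W_\shp$ is injective, hence positive-definite.
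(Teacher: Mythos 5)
Your proof is correct, and it reaches the three conclusions by a route that differs from the paper's in both halves. For the claim $\Veco(Z)\in W_\shp$ the paper works forwards: it writes $\sigma^2=AJ_{k,m^2}A^T$ with $A$ invertible, realizes $\Veco(Z)=AJ_{k,m^2}Y$ with $Y$ standard normal, and observes that this vector equals $\sigma^2\bigl((A^{-1})^TY\bigr)\in\im(\sigma^2)=W_\shp$; you instead argue backwards through the kernel, showing $\E\bigl[\langle\Veco(B),\Veco(Z)\rangle^2\bigr]=\Veco(B)^T\sigma^2\Veco(B)=0$ for $\Veco(B)\in\Ker(\sigma^2)$ and concluding $\Veco(Z)\in\Ker(\sigma^2)^\perp=W_\shp$ almost surely. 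For $\supp(P_Z)=W_\shp$ the paper again computes explicitly: it takes a Borel set $D\subset W_\shp$ of positive $k$-dimensional Lebesgue measure, pulls it back through $AJ_{k,m^2}$, and evaluates the resulting Gaussian integral to show it is positive; you instead invoke the standard fact that a centered Gaussian with positive-definite covariance on a finite-dimensional space has full support, after checking that the compression $Q\sigma^2Q$ is positive-definite on $W_\shp$ because $\Ker(\sigma^2)=W_\shp^\perp$. Your nondegeneracy check is the one point that needed care and you handle it correctly ($v^TQ\sigma^2Qv=v^T\sigma^2v=0$ with $\sigma^2$ positive semidefinite forces $v\in\Ker(\sigma^2)\cap W_\shp=\{0\}$). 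What the paper's approach buys is self-containedness — everything reduces to an explicit integral, with no appeal to general facts about Gaussian supports; what yours buys is brevity and a cleaner separation of the linear algebra (all of which is already in Lemma \ref{lem::ker_sigma}) from the probabilistic content. Both are complete proofs.
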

\begin{proof} Recall, by Lemma \ref{lem::ker_sigma} we have ${\rm Im}(\sigma^2)=W_\shp$.
By construction 
$\sigma^2 = A J_{k,m^2}A^T$, where  $k={\rm dim}({\rm Im}(\sigma^2))$, $A$ is invertible and where $J_{k,m^2}$ has the block diagonal form 
$$J_{k,m^2}=\begin{pmatrix} \Id_{k} & 0  \\
0 & 0 \\
\end{pmatrix} .$$
Then, $\Veco(Z)=A J_{k,m^2}Y$, where the entries of $Y$ are i.i.d. standard normal. Hence, $\Veco(Z)=A J_{k,m^2}A^T ((A^{-1})^T Y)= \sigma^2  ((A^{-1})^T Y)\in {\rm Im}(\sigma^2)=W_\shp$  so that
$Q \Veco(Z)=\Veco(Z)$, as required.

To prove the remaining part of the lemma, let $D$ be a Borel subset of the linear subspace $W_\shp$, such that
$\nu_k(D)>0$, where $\nu_k$ denotes the $k$-dimensional Lebesgue measure on $W_\shp$.
As $W_\shp = {\rm Im}(\sigma^2)$, there is some subset $\bar D \subset \RR^{m^2}$ such that
$D = A J_{k,m^2}A^T \bar D$ and $\nu_k(J_{k,m^2}A^T\bar D)>0$.
Hence, for each Borel subset $D$ of $W_\shp$ we have
\begin{eqnarray*}
P(\Veco(Z)\in D)&=&P(A J_{k,m^2} Y \in D)=P(J_{k,m^2} Y \in A^{-1}D )\\
                &=&P(J_{k,m^2} Y \in J_{k,m^2}A^T \bar D)\\
                &=&P((Y_1,\cdots,Y_k)^T\in J_{k,m^2}A^T \bar D)\\ 
                &=&\int_{J_{k,m^2}A^T \bar D}\Big(\frac{1}{\sqrt{2\pi}}\Big)^{k}\exp(-\frac{1}{2}\sum_{i=1}^k y_i^2) {\rm d}y >0.
 \end{eqnarray*}
As $\Veco(Z)\in W_{\shp}$ , the equality $\supp(P_Z)=W_\shp$ follows.
\end{proof}

\begin{proof}[Proof of Theorem \ref{CLT}]

Let $V_{n}:= \frac{m}{\tr(\Sigma_{\shp}^{-1} \Sigma_n)} \Sigma_n $.

For each $n \geq 1$ and  $j \in \{1,..., n\}$ let $$A_{jn}:= X_j^{T} V_n^{-1}  X_j \; \; \; \text{ and } \; \; \;  B_j := X_j^{T} \Sigma_\shp^{-1} X_j.$$ Then it is easy to see that 
\begin{equation}
\label{equ::AB}
A_{jn}^{-1}= B_j^{-1}- A_{jn}^{-1}(A_{jn}-B_j)B_j^{-1} = B_j^{-1}- A_{jn}^{-1}X_j^{T} (V_n^{-1}-\Sigma_\shp^{-1} ) X_jB_j^{-1}.
\end{equation}

Notice  $\frac{r}{m} \Id_m= M_n(\Sigma_n)$ is equivalent to  $\frac{r}{m} \Id_m= M_n(V_n)$, as multiplication by a scalar does not affect the equality.

We apply equation (\ref{equ::AB}) to $\frac{r}{m} \Id_m= M_n(V_n)$ and we want to obtain an equation involving $M_n(\Sigma_\shp)$. Indeed:
\begin{equation}
\label{equ::transform}
\begin{split}
\frac{r}{m} V_n &= \frac{1}{n} \sum_{j =1}^{n}  X_j A_{jn}^{-1} X_j^{T}\\
& = \frac{1}{n} \sum_{j =1}^{n}  X_j \left( B_j^{-1}- A_{jn}^{-1}X_j^{T} (V_n^{-1}-\Sigma_\shp^{-1} ) X_jB_j^{-1} \right) X_j^{T}\\
&= \frac{1}{n} \sum_{j =1}^{n}   X_j  B_j^{-1} X_j^{T} - \frac{1}{n} \sum_{j =1}^{n}  X_j  A_{jn}^{-1}X_j^{T} (V_n^{-1}-\Sigma_\shp^{-1} ) X_j B_j^{-1} X_j^{T}.\\
\end{split}
\end{equation}
By multiplying equation (\ref{equ::transform}) on left by $g^{-1}$ and on the right by $g^{-1}$ we obtain:

\begin{equation}
\label{equ::transform_2}
\begin{split}
\frac{r}{m}g^{-1} V_n  g^{-1} = M_n(\Sigma_\shp)- \frac{1}{n} \sum_{j =1}^{n} g^{-1} X_j  A_{jn}^{-1}X_j^{T} (V_n^{-1}-\Sigma_\shp^{-1} ) X_j B_j^{-1} X_j^{T} g^{-1}.\\
\end{split}
\end{equation}
Set 
\begin{equation}\label{Notation}
 C_n := g^{-1} V_n g^{-1} \; \; \; \text{and} \; \; \;  \Theta_j := g^{-1}X_j,
 \end{equation}
for each $n \geq 1$ and  $j \in \{1,..., n\}$. 
Then,
$$A_{jn}^{-1}=(X_j^{T} V_n^{-1}  X_j)^{-1}= (X_j^{T} g^{-1} C_n^{-1} g^{-1}  X_j)^{-1}= (\Theta_j^{T} C_n^{-1} \Theta_j)^{-1} $$
and
$$X_j^{T} (V_n^{-1}-\Sigma_\shp^{-1} ) X_j = X_j^{T} g^{-1} g(V_n^{-1}-\Sigma_\shp^{-1} ) g g^{-1} X_j=  \Theta_j^{T} (C_n^{-1}- \Id_n )\Theta_j $$
and
$$B_j^{-1} := (X_j^{T} g^{-1} g^{-1} X_j)^{-1}= (\Theta_j^{T} \Theta_j)^{-1}.$$

Now equation (\ref{equ::transform_2}) becomes
\begin{equation*}
\frac{r}{m}C_n = M_n(\Sigma_\shp)- \frac{1}{n} \sum_{j =1}^{n} \Theta_j  (\Theta_j^{T} C_n^{-1} \Theta_j)^{-1} \Theta_j^{T} (C_n^{-1}- \Id_m )\Theta_j  (\Theta_j^{T} \Theta_j)^{-1}  \Theta_j ^{T}.
\end{equation*}
and by adding $- \frac{r}{m} \Id_m$  to both terms of the above equation we obtain 
\begin{equation}
\label{equ::transform_3}
M_n(\Sigma_\shp) -  \frac{r}{m} \Id_m = \frac{r}{m}C_n  -  \frac{r}{m} \Id_m + \frac{1}{n} \sum_{j =1}^{n} \Theta_j  (\Theta_j^{T} C_n^{-1} \Theta_j)^{-1} \Theta_j^{T} (C_n^{-1}- \Id_m )\Theta_j  (\Theta_j^{T} \Theta_j)^{-1}  \Theta_j ^{T}.
\end{equation}

Notice, $C_n$ is in $ \Pos_{sym}^1(m)$, thus it admits a  unique symmetric positive-definite square root that we denote by $C_n^{1/2}$. Then by applying the $\Veco$ operator and equality (\ref{Kronecker_prod})  to 
$$C_n ^{-1} -  \Id_m=C_n ^{-1/2}C_n ^{-1/2} - \Id_m=C_n ^{-1/2} \Id_m C_n ^{-1/2} - \Id_m$$
we obtain:
\begin{equation}
\label{vec_C_i}
\begin{split}
\Veco(C_n ^{-1} -  \Id_m)&= \Veco(C_n ^{-1/2} \Id_m C_n ^{-1/2})- \Veco(\Id_m)\\
&= \left( C_n ^{-1/2}  \otimes C_n ^{-1/2} \right) \Veco(\Id_m) - \Veco(\Id_m) \\
&=\left[ \left( C_n ^{-1/2}  \otimes C_n ^{-1/2} \right)- \Id \right]\Veco(\Id_m).
\end{split}
\end{equation}
Using equality (\ref{vec_C_i}), applying $\Veco$ operator and equality (\ref{Kronecker_prod}) to $$\Theta_j  (\Theta_j^{T} C_n^{-1} \Theta_j)^{-1} \Theta_j^{T} (C_n^{-1}- \Id_m )\Theta_j  (\Theta_j^{T} \Theta_j)^{-1}  \Theta_j ^{T},$$ where $A= \Theta_j  (\Theta_j^{T} C_n^{-1} \Theta_j)^{-1} \Theta_j^{T}$, $X =C_n^{-1}- \Id_m $, and $B= \Theta_j  (\Theta_j^{T} \Theta_j)^{-1}  \Theta_j ^{T}$, we obtain
\begin{equation}
\label{vec_theta}
\begin{split}
&\Veco(\Theta_j  (\Theta_j^{T} C_n^{-1} \Theta_j)^{-1} \Theta_j^{T} (C_n^{-1}- \Id_m )\Theta_j  (\Theta_j^{T} \Theta_j)^{-1}  \Theta_j ^{T})=\\
&= \left( \Theta_j  (\Theta_j^{T} \Theta_j)^{-1}  \Theta_j ^{T} \otimes  \Theta_j  (\Theta_j^{T} C_n^{-1} \Theta_j)^{-1} \Theta_j^{T}\right)\Veco(C_n^{-1}- \Id_m)\\
&= \left( \Theta_j  (\Theta_j^{T} \Theta_j)^{-1}  \Theta_j ^{T} \otimes  \Theta_j  (\Theta_j^{T} C_n^{-1} \Theta_j)^{-1} \Theta_j^{T}\right) \left[ \left(C_n ^{-1/2} \otimes C_n ^{-1/2} \right)- \Id_{m^2} \right]\Veco(\Id_m).\\
\end{split}
\end{equation}

Also 
\begin{equation}
\label{vec_Id}
\Veco(\Id_m)= \Veco( C_n ^{-1/2} C_n  C_n ^{-1/2} )=  \left(  C_n ^{-1/2}\otimes C_n ^{-1/2} \right) \Veco(C_i).
\end{equation}

We denote 
\begin{equation}
\label{vec_Sigma_1_i}
\Sigma_{1,n}:= \frac{1}{n} \sum_{j =1}^{n}   \Theta_j  (\Theta_j^{T} \Theta_j)^{-1}  \Theta_j ^{T} \otimes  \Theta_j  (\Theta_j^{T} C_n^{-1} \Theta_j)^{-1} \Theta_j^{T}.
\end{equation}

By applying $\Veco$ operator and equation (\ref{vec_theta}) to equality (\ref{equ::transform_3}) we obtain
\begin{equation}
\label{vec_M_i_Sigma}
\begin{split}
\Veco(M_n(\Sigma_\shp) -  \frac{r}{m} \Id_m)&= \frac{r}{m} \Veco(C_n - \Id_m) \\
& \; \; \; +\frac{1}{n} \sum_{j =1}^{n} \Veco(\Theta_j  (\Theta_j^{T} C_n^{-1} \Theta_j)^{-1} \Theta_j^{T} (C_n^{-1}- \Id_m )\Theta_j  (\Theta_j^{T} \Theta_j)^{-1}  \Theta_j ^{T})\\
&= \frac{r}{m} \Veco(C_n - \Id_m) + \Sigma_{1,n}  \left(C_n ^{-1/2}  \otimes C_n ^{-1/2} \right) \Veco(\Id_m)\\
& \; \; \; \; -  \Sigma_{1,n} \Veco(\Id_n)\\
&=\frac{r}{m} \Veco(C_n - \Id_m) + \Sigma_{1,n}  \left( C_n ^{-1/2} \otimes C_n ^{-1/2} \right) \Veco(\Id_m)\\
& \; \; \; \;-  \Sigma_{1,n}\left(  C_n ^{-1/2}\otimes C_n ^{-1/2} \right) \Veco(C_n) \\
&= \left[ \frac{r}{m} \Id_{m^2} - \Sigma_{1,n}  \left( C_n ^{-1/2} \otimes C_n ^{-1/2} \right)  \right]\Veco(C_n - \Id_m).
\end{split}
\end{equation}

We next consider the factor $\frac{r}{m}\Id_m-\Sigma_{1,n}\left( C_n ^{-1/2} \otimes C_n ^{-1/2} \right)$. Theorem
\ref{thm::LLN} gives us that $C_n  \xrightarrow[a.s]{n \to \infty} \Id_m$. Furthermore, the identity
$$(\Theta^T C_n^{-1}\Theta)^{-1} = (\Theta^T\Theta)^{-1} - (\Theta^T C_n^{-1}\Theta)^{-1}\Theta^T (C_n^{-1}-\Id_m)\Theta (\Theta^T\Theta)^{-1},$$
leads to
\begin{equation}\label{eq}
\Sigma_{1,n}=\Sigma_{0,n}-\frac{1}{n}\sum_{j=1}^n \Theta_j(\Theta_j^T\Theta_j)^{-1}\Theta_j^T
\otimes \Theta_j (\Theta_j^T C_n^{-1}\Theta_j)^{-1}\Theta_j^T \triangle^n \Theta_j(\Theta_j^T\Theta_j)^{-1}\Theta_j^T,
\end{equation}
where
$$\Sigma_{0,n}=\frac{1}{n}\sum_{j=1}^n \Theta_j(\Theta_j^T\Theta_j)^{-1}\Theta_j^T\otimes \Theta_j(\Theta_j^T\Theta_j)^{-1}\Theta_j^T,$$
and $\triangle^n = C_n^{-1}-\Id_m$. The strong law of large numbers  and Theorem \ref{thm::LLN} give that
$$\Sigma_{0,n} \xrightarrow[a.s]{n \to \infty} \Sigma_0=\E_\shp(\Theta (\Theta^T\Theta)^{-1}\Theta^T\otimes \Theta (\Theta^T\Theta)^{-1}\Theta^T),$$
and $\triangle^n  \xrightarrow[a.s]{n \to \infty} 0$. We next show that the second term of the right hand side of (\ref{eq}) $ \xrightarrow[a.s]{n \to \infty} 0$. For fixed $1\le j\le n$, any entry of the Kronecker product can be written as 
$$a_{ik}^j \sum_{1\le \nu,\mu\le m}\alpha_{k\nu}^j \triangle_{\nu\mu}^n a_{\mu l}^j,$$
 where
$a_{ik}^j$ is the $(ik)$ entry of $\Theta_j(\Theta_j^T\Theta_j)^{-1}\Theta_j^T$,  $\alpha_{k\nu}^j$ is the 
$(k\nu)$ entry of $\Theta_j (\Theta_j^T C_n^{-1}\Theta_j)^{-1}\Theta_j^T$, and $\triangle_{\nu\mu}^n$ is the $(\nu\mu)$ entry of $\triangle^{n}$. 
From construction, 
$$\Theta_j(\Theta_j^T\Theta_j)^{-1}\Theta_j^T = g^{-1} {\rm Pr}(U_j,\Sigma_\shp)g \hbox{ and }
\Theta_j (\Theta_j^T C_n^{-1}\Theta_j)^{-1}\Theta_j^T = g^{-1} {\rm Pr}(U_j,V_n)V_n (g^{-1})^T.$$
The linear operators ${\rm Pr}(U_j,\Sigma_\shp)$ and ${\rm Pr}(U_j, V_n)$ are projectors and thus have bounded entries (just look at the spectral decomposition relatively to orthonormal basis with eigenvalues equal to 0 or 1 ). Moreover, Theorem
\ref{thm::LLN} gives that $V_n \xrightarrow[a.s]{n \to \infty} \Sigma_\shp$. We can thus assume that all the entries 
$a_{ik}$ and $\alpha_{k\nu}^j$ are bounded by some positive constant  $M>0$. 
Hence
$$\left\vert\frac{1}{n}\sum_{j=1}^n a_{ik}^j \sum_{1\le \nu,\mu\le m}\alpha_{k\nu}^j \triangle_{\nu\mu}^n a_{\mu l}^j \right\vert
\le \sum_{1\le \nu,\mu\le m}\vert\triangle_{\nu\mu}^n\vert \cdot
\left\vert\frac{1}{n}\sum_{j=1}^n a_{ik}^j \alpha_{k\nu}^j a_{\mu l}^j \right\vert \xrightarrow[a.s]{n \to \infty} 0.$$

Then (\ref{vec_M_i_Sigma}) becomes asymptotically equivalent to
\begin{equation}\label{limiting}
 \left[ \frac{r}{m} \Id_{m^{2}} - \Sigma_0  \left( \Id_m  \otimes \Id_m \right)  \right]\sqrt{n}(\Veco(C_n - \Id_m)) \xrightarrow[distribution]{n \to \infty} \Veco(Z),
\end{equation}
where we recall that $\Veco(Z)$ is normal of law 
$\mathcal{N}(0, \sigma^2)$. The final step of the proof consists in inverting this last relation using the algebraic knowledge on the covariance matrix $\sigma^2$ and on $\Sigma_0$.

By hypothesis, $\supp(\shp)=G(m,r)$, so that, from Lemma \ref{lem::ker_sigma}, $W_\shp = \TT_{\Id_m} \mkern-4mu \Pos_{sym}^1(m)$.
Hence, $C_n - \Id_m \in W_{\shp}$. Indeed, by its definition, $C_n$ is symmetric with $tr(C_n)= m$. Then $C_n - \Id_m$ is also symmetric of trace zero, and thus  $C_n - \Id_m \in \TT_{\Id_m} \mkern-4mu \Pos_{sym}^1(m)= W_{\shp}$.
Lemma \ref{l.2} shows that the random vector $\Veco(Z)$ is supported by the full linear subspace $W_\shp$. Then, we claim the restriction $L_0$ of the linear operator $\frac{r}{m} \Id_{m^{2}} - \Sigma_0  \left( \Id_m  \otimes \Id_m \right) $ to $W_\shp$ is injective. Indeed, assuming the contrary, the image of $L_0$ would be strictly contained as a linear subspace in $W_\shp = \supp(P_Z)$, a contradiction with the weak convergence result given in (\ref{limiting}).
Then Lemmas \ref{lem::ker_sigma} and \ref{l.2} give that (\ref{limiting}) is equivalent to
\begin{equation}\label{limiting2}
 Q L_0 Q\sqrt{n}(\Veco(C_n - \Id_m)) \xrightarrow[distribution]{n \to \infty} \Veco(Z),
\end{equation}
so that
\begin{equation}\label{limiting3}
 \sqrt{n}(\Veco(C_n - \Id_m)) \xrightarrow[distribution]{n \to \infty}\left[ Q L_0 Q\right]^+\Veco(Z),
\end{equation}
where $\left[ A \right]^+$ denotes the Moore--Penrose inverse of a matrix $A$.
One obtains finally that $\sqrt{n}(\Veco(C_n - \Id_m))$ is asymptotically multivariate centred normal of covariance
$$\sigma_\infty^2 = \left[ Q L_0 Q \right]^+ \sigma^2 \Big(\left[ Q L_0 Q \right]^+\Big)^T.$$

\end{proof}


\section{Appendix}
\label{sec::Appendix}
\subsection{Proof of Lemma \ref{lem::diff_log_like}}
Using Lemma \ref{lem::diff_det} we have: 
\begin{equation*}
\label{equ::grad}
\begin{split}
\frac{d \ell_{U}(g \exp(tV)g^{T})}{dt} \vert_{t=0}&=\frac{1}{2} \frac{d \log \frac{\det(X^{T} (g^{T})^{-1} \exp(-tV)g^{-1} X)}{\det(X^{T}X)}}{dt} \vert_{t=0}\\
&= \frac{1}{2}  \frac{\det(X^{T}X)}{\det(X^{T} (g^{T})^{-1} \exp(-tV)g^{-1} X)}\vert_{t=0} \cdot  \frac{d \frac{\det(X^{T} (g^{T})^{-1} \exp(-tV)g^{-1} X)}{\det(X^{T}X)}}{dt}\vert_{t=0}\\
&=  \frac{1}{2}  \frac{\det(X^{T}X)}{\det(X^{T} \Sigma^{-1} X)} \cdot \frac{1}{\det(X^{T}X)} \cdot \frac{d \det(X^{T} (g^{T})^{-1} \exp(-tV)g^{-1} X)}{dt}\vert_{t=0}\\
&=  -\frac{1}{2} \frac{1}{\det(X^{T} \Sigma^{-1} X)} \cdot \det(X^{T} \Sigma^{-1} X) \cdot \tr((X^{T} \Sigma^{-1} X)^{-1} X^{T} (g^{T})^{-1} Vg^{-1} X)\\
&= -\frac{1}{2} \cdot \tr((X^{T} \Sigma^{-1} X)^{-1} X^{T} (g^{T})^{-1} Vg^{-1} X)\\
&= -\frac{1}{2} \cdot \tr(X (X^{T} \Sigma^{-1} X)^{-1} X^{T} (g^{T})^{-1} Vg^{-1} )\\
&= d(\ell_{U})_{\Sigma}(gVg^{T}).\\
\end{split}
\end{equation*}

\subsection{Proof of Lemma \ref{lem::grad_formula}}
By  Lemma \ref{lem::diff_log_like}, for $V \in \TT_{\Id_m}\mkern-4mu\Pos_{sym}^1(m)$ and $W = g V g^{T}$
 \begin{equation*}
 \label{equ::first_covar_deriv}
 \begin{split}
 d(\ell_{U})_{\Sigma}(W)&= -\frac{1}{2} \cdot \tr(X (X^{T} \Sigma^{-1} X)^{-1} X^{T} (g^{T})^{-1} Vg^{-1} )\\
 &= -\frac{1}{2} \cdot \tr(X (X^{T} \Sigma^{-1} X)^{-1} X^{T} \Sigma^{-1} W \Sigma^{-1} ).\\
 \end{split}
 \end{equation*}
We obtain
\begin{equation*}
\begin{split}
-\frac{1}{2} \cdot \tr(X (X^{T} \Sigma^{-1} X)^{-1} X^{T} (g^{T})^{-1} Vg^{-1})&=\langle \grad \ell_{U}(\Sigma) , g V g^{T}\rangle_{\Sigma}\\
&= \tr(\Sigma^{-1} \grad \ell_{U}(\Sigma) \Sigma^{-1} g V g^{T})\\
&= \tr((g^{T})^{-1} g^{-1}  \grad \ell_{U}(\Sigma) (g^{T})^{-1} g^{-1} g V g^{T})\\
&= \tr(g^{-1}  \grad \ell_{U}(\Sigma) (g^{T})^{-1}V)\\
&=\tr(\grad \ell_{U}(\Sigma) (g^{T})^{-1}Vg^{-1})\\
\end{split}
\end{equation*}
for every $V \in\TT_{\Id_m}\mkern-4mu\Pos_{sym}^1(m)$.

Notice, as  $\grad \ell_{U}(\Sigma) \in \TT_{\Sigma} \mkern-4mu \Pos_{sym}^1(m)=\{A \in \Sym(m) \; \vert \; \tr(g^{-1}A (g^{T})^{-1})=0\} $ we cannot have $X (X^{T} \Sigma^{-1} X)^{-1} X^{T} \in \TT_{\Sigma} \mkern-4mu \Pos_{sym}^1(m)$ as 
\begin{equation*}
\begin{split}
\tr(g^{-1} X (X^{T} \Sigma^{-1} X)^{-1} X^{T} (g^{T})^{-1})&=\tr((g^{T})^{-1}g^{-1} X (X^{T} \Sigma^{-1} X)^{-1} X^{T})\\
&= \tr(\Sigma^{-1}X (X^{T} \Sigma^{-1} X)^{-1} X^{T})\\
&= \tr(X^{T}\Sigma^{-1}X (X^{T} \Sigma^{-1} X)^{-1})\\
&=\tr(\Id_{r \times r})= r.
\end{split}
\end{equation*}
One sees that $\frac{r}{2m} \Sigma-\frac{1}{2} X (X^{T} \Sigma^{-1} X)^{-1} X^{T}$ is in $\Sym(m)$ and verifies 
$$\tr(g^{-1} (\frac{r}{2m} \Sigma-\frac{1}{2} X (X^{T} \Sigma^{-1} X)^{-1} X^{T})(g^{T})^{-1})=0.$$

Moreover, one can verify that indeed 
\begin{equation}
\label{equ::grad_formula}
\grad \ell_{U}(\Sigma)= \frac{r}{2m} \Sigma-\frac{1}{2} X (X^{T} \Sigma^{-1} X)^{-1} X^{T}
\end{equation} satisfies
$$-\frac{1}{2} \cdot \tr(X (X^{T} \Sigma^{-1} X)^{-1} X^{T} (g^{T})^{-1} Vg^{-1})=\langle \grad \ell_{U}(\Sigma) , g V g^{T}\rangle_{\Sigma}$$
as $\tr(\Sigma^{-1}\frac{r}{2m} \Sigma \Sigma^{-1} g V g^{T} )=m \frac{r}{2m} \tr(\Sigma^{-1} g V g^{T} )=\frac{r}{2} \tr(g^{T} \Sigma^{-1} g V )=0.$

\subsection{Proof of Lemma \ref{lem::covariant_deriv}}
Let $X, Y, Z$ be vector fields in $\TT \Pos_{sym}^1(m)$. It is well known the Levi--Civita connection $\nabla_{X}$ satisfies Koszul's formula:
\begin{equation}
\label{equ::Koszul}
\begin{split}
2 \langle  \nabla_{X}Y, Z\rangle &= X \langle Y,Z\rangle +  Y \langle X,Z\rangle - Z \langle X,Y\rangle\\
& \; \; - \langle X,[Y,Z]\rangle - \langle Y,[X,Z]\rangle + \langle Z,[X,Y]\rangle.\\
\end{split}
\end{equation}  
As in our case of $\Pos_{sym}^1(m)$  the metric is given by $\langle Y,Z\rangle_{\Sigma}= \tr(\Sigma^{-1}Y\Sigma^{-1}Z)$, for every $\Sigma \in \Pos_{sym}^1(m)$, the directional derivative along the vector field $X$ of $\langle Y,Z\rangle$ gives
\begin{equation}
\label{equ::directional_deriv_X}
\begin{split}
X(\langle Y,Z\rangle)_{\Sigma} &= \tr(\Sigma^{-1}Y\Sigma^{-1} (X(Z))_{\Sigma}) +  \tr(\Sigma^{-1}(X(Y))_{\Sigma}\Sigma^{-1} Z) \\
&\; \; - \tr(\Sigma^{-1}X\Sigma^{-1} Y \Sigma^{-1}Z) -  \tr(\Sigma^{-1}Y\Sigma^{-1} X \Sigma^{-1}Z).\\
\end{split}
\end{equation}
We have used the well known formula 
\begin{equation}
\label{equ::deriv_inverse}
\frac{d}{dt}E(t)^{-1}\vert_{t=0}= -E(t)^{-1} (\frac{d}{dt}E(t)) E(t)^{-1}  \vert_{t=0}. 
\end{equation}
By cyclic permutations one obtains similar formulas for $Y \langle X,Z\rangle$ and  $Z \langle X,Y\rangle$.

As $[Y,Z]= YZ -ZY$ we have
$$\langle X,[Y,Z]\rangle= \tr(\Sigma^{-1}X\Sigma^{-1}YZ) - \tr(\Sigma^{-1}X\Sigma^{-1}ZY)$$
and similarly for $\langle Y,[X,Z]\rangle$ and $\langle Z,[X,Y]\rangle$.

Adding everything together and canceling similar terms, from Koszul's formula (\ref{equ::Koszul}) we obtain 
$$\langle  \nabla_{X}Y, Z\rangle_{\Sigma}= \tr(\Sigma^{-1}(X(Y))_{\Sigma}\Sigma^{-1}Z)- \frac{1}{2} \tr(\Sigma^{-1}X\Sigma^{-1}Y\Sigma^{-1}Z) - \frac{1}{2} \tr(\Sigma^{-1}Y\Sigma^{-1}X\Sigma^{-1}Z) $$
for every $X,Y,Z$. Thus as desired 
$(\nabla_{X} Y)_\Sigma= (X(Y))_{\Sigma} - \frac{1}{2} X\Sigma^{-1}Y- \frac{1}{2} Y\Sigma^{-1}X \in  \TT_{\Sigma} \mkern-4mu \Pos_{sym}^1(m).$

Then by taking $Y= \grad \ell_{U}$, $Z$ a vector field and $\Sigma \in \Pos_{sym}^1(m)$
$$(\nabla_{Z} \grad \ell_{U})_\Sigma= (Z(\grad \ell_{U}))_{\Sigma} - \frac{1}{2} Z\Sigma^{-1}\grad \ell_{U}(\Sigma)- \frac{1}{2} \grad \ell_{U}(\Sigma)\Sigma^{-1}Z.$$

By computing $(Z(\grad \ell_{U}))_{\Sigma}= \frac{d}{dt}(\grad \ell_{U}(g \exp(tV) g^{T}))\vert_{t=0}$, where $Z_{\Sigma}= g V g^{T}$ for a unique $V \in\TT_{\Id_m}$, the desired formula is obtained
$$\nabla_{Z} \grad \ell_{U}(\Sigma) = \frac{1}{4}Z\pi_{U}(\Sigma) \Sigma+  \frac{1}{4} \Sigma \pi_{U}(\Sigma) Z -\frac{1}{2}  \Sigma \pi_{U}(\Sigma) Z \pi_{U}(\Sigma)\Sigma$$
where $\pi_{U}(\Sigma):= \Sigma^{-1} X (X^{T} \Sigma^{-1}  X)^{-1} X^{T} \Sigma^{-1}$. 

We have used again the chain rule and the well known formula (\ref{equ::deriv_inverse}).

\subsection{Proof of Lemma \ref{lem::grad_h_n}}
Let $\Gamma = gg^{T} \in \Pos_{sym}^1(m)$, $V \in \TT_{\Id_m}\mkern-4mu\Pos_{sym}^1(m)$ and $W = g V g^{T} \in \TT_{\Gamma}\mkern-4mu\Pos_{sym}^1(m)$. Then 
$$\nabla_{W}h_n(\Gamma) = \langle \grad(h_n)(\Gamma),W\rangle_{\Gamma} = \frac{d h_{n}(g \exp(tV)g^{T})}{dt} \vert_{t=0}.$$

Recall from Lemma \ref{lem::grad_formula} that $\grad(\ell_{\shp_n})(\Gamma)= \frac{r}{2m} \Gamma-\frac{1}{2n} \sum\limits_{j=1}^{n} \Gamma \pi_{U_{j}}(\Gamma) \Gamma$. So 
\begin{equation}
\label{equ::h_n_trace}
\begin{split}
h_n{\Gamma}&=\tr(\Gamma^{-1}\grad(\ell_{\shp_n})(\Gamma)\Gamma^{-1} \grad(\ell_{\shp_n})(\Gamma))\\
&=\tr\left( \left(\frac{r}{2m} \Id_m-\frac{1}{2n} \sum\limits_{j=1}^{n}\pi_{U_{j}}(\Gamma) \Gamma \right) \left(\frac{r}{2m} \Id_m-\frac{1}{2n} \sum\limits_{j=1}^{n}\pi_{U_{j}}(\Gamma) \Gamma \right) \right).
\end{split}
\end{equation}
We need to compute 
\begin{equation}
\label{equ::h_n_deriv}
\begin{split}
\frac{d h_{n}(g \exp(tV)g^{T})}{dt} \vert_{t=0} &= \frac{1}{4}\frac{d}{dt} \tr \left( \left(\frac{r}{m} \Id_m-\frac{1}{n} \sum\limits_{j=1}^{n}\pi_{U_{j}}(g \exp(tV)g^{T}) g \exp(tV)g^{T} \right)^2 \right) \vert_{t=0}\\
&= \frac{1}{4}\tr \left(\frac{d}{dt}  \left(\frac{r}{2m} \Id_m-\frac{1}{2n} \sum\limits_{j=1}^{n}\pi_{U_{j}}(g \exp(tV)g^{T}) g \exp(tV)g^{T} \right)^2 \right) \vert_{t=0}\\
&=- \frac{1}{2n} \tr \left(\frac{d}{dt}\left(\sum\limits_{j=1}^{n}\pi_{U_{j}}(g \exp(tV)g^{T}) g \exp(tV)g^{T} \right) \vert_{t=0} \left(\frac{r}{m} \Id_m-\frac{1}{n} \sum\limits_{j=1}^{n}\pi_{U_{j}}(\Gamma) \Gamma \right) \right).\\
\end{split}
\end{equation}

Recall 
$$\sum\limits_{j=1}^{n}\pi_{U_{j}}(\Gamma) \Gamma= \sum\limits_{j=1}^{n}\Gamma^{-1} X_{j} (X_{j}^{T} \Gamma^{-1}  X_{j})^{-1} X_{j}^{T}.$$

Using formula (\ref{equ::deriv_inverse}) and letting $A(n,\Gamma):= \frac{r}{m} \Id_m-\frac{1}{n} \sum\limits_{j=1}^{n}\pi_{U_{j}}(\Gamma) \Gamma $,  equation (\ref{equ::h_n_deriv}) equals further
\begin{equation}
\label{equ::h_n_deriv_further}
\begin{split}
&- \frac{1}{2n} \tr \left(\frac{d}{dt}\left(\sum\limits_{j=1}^{n}(g\exp(tV)g^{T})^{-1} X_{j} (X_{j}^{T} ( \exp(tV)g^{T})^{-1}  X_{j})^{-1} X_{j}^{T}  \right) \vert_{t=0} A(n,\Gamma) \right)\\
&= -\frac{1}{2n} \tr \left(\left(\sum\limits_{j=1}^{n}\Gamma^{-1} X_{j} (X_{j}^{T} \Gamma^{-1}  X_{j})^{-1} X_{j}^{T} \Gamma^{-1} W \Gamma^{-1}  X_{j} (X_{j}^{T} \Gamma^{-1}  X_{j})^{-1}X_{j}^{T}  \right) A(n,\Gamma) \right)\\
&\; \; + \frac{1}{2n} \tr \left(\left(\sum\limits_{j=1}^{n} \Gamma^{-1} W \Gamma^{-1} X_{j} (X_{j}^{T} \Gamma^{-1}  X_{j})^{-1} X_{j}^{T}  \right) A(n,\Gamma)  \right)\\
&=\frac{1}{2n} \tr \left(\sum\limits_{j=1}^{n} \Gamma^{-1} W \Gamma^{-1} X_{j} (X_{j}^{T} \Gamma^{-1}  X_{j})^{-1} X_{j}^{T}  A(n,\Gamma)  \right)\\
&\; \; - -\frac{1}{2n} \tr \left(\sum\limits_{j=1}^{n}\Gamma^{-1} W \Gamma^{-1}  X_{j} (X_{j}^{T} \Gamma^{-1}  X_{j})^{-1}X_{j}^{T}  A(n,\Gamma) \Gamma^{-1} X_{j} (X_{j}^{T} \Gamma^{-1}  X_{j})^{-1} X_{j}^{T} \right)\\
\end{split}
\end{equation}
\begin{equation*}
\begin{split}
&=\frac{1}{2n} \tr \left(\Gamma^{-1} W \Gamma^{-1}\left(\sum\limits_{j=1}^{n} X_{j} (X_{j}^{T} \Gamma^{-1}  X_{j})^{-1} X_{j}^{T}  A(n,\Gamma) \right) \right) \\
&\; \; -\frac{1}{2n} \tr \left(\Gamma^{-1} W \Gamma^{-1} \left( \sum\limits_{j=1}^{n} X_{j} (X_{j}^{T} \Gamma^{-1}  X_{j})^{-1}X_{j}^{T}  A(n,\Gamma) \Gamma^{-1} X_{j} (X_{j}^{T} \Gamma^{-1}  X_{j})^{-1} X_{j}^{T} \right) \right).\\
\end{split}
\end{equation*}
Now from the definition of $A(n,\Gamma)$ the term $\frac{1}{2n} \tr \left(\Gamma^{-1} W \Gamma^{-1}\left(\sum\limits_{j=1}^{n} X_{j} (X_{j}^{T} \Gamma^{-1}  X_{j})^{-1} X_{j}^{T} \frac{r}{m} \Id_m \right) \right) $ cancels the term $-\frac{1}{2n} \tr \left(\Gamma^{-1} W \Gamma^{-1} \left( \sum\limits_{j=1}^{n} X_{j} (X_{j}^{T} \Gamma^{-1}  X_{j})^{-1}X_{j}^{T}  \frac{r}{m} \Id_m \Gamma^{-1} X_{j} (X_{j}^{T} \Gamma^{-1}  X_{j})^{-1} X_{j}^{T} \right) \right)$.

Therefore, equation (\ref{equ::h_n_deriv_further}) equals further 
\begin{equation}
\label{equ::h_n_deriv_further2}
\begin{split}
&-\frac{1}{2n^2} \tr \left(\Gamma^{-1} W \Gamma^{-1}\left(\sum\limits_{j=1}^{n} X_{j} (X_{j}^{T} \Gamma^{-1}  X_{j})^{-1} X_{j}^{T}  B(n,\Gamma) \Gamma \right) \right) \\
&\; \; \frac{1}{2n^2} \tr \left(\Gamma^{-1} W \Gamma^{-1} \left( \sum\limits_{j=1}^{n} X_{j} (X_{j}^{T} \Gamma^{-1}  X_{j})^{-1}X_{j}^{T}  B(n,\Gamma)  X_{j} (X_{j}^{T} \Gamma^{-1}  X_{j})^{-1} X_{j}^{T} \right) \right),\\
\end{split}
\end{equation}
where $B(n,\Gamma):=\sum\limits_{j=1}^{n}\pi_{U_{j}}(\Gamma) .$

Then to finish the proof one needs to verify that 
\begin{equation}
\label{equ::grad_first_h_n}
\begin{split}
C:=&\frac{1}{2n^2} \left( \sum\limits_{j=1}^{n} \Gamma \pi_{U_{j}}(\Gamma) \Gamma B(n,\Gamma) \Gamma \pi_{U_{j}}(\Gamma) \Gamma \right)- \frac{1}{2n^2}\left(\sum\limits_{j=1}^{n} \Gamma \pi_{U_{j}}(\Gamma) \Gamma  B(n,\Gamma) \Gamma \right)\\
&= \frac{1}{2n^2} \left( \sum\limits_{j=1}^{n} \Gamma \pi_{U_{j}}(\Gamma) \Gamma B(n,\Gamma) \Gamma \pi_{U_{j}}(\Gamma) \Gamma \right)- \frac{1}{2n^2} \Gamma B(n,\Gamma) \Gamma  B(n,\Gamma) \Gamma\\
\end{split}
\end{equation}
is in the tangent space  $\TT_{\Gamma}\mkern-4mu\Pos_{sym}^1(m)$ at $\Gamma$. Indeed, it is easy to see that the matrix $C$ from equation (\ref{equ::grad_first_h_n}) is in $\Sym(m)$ and $\tr(C\Gamma^{-1})=0$. Therefore, $\grad(h_n)(\Gamma)=C$ and the lemma follows.

\begin{bibdiv}
\begin{biblist}

\bib{AMR2005}{article}{
author={Auderset, C.},
  author={Mazza, C.},
  author={Ruh, E.}
   title={Angular Gaussian and Cauchy estimation},
    journal={Journal of Multivariate Analysis},
   volume={93},
   issue ={1},
   pages={180--197}
  date={2005},
}

\bib{AMR}{unpublished}{
  author={Auderset, C.},
  author={Mazza, C},
  author={Ruh, E.}
  title={Grassmannian Estimation},
    note={arXiv:0809.3697v1}
    }

\bib{BHV}{book}{
author = {Bekka, B.},
author={de la Harpe, P.}, 
 author={Valette, A.},
title = {Kazhdan's Property (T)},
publisher = {New Mathematical Monographs, Cambridge University Press},
volume={11}
year = {2008},
}

\bib{BH99}{book}{
  author={Bridson, M.},
  author={Haefliger, A.},
  title={Metric Spaces of Non-Positive Curvature},
    publisher={Springer-Verlag, Berlin},
   volume={319},
  date={1999},
}

\bib{chi}{article}{
   author={Chikuse, Y.},
   title={The matrix angular central Gaussian distribution},
    journal={Journal of Multivariate Analysis},
   volume={33},
   issue ={2},
   pages={265--274}
  date={1990},
}

\bib{Cook}{article}{
    title={Likelihood-Based Sufficient Dimension Reduction},
    author={Cook, R.},
    author={Forzani, L.},
    year={2009},
    eprint={},
    journal={Journal of American Statistical Association},
    primaryClass={stat.ML}
}

\bib{Couillet14}{article}{
author={Couillet, R.},
author={McKay, M.},
   title={Large dimensional analysis and optimization of robust shrinkage covariance matrix estimators},
    journal={Journal of Multivariate Analysis},
   volume={131},
   issue ={},
   pages={99-120}
  date={2014},
}

\bib{Couillet15}{article}{
author={Couillet, R.},
author={Pascal, F.},
author={Silverstein, J.},
   title={The random matrix regime of Maronna's M-estimator with elliptically distributed samples },
    journal={Journal of Multivariate Analysis},
   volume={139},
   issue ={},
   pages={56-78}
  date={2015},
}

\bib{Dr}{article}{
author={Dru\c{t}u, C.},
   title={Diophantine approximation on rational quadrics},
    journal={Mathematische Annalen},
   volume={333},
   number={2},
   pages={405--470}
  date={2005},
}

\bib{DT}{article}{
author={D\"umbgen, L.},
  author={Tyler, D.},
   title={Geodesic Convexity and Regularized Scatter Estimators},
    note={arXiv:1607.05455v2}
}

\bib{FR}{article}{
  author={Fl\"{u}ge, R.},
  author={Ruh, E.},
  title={Barycenter and maximum likelihood},
  journal={Differential Geometry and its Applications},
   volume={24},
   pages={660--669}
  date={2006},
}

\bib{Herz}{article}{
author={Herz, C.S.},
   title={Bessel functions of matrix argument},
    journal={Ann. Math},
   volume={61},
   issue ={},
   pages={474-523}
  date={1955},
}

\bib{Hub}{book}{
   author={Huber, P.J.},
   title={Robust statistics},
   note={Wiley Series in Probability and Mathematical Statistics},
   publisher={John Wiley \&\ Sons, Inc., New York},
   date={1981},
   pages={ix+308},
   isbn={0-471-41805-6},
   review={\MR{606374}},
}

\bib{James}{article}{
author={James, A.T.},
   title={Normal multivariate analysis and the orthogonal group},
    journal={Ann. Math. Statist.},
   volume={25},
   issue ={},
   pages={40-75}
  date={1954},
}

\bib{KLM}{article}{
   author={Kapovich, M.},
   author={Leeb, B.},
   author={Millson, J.},
   title={Convex functions on symmetric spaces, side lengths of polygons and
   the stability inequalities for weighted configurations at infinity},
   journal={J. Differential Geom.},
   volume={81},
   date={2009},
   number={2},
   pages={297--354},
   issn={0022-040X},
   review={\MR{2472176}},
}

\bib{Kassel}{article}{
author={Kassel, A.},
author={Levy, T.},
   title={Determinantal Probability measures on Grassmannians},
    journal={ArXiv},
   volume={},
   issue ={},
   pages={}
  date={2019},
}

\bib{Kent}{article}{
author={Kent, J.T.},
  author={Tyler, D.},
   title={Redescending M-estimates of multivariate location and scatter},
    journal={Ann. Math. Statist.},
   volume={19},
   issue ={},
   pages={2102-2119}
  date={1991},
}

\bib{Knpp}{book}{
   author={Knapp, A.W.},
   title={Lie groups beyond an introduction},
   series={Progress in Mathematics},
   volume={140},
   edition={2},
   publisher={Birkh\"auser Boston, Inc., Boston, MA},
   date={2002},
   pages={xviii+812},
   isbn={0-8176-4259-5},
   review={\MR{1920389}},
}

\bib{Lerman}{article}{
author={Lerman, G.},
author={Maunu, T.},
   title={An Overview of Robust Subspace Recovery},
    journal={Proc. IEEE},
   volume={106},
   issue ={8},
   pages={1380-1410}
  date={2018},
}

\bib{LT}{book}{
   author={Lieb, E.},
   author={Thirring, W.},
   title={Studies in Mathematical Physics:  essays in honor of Valentine Bargmann},
   publisher={Princeton Press},
   date={1976},
}

\bib{Lutz}{article}{
author={D\"umbgen, L.},
author={Pauly, M.},
author={Schweizer, T},
   title={M-functionals of multivariate scatter},
    journal={Statist. Surv.},
   volume={9},
   issue ={},
   pages={32-105}
  date={2015},
}

\bib{Maronna}{article}{
author={Maronna, R.A.},
   title={Robust M-estimators of multivariate location and scatter},
    journal={Anal. Statist.},
   volume={4},
   issue ={1},
   pages={51-67}
  date={1976},
}

\bib{MarOl}{article}{
author={Marshall, A.W.},
author={Olkin, I.},
   title={Matrix versions of the Cauchy and Kantorovich inequalities},
    journal={Aeq. Math.},
   volume={40},
   pages={89--93}
  date={1990},
  doi={10.1007/BF02112284}
}

\bib{Muir}{book}{
author = {Muirhead, J.},
title = {Aspects of Multivariate Statistical Theory},
publisher = {Wiley, New-York},
volume={}
year = {1982},
}

\bib{Nokleby}{article}{
author={Nokleby, M.},
author={Rodrigues, M.},
author={Calderbank, R.},
   title={Discrimination on the Grassmann Manifold: Fundamental Limits of Subspace Classifiers},
    journal={IEEE Trans. Inform. Theory},
   volume={61},
   issue ={4},
   pages={2133-2147}
  date={2015},
}

\bib{Tan}{article}{
    title={Learning Integral Representations of Gaussian Processes},
    author={Tan, Z.}
    author={Mukherjee, S.},
    year={2018},
    eprint={1802.07528},
    journal={arXiv},
    primaryClass={stat.ML}
}

\bib{Ty}{article}{
   author={Tyler, D.E.},
   title={A distribution-free $M$-estimator of multivariate scatter},
   journal={Ann. Statist.},
   volume={15},
   date={1987},
   number={1},
   pages={234--251},
   issn={0090-5364},
   review={\MR{885734}},
   doi={10.1214/aos/1176350263},
}

\bib{Zhang}{article}{
author={Zhang, T.},
   title={Robust subspace recovery by Tyler's M-estimator},
    journal={Information and Inference: A Journal of the IMA},
   volume={},
   issue ={},
   pages={}
  date={2016},
}

\bib{ZWG}{article}{
author={Zhang, T.},
author={Wiesel, Ami},
author={Greco, M.S.},
   title={Multivariate Generalized Gaussian Distribution: Convexity and Graphical Models},
    journal={IEEE Trans. Signal Proc.},
   volume={61},
   issue ={16},
   pages={4141-4148}
  date={2013},
}

\bib{Wiesel}{article}{
author={Wiesel, A.},
   title={Geodesic Convexity and Covariance Estimation},
    journal={IEEE Trans. Signal Proc.},
   volume={60},
   issue ={12},
   pages={6182-6189}
  date={2012},
}

\bib{Zhang16}{article}{
author={Zhang, T.},
author={Cheng, X.},
author={Singer, A.},
   title={Marcenko-Pastur law for Tyler's M-estimator },
    journal={Journal of Multivariate Analysis},
   volume={149},
   issue ={},
   pages={114-123}
  date={2016},
}

\end{biblist}
\end{bibdiv}

\end{document}